\numberwithin{equation}{subsection} 
\newtheorem{theorem}[equation]{Theorem} 
\newtheorem{lemma}[equation]{Lemma}
\newtheorem{proposition}[equation]{Proposition}
\newtheorem{corollary}[equation]{Corollary}
\theoremstyle{definition} 
\newtheorem{remark}[equation]{Remark}
\newtheorem{notation}[equation]{Notation}
\newtheorem{example}[equation]{Example}
\newtheorem{construction}[equation]{Construction}
\newcommand{\CC}{\mathbb{C}}
\newcommand{\ZZ}{\mathbb{Z}}
\newcommand{\tildeboxtimes}{\operatorname{\widetilde{\boxtimes}}}
\newcommand{\tildetimes}{\operatorname{\tilde{\times}}}
\newcommand{\tripslash}{/ \!\! / \!\! /}
\newcommand{\doubslash}{/ \! /}
\newcommand{\Sym}{\operatorname{Sym}}
\newcommand{\Gr}{\mathrm{Gr}}
\newcommand{\reg}{\mathrm{reg}}
\newcommand{\gen}{\mathrm{gen}}
\newcommand{\F}{\mathcal{F}}
\title{Levi-Equivariant Restriction of Spherical Perverse Sheaves}
\author{Mark Macerato}
\date{}
\begin{document} 

\begin{abstract}
    We study the equivariant cohomology of spherical perverse sheaves on the affine Grassmannian of a connected reductive group $G$ with support in the affine Grassmannian of any Levi subgroup $L$ of $G$. In doing so, we extend the work of Ginzburg and Riche on the $T$-equivariant cofibers of spherical perverse sheaves. We obtain a description of this cohomology in terms of the Langlands dual group $\check{G}$. More precisely, we identify the cohomology of the regular sheaf on $\Gr_G$ with support along $\Gr_L$ with the algebra of functions on a hyperspherical Hamiltonian $\check{G}$-variety $T^*(\check{G}/(\check{U}, \psi_L))$, where the \textit{Whittaker datum} $\psi_L$ is an additive character (determined by $L$) of the maximal unipotent subgroup $\check{U}$.
\end{abstract}

\maketitle 
\tableofcontents
\section{Introduction}

\subsection{Geometric Satake}\label{Introduction}
Let $G$ denote a connected reductive group over $\CC$. Following Lusztig, we associate to $G$ its \textit{affine Grassmannian} $\Gr_G$ \cite[§11]{lusztig1983singularities}, an ind-projective ind-variety\footnote{We will not distinguish here between $\Gr_G$ and its reduction $\Gr_G^{\mathrm{red}}$, as our considerations are purely topological.} over $\CC$ (we refer the reader to \cite[§1.2]{acharriche2023}, \cite[§9.1]{achar2020}, \cite[§1]{zhu2016introduction}, and \cite[§3]{baumann2018notes} for its basic properties). The topology of $\Gr_G$ gives rise to the Langlands dual group $\check{G}$ through the \textit{geometric Satake equivalence} of Ginzburg \cite{Gi95} and Mirkovi\'c-Vilonen \cite{MV04}. More precisely, let $G(\mathcal{O}) = G(\CC[[t]])$ denote the arc-group\footnote{We abuse notation here and throughout the paper by writing $G(\mathcal{O})$ in place of the arc-group scheme $L^+G$ \cite[Definition 1]{Faltings2003}, whose group of $\CC$-points is $G(\mathcal{O})$. } of $G$. The pro-algebraic group $G(\mathcal{O})$ acts naturally on $\Gr_G$, and we may consider the $G(\mathcal{O})$-equivariant constructible derived category\footnote{See §\ref{sheaf conventions} for our notation and conventions concerning categories of sheaves.} $D_{G(\mathcal{O})}(\Gr_G)$ of sheaves of complex vector spaces\footnote{Of course, one of the most profound aspects of the approach of \cite{MV04} is that the geometric Satake equivalence holds with essentially arbitrary (in particular, integral) coefficients. We will not need this generality here; our reliance on the equivariant localization theorem in the body of the paper precludes our arguments from extending to the integral setting.} on $\Gr_G$. The triangulated category $D_{G(\mathcal{O})}(\Gr_G)$ carries a natural monoidal structure $\star$ given by the \textit{convolution product} of sheaves \cite[§4]{MV04}. Lusztig's ``miraculous theorem'' \cite[§5.3.6]{beilinsondrinfeld} (reproven conceptually by Gaitsgory \cite[Proposition 6]{gaitsgory2000construction}) asserts that the convolution product $- \star -$ is $t$-exact, and therefore restricts to an exact bifunctor on the heart of the perverse $t$-structure $\mathcal{P}_{G(\mathcal{O})}(\Gr_G) \subseteq D_{G(\mathcal{O})}(\Gr_G)$, the abelian category of $G(\mathcal{O})$-equivariant perverse sheaves on $\Gr_G$. 

The total cohomology functor 
$$
H^*(\Gr_G,-): \mathcal{P}_{G(\mathcal{O})}(\Gr_G) \rightarrow \mathrm{Vect}_\CC
$$
admits a natural monoidal structure. Ginzburg's approach to this fact in \cite{Gi95} is to observe that the equivariant cohomology functor $H_{G(\mathcal{O})}^*(\Gr_G, -)$ has an obvious monoidal structure (see, for example, \cite[Proposition 5.2.3]{zhu2016introduction}), which induces a monoidal structure on $H^*$ through the canonical isomorphism $H^*(\Gr_G,-) \simeq H_{G(\mathcal{O})}^*(\Gr_G, -) \otimes_{H_{G(\mathcal{O})}^*(\mathrm{pt}, \CC)} H^*(\mathrm{pt}, \CC)$ (which arises from the equivariant formality of all objects of $\mathcal{P}_{G(\mathcal{O})}(\Gr_G)$; compare to Proposition~\ref{equivariantformality} below). See \cite[§3]{MV04} for the approach of Mirkovi\'{c}-Vilonen, which works for general coefficients. One moreover shows that $H^*(\Gr_G,-)$ is exact and conservative \cite[Corollary 3.7]{MV04}. 

Through a global reinterpretation of the convolution product \cite[§5-6]{MV04}, Mirkovi\'{c}-Vilonen (following the ideas of Beilinson-Drinfeld \cite{beilinsondrinfeld}) equip the monoidal abelian category $\mathcal{P}_{G(\mathcal{O})}(\Gr_G)$ with a symmetric braiding, the \textit{commutativity constraint}. They show \cite[Proposition §6.3]{MV04} that with respect to this symmetric monoidal structure on $\mathcal{P}_{G(\mathcal{O})}(\Gr_G)$, the cohomology functor $H^*(\Gr_G, -)$ is symmetric monoidal.   

\begin{theorem}[{Ginzburg \cite{Gi95}, Mirkovi\'c-Vilonen \cite{MV04}}] \label{The Geometric Satake Equivalence}
    There exists a canonically defined connected reductive group $\check{G}$ over $\CC$ equipped with an equivalence of symmetric monoidal abelian categories
    $$
    \mathbb{S}_G: \mathcal{P}_{G(\mathcal{O})}(\Gr_G) \simeq \operatorname{Rep}(\check{G})
    $$
    and a natural isomorphism of symmetric monoidal functors 
    $$
    \mathrm{For} \circ \mathbb{S}_G \simeq H^*(\Gr_G,-): \mathcal{P}_{G(\mathcal{O})}(\Gr_G) \rightarrow \mathrm{Vect}_\CC,
    $$
    where $\mathrm{For}: \operatorname{Rep}(\check{G}) \rightarrow \mathrm{Vect}_\CC$ is the forgetul functor from the category of finite dimensional $\check{G}$-modules to the category of vector spaces. Moreover, $\check{G}$ is equipped with a canonically defined Borel subgroup $\check{B} \subseteq \check{G}$ and maximal torus $\check{T} \subseteq \check{B}$. The root datum $(X^*(\check{T}), X_*(\check{T}), \Phi, \check{\Phi})$ of the triple $\check{T} \subseteq \check{B} \subseteq \check{G}$ is dual to the abstract root datum\footnote{See the discussion \cite[§1.1]{delignelusztig} of Deligne-Lusztig for how one defines the root datum of a connected reductive group $G$ \textit{without} choosing $T \subseteq B \subseteq G$.} of $G$ (in the sense of Langlands). 
\end{theorem}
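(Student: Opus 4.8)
\section*{Proof proposal}

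The plan is to exhibit $(\mathcal{P}_{G(\mathcal{O})}(\Gr_G), \star)$ as a neutral Tannakian category over $\CC$ and let geometric reconstruction produce $\check G$. First I would upgrade the monoidal abelian structure (whose existence is granted by the $t$-exactness of $\star$ recalled above) to a rigid \emph{symmetric} monoidal structure. Rigidity follows from Verdier duality together with the auto-equivalence of $\Gr_G$ induced by $g \mapsto g^{-1}$, which produces a functorial left/right dual. The commutativity constraint is read off from the global/fusion interpretation of convolution over the Beilinson--Drinfeld Grassmannian: convolution on a single point of a curve $X$ arises by degeneration from the external fusion product over two distinct points, and the $S_2$-action permuting the two points equips $\star$ with a braiding; a parity computation (the dimensions of the relevant intersections with semi-infinite orbits have the correct parity) shows this braiding is symmetric with the sign normalization that makes the next step work.

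Next I would construct the fiber functor and verify its properties. The essential geometric input is the weight-functor/hyperbolic-localization decomposition
$$
H^*(\Gr_G, \F) \;\cong\; \bigoplus_{\mu \in X_*(T)} H^{2\langle \mu, \check\rho\rangle}_c\!\big(S_\mu, \F\big),
$$
where $S_\mu$ are the semi-infinite orbits attached to a chosen $B \subseteq G$; this is obtained from Braden's hyperbolic localization theorem and the dimension estimate $\dim(\overline{\Gr}{}^\lambda \cap S_\mu) \le \langle \lambda+\mu, \check\rho\rangle$ for Mirkovi\'c--Vilonen cycles. This single statement shows simultaneously that $H^*(\Gr_G,-)$ is exact, faithful and conservative, that it is symmetric monoidal for the structure built above, and — crucially — that it comes with a canonical $X_*(T)$-grading, which will become the character decomposition under a torus in $\check G$, and whose ``one-step'' filtration singles out a Borel.

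Tannakian reconstruction (Saavedra Rivano, Deligne--Milne) then produces an affine group scheme $\check G := \underline{\operatorname{Aut}}^\otimes\big(H^*(\Gr_G,-)\big)$ together with the desired equivalence $\mathbb{S}_G \colon \mathcal{P}_{G(\mathcal{O})}(\Gr_G) \simeq \operatorname{Rep}(\check G)$ intertwining $H^*$ with $\mathrm{For}$ by construction; the asserted isomorphism of symmetric monoidal functors is thus automatic. That $\check G$ is of finite type over $\CC$ follows because each bounded piece of $\Gr_G$ has finitely many $G(\mathcal{O})$-orbits, so $\mathcal{P}_{G(\mathcal{O})}(\Gr_G)$ is finitely $\otimes$-generated; that $\check G$ is reductive follows from semisimplicity of $\mathcal{P}_{G(\mathcal{O})}(\Gr_G)$, which in turn follows from the decomposition theorem (the $\mathrm{IC}$-sheaves of the orbit closures are the simple objects, and, by semismallness of the convolution morphism, $\star$ of semisimples is semisimple). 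The $X_*(T)$-grading on $H^*$ is a cocharacter $\mathbb{G}_m \to \check G$ which one checks is regular, hence sits in a unique maximal torus $\check T$ with $X^*(\check T) \cong X_*(T)$; connectedness of $\check G$ follows because the monoid of highest weights of $\operatorname{Rep}(\check G)$ — the dominant coweights of $G$ — generates $X_*(T)$ as a group, leaving $\check G$ no nontrivial finite quotient.

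Finally, to pin the root datum down to the Langlands dual of $G$ I would invoke compatibility of the construction with the constant-term (Levi restriction) functors $\mathrm{CT}^G_L \colon \mathcal{P}_{G(\mathcal{O})}(\Gr_G) \to \mathcal{P}_{L(\mathcal{O})}(\Gr_L)$, which on the dual side match the restriction $\operatorname{Rep}(\check G) \to \operatorname{Rep}(\check L)$; this reduces the identification of $\Phi$ and $\check\Phi$ to the case of semisimple rank one, where $\Gr_{\mathrm{SL}_2}$ and $\Gr_{\mathrm{PGL}_2}$ and their minuscule and quasi-minuscule $\mathrm{IC}$-sheaves ($\mathbb{P}^1$, and the $A_1$-singularity of the nilpotent cone) are computed by hand, from which one reads off that the simple coroots of $\check G$ are the simple roots of $G$ and conversely. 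I expect the main obstacle to be the fusion construction of the symmetric monoidal structure — in particular checking that the braiding is symmetric with the sign that makes $H^*$ (rather than a twist of it) the fiber functor — since everything downstream (reconstruction, reductivity, identification of $\check T$, reduction to rank one) is comparatively formal once the symmetric monoidal fiber functor is in hand.
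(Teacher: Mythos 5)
The paper does not actually prove this theorem: it is quoted from Ginzburg and Mirkovi\'c--Vilonen, and §\ref{Introduction} only summarizes the ingredients (fusion commutativity constraint, the fiber functor $H^*$ and its monoidal structure, Tannakian reconstruction, semisimplicity via parity vanishing). Your outline follows exactly this standard route — weight functors via semi-infinite orbits and hyperbolic localization, rigidity via Verdier duality and inversion, the fusion braiding with the sign correction, reconstruction, and reduction to semisimple rank one via constant terms — so in that sense it is the same proof the paper is pointing to, and most of it is sound at the level of a sketch.

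There is, however, one step whose stated justification does not work: semisimplicity of the abelian category $\mathcal{P}_{G(\mathcal{O})}(\Gr_G)$, which is what feeds the Tannakian criterion for reductivity of $\check{G}$. What semismallness of the convolution morphism plus the decomposition theorem gives you is that $\star$ is perverse $t$-exact and carries semisimple complexes to semisimple complexes; it does \emph{not} give semisimplicity of the category, which is the assertion that $\operatorname{Ext}^1(\mathrm{IC}_\lambda, \mathrm{IC}_\mu) = 0$ so that every object is a direct sum of $\mathrm{IC}$'s. (Perverse sheaves on a stratified space for which the decomposition theorem holds need not form a semisimple category — already on $\mathbb{P}^1$ stratified by a point and its complement, $j_!\underline{\CC}_{\mathbb{A}^1}[1]$ is a nonsplit extension of simples.) The input you actually need is the one the paper singles out as "the most important ingredient": parity vanishing of the stalks and costalks of the $\mathrm{IC}_\lambda$, combined with the facts that the $G(\mathcal{O})$-orbits are simply connected and that orbit dimensions within a fixed connected component of $\Gr_G$ have constant parity; this kills the relevant $\operatorname{Ext}^1$'s. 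Parity vanishing can indeed be deduced from the BBD decomposition theorem, but via the affine pavings of the fibers of convolution resolutions of $\overline{\Gr_G^\lambda}$ (or from Lusztig's Kazhdan--Lusztig computations), not from semismallness of $m$ as you invoke it. Two smaller points to tidy: the $X_*(T)$-grading on $H^*$ directly produces the embedded torus $\check{T} = \operatorname{Spec}\CC[X_*(T)]$, while it is the cohomological $\ZZ$-grading (the cocharacter $2\check{\rho}_G$) whose regularity identifies $\check{T}$ as its centralizer and $\check{B}$ as its attractor — your sketch merges these; and finite type of $\check{G}$ is best justified by finite generation of the monoid $\Lambda^+$, so that finitely many $\mathrm{IC}_\lambda$ tensor-generate, rather than by finiteness of orbits in bounded pieces.
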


The discussion above can be summarized by saying that the symmetric monoidal abelian category $\mathcal{P}_{G(\mathcal{O})}(\Gr_G)$ together with the fiber functor $H^*(\Gr_G, -)$ defines a \textit{neutralized Tannakian category}\footnote{Actually, there are a few technical conditions to check, such as rigidity; these are easily verified for $\mathcal{P}_{G(\mathcal{O})}(\Gr_G).$}. The Tannakian reconstruction theorem \cite[Theorem 2.7]{baumann2018notes} constructs the affine group scheme $\check{G}$ as the automorphism group scheme of the fiber functor $H^*(\Gr_G,-)$. That $\check{G}$ is connected reductive is deduced from a Tannakian characterization \cite[Proposition 2.11]{baumann2018notes} of this property. The most important ingredient is the semisimplicity of the abelian category $\mathcal{P}_{G(\mathcal{O})}(\Gr_G)$ \cite[Theorem 4.2]{baumann2018notes}, which follows from the parity vanishing property \cite[Lemma 4.5]{baumann2018notes} of the stalks of the intersection complexes of the $G(\mathcal{O})$-orbit closures on $\Gr_G$. This parity vanishing property is a geometric reinterpretation of deep computations of Lusztig \cite[§11]{lusztig1983singularities} in the affine Hecke algebra and can be deduced from the BBD Decomposition Theorem. 

Observe that the fiber functor $H^*(\Gr_G, -)$ is $\ZZ$-graded by the cohomological index. By Tannakian formalism, this $\ZZ$-grading on $H^*(\Gr_G, -)$ defines a homomorphism $2\check{\rho}_G: \mathbb{G}_m \rightarrow \check{G}$. It is well known that the maximal torus $\check{T} \subseteq \check{G}$ (the Borel subgroup $\check{B} \subseteq \check{G}$) arises as the centralizer (respectively, the attracting scheme) of the cocharacter $2\check{\rho}_G$ in $\check{G}$. 

\subsection{Levi subgroups}

We can now approach the problem considered in this paper. We fix a parabolic subgroup $P \subseteq G$ with unipotent radical $V \subseteq P$ and Levi quotient $L = P/V$. Note that $L$ is a connected reductive group in its own right, so we also have the geometric Satake equivalence
$$
\mathbb{S}_{L}: \mathcal{P}_{L(\mathcal{O})}(\Gr_L) \simeq \mathrm{Rep}(\check{L}). 
$$
The categories $\mathcal{P}_{G(\mathcal{O})}(\Gr_G)$ and $\mathcal{P}_{L(\mathcal{O})}(\Gr_L)$ are directly related by a functor, which we shall call the \textit{parabolic restriction}, introduced by Beilinson-Drinfeld in \cite[§5.3.28]{beilinsondrinfeld} (generalizing the \textit{weight functors} of Mirkovi\'{c}-Vilonen \cite[§3]{MV04} in the case that $L = T$ is a torus). We will have much more to say about it in §§\ref{parabolic restriction}, and also refer the reader to \cite[§15.1]{baumann2018notes} for more details. For now, we note that it is an exact tensor functor 
$$
\mathrm{Res}^{G,\natural}_L: \mathcal{P}_{G(\mathcal{O})}(\Gr_G) \rightarrow \mathcal{P}_{L(\mathcal{O})}(\Gr_L).
$$
Namely, we have a diagram of ind-schemes 
$$
\begin{tikzcd}
    & \Gr_P \ar[rd,"r"] \ar[ld,"q",swap] \\
    \Gr_L & & \Gr_G. 
\end{tikzcd}
$$
The functor $q_*r^!: D_{G(\mathcal{O})}(\Gr_G) \rightarrow D_{L(\mathcal{O})}(\Gr_L)$ is not $t$-exact. However, let $\mathcal{F} \in \mathcal{P}_{G(\mathcal{O})}(\Gr_G)$. For each connected component $\Gr_L^\chi \subseteq \Gr_L$ of $\Gr_L$, the complex $q_*r^!\mathcal{F} \vert_{\Gr_L^\chi}$ is concentrated in a unique perverse degree. Thus, one defines the exact functor $\mathrm{Res}^{G,\natural}_L$ by shifting the functor $r_*q^!$ by an appropriate integer on each connected component $\Gr_L^\chi$ (see Remark~\ref{normalized parabolic restriction}).

Tannakian formalism now defines a closed embedding $\check{L} \hookrightarrow \check{G}$ and a canonical identification of the tensor functor $\mathbb{S}_L \circ \mathrm{Res}^{G,\natural}_L \circ \mathbb{S}_G^{-1}$ with the restriction functor $\mathrm{Rep}(\check{G}) \rightarrow \mathrm{Rep}(\check{L})$ \cite[Proposition 15.3]{baumann2018notes}. Moreover, the embedding $\check{L} \hookrightarrow \check{G}$ realizes $\check{L}$ as a Levi subgroup of $\check{G}$ containing the maximal torus $\check{T}$. The dual parabolic subgroup $\check{P} \subseteq \check{G}$ can now be constructed as the attracting scheme in $\check{G}$ for the coweight $2\check{\rho}_G - 2\check{\rho}_L$ of $\check{T}$. 

\subsection{Equivariant corestriction} We will now fix a Levi factor $L \subseteq P$. We obtain a closed immersion 
$$
i: \Gr_L \hookrightarrow \Gr_G
$$
identifying $\Gr_L$ with a closed subscheme of $\Gr_G$. In fact, $\Gr_L \subseteq \Gr_G$ is exactly the fixed point subscheme $\Gr_G^{Z^\circ(L)} \subseteq \Gr_G$ for the action of the neutral component $Z(L)^\circ \subseteq Z(L)$ of the center of $L$. We are interested in the (co)restriction functor 
$$
i^!: D_{G(\mathcal{O})}(\Gr_G) \rightarrow D_{L(\mathcal{O})}(\Gr_L)
$$
given by the composition of the forgetful functor $\mathrm{For}^{G(\mathcal{O})}_{L(\mathcal{O})}$ (forgetting $G(\mathcal{O})$-equivariance down to $L(\mathcal{O})$-equivariance) followed by the functor $i^!: D_{L(\mathcal{O})}(\Gr_G) \rightarrow D_{L(\mathcal{O})}(\Gr_L)$ of $L(\mathcal{O})$-equivariant $!$-restriction.\footnote{We consider $!$-restriction over $*$-restriction only because the spectral ``answer'' to our question will be cleaner; of course, the two functors differ only by Verdier duality.} Unlike $\mathrm{Res}^{G,\natural}_L$, the functor $i^!$ is not $t$-exact, and there is no saving exactness with a mere grading shift. Nonetheless, it will be convenient\footnote{Note that this grading shift will not be used in §\ref{automorphic}} to work with the following ``regraded'' or ``sheared'' version of $i^!$: 
$$
i^{!,\natural} := \bigoplus_{\chi \in \pi_0(\Gr_L)} i^! \vert_{\Gr_L^{\chi}} [-\langle 2\check{\rho}_G - 2\check{\rho}_L, \chi \rangle]. 
$$
Here, $\Gr_L^\chi \subseteq \Gr_L$ is the connected component of $\Gr_L$ labelled by the element $\chi \in \pi_0(\Gr_L) \simeq \Lambda/Q_L$, where $\Lambda$ is the weight lattice of $\check{T}$ and $Q_L \subseteq \Lambda$ is the root lattice of $\check{L}$. 

Unlike $\mathrm{Res}^{G,\natural}_L$, $i^{!,\natural}$ does not admit a monoidal structure. However, in §§\ref{monoidal structure section}, we will equip $i^{!,\natural}$ with a \textit{lax} monoidal structure. This is enough structure to ensure that $i^{!,\natural}$ carries ring objects of $D_{G(\mathcal{O})}(\Gr_G)$ to ring objects of $D_{L(\mathcal{O})}(\Gr_L)$. In fact, we will carry out this construction for any connected reductive subgroup $L \subseteq G$. We will give two constructions of this lax monoidal structure; the second (given in §§\ref{Fusion}) uses Gaitsgory's description \cite[Proposition 6]{gaitsgory2000construction} of the convolution product $- \star -$ in terms of a global nearby cycles construction. Moreover, in §§\ref{parabolic restriction} (see Construction~\ref{Xiconstruction} and Remark~\ref{normalized corestriction}), we will show that the natural adjunction morphism 
\begin{equation}\label{introductionXi}
\Xi^{\natural}: i^{!,\natural} \rightarrow \mathrm{Res}^{G,\natural}_L 
\end{equation}
is compatible with the lax monoidal structures on these functors. 

\subsection{The case of a torus and the work of Ginzburg-Riche} We now specialize to the case in which $L = T$ is a maximal torus of $G$. The affine Grassmannian $\Gr_T$ is the coweight lattice of $T$, which we identify with the weight lattice $\Lambda$ of $\check{T}$. The closed embedding $i: \Lambda = \Gr_T \hookrightarrow \Gr_G$ is the familiar embedding of the coweight lattice into $\Gr_G$ taking the coweight $\lambda \in \Lambda$ to the point $t^\lambda \in \Gr_G$ (the image of the uniformizer $t \in \CC((t))^*$ under the map $\lambda(\CC((t))): \CC((t))^* \rightarrow G(\CC((t))) \twoheadrightarrow \Gr_G$). For each $\mu \in \Lambda$, let $i_\mu: \{t^\mu\} \hookrightarrow \Gr_G$ denote the inclusion of the corresponding point of $\Gr_T$. Let $\lambda \in \Lambda^+$ denote a dominant coweight, and let $\mathrm{IC}_\lambda \in \mathcal{P}_{G(\mathcal{O})}(\Gr_G)$ denote the intersection complex of $\Gr_G^{\leq \lambda} = \overline{\Gr^\lambda_G}$. Note that $L(\lambda) := \mathbb{S}(\mathrm{IC}_\lambda) \in \mathrm{Rep}(\check{G})$ is the simple $\check{G}$-module of highest weight $\lambda \in \Lambda^+$. Since the costalk $i_\mu^! \mathrm{IC}_\lambda$ is only non-trivial when $\mu \leq \lambda$, we may assume that this is the case. 

Lusztig's description of the (non-equivariant) (co)stalks $H^*(i_\mu^!\mathrm{IC}_\lambda)$ predates the geometric Satake equivalence itself. 

\begin{theorem}[{Lusztig \cite[§11]{lusztig1983singularities}}]
    Let $\widetilde{W}_{\mathrm{aff}} = \Lambda \rtimes W$ denote the extended affine Weyl group and $\ell: \widetilde{W}_{\mathrm{aff}} \rightarrow \ZZ$ its length function; let $p_\mu, p_\lambda \in \widetilde{W}_{\mathrm{aff}}$ denote the coweights $\mu, \lambda \in \Lambda$ viewed as elements of $\widetilde{W}_{\mathrm{aff}}$. Let $n_\mu \in \widetilde{W}_{\mathrm{aff}}$ (resp. $n_\lambda \in \widetilde{W}_{\mathrm{aff}}$) denote the element of maximal length in the $W$-double coset $Wp_\mu W \subseteq \widetilde{W}_{\mathrm{aff}}$ (resp. $Wp_\lambda W \subseteq \widetilde{W}_{\mathrm{aff}}$). Then, we have the vanishing $H^i(i_\mu^!\mathrm{IC}_\lambda) \simeq 0$ for $i$ odd, as well as the equality
    $$
    \sum_{i \geq 0} \dim{H^{2i}(i_\mu^! \mathrm{IC}_\lambda)}q^{i} = P_{n_\mu, n_\lambda}(q) = d_\mu(L(\lambda); q) 
    $$
    in the ring $\ZZ[q]$. Here, for any elements $y, w \in \widetilde{W}_{\mathrm{aff}}$ of the extended affine Weyl group, $P_{y,w} \in \ZZ[q]$ denotes the corresponding Kazhdan-Lusztig polynomial (see \cite[§4]{lusztig1983singularities}). Moreover, $d_\mu(L(\lambda); q)$ denotes Lusztig's $q$-analog of the weight multiplicity (see \cite[§6]{lusztig1983singularities}).  
\end{theorem}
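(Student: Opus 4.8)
The plan is to reduce the statement to the Kazhdan--Lusztig description of costalks of intersection complexes on the affine flag variety, via the standard projection. Let $\mathrm{Fl}_G = G(\mathcal{K})/I$ be the affine flag variety, with $\mathcal{K} = \CC((t))$ and $I \subseteq G(\mathcal{O})$ an Iwahori subgroup, and let $\pi : \mathrm{Fl}_G \to \Gr_G$ be the projection; it is a proper smooth Zariski-locally trivial fibration with fibre $G(\mathcal{O})/I \cong G/B$. I would first record three facts. Since $\pi$ is smooth with connected fibres, $\pi^*[\dim G/B]$ is $t$-exact and carries simple perverse sheaves to simple perverse sheaves. The preimage $\pi^{-1}(\Gr^\lambda_G)$ is an irreducible, $I$-stable, locally closed subvariety whose stratification into $I$-orbits is exactly $\{X_w : w \in W p_\lambda W\}$, where $X_w \subseteq \mathrm{Fl}_G$ is the Schubert cell of $w$; its unique open dense stratum is $X_{n_\lambda}$, so, $\pi$ being proper, $\pi^{-1}(\overline{\Gr^\lambda_G}) = \overline{X_{n_\lambda}}$. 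Combining these, $\pi^*\mathrm{IC}_\lambda[\dim G/B] \cong \mathrm{IC}_{\overline{X_{n_\lambda}}}$ in compatible normalizations.

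Next I would transport the computation to $\mathrm{Fl}_G$. Because $n_\mu \in W p_\mu W$, the point $n_\mu G(\mathcal{O})/G(\mathcal{O})$ lies in $G(\mathcal{O}) \cdot t^\mu$, so by $G(\mathcal{O})$-equivariance of $\mathrm{IC}_\lambda$ its costalk there is isomorphic to $i_\mu^!\mathrm{IC}_\lambda$; this point lifts along $\pi$ to the base point $\tilde x_\mu := n_\mu I/I$ of the Schubert cell $X_{n_\mu}$. As $\pi$ is smooth of relative dimension $d := \dim G/B$, one has $i_{\tilde x_\mu}^!\pi^* \cong i_{n_\mu G(\mathcal{O})/G(\mathcal{O})}^![-2d]$, and together with $\pi^*\mathrm{IC}_\lambda[d] \cong \mathrm{IC}_{\overline{X_{n_\lambda}}}$ this identifies the total cohomology of $i_\mu^!\mathrm{IC}_\lambda$, up to an overall cohomological shift depending only on the chosen normalizations, with that of the costalk of $\mathrm{IC}_{\overline{X_{n_\lambda}}}$ at a point of $X_{n_\mu}$.

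Now I would invoke the Kazhdan--Lusztig theorem for affine flag varieties: each affine Schubert variety $\overline{X_w}$ is paved by the affine cells $\overline{X_w} \cap X_v$, so by purity --- equivalently by the weight argument underlying the Decomposition Theorem --- the complex $\mathrm{IC}_{\overline{X_w}}$ has stalks and costalks concentrated in a single cohomological parity, and the Poincar\'e polynomial of its costalk at any point of $X_v$ (for $v \leq w$) is the Kazhdan--Lusztig polynomial $P_{v,w}(q)$. Taking $v = n_\mu$, $w = n_\lambda$, and pinning down the normalization shift (using the case $\mu = \lambda$, where both sides are $1$), this yields both the vanishing of $H^i(i_\mu^!\mathrm{IC}_\lambda)$ in odd degrees and the equality $\sum_{i \geq 0}\dim H^{2i}(i_\mu^!\mathrm{IC}_\lambda)\,q^i = P_{n_\mu,n_\lambda}(q)$.

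It remains to identify $P_{n_\mu,n_\lambda}(q)$ with Lusztig's $q$-analogue $d_\mu(L(\lambda);q)$. This is a purely combinatorial statement about the (extended) affine Hecke algebra: expanding the Kazhdan--Lusztig basis element indexed by $n_\lambda$ in the standard basis and projecting to its ``lattice'' part identifies, through the Satake isomorphism, the coefficients $P_{n_\mu,n_\lambda}(q)$ with the entries of the $W$-alternating sum of $q$-analogue Kostant partition functions that defines $d_\mu(L(\lambda);q)$. For this identity I would simply cite Lusztig \cite[§6, §11]{lusztig1983singularities}; it is the deepest ingredient, and the step I expect to be the main obstacle to a self-contained account. (Alternatively one can bypass $\mathrm{Fl}_G$ and compute the costalk on the dual side: Ginzburg identifies $H^*(i_\mu^!\mathrm{IC}_\lambda)$ with the weight space $L(\lambda)_\mu$ carrying the Brylinski--Kostant filtration attached to a principal nilpotent of $\check{\mathfrak g}$, whose associated-graded Poincar\'e polynomial is $d_\mu(L(\lambda);q)$ by Brylinski's theorem --- but that route rests on comparably deep input.)
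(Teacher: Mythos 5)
The paper itself does not prove this statement: it is quoted as background and attributed wholesale to \cite[\S\S 6, 11]{lusztig1983singularities}, so there is no internal argument to compare yours against. Your outline is the standard geometric route — pull back along $\pi:\mathrm{Fl}_G\to\Gr_G$, identify $\pi^*\mathrm{IC}_\lambda[\dim G/B]$ with $\mathrm{IC}_{\overline{X_{n_\lambda}}}$, use $i_{\tilde x}^!\pi^*\simeq i_{\pi(\tilde x)}^![-2\dim G/B]$ and $G(\mathcal{O})$-equivariance to move the computation to a Schubert point, then apply the Kazhdan--Lusztig theorem and cite Lusztig for the Hecke-algebra identity with $d_\mu(L(\lambda);q)$ — and those reduction steps ($t$-exactness and preservation of simples for $\pi^*[\dim G/B]$, $\pi^{-1}(\Gr_G^\lambda)=\coprod_{w\in Wp_\lambda W}X_w$ with dense cell $X_{n_\lambda}$) are all correct. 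In the end your argument rests on exactly the citation the paper makes, so in that sense it is as self-contained as the source.

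The one genuine gap is your use of the Kazhdan--Lusztig theorem for \emph{costalks}. That theorem computes \emph{stalks}: $\sum_i\dim H^{2i}(i_x^*\mathrm{IC}_{\overline{X_{n_\lambda}}})\,q^i=P_{n_\mu,n_\lambda}(q)$ for $x\in X_{n_\mu}$, in the unshifted normalization. Passing to $i_x^!$ uses self-duality of the IC sheaf, and Verdier duality \emph{reverses} the grading: the costalk generating function is $q^{\ell(n_\lambda)-\ell(n_\mu)}P_{n_\mu,n_\lambda}(q^{-1})$ (equivalently $q^{\langle 2\check{\rho},\lambda-\mu\rangle}P_{n_\mu,n_\lambda}(q^{-1})$ downstairs), which is not $P_{n_\mu,n_\lambda}(q)$ up to an overall shift, because KL polynomials are not palindromic. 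Your calibration by the case $\mu=\lambda$, where both sides equal $1$, cannot detect this reversal: already for $\Gr_{\mathrm{PGL}_2}$ with $\lambda$ the adjoint coweight and $\mu=0$ the costalk of $\mathrm{IC}_\lambda$ at $t^0$ is one-dimensional and sits in a single strictly positive degree, while $P_{n_0,n_\lambda}=1$ and the $q$-analogue is $q$. To be fair, the statement as reproduced in the paper is loose on exactly this point (matching $i_\mu^!$ simultaneously with $P_{n_\mu,n_\lambda}(q)$ and with the $q$-analogue requires Lusztig's degree-reversal identity between the two), so a careful write-up along your lines must fix conventions explicitly: either compute $i_\mu^*$ and obtain $P_{n_\mu,n_\lambda}(q)$ on the nose, or compute $i_\mu^!$ and insert the duality/reversal step before invoking Lusztig's combinatorial identity relating $P_{n_\mu,n_\lambda}$ to $d_\mu(L(\lambda);q)$.
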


The parity vanishing in Lusztig's theorem implies that $H_T^*(i_\mu^! \mathrm{IC}_\lambda)$ is a free module over the ring $R_T := H^*_T(\mathrm{pt}, \CC)$. Hence, it implies the (non-canonical) isomorphism of $R_T$-modules $H_T^*(i_\mu^! \mathrm{IC}_\lambda) \simeq H^*(i_\mu^! \mathrm{IC}_\lambda) \otimes R_T$ and gives the graded dimension of $H^*(i_\mu^! \mathrm{IC}_\lambda)$. 

However, a more intrinsic description of the $R_T$-module $H_T^*(i_\mu^! \mathrm{IC}_\lambda)$ has been obtained by Ginzburg and Riche \cite{GR13}. It is their work which we aim to extend to arbitrary Levi subgroups of $G$. We point out that Ginzburg and Riche go further and describe the equivariant costalks $H_{T \rtimes \mathbb{G}_m}^*(i_\mu^!\mathrm{IC}_\lambda)$, where $\mathbb{G}_m$ denotes the loop rotation torus. In this paper, we will not consider the loop rotation equivariance (because it is somewhat orthogonal to the applications that we have in mind), although we anticipate that our arguments will extend to this setting.

We can evaluate the natural transformation \eqref{introductionXi} on the object $\mathrm{IC}_\lambda \in D_{G(\mathcal{O})}(\Gr_G)$, restrict to the component $t^\mu \in \Gr_T$ of $\Gr_T$, and take cohomology to obtain an $R_T$-module map
\begin{equation}\label{ginzburg riche automorphic map}
H_T^*( i^{!,\natural}_\mu \mathrm{IC}_\lambda ) \rightarrow H_T^*(\{t^\mu\}, \mathrm{Res}^{G,\natural}_T(\mathrm{IC}_\lambda) ) =  H^*(\{t^\mu\}, \mathrm{Res}^{G,\natural}_T(\mathrm{IC}_\lambda) ) \otimes R_T. 
\end{equation}
To formulate their description of $H_T^*( i^{!,\natural}_\mu \mathrm{IC}_\lambda)$, we bring in the Lie algebra $\check{\mathfrak{g}} = \mathrm{Lie}(\check{G})$, the Cartan subalgebra $\check{\mathfrak{t}} = \mathrm{Lie}(\check{T})$, the maximal nilpotent subalgebra $\check{\mathfrak{u}} = \mathrm{Lie}(\check{{U}})$, and the opposite maximal nilpotent subalgebra $\check{\mathfrak{u}}^- = \mathrm{Lie}(\check{U}^-)$. We have the triangular decomposition 
$$
\check{\mathfrak{g}} = \check{\mathfrak{u}} \oplus \check{\mathfrak{t}} \oplus \check{\mathfrak{u}}^-,
$$
which induces a projection $\check{\mathfrak{g}}/\check{\mathfrak{u}} \twoheadrightarrow \check{\mathfrak{t}}$. Let $\CC(-\mu)$ denote the one dimensional $\check{B}$-module corresponding to the weight $-\mu \in \Lambda$ of $\check{T}$. We obtain a canonical map 
\begin{align}\label{introduction ginzburg riche spectral map}
\left( L(\lambda) \otimes \operatorname{Sym}(\check{\mathfrak{g}}/\check{\mathfrak{u}}) \otimes \CC(-\mu) \right)^{\check{B}} &\hookrightarrow \left( L(\lambda) \otimes \operatorname{Sym}(\check{\mathfrak{g}}/\check{\mathfrak{u}}) \otimes \CC(-\mu) \right)^{\check{T}}\\
&\twoheadrightarrow \left( L(\lambda) \otimes \operatorname{Sym}(\check{\mathfrak{t}}) \otimes \CC(-\mu) \right)^{\check{T}} \nonumber \\
&=\,\, L(\lambda)_\mu \otimes  \operatorname{Sym}(\check{\mathfrak{t}}).\nonumber
\end{align}
Here, $L(\lambda)_\mu \subseteq L(\lambda)$ denotes the $\mu$ weight space. Recall, moreover, that there is a canonical isomorphism $R_T \simeq \operatorname{Sym}(\mathfrak{t}^*) \simeq \operatorname{Sym}(\check{\mathfrak{t}})$ which places $\check{\mathfrak{t}}$ in graded degree $2$ (see, for example, \cite[Theorem 6.6.8]{achar2020}).

\begin{theorem}[{Ginzburg-Riche, \cite[Theorem 2.3.1]{GR13}}]\label{GR13 Main Theorem}
    There exists a unique graded $R_T\simeq \operatorname{Sym}(\check{\mathfrak{t}})$-module isomorphism 
    \begin{equation}\label{introduction ginzburg riche isomorphism}
    H_T^*( i^{!,\natural}_\mu \mathrm{IC}_\lambda) \simeq \left( L(\lambda) \otimes \operatorname{Sym}(\check{\mathfrak{g}}/\check{\mathfrak{u}}) \otimes \CC(-\mu) \right)^{\check{B}}
    \end{equation}
    such that the diagram 
    $$
    \begin{tikzcd}
        H_T^*( i^{!,\natural}_\mu \mathrm{IC}_\lambda) \ar[r, "\eqref{ginzburg riche automorphic map}"] \ar[d,"\eqref{introduction ginzburg riche isomorphism}"] &  H^*(\{t^\mu\}, \mathrm{Res}^{G,\natural}_T(\mathrm{IC}_\lambda) ) \otimes R_T \ar[d,"\sim"] \\
        \left( L(\lambda) \otimes \operatorname{Sym}(\check{\mathfrak{g}}/\check{\mathfrak{u}}) \otimes \CC(-\mu) \right)^{\check{B}} \ar[r,"\eqref{introduction ginzburg riche spectral map}"] & L(\lambda)_\mu \otimes  \operatorname{Sym}(\check{\mathfrak{t}})
    \end{tikzcd}
    $$
    commutes. Here, $\check{\mathfrak{g}}/\check{\mathfrak{u}}$ is placed in graded degree $2$ (so that \eqref{introduction ginzburg riche spectral map} is a graded map). 
\end{theorem}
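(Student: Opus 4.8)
The plan is to reduce Theorem~\ref{GR13 Main Theorem} to the statement about the parabolic restriction (i.e.\ the torus case of the known geometric Satake compatibility of \eqref{introductionXi}) by exploiting that the ``error term'' measuring the failure of $i^{!,\natural}_\mu$ to agree with $\mathrm{Res}^{G,\natural}_T$ can be made explicit via attracting/repelling cells on $\Gr_G$ for the cocharacter $2\check\rho_G - 2\check\rho_T = 2\check\rho_G$. More precisely, I would first recall the hyperbolic-localization setup: writing $S_\mu = $ the $U(\mathcal O)$-orbit through $t^\mu$ (the attracting set) and $T_\mu = $ the $U^-(\mathcal O)$-orbit (the repelling set), the weight functor of Mirkovi\'c--Vilonen computes $H^*(\{t^\mu\},\mathrm{Res}^{G,\natural}_T \mathrm{IC}_\lambda) \simeq H^*_c(S_\mu, \mathrm{IC}_\lambda)[\langle 2\check\rho,\mu\rangle]$, which under $\mathbb S_G$ is the weight space $L(\lambda)_\mu$. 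On the other hand $i^!_\mu \mathrm{IC}_\lambda$ is the honest costalk. The map \eqref{ginzburg riche automorphic map} is, up to shift, the canonical map from the costalk to the ``$T_\mu$-sections'' $\Gamma(T_\mu, i_{T_\mu}^* \mathrm{IC}_\lambda)$, factoring through $\Gamma_c$ of the repelling cell; equivariantly this becomes an $R_T$-module map and the claim is that its source has the advertised Tannakian description.

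**Next**, the heart of the argument is to identify $H^*_T(i^{!,\natural}_\mu \mathrm{IC}_\lambda)$ with $(L(\lambda)\otimes \Sym(\check{\mathfrak g}/\check{\mathfrak u})\otimes \CC(-\mu))^{\check B}$. Here I would invoke equivariant formality (Proposition~\ref{equivariantformality}) plus Lusztig's parity vanishing to know the $T$-equivariant costalk is a free $R_T$-module whose fiber at $0$ recovers $H^*(i^!_\mu \mathrm{IC}_\lambda)$; so the isomorphism is determined by a compatible family over the formal/generic point of $\operatorname{Spec} R_T = \check{\mathfrak t}$. Over the generic point, $T$-equivariant localization reduces the costalk computation to the fixed points $\Gr_G^T = \{t^\nu\}_{\nu \in \Lambda}$: one gets $H^*_T(i^!_\mu \mathrm{IC}_\lambda) \otimes_{R_T} \mathrm{Frac}(R_T) \simeq \bigoplus_\nu (\text{contribution of } t^\nu)$, and the contribution of each fixed point, with its Euler-class denominators from the normal directions, is exactly the $\nu$-weight-space piece of $L(\lambda)\otimes \Sym(\check{\mathfrak g}/\check{\mathfrak u})$ twisted by $\CC(-\mu)$ — the $\check{\mathfrak u}$-quotient appearing because only the repelling normal directions (those pairing negatively with $2\check\rho_G$) survive in a $!$-costalk. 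Taking $\check B$-invariants is then the statement that the various fixed-point contributions glue, i.e.\ that the classes extend off the generic point; this is where parity vanishing is used again, and it is the mechanism by which $\check T$-invariants get upgraded to $\check B$-invariants (the $\check U$-equivariance is the ``integrality'' of the localization formula). I would set this up to run in parallel with the known computation for $\mathrm{Res}^{G,\natural}_T$, where the same localization gives $L(\lambda)_\mu \otimes \operatorname{Sym}(\check{\mathfrak t})$ (no $\check{\mathfrak u}$-factor, since the weight functor is a hyperbolic localization that already discards those directions), so that \eqref{introduction ginzburg riche spectral map} falls out as exactly the map induced by $\check{\mathfrak g}/\check{\mathfrak u}\twoheadrightarrow\check{\mathfrak t}$ together with $(-)^{\check B}\hookrightarrow(-)^{\check T}$.

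**For the commutativity of the square and uniqueness**, once both vertical isomorphisms are pinned down over the generic point (where everything is forced by localization), commutativity is automatic there, and since all four modules are free over $R_T$ with the horizontal maps being $R_T$-linear, commutativity over $\mathrm{Frac}(R_T)$ implies commutativity over $R_T$; uniqueness of \eqref{introduction ginzburg riche isomorphism} follows because a graded $R_T$-module endomorphism of a free module that is the identity after $\otimes\,\mathrm{Frac}(R_T)$ is the identity, and the generic fiber identification is canonical (it's the fixed-point decomposition). The compatibility of \eqref{introductionXi} with lax monoidal structures, established in §\ref{parabolic restriction}, is what guarantees that these pointwise identifications are natural in $\mathrm{IC}_\lambda$ and assemble correctly.

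**The main obstacle** I anticipate is making the generic-point (equivariant localization) computation of the \emph{costalk} $H^*_T(i^!_\mu \mathrm{IC}_\lambda)$ genuinely match $\Sym(\check{\mathfrak g}/\check{\mathfrak u})$ rather than something smaller or larger — i.e.\ correctly accounting for which normal directions at each $T$-fixed point $t^\nu$ contribute, and with which Euler-class weights, so that the answer organizes into $\check B$-invariants of $L(\lambda)\otimes\Sym(\check{\mathfrak g}/\check{\mathfrak u})\otimes\CC(-\mu)$ with the grading placing $\check{\mathfrak g}/\check{\mathfrak u}$ in degree $2$. Getting the bookkeeping of shifts (the $[-\langle 2\check\rho_G - 2\check\rho_L,\chi\rangle]$ in $i^{!,\natural}$, the perverse normalization of $\mathrm{IC}_\lambda$, and the $\langle 2\check\rho,\mu\rangle$ in the weight functor) to line up so that \eqref{introduction ginzburg riche spectral map} is literally graded is the fiddly-but-essential part. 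I would handle it by first treating $G = \mathrm{SL}_2$ (or $\mathrm{PGL}_2$) by hand to fix all sign and shift conventions, then leveraging functoriality under Levi subgroups of the form $\langle T, \alpha\rangle$ for simple roots $\alpha$, and finally globalizing via the compatibility of \eqref{introductionXi} with convolution to reduce general $\lambda$ to fundamental weights.
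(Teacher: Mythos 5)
First, a point of orientation: the paper does not prove Theorem~\ref{GR13 Main Theorem} at all --- it is imported verbatim from Ginzburg--Riche \cite[Theorem 2.3.1]{GR13} and used as an input (in §\ref{ginzburgrichesubsection} and in the proof of Theorem~\ref{Main Theorem 1}). So there is no ``paper's own proof'' to compare against; the comparison has to be with the argument in \cite{GR13}, and with how the present paper handles the analogous difficulty for general Levis.

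Measured against that, your outline reproduces the correct general shape (parity vanishing and equivariant formality as in Proposition~\ref{equivariantformality}, freeness over $R_T$, comparison with the weight functor over the generic point of $\operatorname{Spec} R_T$, uniqueness from injectivity of the horizontal maps), but it has a genuine gap exactly at the crucial step. After localization, what you get is an identification of $H_T^*(i_\mu^{!,\natural}\mathrm{IC}_\lambda)\otimes_{R_T}\mathrm{Frac}(R_T)$ with $L(\lambda)_\mu\otimes\mathrm{Frac}(R_T)$ (note that $i_\mu^!\mathrm{IC}_\lambda$ is supported at the single point $t^\mu$, so there is no ``sum over fixed points $t^\nu$'' for the costalk itself; the localization is really a comparison with $H_T^*(\Gr_G,\mathrm{IC}_\lambda)$, as in Proposition~\ref{localization}). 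The theorem is the assertion that, inside this generic fiber, the integral lattice cut out by the costalk coincides with the lattice cut out by $\left(L(\lambda)\otimes\Sym(\check{\mathfrak{g}}/\check{\mathfrak{u}})\otimes\CC(-\mu)\right)^{\check B}$, with matching gradings. Your sketch attributes this to ``parity vanishing'' and to $\check U$-equivariance being ``the integrality of the localization formula,'' but parity vanishing only gives freeness of the topological side; it says nothing about why the spectral side $\left(L(\lambda)\otimes\Sym(\check{\mathfrak{g}}/\check{\mathfrak{u}})\otimes\CC(-\mu)\right)^{\check B}$ is free over $\Sym(\check{\mathfrak{t}})$, nor why the two lattices agree. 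In \cite{GR13} this is where the real work happens: they need the freeness of $\mathrm{Ind}_{\check B}^{\check G}\bigl(\Sym(\check{\mathfrak{g}}/\check{\mathfrak{u}})\otimes\CC(-\lambda)\bigr)^{\check B}$, proved via Broer's cohomological vanishing \cite{broer}, together with their identification of equivariant costalks with Kostant--Whittaker-type reductions (in the quantum setting, modules over asymptotic differential operators on $\check G/\check U$). The present paper's Remark~\ref{introduction hartogs} makes precisely this point: for a single $\mathrm{IC}_\lambda$ the author does not know how to close this gap directly, and instead works with the whole regular object $\mathcal{F}_\reg$ so that a Hartogs argument (Lemma~\ref{Spectral Hartogs}, gluing the generic and anti-generic loci) can replace Broer's freeness --- an option not available in the simple-by-simple setting you propose. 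So as written, your reduction-to-the-generic-point strategy does not yet constitute a proof; it would need either Broer-type input or a substitute for it.
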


We can formulate their theorem as follows. Let $\mathcal{F}_\reg \in \mathcal{P}_{G(\mathcal{O})}(\Gr_G)$ denote the \textit{regular sheaf}\footnote{Strictly speaking, $\mathcal{F}_\reg$ does not belong to $D_{G(\mathcal{O})}(\Gr_G)$, but rather to a suitable ind-completion (just as the regular representation $\mathcal{O}(\check{G})$ does not belong to $\mathrm{Rep}(\check{G})$). See §§\ref{sheaf conventions}. We ignore this point for simplicity.}, the $\check{G}$-equivariant ring object of $\mathcal{P}_{G(\mathcal{O})}(\Gr_G)$ corresponding to the left regular representation $\mathcal{O}(\check{G}) \in \mathrm{Rep}(\check{G})$ under the geometric Satake equivalence. It admits the Peter-Weyl decomposition
$$
\mathcal{F}_\reg = \bigoplus_{\lambda \in \Lambda^+} \mathrm{IC}_\lambda \boxtimes L(\lambda)^*. 
$$
We identify $\mathrm{Sym}(\check{\mathfrak{g}}/\check{\mathfrak{u}})$ with the algebra of functions $\mathcal{O}\left((\check{\mathfrak{g}}/\check{\mathfrak{u}})^* \right)$. By tensoring $\eqref{introduction ginzburg riche isomorphism}$ with $L(\lambda)^*$ and passing to the direct sum over $\lambda \in \Lambda^+$ and $\mu \in \Lambda$, we obtain a canonical isomorphism of $\ZZ$-graded $\Lambda$-graded $\check{G}$-equivariant $\mathrm{Sym}(\check{\mathfrak{t}}) \simeq \mathcal{O}(\check{\mathfrak{t}}^*)$-algebras
\begin{align*}
H_T^*(i^{!,\natural} \mathcal{F}_\reg) &\simeq \left( \left( \bigoplus_{\lambda \in \Lambda^+} L(\lambda) \boxtimes L(\lambda)^* \right) \otimes \mathcal{O}\left((\check{\mathfrak{g}}/\check{\mathfrak{u}})^*\right) \right)^{\check{U}}\\
&\simeq \left(\mathcal{O}(\check{G}) \otimes \mathcal{O}((\check{\mathfrak{g}}/\check{\mathfrak{u}})^*) \right)^{\check{U}}\\
&\simeq \mathcal{O}\left(\check{G} \times (\check{\mathfrak{g}}/\check{\mathfrak{u}})^*\right)^{\check{U}} \\
&\simeq \mathcal{O}(T^*(\check{G}/\check{U})). 
\end{align*}
We can interpret the $\Lambda$-grading as arising from the $\check{T}$-action on the variety $T^*(\check{G}/\check{U})$ (through the identification $\check{T} \simeq \check{B}/\check{U}$ and the natural $\check{B}$-action by right translation). The $\mathcal{O}(\check{\mathfrak{t}}^*)$-algebra structure arises from the $\check{T}$-equivariant \textit{moment map}
$$
\mu: T^*(\check{G}/\check{U}) \rightarrow \check{\mathfrak{t}}^*.
$$
The $\check{G}$-module structure arises from the natural action of $\check{G}$ on the variety $T^*(\check{G}/\check{U})$ by left translation. Finally, the $\ZZ$-grading on $\mathcal{O}(T^*(\check{G}/\check{U}))$ is induced by the $\mathbb{G}_m$-action on the fibers of the vector bundle $T^*(\check{G}/\check{U})$ through which $t \in \mathbb{G}_m$ acts by $t^{-2}$. 

The isomorphism $H^*_T(i^{!,\natural}\mathcal{F}_\reg) \simeq \mathcal{O}(T^*(\check{G}/\check{U}))$ conversely yields Theorem~\ref{GR13 Main Theorem} by passage to the $\check{G}$-isotypic and $\check{T}$-isotypic components. 

\subsection{Our results} The geometric Satake equivalence induces a canonical pinning of the dual group $\check{G}$. We have already explained that $\check{G}$ is equipped with a canonical Borel subgroup $\check{B} \subseteq \check{G}$ (concretely, the subgroup of automorphisms of the fiber functor $H^*$ preserving the Mirkovi\'{c}-Vilonen filtration) and a canonical maximal torus $\check{T} \subseteq \check{B}$; the point is that one can moreover find distinguished non-zero root vectors $X_\alpha \in \mathfrak{\mathfrak{g}}_\alpha$. We refer the reader to \cite[§6.5]{GR13} for a discussion of the precise pinning that we will use. Let $\Delta$ denote the set of simple roots of $\check{G}$ and $I \subseteq \Delta$ the subset of simple roots of $\check{L}$. 

We can use this pinning to define a distinguished additive character $\psi_I$ of $\check{U}$ as the composition 
$$
\begin{tikzcd}
    \check{U} \ar[r,twoheadrightarrow] & \check{U}/[\check{U},\check{U}] \ar[r,"\sim"] & \bigoplus_{\alpha \in \Delta} \mathbb{G}_a \ar[r,"\mathrm{pr}_I",twoheadrightarrow] & \bigoplus_{\alpha \in I} \mathbb{G}_a \ar[r, "+"] & \mathbb{G}_a.
    \end{tikzcd}
$$
Here, $\mathrm{pr}_I$ is given by projection onto the factors indexed by $I$, and the last map is summation. We also denote by $\psi_I$ the linear form $(d\psi_I)_1 \in \check{\mathfrak{u}}^*$, where $1 \in \check{U}$ is the identity. We write $\check{\mathfrak{u}}^\perp \subseteq \check{\mathfrak{g}}^*$ for the linear complement to $\check{\mathfrak{u}} \subseteq \check{\mathfrak{g}}$. We may regard $\psi_I$ as an element of $\check{\mathfrak{g}}^*$ by extending it trivially over $\check{\mathfrak{b}}^-$. Let $\psi := \psi_\Delta$ (the \textit{non-degenerate} additive character).

The algebraic variety $T^*\check{G}$ is equipped with a pair of commuting actions of $\check{G}$ by left and right translation. We have the $\check{G}$-equivariant (for the \textit{left} translation $\check{G}$-action) moment map (for the \textit{right} translation $\check{G}$-action) 
$$
\mu: T^*\check{G} \simeq \check{G} \times \check{\mathfrak{g}}^* \xrightarrow{\mathrm{pr}_2} \check{\mathfrak{g}}^*
$$
given by projection to the second factor. The moment map for the induced $\check{U}$-action is the composition $T^*\check{G} \xrightarrow{\mathrm{pr}_2} \check{\mathfrak{g}}^* \twoheadrightarrow \check{\mathfrak{u}}^*$. Since $\psi_I \in \check{\mathfrak{u}}^*$ is $\check{U}$-invariant, we can form the Hamiltonian reduction of the Hamiltonian $\check{U}$-variety $T^*\check{G} \rightarrow \check{\mathfrak{u}}^*$ at the level $\psi_I$ to obtain the \textit{partial Kostant-Whittaker reduction}
$$
T^*(\check{G}/(\check{U},\psi_I)) := T^*\check{G} \tripslash (\check{U},\psi_I) := (T^*\check{G} \times_{\check{\mathfrak{u}}^*} \{ \psi_I\})/\check{U}. 
$$
Our entire §\ref{Hamiltonian Reduction} is dedicated to a discussion of this construction and its basic properties. 

The variety $T^*(\check{G}/(\check{U},\psi_I))$ carries a natural left $\check{G}$-action. Moreover, we have a canonical $\check{G}$-equivariant projection 
$$
\pi_I: T^*(\check{G}/(\check{U}, \psi_I)) \hookrightarrow (T^*\check{G})/\check{U} \twoheadrightarrow \check{\mathfrak{g}}^*/\check{U} \twoheadrightarrow \check{\mathfrak{l}}^*/\check{U} \rightarrow \check{\mathfrak{c}}_I
$$
to the GIT quotient $\check{\mathfrak{c}}_I := \check{\mathfrak{l}}^*\doubslash \check{L}$. The homomorphism $\pi_I^*$ equips $\mathcal{O}(T^*(\check{G}/(\check{U}, \psi_I)))$ with the structure of a $\check{G}$-equivariant $\mathcal{O}(\check{\mathfrak{c}}_I)$-algebra. 

The action of $\mathbb{G}_m$ on $\check{\mathfrak{l}}^*$ given by $t \cdot \xi = t^{-2} \xi$ commutes with the coadjoint action of $\check{L}$, hence descends to a $\mathbb{G}_m$-action on $\check{\mathfrak{c}}_I$ (which places the generators of the polynomial algebra $\mathcal{O}(\check{\mathfrak{c}}_I)$ in degrees given by \textit{twice} the characteristic exponents of $\check{L}$). We let $\mathbb{G}_m$ act on $T^*\check{G} \simeq \check{G} \times \check{\mathfrak{g}}^*$ by the cocharacter $2\check{\rho}_I$ on the $\check{G}$ factor and by $t \mapsto t^{-2}$ on $\check{\mathfrak{g}}^*$ (we will discuss gradings more carefully in Construction~\ref{spectralgrading}). These actions of $\mathbb{G}_m$ make $\pi_I$ a $\mathbb{G}_m$-equivariant map. Hence, $\pi_I^*$ equips $\mathcal{O}(T^*(\check{G}/(\check{U}, \psi_I)))$ with the structure of a \textit{graded} $\check{G}$-equivariant $\mathcal{O}(\check{\mathfrak{c}}_I)$-algebra. 

Let $\mathfrak{J}_I \rightarrow \check{\mathfrak{c}}_I$ denote Ng\^{o}'s regular centralizer group scheme \cite[Lemme 2.1.1]{Ngo10} (see also Riche's treatment \cite{Ri17}) associated to the group $\check{L}_I$. It is a commutative affine group scheme over $\check{\mathfrak{c}}_I$. Moreover, when we discuss the regular centralizer in more detail in §§\ref{spectralactionofthecentralizer}, we shall see that it acts canonically on the variety $T^*(\check{G}/(\check{U}, \psi_I))$ (or \textit{any} partial Kostant-Whittaker reduction, for that matter). When the derived subgroup of $G$ is almost simple, Yun-Zhu \cite{yun2009integral} provide a canonical isomorphism of affine $\check{\mathfrak{c}}_I$-group schemes 
\begin{equation}\label{introduction yunzhuiso}
\operatorname{Spec}{H^{L(\mathcal{O})}_*(\Gr_L, \CC)} \simeq \mathfrak{J}_I.
\end{equation}
We will discuss their results in detail in both §§\ref{equivarianthomologysubsection} and §§\ref{yun zhu comparison}.

On the other hand, we will recall (again following Yun-Zhu \cite{yun2009integral}) in §§\ref{equivarianthomologysubsection} how the group scheme $\operatorname{Spec}{H^{L(\mathcal{O})}_*(\Gr_L, \CC)}$ acts canonically on $H_{L(\mathcal{O})}^*(\Gr_L, i^{!,\natural} \mathcal{F}_\reg)$. 

Finally, recall that there is a canonical algebra isomorphism $R_L := H_L^*(\mathrm{pt}, \CC) \simeq \mathcal{O}(\check{\mathfrak{c}}_I)$. This isomorphism identifies our grading on $\mathcal{O}(\check{\mathfrak{c}}_I)$ with the cohomological grading on $R_L$. 

With this notational setup in place, we can state our main result in its basic form.

\begin{theorem}\label{Main Theorem}
    There is a canonical $\check{G}$-equivariant isomorphism of graded $R_L$-algebras 
    $$
    \Upsilon_I: H_{L(\mathcal{O})}^*(\Gr_L, i^{!,\natural} \mathcal{F}_\reg) \simeq \mathcal{O}(T^*(\check{G}/(\check{U},\psi_I))). 
    $$
    The left hand side is regarded as a ring via the lax monoidal structure on $i^{!,\natural}$ of §§\ref{monoidal structure section}. Assume that the derived subgroup of $G$ is almost simple (so that the Yun-Zhu isomorphism \eqref{introduction yunzhuiso} is available). Then, $\Upsilon_I$ is moreover $\mathfrak{J}_I$-equivariant. 
\end{theorem}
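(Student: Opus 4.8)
The plan is to bootstrap from the Ginzburg--Riche theorem (Theorem~\ref{GR13 Main Theorem}), applied to $G$ together with its maximal torus $T$, using the transitivity of the sheared corestriction along $\Gr_T \hookrightarrow \Gr_L \hookrightarrow \Gr_G$ and the faithful flatness of the inclusion $R_L \hookrightarrow R_T$ (which realizes $R_L \simeq \mathcal{O}(\check{\mathfrak{c}}_I)$ as the invariants $R_T^{W_{\check{L}}}$). The genuinely new input is the Whittaker datum $\psi_I$: it has to be extracted from the geometry of $\Gr_L$ inside $\Gr_G$ and will not fall out of formal arguments, and producing it with the correct normalization is where I expect the main difficulty to lie.

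To construct $\Upsilon_I$, apply $H_{L(\mathcal{O})}^*(\Gr_L, -)$ to the morphism $\Xi^{\natural}_{\mathcal{F}_\reg} \colon i^{!,\natural}\mathcal{F}_\reg \to \mathrm{Res}^{G,\natural}_L\mathcal{F}_\reg$ of \eqref{introductionXi}. Since $\Xi^{\natural}$ is compatible with the lax monoidal structures (§§\ref{parabolic restriction}), this is a $\check{G}$-equivariant homomorphism of graded $R_L$-algebras into $H_{L(\mathcal{O})}^*(\Gr_L, \mathrm{Res}^{G,\natural}_L\mathcal{F}_\reg)$, which by geometric Satake for $\check{L}$ and the identification of $\mathrm{Res}^{G,\natural}_L$ with restriction $\mathrm{Rep}(\check{G}) \to \mathrm{Rep}(\check{L})$ is canonically $\mathcal{O}(\check{G}) \otimes R_L$. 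On the spectral side, the Kostant section of $\check{\mathfrak{l}}^*$ furnishes a $\check{G}$-equivariant dominant map $\check{G} \times \check{\mathfrak{c}}_I \to T^*(\check{G}/(\check{U},\psi_I))$, hence a $\check{G}$-equivariant injection of graded $R_L$-algebras $\mathcal{O}(T^*(\check{G}/(\check{U},\psi_I))) \hookrightarrow \mathcal{O}(\check{G}) \otimes R_L$. The content of the theorem is then that the image of $H_{L(\mathcal{O})}^*(\Gr_L, i^{!,\natural}\mathcal{F}_\reg)$ in $\mathcal{O}(\check{G}) \otimes R_L$ coincides with that of $\mathcal{O}(T^*(\check{G}/(\check{U},\psi_I)))$; this defines $\Upsilon_I$ and makes it automatically a $\check{G}$-equivariant isomorphism of graded $R_L$-algebras.

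To verify this, observe that all objects in sight are equivariantly formal by Lusztig's parity vanishing, so both sides are free over $R_L$ (for the spectral side, via the flatness over $\check{\mathfrak{c}}_I$ of the partial Kostant--Whittaker reduction, §\ref{Hamiltonian Reduction}); it therefore suffices to check that $\Upsilon_I$ becomes an isomorphism (i) after inverting in $R_L$ and (ii) modulo the augmentation ideal of $R_L$. For (i), faithful flatness of $R_L \hookrightarrow R_T$ reduces the claim to working over $\mathrm{Frac}(R_T)$, where the classical equivariant localization theorem, the transitivity $i^{!,\natural}_{T \subseteq G} \simeq i^{!,\natural}_{T \subseteq L} \circ i^{!,\natural}_{L \subseteq G}$ (the shifts adding up by linearity of $\langle 2\check{\rho}_G - 2\check{\rho}_L, - \rangle$ on $\Lambda/Q_L$) and the transitivity of $\Xi^{\natural}$ identify $\Upsilon_I$ with the base change of the Ginzburg--Riche isomorphism for $G$; here the commuting square in Theorem~\ref{GR13 Main Theorem} supplies precisely the compatibility with $\Xi^{\natural}$ that is needed. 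Step (ii) is the crux: over the origin of $\check{\mathfrak{c}}_I$ it amounts to identifying the non-equivariant cohomology $H^*(\Gr_L, i^{!,\natural}\mathcal{F}_\reg)$ with the coordinate ring of the central fibre $\pi_I^{-1}(0) \subseteq T^*(\check{G}/(\check{U},\psi_I))$, which I would establish by a direct computation of the costalks of $\mathrm{IC}_\lambda$ along $\Gr_L$, refining the core argument of \cite{GR13} and pinning down the Whittaker character $\psi_I$ (non-degenerate along $\check{\mathfrak{u}} \cap \check{\mathfrak{l}}$, trivial along the nilradical) from the transverse slices to the non-open $L(\mathcal{O})$-strata of $\Gr_L$. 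The matching of the \emph{ring} structures is most naturally carried out using the fusion model of §§\ref{Fusion}, reducing the multiplicativity bookkeeping to the corresponding known statement for $\check{L}$.

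Finally, assume the derived subgroup of $G$ is almost simple. The group scheme $\operatorname{Spec} H^{L(\mathcal{O})}_*(\Gr_L, \CC)$ acts on $H_{L(\mathcal{O})}^*(\Gr_L, i^{!,\natural}\mathcal{F}_\reg)$ through the equivariant-homology module structure of §§\ref{equivarianthomologysubsection}, and under the Yun--Zhu isomorphism \eqref{introduction yunzhuiso} this is an $\mathcal{O}(\check{\mathfrak{c}}_I)$-linear, $\check{G}$-equivariant action of $\mathfrak{J}_I$; on the spectral side $\mathfrak{J}_I$ acts on $T^*(\check{G}/(\check{U},\psi_I))$ as in §§\ref{spectralactionofthecentralizer}. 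Both actions are $\mathcal{O}(\check{\mathfrak{c}}_I)$-linear automorphisms of a module free over $\mathcal{O}(\check{\mathfrak{c}}_I)$, hence determined by their base change along the faithfully flat $\check{\mathfrak{t}}^* \to \check{\mathfrak{c}}_I$; there one reduces, via the same $\Xi^{\natural}$-compatibility and Yun--Zhu's computation, to the torus case, where the equivariance is the $\Lambda$-grading compatibility already recorded in Theorem~\ref{GR13 Main Theorem}. Hence $\Upsilon_I$ is $\mathfrak{J}_I$-equivariant.
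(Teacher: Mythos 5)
Your construction of the two ambient maps (applying $H^*_{L(\mathcal{O})}$ to $\Xi^\natural(\mathcal{F}_\reg)$, and pulling back along the partial Kostant section $\check{G}\times\check{\mathfrak{c}}_I\to T^*(\check{G}/(\check{U},\psi_I))$) does match the paper's anti-generic comparison, and your step (i) is essentially the paper's generic ($f_I$-localized) comparison via equivariant localization and Theorem~\ref{GR13 Main Theorem}. But step (ii) is a genuine gap, not a deferred routine verification: identifying $H^*(\Gr_L, i^{!,\natural}\mathcal{F}_\reg)$ with $\mathcal{O}(\pi_I^{-1}(0))$ by ``a direct computation of the costalks of $\mathrm{IC}_\lambda$ along $\Gr_L$'' is precisely the computation the paper states it does not know how to perform and designs its argument to avoid (Remark~\ref{introduction hartogs}, and the remark following Corollary~\ref{main theorem simple by simple}: the simple-by-simple statement, which contains your central-fibre identification, is only obtained as a \emph{consequence} of the theorem). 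Even in the torus case the analogous fibre-level input required Broer's cohomological vanishing; no partial-Whittaker analogue is available, and Lusztig's parity/weight-multiplicity results do not by themselves produce the Brylinski--Kostant-type graded character of $\mathcal{O}(\pi_I^{-1}(0))$. The paper's actual route replaces the central fibre by a \emph{second open locus}: after inverting $g_I$ the map $\xi_I^\natural$ is already an isomorphism (Proposition~\ref{corestriction vs parabolic restriction localization}, via $Z(L)^\circ$-fixed-point localization), matched on the spectral side by the isomorphism of Proposition~\ref{Anti-Generic comparison} resting on the torsor Lemma~\ref{Anti-Generic torsor}; since the complement of $\check{\mathfrak{c}}_{I-\gen}\cup\check{\mathfrak{c}}_{I-\gen}^-$ has codimension $2$, Hartogs on both sides (Lemma~\ref{Spectral Hartogs}, Lemma~\ref{automorphichartogs}, Construction~\ref{descriptionofaffineclosure}) glues the two localized isomorphisms, and all ring, grading, $\check{G}$- and $\mathfrak{J}_I$-structures are then inherited by freeness over $R_I$ exactly as you anticipate. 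So the missing idea in your proposal is the anti-generic localization as the second leg of the gluing; without it you are forced back onto the unproven fibre computation.

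Two secondary points. First, your ``(i) generically plus (ii) modulo the augmentation ideal'' criterion presupposes an actual $R_L$-module map between the two sides (equivalently a containment of images inside $\mathcal{O}(\check{G})\otimes R_L$); equality of two free graded submodules cannot be tested on the generic and closed fibres without first producing such a map, and you do not establish the containment. Second, the $\mathfrak{J}_I$-equivariance argument does not reduce to ``the torus case'' by base change along $\check{\mathfrak{t}}^*\to\check{\mathfrak{c}}_I$: the pullback of $\mathfrak{J}_I$ to $\check{\mathfrak{t}}^*$ is the constant torus only over the regular semisimple locus, and the identification of the equivariant-homology action with the regular-centralizer action on the parabolic-restriction side requires the derived Satake input used in Construction~\ref{Upsilondefinition} (Bezrukavnikov--Finkelberg, via \cite{Ri17}), which your sketch omits.
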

In fact, our result is stronger, and describes the canonical $\check{G}$-equivariant graded $R_L$-algebra homomorphism of \eqref{introductionXi}
$$
\xi^\natural := \Xi^\natural(\mathcal{F}_\reg): H_{L(\mathcal{O})}^*(\Gr_L, i^{!,\natural} \mathcal{F}_\reg) \rightarrow H^*_{L(\mathcal{O})}(\Gr_I, \operatorname{Res}^{G,\natural}_L(\mathcal{F}_\reg))
$$
in terms of the Langlands dual group. However, to avoid defining yet more notation in this introduction, we will simply note that the version of Theorem~\ref{Main Theorem} stated and proved in §§\ref{theproof} as Theorem~\ref{Main Theorem 1} asserts and demonstrates these additional compatibliities. 

\begin{remark}\label{introduction hartogs}
    We want to draw attention to what we find to be one of the more interesting aspects of the proof of Theorem~\ref{Main Theorem}. Namely, our choice to treat the regular object $\mathcal{F}_\reg$ rather than the complexes $\mathrm{IC}_\lambda$ individually is \textit{essential} to our argument. In \cite[Proposition 3.2.3]{GR13}, Ginzburg-Riche establish that for any dominant $\lambda \in \Lambda^+$, the $\mathcal{O}(\check{\mathfrak{t}}^*)$-module 
    $$
    H^0(\widetilde{\check{\mathfrak{g}}}, \mathcal{O}_{\widetilde{\check{\mathfrak{g}}}}(\lambda)) = \mathrm{Ind}_{\check{B}}^{\check{G}}(\mathrm{Sym}(\check{\mathfrak{g}}/\check{\mathfrak{u}}) \otimes \CC(-\lambda))^{\check{B}}
    $$
    is \textit{free}. Here, $\widetilde{\check{\mathfrak{g}}} \rightarrow {\check{\mathfrak{g}}}$ is the Grothendieck-Springer alteration and $\mathcal{O}_{\widetilde{\check{\mathfrak{g}}}}(\lambda)$ is the pullback of the Borel-Weil line bundle $\mathcal{O}_{\check{G}/\check{B}}(\lambda)$ on the flag variety $\check{G}/\check{B}$ along the natural projection $q: \widetilde{\check{\mathfrak{g}}} \rightarrow \check{G}/\check{B}$. Their proof appeals to cohomological computations of Broer \cite[Proposition 2.5]{broer}. By working instead with the entire coordinate ring $\mathcal{O}(T^*(\check{G}/\check{U}))$, (a suitable adaptation of) our application of Hartog's principle (Lemma \ref{Spectral Hartogs}) can be used in place of this freeness. We then obtain the freeness of $H^0(\widetilde{\check{\mathfrak{g}}}, \mathcal{O}_{\widetilde{\check{\mathfrak{g}}}}(\lambda))$ as a \textit{corollary} to the Ginzburg-Riche theorem. 
\end{remark}

We can pass to $\check{G}$-isotypic components in Theorem~\ref{Main Theorem} and undo the grading twist in the definition of $i^{!,\natural}$ to obtain the following more concrete result. 

\begin{corollary}\label{main theorem simple by simple}
    Let $\lambda \in \Lambda^+$ denote a dominant weight of $\check{T}$ and $L(\lambda) = \mathbb{S}_{G}(\mathrm{IC}_\lambda)$ the corresponding simple $\check{G}$-module. Then, we have a canonical graded $R_L \simeq \mathcal{O}(\check{\mathfrak{c}}_I)$-module isomorphism
    $$
    \Upsilon_{I,\lambda}: H_{L(\mathcal{O})}^*(\Gr_L, i^{!}\mathrm{IC}_\lambda) \simeq (L(\lambda) \otimes \mathcal{O}(\check{\mathfrak{u}}^\perp + \psi_I))^{\check{U}}.
    $$
    The grading of $H_{L(\mathcal{O})}^*(\Gr_L, i^{!}\mathrm{IC}_\lambda)$ by the component group $\pi_0(\Gr_L) \simeq \Lambda/Q_L \simeq X^*(Z(\check{L}))$ corresponds to the grading of $(L(\lambda) \otimes \mathcal{O}(\check{\mathfrak{u}}^\perp + \psi_I))^{\check{U}}$ by the eigenvalues of the center $Z(\check{L})$. Moreover, the cohomological grading on $H_{L(\mathcal{O})}^*(\Gr_L, i^{!}\mathrm{IC}_\lambda)$ corresponds under $\Upsilon_{I,\lambda}$ to the \emph{Brylinski-Kostant} grading  on $(L(\lambda) \otimes \mathcal{O}(\check{\mathfrak{u}}^\perp + \psi_I))^{\check{U}}$. 

    Assume moreover that $G$ has almost simple derived group. Then, $\Upsilon_{I,\lambda}$ is an isomorphism of $\mathfrak{J}_I$-modules. 
\end{corollary}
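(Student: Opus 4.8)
The plan is to derive Corollary~\ref{main theorem simple by simple} from Theorem~\ref{Main Theorem}, or rather from its refined form Theorem~\ref{Main Theorem 1}, by decomposing the isomorphism $\Upsilon_I$ into $\check{G}$-isotypic components and then undoing the shear built into $i^{!,\natural}$. On the automorphic side, the Peter--Weyl decomposition $\mathcal{F}_\reg = \bigoplus_{\lambda \in \Lambda^+}\mathrm{IC}_\lambda \boxtimes L(\lambda)^*$ gives
$$
H_{L(\mathcal{O})}^*(\Gr_L, i^{!,\natural}\mathcal{F}_\reg) = \bigoplus_{\lambda \in \Lambda^+} H_{L(\mathcal{O})}^*(\Gr_L, i^{!,\natural}\mathrm{IC}_\lambda) \otimes L(\lambda)^*,
$$
where the residual $\check{G}$-action is through the factors $L(\lambda)^*$ and the $\pi_0(\Gr_L)$-grading on each $H_{L(\mathcal{O})}^*(\Gr_L, i^{!,\natural}\mathrm{IC}_\lambda)$ refines the $\Lambda/Q_L$-grading of $\mathcal{F}_\reg$.

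On the spectral side I would first unwind the partial Kostant--Whittaker reduction. Writing $T^*\check{G} \simeq \check{G} \times \check{\mathfrak{g}}^*$ via the left trivialization, the moment map for the right-translation $\check{U}$-action is $\mathrm{pr}_2$ followed by $\check{\mathfrak{g}}^* \twoheadrightarrow \check{\mathfrak{u}}^*$, so its fiber over $\psi_I$ is $\check{G} \times (\check{\mathfrak{u}}^\perp + \psi_I)$ and
$$
\mathcal{O}(T^*(\check{G}/(\check{U},\psi_I))) \simeq \bigl( \mathcal{O}(\check{G}) \otimes \mathcal{O}(\check{\mathfrak{u}}^\perp + \psi_I) \bigr)^{\check{U}},
$$
the invariants being for the right-translation $\check{U}$. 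In the Peter--Weyl decomposition $\mathcal{O}(\check{G}) = \bigoplus_\lambda L(\lambda)^* \otimes L(\lambda)$ the right-translation $\check{U}$ acts only on the factor $L(\lambda)$ (and coadjointly on $\mathcal{O}(\check{\mathfrak{u}}^\perp + \psi_I)$), while the residual left-translation $\check{G}$ acts on $L(\lambda)^*$; hence the $L(\lambda)^*$-factors pull out of the invariants and
$$
\mathcal{O}(T^*(\check{G}/(\check{U},\psi_I))) \simeq \bigoplus_{\lambda \in \Lambda^+} L(\lambda)^* \otimes \bigl( L(\lambda) \otimes \mathcal{O}(\check{\mathfrak{u}}^\perp + \psi_I) \bigr)^{\check{U}}.
$$
Since $\Upsilon_I$ is $\check{G}$-equivariant it matches the $\lambda$-isotypic summands and so induces an isomorphism of their multiplicity spaces
$$
H_{L(\mathcal{O})}^*(\Gr_L, i^{!,\natural}\mathrm{IC}_\lambda) \simeq \bigl( L(\lambda) \otimes \mathcal{O}(\check{\mathfrak{u}}^\perp + \psi_I) \bigr)^{\check{U}}
$$
(using $L(\lambda)^{**} = L(\lambda)$). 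This is automatically $R_L \simeq \mathcal{O}(\check{\mathfrak{c}}_I)$-linear, since $\Upsilon_I$ is an $R_L$-algebra map and $R_L$ acts $\check{G}$-equivariantly; and when $G$ has almost simple derived group it is $\mathfrak{J}_I$-equivariant, because the $\mathfrak{J}_I$-action of Theorem~\ref{Main Theorem} commutes with the $\check{G}$-action — on the automorphic side $\operatorname{Spec} H^{L(\mathcal{O})}_*(\Gr_L, \CC) \simeq \mathfrak{J}_I$ acts functorially in the sheaf and so preserves each isotypic summand, and on the spectral side it acts fiberwise over $\check{\mathfrak{c}}_I$ — so it descends to multiplicity spaces.

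It remains to undo the shear. By definition $i^! |_{\Gr_L^\chi}\mathrm{IC}_\lambda = (i^{!,\natural}\mathrm{IC}_\lambda)|_{\Gr_L^\chi}[\langle 2\check{\rho}_G - 2\check{\rho}_L, \chi \rangle]$, so passing from $i^{!,\natural}$ to $i^!$ shifts the cohomological degree on each component $\Gr_L^\chi$ by the fixed integer $\langle 2\check{\rho}_G - 2\check{\rho}_L, \chi \rangle$, $\chi \in \pi_0(\Gr_L) \simeq \Lambda/Q_L \simeq X^*(Z(\check{L}))$. On the spectral side the $\pi_0(\Gr_L)$-grading corresponds to the decomposition of $(L(\lambda)\otimes\mathcal{O}(\check{\mathfrak{u}}^\perp+\psi_I))^{\check{U}}$ into $Z(\check{L})$-weight spaces, for $Z(\check{L}) \subseteq \check{T}$ acting by right translation (which normalizes $\check{U}$ and fixes $\psi_I$). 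One then checks that the $\ZZ$-grading on $\mathcal{O}(T^*(\check{G}/(\check{U},\psi_I)))$ of Construction~\ref{spectralgrading} — the $\mathbb{G}_m$-action by $2\check{\rho}_I$ on $\check{G}$ and $t \mapsto t^{-2}$ on $\check{\mathfrak{g}}^*$ — restricts on each $Z(\check{L})$-eigenspace, after absorbing the shift $\langle 2\check{\rho}_G - 2\check{\rho}_L, \chi \rangle$, to precisely the Brylinski--Kostant grading on $(L(\lambda)\otimes\mathcal{O}(\check{\mathfrak{u}}^\perp+\psi_I))^{\check{U}}$. This produces $\Upsilon_{I,\lambda}$ together with the asserted compatibility of gradings and of the $\mathfrak{J}_I$-action.

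The main obstacle is this last grading comparison. The identification of the $\pi_0(\Gr_L)$-grading with the $Z(\check{L})$-grading, the $R_L$-linearity, and the $\mathfrak{J}_I$-equivariance are all formal consequences of the $\check{G}$-equivariance of $\Upsilon_I$ and Peter--Weyl; but matching the sheared cohomological $\ZZ$-grading with the Brylinski--Kostant grading requires carefully comparing the contracting $\mathbb{G}_m$-action of Construction~\ref{spectralgrading} (which combines the $2\check{\rho}_I$-cocharacter into $\check{G}$ with the scaling of $\check{\mathfrak{g}}^*$) against the definition of the Brylinski--Kostant grading, and keeping careful track of the component-dependent twist $\langle 2\check{\rho}_G - 2\check{\rho}_L, \chi \rangle$. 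For $L = T$ this reduces to the grading comparison already present in Theorem~\ref{GR13 Main Theorem}.
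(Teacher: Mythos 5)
Your proposal matches the paper's own derivation: Corollary~\ref{main theorem simple by simple} is obtained there exactly as you do it, by passing to $\check{G}$-isotypic components of the isomorphism $\Upsilon_I$ of Theorem~\ref{Main Theorem 1} via the Peter--Weyl decompositions of $\mathcal{F}_\reg$ and of $\mathcal{O}(\check{G})$ inside $\mathcal{O}(T^*(\check{G}/(\check{U},\psi_I))) \simeq (\mathcal{O}(\check{G})\otimes\mathcal{O}(\check{\mathfrak{u}}^\perp+\psi_I))^{\check{U}}$, and then undoing the component-wise shear $[-\langle 2\check{\rho}_G-2\check{\rho}_L,\chi\rangle]$. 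The one step you defer (identifying the sheared cohomological grading of Construction~\ref{spectralgrading} with the Brylinski--Kostant grading, and note that the $\pi_0(\Gr_L)$-versus-$Z(\check{L})$ compatibility really comes from the component-wise/central-character structure of the construction rather than from left $\check{G}$-equivariance alone) is likewise left implicit in the paper, so your argument is at the same level of detail as the paper's own.
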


To reiterate Remark~\ref{introduction hartogs}, we don't know how to prove Corollary~\ref{main theorem simple by simple} without first proving Theorem~\ref{Main Theorem}, since our approach to the latter leverages the algebraic geometry of the moment map $T^*(\check{G}/\check{V}) \rightarrow \check{\mathfrak{l}}^*$.

\subsection{Notation and conventions} We will now review some of our notation pertaining to root data and dual groups. 

\subsubsection{Group theoretic notation} We fix a connected reductive group $G$ over $\CC$. At certain points in §\ref{comparison}, we will take the additional hypothesis that the derived subgroup $G^{\mathrm{der}} \subseteq G$ is almost simple; we will make this assumption explicit in theorem statements whenever it is needed. We will fix a Borel subgroup $B \subseteq G$ and a maximal torus $T \subseteq B$ with weight lattice $\check{\Lambda} = X^*(T)$ and coweight lattice $\Lambda = X_*(T)$. Let $U \subseteq B$ denote the unipotent radical. We also have the opposite Borel subgroup $B^- \subseteq G$ and its unipotent radical $U^- \subseteq B^-$. 

Let $\check{\Phi} \subseteq X^*(T)$ denote the set of roots, $\check{\Phi}^+ \subseteq \check{\Phi}$ the subset of positive roots (relative to $B$), and $\check{\Delta} \subseteq \check{\Phi}^+$ the subset of simple roots. Let $\Phi \subseteq \Lambda = X_*(T)$ denote the set of coroots, ${\Phi}^+ \subseteq {\Phi}$ the positive coroots, and ${\Delta} \subseteq {\Phi}^+$ the simple coroots. We typically denote coroots $\alpha \in \Phi$ without a check; the corresponding root is denoted $\check{\alpha} \in \check{\Phi}$. Let $\Lambda^+ \subseteq \Lambda$ denote the subset of dominant coweights. 

Given a subset $I \subseteq \check{\Delta}$, we let $P_I \subseteq G$ denote the corresponding standard parabolic subgroup. That is, $P_I$ is the unique parabolic subgroup of $G$ containing $B$ such that the negative simple root spaces $\mathfrak{g}_{-\check{\alpha}} \subseteq \mathfrak{g}$ contained in $\mathfrak{p}_I := \mathrm{Lie}(P_I)$ are exactly those labelled by the negative simple roots $\check{\alpha} \in I$. We let $V_I \subseteq P_I$ denote the unipotent radical of $P^-_I$, $P^-_I \subseteq G$ the opposite parabolic subgroup, and $V_I^{-} \subseteq P^{-}_I$ its unipotent radical. Let $L_I = P_I \cap P_I^-$ denote the unique Levi subgroup of $P_I$ containing $T$. Let $B_I = B/V_I \subseteq L_I$ and $U_I = U/V_I \subseteq L_I$; $B_I$ is a Borel subgroup of $L_I$ and $U_I \subseteq B_I$ is the unipotent radical of $B_I$. 

Let $\check{\Phi}_I \subseteq \check{\Phi}$ denote the set of roots of $L_I$ and $\check{\Phi}^+_I \subseteq \check{\Phi}_I$ the subset of positive roots (relative to $\check{B}_I$). Let $2\check{\rho}_I = \sum_{\check{\alpha} \in \check{\Phi}_I^+} \check{\alpha} \in X^*(T)$ and $2\rho_I = \sum_{\alpha \in \Phi_I^+} \alpha$ denote the sum of the positive roots of $L_I$ and positive coroots of $L_I$, repsectively. Note that $L_I = Z_{G}(2\rho_I)$ is the centralizer of the homomorphism $2\rho_I: \mathbb{G}_m \rightarrow G$ in $G$. When $I = \Delta$, we simply write $2\rho$ and $2\check{\rho}$ in place of $2\rho_\Delta$ and $2\check{\rho}_\Delta$, respectively. 

Let $\check{G}$ denote the Langlands dual group. Recall that the geometric Satake equivalence induces a \textit{canonical} pinning of $\check{G}$ (independently of our choice of $T \subseteq B \subseteq G$, which is made only for notational convenience). We adopt the conventions of \cite[Section 6.5]{GR13} and refer the reader there for a discussion of the pinning that we use (although note that their notation differs from ours in that what we call $G$, they call $\check{G}$). In particular, let $\check{T} \subseteq \check{B} \subseteq \check{G}$ denote the maximal torus and Borel subgroup, $\check{B}^- \subseteq \check{G}$ the opposite Borel subgroup, $\check{U} \subseteq \check{B}$ and $\check{U}^- \subseteq \check{B}^-$ the unipotent radical and opposite unipotent radical, respectively. We have canonical identifications $X^*(T) = X_*(\check{T})$ and $X_*(T) = X^*(\check{T})$ under which $\check{\Phi} \subseteq X^*(T)$ and $\Phi \subseteq X_*(T)$ correspond to the coroots and roots of $\check{G}$, respectively. We abuse notation and let $I \subseteq \Delta$ denote the set of simple coroots of $G$ (roots of $\check{G}$) corresponding a given subset $I \subseteq \check{\Delta}$ of simple roots of $G$ (coroots of $\check{G}$). Given a subset $I \subseteq \check{\Delta}$, we let $\check{P}_I \subseteq \check{G}$, $\check{P}_I^- \subseteq \check{G}$, $\check{V}_I \subseteq \check{P}_I$, $\check{V}_I^- \subseteq \check{P}_I^-$, $\check{L}_I = \check{P}_I \cap \check{P}^-_I$ denote the corresponding parabolic subgroup, opposite parabolic subgroup, unipotent radical, opposite unipotent radical, and Levi subgroup, respectively. 

If the subset $I \subseteq \Delta$ is understood, we will sometimes drop the subscript $I$ if it is unlikely to cause confusion; for example, we may write $\check{L}$ in place of $\check{L}_I$. 

Given a subset $I \subseteq \Delta$, we let $i_I: \Gr_I := \Gr_{L_I} \hookrightarrow \Gr_G$ denote the induced closed immersion of affine Grassmannians. When $I = \emptyset$, we often write $\Gr_T$ in place of $\Gr_\emptyset$ and $i_T$ in place of $i_\emptyset$. 

\subsubsection{Groups attached to the formal disc} As in the introduction, we write $G(\mathcal{O})$ for the arc group associated to $G$ in place of the more technically correct notation $L^+G$ (where $L^+G(R) := G(R[[t]])$), except when we are intentionally being more precise. Similarly, we write $G(\mathcal{K})$ for the loop group $LG$ (where $LG(R) = G(R((t)))$).

\subsubsection{Group actions} All group actions are \textit{left} actions. Given an action of an algebraic group $G$ on an affine variety $X$, the induced action of $G$ on the coordinate ring $\mathcal{O}(X)$ is characterized by the formula $(g \cdot f )(x) = f(g^{-1}x)$ (where $g \in G$, $f \in \mathcal{O}(X)$ and $x \in X$). These conventions are relevant for determining the gradings produced by $\mathbb{G}_m$-actions. For example, suppose that $\mathbb{G}_m$ acts on the vector space $\mathfrak{\check{t}}^*$ by the formula $t \cdot \xi = t^{-2} \xi$ (for $t \in \mathbb{G}_m$, $\xi \in \mathfrak{\check{t}}^*$). Then, $t \in \mathbb{G}_m$ acts on a linear form $x \in \mathfrak{\mathfrak{t}}$ by $t \cdot x = t^2 x$, since $(t \cdot x) (\xi) = x(t^{-1} \cdot \xi) = x(t^2 \xi) = t^2 x(\xi)$ for any $\xi \in \mathfrak{\check{t}}^*$. Therefore, this $\mathbb{G}_m$-action on $\check{\mathfrak{t}}$ equips the polynomial algebra $\mathrm{Sym}({\mathfrak{t}}) \simeq \mathcal{O}(\check{\mathfrak{t}}^*)$ with the grading in which the generators have degree $+2$, i.e. the cohomological grading on $H_T^*(\mathrm{pt}, \CC)$. 

\subsubsection{Sheaf-theoretic notation}\label{sheaf conventions} Let $H$ denote a complex algebraic group acting on a complex algebraic variety $X$. We write  $D_H(X) := D_H(X, \CC)$ for the triangulated $H$-equivariant constructible derived category of sheaves of complex vector spaces on the underlying topological space $X^{\mathrm{an}}(\CC)$, as defined in \cite[Definition 6.2.11]{achar2020}. We omit the superscript ``$b$'' on $D_H(X)$ as we will not have occasion to consider the non-constructible derived category. Let $\mathcal{P}_H(X) \subseteq D_H(X)$ denote the heart of the perverse $t$-structure, the abelian category of $H$-equivariant perverse sheaves on $X$. Of course, we will also need to consider the situation in which $X$ is a ind-scheme over $\CC$ and $H$ is a pro-algebraic $\CC$-group. In this situation, we follow the conventions of \cite[§2.2]{nadler2004perverse} (see also \cite[§9.1]{achar2020}) in defining $D_H(X)$. In particular, our convention is that every object $F \in D_{G(\mathcal{O})}(\Gr_G)$ of the spherical Hecke category is supported on a closed stratum $\Gr_G^{\leq \lambda}$ (for some $\lambda \in \Lambda^+$).

We have found it convenient to organize the constructions of this paper around the ``regular object'' $\mathcal{F}_\reg$ (see Remark~\ref{ring structure on the regular sheaf}), the ``perverse sheaf'' on $\Gr_G$ corresponding under the geometric Satake equivalence $\mathcal{P}_{G(\mathcal{O})}(\Gr_G) \simeq \mathrm{Rep}(\check{G})$ to the (left) regular representation $\mathcal{O}(\check{G})$ of $\check{G}$. Of course, under the conventions above, there is no such object in the category $\mathcal{P}_{G(\mathcal{O})}(\Gr_G)$ (just as the regular representation $\mathcal{O}(\check{G})$ does not belong to the category $\mathrm{Rep}(\check{G})$ of \textit{finite dimensional} $\check{G}$-modules). It is equally obvious that this observation does not pose a genuine issue. The least technical solution is to simply define $\mathcal{F}_\reg$ as a $\Lambda^+$-graded object of the abelian category $\mathcal{P}_{G(\mathcal{O})}(\Gr_G)$ (in the sense of \cite[§2.1.3]{nadler2004perverse}) through the formula
$$
\mathcal{F}_\reg := \bigoplus_{\lambda \in \Lambda^+} \mathrm{IC}_\lambda \boxtimes L(\lambda)^*.
$$
Here, $L(\lambda) \in \mathrm{Rep}(\check{G})$ is the simple $\check{G}$-module of highest weight $\lambda \in \Lambda^+$ and $\mathrm{IC}_\lambda \in \mathcal{P}_{G(\mathcal{O})}(\Gr_G)$ denotes the corresponding irreducible perverse sheaf supported on $\Gr_G^{\leq \lambda} := \overline{\Gr^\lambda_G}$, where $\Gr^\lambda_G = G(\mathcal{O}) t^\lambda \subseteq \Gr_G$ is the corresponding spherical orbit. The corestriction $i_I^! \mathcal{F}_\reg$ should then be understood as the $\Lambda^+$-graded object 
$$
i_I^!\mathcal{F}_\reg := \bigoplus_{\lambda \in \Lambda^+} i_I^!\mathrm{IC}_\lambda \boxtimes L(\lambda)^*
$$
of $D_{L_I(\mathcal{O})}(\Gr_I)$. However, we have chosen to completely suppress this ultimately notational issue in the body of the paper, and will instead refer to $\mathcal{F}_\reg$ as an object of $\mathcal{P}_{G(\mathcal{O})}(\Gr_G)$; the concerned reader should have no issue in making this abuse of notation precise. Similarly, we will refer to the regular representation $\mathcal{O}(\check{G})$ as an object of $\mathrm{Rep}(\check{G})$. 

\subsection{Acknowledgements}

We thank David Nadler, Tsao-Hsien Chen, John O'Brien, Jeremy Taylor, Yixuan Li, Peter Haine, Guglielmo Nocera, and Connor Halleck-Dube for many useful comments and discussions. We are especially grateful for the extensive mathematical and academic support that we have received from David Nadler over the past four years. 

Of course, this paper was directly motivated by the works of Ginzburg-Riche \cite{GR13} and Ginzburg-Kazhdan \cite{ginzburg2022differential} and borrows heavily from their ideas. Moreover, the foundational works of Achar \cite{achar2020} and Achar-Riche \cite{acharriche2023} were invaluable in preparing this work. 

The author was supported by an NSF Graduate Research Fellowship during the writing of this paper. 

\section{Automorphic Side}\label{automorphic}

\subsection{Equivariant localization}\label{equivariant localization subsection} We begin on the ``geometric side'' of Theorem~\ref{Main Theorem}. Let $I \subseteq \check{\Delta}$ denote a set of simple roots of $G$. Let $\lambda \in \Lambda^+$ denote a dominant coweight and let $\mathrm{IC}_\lambda \in \mathcal{P}_{G(\mathcal{O})}(\Gr_G)$ denote the corresponding irreducible perverse sheaf supported on $\Gr_G^{\leq \lambda} := \overline{\Gr^\lambda_G}$, where $\Gr^\lambda_G = G(\mathcal{O}) t^\lambda \subseteq \Gr_G$ is the corresponding spherical orbit. Let $\Gr_I = \Gr_{L_I}$ denote the affine Grassmannian of the Levi subgroup $L_I \subseteq G$ and $i_I: \Gr_I \hookrightarrow \Gr_G$ the inclusion. We will study the cohomology $H^*_{L_I(\mathcal{O})}(\Gr_I, i_I^! \mathrm{IC}_\lambda)$ through the technique of equivariant localization. We abuse notation and write $i_I^! \mathrm{IC}_\lambda \in D_T(\Gr_L)$ for the image of $i_I^! \mathrm{IC}_\lambda \in D_{L(\mathcal{O})}(\Gr_L)$ under the functor forgetting the $L(\mathcal{O})$-equivariance down to $T$-equivariance. Recall from \cite{GKM98} (especially the discussion surrounding \cite[Theorem 1.6.2]{GKM98}) that a complex $F \in D_T(\Gr_L)$ is \textit{equivariantly formal} if the spectral sequence 
$$
E_2^{p,q} = H_T^p(\mathrm{pt}, H^q(\Gr_I, F)) \implies H^{p + q}_T(\Gr_I, F)
$$
degenerates at $E_2$. 

\begin{proposition}\label{equivariantformality}
The complex $i_I^!\mathrm{IC}_\lambda \in D_{T}(\Gr_L)$ is equivariantly formal.
\end{proposition}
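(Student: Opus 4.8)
The plan is to reduce equivariant formality to a parity-vanishing statement for the costalks, which is exactly the content of Lusztig's theorem recalled above. Recall that by the general theory of \cite{GKM98}, a complex $F \in D_T(\Gr_I)$ is equivariantly formal as soon as the ordinary cohomology $H^*(\Gr_I, F)$ vanishes in either all odd or all even degrees — then for degree-parity reasons every differential $d_r$ in the spectral sequence $E_2^{p,q} = H_T^p(\mathrm{pt}, H^q(\Gr_I, F)) \Rightarrow H_T^{p+q}(\Gr_I, F)$ must vanish, since $R_T = H^*_T(\mathrm{pt}, \CC)$ is concentrated in even degrees and $d_r$ shifts total degree by $1$. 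So it suffices to prove that $H^*(\Gr_I, i_I^! \mathrm{IC}_\lambda)$ is concentrated in even degrees.

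First I would decompose $\Gr_I = \bigsqcup_{\chi} \Gr_I^\chi$ into connected components and, on each component, stratify by $L_I(\mathcal{O})$-orbits $\Gr_I^\nu \cong L_I(\mathcal{O}) t^\nu$ for $\nu$ ranging over $L_I$-dominant coweights. Since $\mathrm{IC}_\lambda$ is $G(\mathcal{O})$-equivariant, hence $L_I(\mathcal{O})$-equivariant, $i_I^! \mathrm{IC}_\lambda$ is $L_I(\mathcal{O})$-constructible, and there is a spectral sequence computing $H^*(\Gr_I, i_I^!\mathrm{IC}_\lambda)$ from the costalk cohomologies $H^*(i_{I,\nu}^! i_I^! \mathrm{IC}_\lambda) = H^*(i_\nu^! \mathrm{IC}_\lambda)$ along the strata $\Gr_I^\nu$ (here $i_\nu \colon \{t^\nu\} \hookrightarrow \Gr_G$), tensored with the cohomology of the orbit $\Gr_I^\nu$. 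Now $H^*(i_\nu^! \mathrm{IC}_\lambda)$ is concentrated in even degrees by Lusztig's theorem (the parity vanishing $H^i(i_\mu^! \mathrm{IC}_\lambda) = 0$ for $i$ odd stated above). The orbit $\Gr_I^\nu$ is an affine bundle over a partial flag variety of $L_I$ (the $L_I(\mathcal{O})$-orbit through $t^\nu$ fibers over $L_I/P$ for an appropriate parabolic, with affine fibers), so its cohomology is also concentrated in even degrees. Hence each $E_1$-term is even, the spectral sequence degenerates, and $H^*(\Gr_I, i_I^!\mathrm{IC}_\lambda)$ is even.

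The one point requiring care — and the main obstacle — is the passage from costalks at the torus-fixed points $t^\nu$ to the costalks $i_{I,\nu}^! i_I^!\mathrm{IC}_\lambda$ along the \emph{orbit} $\Gr_I^\nu$, and the identification of the latter (up to shift) with $H^*(i_\nu^!\mathrm{IC}_\lambda)$. This follows from $L_I(\mathcal{O})$-equivariance: the costalk of an $L_I(\mathcal{O})$-equivariant complex along an orbit is, up to a shift by the (even) dimension of the orbit, determined by the costalk at any point, via the smooth pullback along $L_I(\mathcal{O}) \to \Gr_I^\nu$. (Alternatively, one may contract the orbit to its fixed point using a cocharacter and invoke hyperbolic localization, or simply cite that $\mathrm{IC}_\lambda$ is even — its stalks and costalks along all $G(\mathcal{O})$-orbits are concentrated in even degrees by \cite[Lemma 4.5]{baumann2018notes} — and restrict this evenness along $i_I$, noting $i_I$ is a closed embedding of a union of $G(\mathcal{O})$-orbit closures' fixed loci, compatible with stratifications.) I would check that the relevant orbit dimensions and the shift in the definition of $i_I^!$ versus $i_I^{!,\natural}$ do not spoil parity — they do not, since all these shifts are by even integers (lengths of elements of $\widetilde W_{\mathrm{aff}}$ of the form $\langle 2\check\rho_G - 2\check\rho_L, \chi\rangle$ and orbit dimensions are all $\equiv 0 \bmod 2$ for the relevant coweights, by standard affine Grassmannian dimension formulas). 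Once parity is established the degeneration is automatic, completing the proof.
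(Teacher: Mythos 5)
Your proposal is correct in substance and runs on the same engine as the paper's proof: reduce equivariant formality to the statement that $H^*(\Gr_I, i_I^!\mathrm{IC}_\lambda)$ sits in a single parity (then the differentials in $E_2^{p,q}=H^p_T(\mathrm{pt},H^q(\Gr_I,-))$ die for parity reasons, since $R_T$ is even), and deduce that single-parity statement from parity vanishing of costalks of $\mathrm{IC}_\lambda$ together with a stratification (Cousin-type) spectral sequence. The difference is only in the stratification: the paper uses the Iwahori orbits, i.e.\ an affine paving of $\Gr_I$, together with costalk parity at \emph{all} points of $\Gr_I$ (not just the $T$-fixed points), which makes each Cousin $E_1$-term literally a point costalk up to an even shift and finishes in one step; you use the coarser $L_I(\mathcal{O})$-orbit stratification, which forces you to add two inputs (constancy of the cohomology sheaves of $i_\nu^!i_I^!\mathrm{IC}_\lambda$ on each simply connected orbit, plus evenness of $H^*$ of the orbit as an affine bundle over a partial flag variety of $L_I$) and, strictly, an extra hypercohomology spectral sequence rather than a literal tensor-product identification of the $E_1$-term — harmless for parity, but worth stating as a spectral sequence rather than an isomorphism. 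One justification in your last paragraph is wrong, though not fatally: the spherical orbit dimensions $\langle 2\check\rho_I,\nu\rangle$ are \emph{not} all even (minuscule orbits give counterexamples), so you cannot appeal to "standard dimension formulas" for evenness. What saves the argument is that the shift relating the costalk at a point $t^\nu$ to the restriction of $i_\nu^!i_I^!\mathrm{IC}_\lambda$ along the smooth orbit is by twice the complex dimension (the real dimension), hence even regardless of the parity of $\dim_{\CC}\Gr_I^\nu$; alternatively, follow the paper and use the Iwahori paving, where the same even-shift remark applies cell by cell and no orbit-cohomology input is needed.
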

\begin{proof} For each point $x \in \Gr_I$, we have that $H^j(i_x^!i_I^!\mathrm{IC}_\lambda) \simeq H^j(i_x^!\mathrm{IC}_\lambda)$ vanishes for $j \not\equiv \langle 2\check{\rho}, \lambda \rangle$ (mod 2), by the well-known parity vanishing property of $\mathrm{IC}_\lambda$ \cite[Lemma 4.5]{baumann2018notes}. Since $i^!\mathrm{IC}_\lambda$ is constructible with respect to an affine paving of $\Gr_I$ (the Schubert stratification by the orbits of an Iwahori subgroup $\mathcal{I} \subseteq L(\mathcal{O})$), a standard argument with the Cousin spectral sequence implies that $H^j(\Gr_I, i_I^!\mathrm{IC}_\lambda)$ vanishes for $j \not\equiv \langle 2\check{\rho}, \lambda \rangle$. 

It is now clear from parity considerations that the spectral sequence
$$
E_2^{p,q} = H^p_{T}(\mathrm{pt}, H^q(\Gr_I, i_I^!\mathrm{IC}_\lambda)) \implies H^{p+q}_T(\Gr_I, i_I^!\mathrm{IC}_\lambda)
$$
degenerates at $E_2$. 
\end{proof}

\begin{remark}\label{parityandformality}
    Proposition~\ref{equivariantformality} is a special case of the following very simple but quite general observation. Let $X$ denote a $T$-variety equipped with a $T$-stable affine paving. Let $F \in D_T(X)$ denote a $T$-equivariant complex whose underlying non-equivariant complex $\mathrm{For}^{T}(F) \in D(X)$ is \textit{$!$-parity} (in the language of \cite{paritysheaves}, and with respect to the dimension pariversity $\diamond$). Then, $F$ is equivariantly formal. Moreover, the property of being $!$-parity is preserved by $!$-restriction to any $T$-stable closed subvariety $i: Y \hookrightarrow X$. Thus, if $Y$ also carries a $T$-stable affine paving, then $i^!F$ is equivariantly formal. 
\end{remark}

\begin{remark}
    Let $\mathrm{IC}_\lambda$ be equipped with its natural structure of $T$-equivariant mixed Hodge module (the theory of equivariant mixed Hodge modules is not very well documented, but see \cite{equivariantmhm} for a treatment) on $\Gr_G$, for which it is pure of weight 0. Note that $i_I^!\mathrm{IC}_\lambda$ is not necessarily pure. Indeed,  $i^!_I\mathrm{IC}_\lambda$ is usually not a semisimple complex (which implies that it is not pure, by the BBD decomposition theorem). Therefore, the equivariant formality of Proposition~\ref{equivariantformality} really requires the costalk parity considerations used in the proof and does not follow from the proof of \cite[Theorem 14.1(7)]{GKM98}. On the other hand, the same argument works without change in settings which lack a theory of weights but in which strong parity vanishing results hold; for example, on the quaternionic affine Grassmannian of \cite{quaternionicsatake}. 
\end{remark}

Recall (see \cite[Section 3]{MV04}) that $\Gr_G$ admits the Iwasawa decomposition 
$$
\Gr_G = \coprod_{\nu \in \Lambda} S_\nu,
$$
where $S_\nu = U(\mathcal{K}) \cdot t^\nu$ is the orbit of the group ind-scheme $U(\mathcal{K})$ through the point $t^\nu \in \Gr_G$. Following \cite{Faltings2003}, we write $L^{-}B$ for the ind-affine group ind-scheme representing the functor $R \mapsto L^-B(R) = B(R[t^{-1}])$ and $L^{--}B \subseteq L^-B$ for the kernel of the reduction morphism $L^-B \twoheadrightarrow B$ given by $t^{-1} \mapsto 0$. Then, by the discussion in \cite{acharriche2023} surrounding Equation 1.2.6, there is a $T$-equivariant isomorphism
\begin{equation}\label{semiinfiniteorbits}
L^{--}B \simeq S_\nu
\end{equation}
given on $\CC$-points by $g \mapsto t^\nu g G(\mathcal{O}) \in \Gr_G$. Here, $T$ acts on $L^{--}B$ acts by conjugation. In the proof of Proposition~\ref{stabilizercomputation}, we write $V = V_I$ for the unipotent radical of $P = P_I$ to ease notation. 

\begin{proposition}\label{stabilizercomputation}
For $x \in \Gr_G$, let $T_x \subseteq T$ denote the stabilizer of $x$ in $T$. Assume that $\alpha \vert_{T_x}: T_x \rightarrow \mathbb{G}_m$ is a dominant morphism for every $\alpha \in \Phi^+_G \setminus \Phi_L^+$. Then, $x \in \Gr_L$.
\end{proposition}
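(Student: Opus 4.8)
The statement is a purely combinatorial/geometric fact about which points of $\Gr_G$ have large $T$-stabilizer. The plan is to use the Iwasawa decomposition $\Gr_G = \coprod_{\nu \in \Lambda} S_\nu$ together with the isomorphism \eqref{semiinfiniteorbits} to get a concrete hold on points of $\Gr_G$ and their $T$-stabilizers. First I would fix $x \in \Gr_G$ and locate it in a unique Iwasawa stratum $S_\nu$. Using the $T$-equivariant isomorphism $L^{--}B \simeq S_\nu$, write $x = t^\nu g G(\mathcal{O})$ for a unique $g \in L^{--}B$, where $T$ acts on $L^{--}B$ by conjugation. The stabilizer $T_x$ is then exactly the subgroup of $s \in T$ fixing $g$ under conjugation, i.e. centralizing $g$. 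Now decompose $L^{--}B$ via $B = TU$: since $g \in L^{--}B$ reduces to $1 \in B$ modulo $t^{-1}$, its ``torus part'' is trivial and $g$ lies in $L^{--}U$ (the analogous negative congruence subgroup of $U$). The key structural input is that $L^{--}U$ is filtered by the root subgroups $L^{--}U_\alpha \simeq t^{-1}\CC[t^{-1}]$ for $\alpha \in \Phi_G^+$, on which $s \in T$ acts by the character $\check\alpha(s)$ (times the loop grading, which is irrelevant since the $T$-action is by conjugation and doesn't touch $t$).

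**The main computation.**

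Writing $g = \prod_{\alpha \in \Phi_G^+} u_\alpha(f_\alpha(t^{-1}))$ with $f_\alpha \in t^{-1}\CC[t^{-1}]$ (in some fixed order), the condition that $s \in T_x$ centralizes $g$ forces, for each $\alpha$ with $f_\alpha \neq 0$, that $\check\alpha(s) = 1$. Conversely if $\check\alpha|_{T_x} \equiv 1$ for all $\alpha$ appearing in $g$, then $T_x$ centralizes $g$. So $T_x$ is precisely the common kernel of $\{\check\alpha : f_\alpha \neq 0\}$. Now invoke the hypothesis: $\check\alpha|_{T_x} : T_x \to \mathbb{G}_m$ is dominant for every $\alpha \in \Phi_G^+ \setminus \Phi_L^+$. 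A dominant morphism of tori is in particular nonconstant, hence $\check\alpha|_{T_x} \not\equiv 1$; by the previous sentence this forces $f_\alpha = 0$ for all $\alpha \in \Phi_G^+ \setminus \Phi_L^+$. Therefore $g$ lies in the subgroup generated by the root subgroups $L^{--}U_\alpha$ with $\alpha \in \Phi_L^+$ — that is, $g \in L^{--}(U \cap L)$. Then $x = t^\nu g G(\mathcal{O})$ with both $t^\nu$ (as $\nu \in \Lambda = X_*(T) \subseteq X_*(L)$) and $g$ coming from $L(\mathcal{K})$, so $x$ lies in the image of $\Gr_L \hookrightarrow \Gr_G$, as desired.

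**Anticipated obstacle and care points.**

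The main technical obstacle is making the factorization $g = \prod u_\alpha(f_\alpha)$ and the ``$T_x$ centralizes $g$ iff $\check\alpha|_{T_x} \equiv 1$ for each $\alpha$ with $f_\alpha \neq 0$'' precise at the level of ind-schemes, rather than just $\CC$-points — in particular, handling the fact that the product decomposition of $L^{--}U$ over ordered positive roots is an isomorphism of ind-schemes (this is standard, e.g. it follows from the analogous statement for $U$ over $\CC[t^{-1}]$ using that $U$ admits a filtration with $\mathbb{G}_a$-quotients on which $T$ acts through roots), and that conjugation by $s$ acts coordinate-wise by the roots. One must also be slightly careful that ``$T_x$ centralizes $g$'' genuinely detects vanishing of each $f_\alpha$: this uses that the coordinates $f_\alpha$ are genuinely the $\check\alpha$-eigencoordinates, so $s \cdot f_\alpha = \check\alpha(s) f_\alpha$ and $s$ fixes $g$ iff $\check\alpha(s) f_\alpha = f_\alpha$ for all $\alpha$, i.e. $f_\alpha = 0$ whenever $\check\alpha(s) \neq 1$. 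A subtle point worth a sentence: one should check the hypothesis is used correctly — ``dominant'' here should be read as ``nonconstant'' / ``nontrivial'' (a dominant morphism of algebraic tori $T_x \to \mathbb{G}_m$ has dense, hence cofinite—indeed all but finitely many—image, so is not the trivial character), which is exactly what we need; if instead the authors' ``dominant'' means something stronger, the argument only gets easier. Finally, one records that $\nu \in \Lambda$ guarantees $t^\nu \in L(\mathcal{K})$, so no further condition on $\nu$ is needed; the whole content is confining the unipotent part of $g$ to $L$.
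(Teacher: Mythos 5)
Your proof is correct and follows essentially the same route as the paper: locate $x$ in a semi-infinite orbit $S_\nu$, use the $T$-equivariant identification $S_\nu \simeq L^{--}B$, decompose into root-subgroup coordinates $L^{--}U_\alpha \simeq t^{-1}\CC[t^{-1}]$ on which $T$ acts by the corresponding character, and use dominance (in fact mere nontriviality) of the characters in $\Phi^+_G \setminus \Phi^+_L$ restricted to $T_x$ to force the corresponding coordinates to vanish. The only cosmetic difference is that the paper first splits $S_\nu \simeq L^{--}V \times S'_\nu$ and kills the projection to $L^{--}V$, whereas you coordinatize all of $L^{--}U$ (after disposing of the torus part); both arguments are the same in substance.
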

\begin{proof}
    The point $x$ belongs to the semi-infinite orbit $S_\nu$ for a unique $\nu \in \Lambda$. We will use the $T$-equivariant isomorphism 
    $L^{--}B \simeq S_\nu$ of Equation~\ref{semiinfiniteorbits}. Let $B' = B \cap L$. We also have a $T$-equivariant isomorphism of schemes $V \times B' \simeq B$ ($(v,b') \mapsto vb'$), which induces a $T$-equivariant isomorphism $L^{--}V \times L^{--}B' \simeq L^{--}B$. Since $B'$ is a Borel subgroup of $L$, we also obtain a $T$-equivariant isomorphism $L^{--}B' \simeq S_{\nu}'$, where $S_\nu'$ is the semi-infinite orbit in $\Gr_L$ through $t^\nu$. Hence, there is a $T$-equivariant isomorphism $S_\nu \simeq L^{--}V \times S_\nu'$. Let $\pi_1: S_\nu \rightarrow L^{--}V$ denote the first projection. To show that $x \in \Gr_L$, it suffices to show that $\pi_1(x)$ is the identity element $e \in L^{--}V$. Since $\pi_1$ is $T$-equivariant, the subgroup $T_x \subseteq T$ also fixes $\pi_1(x)$. Furthermore, we have a $T$-equivariant isomorphism of ind-affine ind-schemes
    $$
    L^{--}V \simeq \prod_{\alpha \in \Phi^+_G \setminus \Phi^+_L} L^{--}U_\alpha,
    $$
    where $U_\alpha \subseteq U$ is the root subgroup corresponding to the root $\alpha$. Let $\pi_\alpha$ denote the projection onto the factor indexed by $\alpha$. We have an isomorphism $L^{--}U_\alpha \simeq L^{--}\mathbb{A}^1$ under which the adjoint action of $T$ on $L^{--}U_\alpha$ is induced by the homomorphism $\alpha: T \rightarrow \mathbb{G}_m$ and the natural scaling action of $\mathbb{G}_m$ on $L^{--}\mathbb{A}^1$. Since $\alpha \vert_{T_x}$ dominates $\mathbb{G}_m$, we deduce that $(L^{--}\mathbb{A}^1)^{T_x} = (L^{--}\mathbb{A}^1)^{\mathbb{G}_m} = \{0\}$. Therefore, $\pi_\alpha (\pi_1(x)) = \pi_\alpha(e) \in L^{--}U_\alpha$ for each $\alpha \in \Phi^+_G \setminus \Phi^+_L$, which implies that $\pi_1(x) = e$, as claimed. 
\end{proof}

We will now apply the localization theorem in $T$-equivariant cohomology. We again refer the reader to \cite{GKM98} for a general treatment as well as \cite[Theorem A.1.13]{zhu2016introduction} for the specific statement that we use. Recall that we have a canonical graded ring isomorphism 
$$
H^*_T(\mathrm{pt}, \CC) \simeq \operatorname{Sym}({\mathfrak{t}}^*)
$$
obtained (for example) by identifying $H_T^*(\mathrm{pt}, \CC)$ with the cohomology $H^*(BT, \CC)$ of the (topological) classifying space of $T$. Here, the elements of $\mathfrak{t}^*$ are placed in graded degree 2. 

Let $i_T: \Gr_T \hookrightarrow \Gr_G$ denote the inclusion of the Grassmannian of $T$. We use the same symbol to denote the inclusion $\Gr_T \hookrightarrow \Gr_I$.

\begin{proposition}\label{localization}
    (a) Let 
    $$
    f_I = \prod_{\alpha \in \Phi^+_G \setminus \Phi^+_L} \alpha \in \Sym{\mathfrak{h}^*} = H_T^*(\mathrm{pt}).
    $$
    The natural map
    \begin{equation}\label{localizationeq1}
    H^*_T(\Gr_I, i_I^!\mathrm{IC}_\lambda) \rightarrow H^*_T(\Gr_G, \mathrm{IC}_\lambda)
    \end{equation}
    is an injective map of $H_T^*(\mathrm{pt}, \CC)$-modules, and becomes an isomorphism after inverting $f_I$. 

    (b) Let 
    $$
    g_I = \prod_{\alpha \in \Phi_L^+} \alpha \in \Sym{\mathfrak{h}^*} = H_T^*(\mathrm{pt}).
    $$
    The natural map
    $$
    H_T^*(\Gr_T, i_T^!\mathrm{IC}_\lambda) \simeq H_T^*(\Gr_T, i_T^!i_I^!\mathrm{IC}_\lambda)\rightarrow H_T^*(\Gr_I, i_I^!\mathrm{IC}_\lambda)
    $$
    is an injective map of $H_T^*(\mathrm{pt}, \CC)$-modules, and becomes an isomorphism after inverting $g_I$. 
\end{proposition}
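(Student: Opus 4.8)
The plan is to prove both parts by the same strategy: realize the relevant map between equivariant cohomology groups as one induced by a fixed-point inclusion, and then invoke the equivariant localization theorem together with the stabilizer computation of Proposition~\ref{stabilizercomputation} to pin down exactly which equivariant parameters must be inverted. For part (a), the map \eqref{localizationeq1} is the one induced by the adjunction counit $i_{I,*}i_I^! \to \mathrm{id}$ applied to $\mathrm{IC}_\lambda$, i.e. the natural ``integration over $\Gr_I$'' map $H^*_T(\Gr_I, i_I^!\mathrm{IC}_\lambda) = H^*_T(\Gr_G, i_{I,*}i_I^!\mathrm{IC}_\lambda) \to H^*_T(\Gr_G, \mathrm{IC}_\lambda)$. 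Since both sides are free $H^*_T(\mathrm{pt},\CC)$-modules by Proposition~\ref{equivariantformality} (and the parity argument showing $H^*_T(\Gr_G,\mathrm{IC}_\lambda)$ is also free, which is standard), injectivity will follow once we show the map becomes an isomorphism after inverting $f_I$, because a map of free modules that is injective after inverting a nonzerodivisor is injective.

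For the localization step, the key input is that the complement $\Gr_G \setminus \Gr_I$, when restricted to its $T$-fixed-point behavior, carries only $T$-weights divisible by the roots $\alpha \in \Phi_G^+ \setminus \Phi_L^+$. Concretely: by Proposition~\ref{stabilizercomputation}, any point $x \in \Gr_G$ whose stabilizer $T_x$ is ``large'' in the sense that $\alpha|_{T_x}$ is dominant for all $\alpha \in \Phi_G^+\setminus\Phi_L^+$ already lies in $\Gr_I$. Contrapositively, the stabilizers of points outside $\Gr_I$ are cut out (up to the subtorus $Z(L)^\circ$ which acts trivially on all of $\Gr_I$) by the vanishing of some $\alpha \in \Phi_G^+\setminus\Phi_L^+$. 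Thus, after localizing $H^*_T(\mathrm{pt},\CC)$ at the multiplicative set generated by $f_I$ — equivalently, restricting to the open subscheme of $\operatorname{Spec} H^*_T(\mathrm{pt},\CC)$ where no $\alpha \in \Phi^+_G\setminus\Phi^+_L$ vanishes — the cohomology of any $T$-equivariant complex supported on $\Gr_G\setminus\Gr_I$ dies, because its cohomology is supported (as a module over $H^*_T(\mathrm{pt})$) on the union of the hyperplanes $\{\alpha = 0\}$. Here one uses the standard fact that for a $T$-variety $Y$, the $H^*_T(\mathrm{pt})$-module $H^*_T(Y,F)$ is supported on the union of the images of $H^*_T(\mathrm{pt}) \to H^*_{T_x}(\mathrm{pt})$-kernels over the strata; more cleanly, one can stratify $\Gr_G$ $T$-equivariantly refining the decomposition $\Gr_G = \Gr_I \sqcup (\Gr_G\setminus\Gr_I)$, use the long exact triangle relating $i_I^!\mathrm{IC}_\lambda$, $\mathrm{IC}_\lambda$, and $j_!j^*\mathrm{IC}_\lambda$ for $j$ the open complement, and apply the localization theorem to the outer term. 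Since $\Gr_G\setminus\Gr_I$ admits a $T$-stable paving by affine spaces each of whose $T$-fixed loci is empty once we invert $f_I$ (by the stabilizer analysis), its localized equivariant cohomology vanishes, giving the isomorphism \eqref{localizationeq1} after inverting $f_I$.

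Part (b) is entirely parallel, working inside $\Gr_I$ instead of $\Gr_G$: the roles of $\Phi^+_G$ and $\Phi^+_L$ are replaced by $\Phi^+_L$ and $\emptyset$, so the relevant product is $g_I = \prod_{\alpha\in\Phi^+_L}\alpha$, the closed subvariety is $\Gr_T = \Gr_I^T$ (more precisely $\Gr_T = \Gr_I^{T'}$ where $T'$ is the relevant subtorus, but the $T$-fixed points of $\Gr_I$ are exactly $\Gr_T$ as a set), and applying Proposition~\ref{stabilizercomputation} to the group $L_I$ in place of $G$ (with Levi $T$) shows that points of $\Gr_I\setminus\Gr_T$ have stabilizer killed by some $\alpha\in\Phi^+_L$. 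The compatibility $i_T^!\mathrm{IC}_\lambda \simeq i_T^! i_I^!\mathrm{IC}_\lambda$ is immediate from $i_T = i_I \circ (\text{inclusion }\Gr_T\hookrightarrow\Gr_I)$ and functoriality of $!$-pullback. Again freeness of all modules involved over $H^*_T(\mathrm{pt})$ comes from Proposition~\ref{equivariantformality} and Remark~\ref{parityandformality}, so injectivity follows from the localized isomorphism. The main obstacle, I expect, is not any single step but rather the bookkeeping needed to deduce vanishing of localized equivariant cohomology of the complement from the stabilizer computation — one must be careful that Proposition~\ref{stabilizercomputation} gives a statement about \emph{all} points (so that the complement has no relevant fixed points after localization), and that the affine paving used in Proposition~\ref{equivariantformality} is compatible with the decomposition $\Gr_G = \Gr_I \sqcup (\Gr_G\setminus\Gr_I)$ so that the Cousin/localization spectral sequence argument applies cleanly; handling the noncompactness of $\Gr_G$ requires the support-on-a-closed-stratum convention of §\ref{sheaf conventions}, which keeps everything finite-dimensional.
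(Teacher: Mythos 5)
Your proposal is correct and follows essentially the same route as the paper: the map becomes an isomorphism after inverting $f_I$ (resp.\ $g_I$) by the equivariant localization theorem applied to the projective support $\Gr_G^{\leq\lambda}$ together with the stabilizer analysis of Proposition~\ref{stabilizercomputation}, and injectivity follows from freeness of the source over $H_T^*(\mathrm{pt},\CC)$ via Proposition~\ref{equivariantformality}, with part (b) obtained by running the same argument for the pair $(L,T)$. Your open--closed triangle discussion merely unwinds the proof of the localization theorem that the paper cites directly, so it is a harmless elaboration rather than a different approach.
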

\begin{proof}
    (a) Let $Z = \Gr^{\leq \lambda}_G$, $Z' = Z \cap \Gr_L$, and $U = Z \setminus Z'$.  By Proposition~\ref{stabilizercomputation}, $f_I$ vanishes on $\mathrm{Lie}(T_x) \subseteq \mathfrak{t}$ for any point $x \in U \subseteq \Gr_G \setminus \Gr_L$ (at least one of the roots $\alpha \in \Phi^+_G \setminus \Phi_L^+$ is trivial on $T_x$ since $x\not\in \Gr_I$). Hence, by the equivariant localization theorem (applied to the projective $T$-scheme $Z$, the $T$-stable closed subscheme $Z'$, and the complex $\mathrm{IC}_\lambda \in D_T(Z)$), say in the form of \cite[Theorem A.1.13]{zhu2016introduction}, the morphism \ref{localizationeq1} becomes an isomorphism after inverting $f_I$. By Proposition~\ref{equivariantformality}, $H_T^*(\Gr_I, i_I^!\mathrm{IC}_\lambda)$ is a free $H_T^*(\mathrm{pt},\CC)$-module, so the injectivity of the map \ref{localizationeq1} follows. The proof of (b) is similar, with the pair $(G,L)$ replaced by $(L,T)$. 
\end{proof}

\begin{remark}
    Suppose that the Levi subgroup $L = L_I$ is replaced by an arbitrary connected reductive subgroup $K \subseteq G$. Let $i: \Gr_K \hookrightarrow \Gr_G$ denote the induced closed immersion. If $K$ contains a maximal torus $T$ of $G$, then the arguments of this section carry over to show that the natural map 
    $$
    H_T^*(\Gr_K, i^!\mathrm{IC}_\lambda) \rightarrow H_T^*(\Gr_G, \mathrm{IC}_\lambda)
    $$
    is injective and becomes an isomorphism after inverting a suitable element of $H_T^*(\mathrm{pt}, \CC)$. However, without an analysis of the $T$-stabilizers of points $x \in \Gr_G \setminus \Gr_K$ analogous to Proposition~\ref{stabilizercomputation}, it is not clear \textit{which} element of $H_T^*(\mathrm{pt}, \CC)$ must be inverted. 
\end{remark}

\begin{remark}
    Proposition~\ref{localization} can be understood in geometric terms as follows. The morphisms
    $$
    H^*_T(\Gr_T, i_T^!\mathrm{IC}_\lambda) \rightarrow H_T^*(\Gr_I, i_I^!\mathrm{IC}_\lambda) \rightarrow H^*_T(\Gr_G, \mathrm{IC}_\lambda)
    $$
    of free $H_T^*(\mathrm{pt}, \CC) \simeq \operatorname{Sym}(\mathfrak{t}^*)$-modules can be viewed as morphisms of (trivial) vector bundles over the affine space $\operatorname{Spec}{\operatorname{Sym}(\mathfrak{t}^*)} = \mathfrak{t}$. Then, the second morphism restricts to an isomorphism \textit{away from} the walls of the root system of $\check{G}$ which are not walls of the root system of $\check{L}_I$. On the other hand, the first morphism restricts to an isomorphism away from the walls of the root system of $\check{L}_I$.
\end{remark}

\subsection{Monoidal structure on corestriction}\label{monoidal structure section} We will write $L = L_I$ and $i = i_I$ for now. We emphasize that the isomorphism of Theorem~\ref{Main Theorem} is an isomorphism of \textit{algebras} over the ring $R_L := H_L^*(\mathrm{pt},\CC)$. However, our proof of Theorem~\ref{Main Theorem} constructs this isomorphism of $R_L$-\textit{modules} without any appeal to the existence of an algebra structure on the left hand side. The existence of a ring structure on $H^*_{L(\mathcal{O})}(\Gr_L, i^!\F_{\reg})$ appeared to us as an unexpected consequence of Theorem~\ref{Main Theorem}, since the spectral side $\mathcal{O}(T^*(\check{G}/(\check{U}, \psi_I)))$ is naturally a ring under the pointwise multiplication of regular functions. 

The purpose of this section is therefore to equip $H^*_{L(\mathcal{O})}(\Gr_L, i^!\F_{\reg})$ with an $R_L$-algebra structure defined entirely on the geometric side. In the proof of Theorem~\ref{Main Theorem}, we will show that our identification $H_{L(\mathcal{O})}^*(\Gr_L, i^!\F_{\reg}) \simeq \mathcal{O}(T^*(\check{G}/(\check{U}, \psi_I)))$ is in fact an isomorphism of $R_L$-\textit{algebras}. 

We note that the existence of an algebra structure on $H^*_{L(\mathcal{O})}(\Gr_L, i^!\mathcal{F}_\reg)$ is asserted in \cite[Section 5(xi)]{braverman2018ring}. However, we are unaware of a complete construction, so we will provide two in this section (and we will use both). The reader may wish to skim §§\ref{automorphic summary} to see how the results of this section fit together before reading further. 

\begin{remark}\label{ring structure on the regular sheaf}
     Consider the case $L = G$. The $R_G := H_G^*(\mathrm{pt})$-module $H^*_{G(\mathcal{O})}(\F_\reg)$ carries an evident $R_G$-algebra structure which was extensively exploited in \cite{arkhipov2004quantum}. It also plays a prominent role in the construction of Coulomb branches in \cite{braverman2018ring}. Under the geometric Satake equivalence $\mathcal{P}_{G(\mathcal{O})}(\Gr_G) \simeq \operatorname{Rep}(\check{G})$, $\F_{\reg}$ corresponds to the left regular representation $\mathcal{O}(\check{G})$. The pointwise multiplication of regular functions equips $\mathcal{O}(\check{G})$ with the structure of a \textit{ring object} in the monoidal category $\operatorname{Rep}(\check{G})$. Passing back along geometric Satake to perverse sheaves, we find that $\F_{\reg}$ carries a natural ring structure in the monoidal category $\mathcal{P}_{G(\mathcal{O})}(\Gr_G)$. 
    
    Recall that the fiber functor $H^*: \mathcal{P}_{G(\mathcal{O})}(\Gr_G) \rightarrow \mathrm{Mod}(\CC)$ carries a natural monoidal structure \cite[Lemma 6.1]{MV04}. This construction of Mirkovi\'{c}-Vilonen can be adapted (as in \cite[Section 2.3]{yun2009integral} and \cite[Remark 2.5]{yun2009integral}) to produce a monoidal structure on the equivariant cohomology functor $H^*_{G(\mathcal{O})}: \mathcal{P}_{G(\mathcal{O})}(\Gr_G) \rightarrow \operatorname{Mod}(R_G)$. Since a monoidal functor takes ring objects to ring objects, we find a ring structure on $H_{G(\mathcal{O})}^*(\Gr_G, \F_\reg) \in \operatorname{Mod}(R_G)$.
\end{remark}

\begin{remark}
    We will generalize Remark~\ref{ring structure on the regular sheaf} by equipping the object $i^!\F_\reg \in D_{L(\mathcal{O})}(\Gr_L)$ with a ring structure. However, we emphasize that the functor $i^!: D_{G(\mathcal{O})}(\Gr_G) \rightarrow D_{L(\mathcal{O})}(\Gr_L)$ is \textit{not} monoidal. For example, consider the case $L = T$. The functor 
    $$
    i_0^!: D_{T(\mathcal{O})}(\Gr_T) \rightarrow \mathrm{Mod}(R_T)
    $$ 
    of costalk at the basepoint $t^0 \in \Gr_T$ is in fact monoidal. However, the composition $i_0^! \circ i^! \simeq \mathrm{Hom}_{T(\mathcal{O})}(\mathrm{IC}_0, -)$ does not admit a monoidal structure: its restriction to $\mathcal{P}_{G(\mathcal{O})}(\Gr_G) \simeq \mathrm{Rep}(\check{G})$ is naturally isomorphic to the functor $V \mapsto (V \otimes \mathrm{Sym}(\check{\mathfrak{g}}/\check{\mathfrak{u}}))^{\check{B}}$ (by the main result of \cite{GR13}, recalled above as Theorem~\ref{GR13 Main Theorem}). In particular, $i^!$ does not admit a monoidal structure.
    
    Therefore, we will only aim to establish the weaker (but sufficient) result that $i^!$ admits a natural \textit{lax} monoidal structure. This will suffice: lax monoidal functors carry algebras to algebras, so $i^!\F_\reg$ will acquire a ring structure in $D_{L(\mathcal{O})}(\Gr_L)$. 
\end{remark}

\begin{remark}\label{apology}
    It is surely possible to give a more elegant construction of the lax monoidal structure on $i^!$ (even at the ``cochain level'') by appealing to a suitable theory of sheaves and correspondences, such as that of \cite[Section A.5]{mann2022padic}. However, as our Construction~\ref{colax definition} makes essential use of non-invertible natural transformations (specifically, base change morphisms arising from non-Cartesian commutative squares), the kind of formalism described in \cite{mann2022padic} is not sufficient. It would be necessary to use a theory like that of \cite{gaitsgory2016derived}. However, as we do not wish to make our relatively pedestrian computations dependent on the (only partially documented) machinery of $(\infty, 2)$-category theory, we will proceed directly.
\end{remark}

\begin{remark}\label{arbitraryreductivesubgroup}
    In the remainder of this section, we will allow the Levi subgroup $L \subseteq G$ to be replaced by an arbitrary connected reductive subgroup of $G$. However, our only use case in this paper is when $L$ is a Levi subgroup. 
\end{remark}

\begin{remark}\label{lax definition}
    Verdier duality defines an equivalence of categories 
    $$
    \mathbb{D}_G: D_{G(\mathcal{O})}(\Gr_G) \simeq D_{G(\mathcal{O})}(\Gr_G)^{\mathrm{op}}.
    $$
    For objects $A,B \in D_{G(\mathcal{O})}(\Gr_G) $, there is moreover a natural isomorphism
    $$
    \mathbb{D}_G(A \star B) \simeq \mathbb{D}_G(A) \star \mathbb{D}_G(B).
    $$
    See \cite[Lemma 7.2.8]{achar2020} for the analogous fact about the convolution of $B$-equivariant sheaves on the flag variety $G/B$; the proof in our setting is identical. We also have a natural isomorphism $\mathbb{D}_L \circ i^! \simeq i^* \circ \mathbb{D}_G$. Therefore, the problem of equipping $i^!$ with a lax monoidal structure is formally equivalent to that of equipping $i^*$ with a \textit{colax} monoidal structure (and that is what we will do). 
\end{remark}

\begin{remark}\label{convolution}
    In order to fix notation, we recall the definition of the convolution product on $D_{G(\mathcal{O})}(\Gr_G)$ (from \cite[Section 4]{MV04}), together with its associativity constraint (described briefly in \cite[Proposition 4.6]{MV04}). Let $\Gr_G^{(2)} = G(\mathcal{K}) \times^{G(\mathcal{O})} \Gr_G =: \Gr_G \tildetimes \Gr_G$ denote the convolution Grassmannian. Let $m: \Gr_G^{(2)} \rightarrow \Gr_G$ denote the multiplication map. We denote by $- \tildeboxtimes -: D_{G(\mathcal{O})}(\Gr_G) \times D_{G(\mathcal{O})}(\Gr_G) \rightarrow D_{G(\mathcal{O})}(\Gr_G^{(2)})$ the twisted external product. 

    Similarly, we let 
    $$
    \Gr_G^{(3)} = G(\mathcal{K}) \times^{G(\mathcal{O})} G(\mathcal{K}) \times^{G(\mathcal{O})} \Gr_G =: \Gr_G \tildetimes \Gr_G \tildetimes \Gr_G 
    $$
    denote the threefold convolution Grassmannian. Let $m^{23}: \Gr_G^{(3)} \rightarrow \Gr_G^{(2)}$ denote the action map (given by $m^{23}(g_1, g_2, g_3 G(\mathcal{O})) = (g_1, g_2 g_3 G(\mathcal{O}))$), and let $m^{12}: \Gr_G^{(3)} \rightarrow \Gr_G^{(2)}$ denote the multiplication of the first two factors (given by $m^{12}(g_1,g_2,g_3G(\mathcal{O})) = (g_1g_2, g_3G(\mathcal{O}))$). We also have the twisted external product functors:
    \begin{align*}
    - \tildeboxtimes - &: D_{G(\mathcal{O})}(\Gr_G^{(2)}) \times D_{G(\mathcal{O})}(\Gr_G) \rightarrow D_{G(\mathcal{O})}(\Gr_G^{(3)})\\
    - \tildeboxtimes -&: D_{G(\mathcal{O})}(\Gr_G) \times D_{G(\mathcal{O})}(\Gr_G^{(2)}) \rightarrow D_{G(\mathcal{O})}(\Gr_G^{(3)})
    \end{align*}
    Finally, there is a triple multiplication map 
    $$
    m^{123}: \Gr_G^{(3)} \rightarrow \Gr_G
    $$
    given by $m^{123}(g_1, g_2, g_3G(\mathcal{O})) = g_1 g_2 g_3G(\mathcal{O})$, and a triple external product 
    $$
    - \tildeboxtimes - \tildeboxtimes -: D_{G(\mathcal{O})}(\Gr_G) \times D_{G(\mathcal{O})}(\Gr_G) \times D_{G(\mathcal{O})}(\Gr_G) \rightarrow D_{G(\mathcal{O})}(\Gr_G^{(3)}). 
    $$
    The convolution product of $A, B \in D_{G(\mathcal{O})}(\Gr_G)$ is defined by
    $$
    A \star B := m_*(A \tildeboxtimes B). 
    $$
    The triple convolution product of $A,B,C \in D_{G(\mathcal{O})}(\Gr_G)$ is similarly defined by
    $$
    A \star B \star C = m^{123}_*(A \tildeboxtimes B \tildeboxtimes C). 
    $$
    The associativity constaint $\alpha_{A,B,C}$ is a trifunctorial isomorphism
    \begin{equation}\label{associativityconstraint}
    \alpha_{A,B,C}: (A \star B) \star C \simeq A \star (B \star C)
    \end{equation}
    defined as a composition $\alpha_{A,B,C} = \gamma_{A,B,C}^{-1} \circ \beta_{A,B,C}$, where $\beta_{A,B,C}$ is a natural isomorphism
    \begin{equation}\label{betadefinition}
    \beta_{A,B,C}: (A \star B) \star C \simeq A \star B \star C
    \end{equation}
    and $\gamma_{A,B,C}$ is a natural isomorphism
    $$
    \gamma_{A,B,C}: A \star (B \star C) \simeq A \star B \star C.
    $$
    We spell out the definition of $\beta_{A,B,C}$ (that of $\gamma_{A,B,C}$ is analogous). We have $m^{123} = m \circ m^{12}$. Hence, we have a compositional isomorphism $m^{123}_* \simeq m_* \circ m_{*}^{12}$. It defines an isomorphism
    $$
    \mathrm{comp}_{A,B,C}^{12}: m_*m_{*}^{12}(A \tildeboxtimes B \tildeboxtimes C) \simeq m^{123}_*(A \tildeboxtimes B \tildeboxtimes C) = A \star B \star C. 
    $$
    The next ingredient is the associativity constraint on the twisted external product
    $$
    \delta_{A,B,C}: (A \tildeboxtimes B) \tildeboxtimes C \simeq A \tildeboxtimes B \tildeboxtimes C. 
    $$
    Applying $m^{12}_*$ yields an isomorphism
    $$
    m^{12}_* (\delta_{A,B,C}): m^{12}_*((A \tildeboxtimes B) \tildeboxtimes C) \simeq m_{*}^{12}(A \tildeboxtimes B \tildeboxtimes C). 
    $$
    For any objects $E\in D_{G(\mathcal{O})}(\Gr_G^{(2)})$, $F \in D_{G(\mathcal{O})}(\Gr_G)$, we have an isomorphism
    \begin{equation}\label{epsilondefinition}
    \epsilon_{E,F}: m_*E \tildeboxtimes F \simeq  m^{12}_*(E \tildeboxtimes F).
    \end{equation}
    By definition, $\epsilon_{E,F}$ is obtained by adjunction from the composition of the evident natural maps 
    $$
    m^{12,*}(m_*E \tildeboxtimes F) \xrightarrow{\xi^{12}_{E,F}} m^*m_*E \tildeboxtimes F \xrightarrow{\eta_E \tildeboxtimes 1} E \tildeboxtimes F. 
    $$
    We may now take $E = A \tildeboxtimes B$ and $F = C$ to obtain the isomorphism
    $$
    \epsilon_{A,B,C} := \epsilon_{A\tildeboxtimes B,C}: m_*(A \tildeboxtimes B) \tildeboxtimes C \simeq m_{*}^{12}((A \tildeboxtimes B) \tildeboxtimes C). 
    $$
    Let 
    \begin{equation}\label{sigmadefinition}
    \sigma_{A,B,C} = m_*( m_{*}^{12}(\delta_{A,B,C}) \circ \epsilon_{A,B,C}). 
    \end{equation}
    The isomorphism $\beta_{A,B,C}$ is now defined to be the composition
    $$
    \beta_{A,B,C} = \mathrm{comp}^{12}_{A,B,C} \circ \sigma_{A,B,C}.
    $$
    When dealing with both groups $G$ and $L$, we will generally use the same notation for the analogous maps and isomorphisms. 
\end{remark}

\begin{remark}\label{convolution with two groups} In addition to the morphism $i: \Gr_L \rightarrow \Gr_G$, we obtain morphisms $i_{2}: \Gr^{(2)}_L \rightarrow \Gr^{(2)}_G$ and $i_3: \Gr^{(3)}_L \rightarrow \Gr^{(3)}_G$. We have the identities $m \circ i_{2} = i \circ m$, $m^{12} \circ i_3 = i_2 \circ m^{12}$, $m^{23} \circ i_3 = i_2 \circ m^{23}$, and $m^{123} \circ i_3 = i \circ m^{123}$. We also have evident natural isomorphisms
\begin{align}\label{omegadefinition}
\omega_{A,B} &: i_2^*(A \tildeboxtimes B) \simeq i^*A \tildeboxtimes i^*B\\
\omega_{A,B,C} &: i_3^*(A \tildeboxtimes B \tildeboxtimes C) \simeq i^*A \tildeboxtimes i^*B \tildeboxtimes i^*C.
\end{align}
    
\end{remark}

\begin{construction}\label{colax definition}
    We can now define the (non-unital part of the) colax monoidal structure on $i^*: D_{G(\mathcal{O})}(\Gr_G) \rightarrow D_{L(\mathcal{O})}(\Gr_L)$. For objects $A,B \in D_{G(\mathcal{O})}(\Gr_G)$, we must define a bifunctorial map
    $$
    \theta_{A,B}: i^*(A \star B) \rightarrow i^* A \star i^*B. 
    $$
    By definition, $A \star B = m_*(A \tildeboxtimes B)$. Since $m \circ i_2 = i \circ m$, we deduce the existence of a base change morphism $\rho: i^*m_* \rightarrow m_*i_2^*$. Hence, we have a natural map
    \begin{equation}\label{rhodefinition}
    \rho_{A,B,C} := \rho_{A \tildeboxtimes B}: i^*m_*(A \tildeboxtimes B) \rightarrow m_*i_2^*(A \tildeboxtimes B). 
    \end{equation}
    We also have the natural isomorphism $\omega_{A,B}: i_2^*(A \tildeboxtimes B) \simeq i^*A \tildeboxtimes i^*B$. Thus, we can define 
    \begin{equation}\label{thetadefinition}
    \theta_{A,B} = m_*(\omega_{A,B}) \circ \rho_{A,B,C}
    \end{equation}
\end{construction}
The proof of the following proposition is the price that we pay for Remark~\ref{apology}, i.e. our choice to construct $\theta_{A,B}$ directly without the use of a higher categorical machinery that would presumably encode the commutativity of the diagrams below implicitly. The reader is encouraged to skip it on a first reading.

\begin{proposition}\label{associativity}
    The natural transformation $\theta: i^*(- \star -) \rightarrow i^*(-) \star i^*(-)$ of functors $D_{G(\mathcal{O})}(\Gr_G) \times D_{G(\mathcal{O})}(\Gr_G) \rightarrow D_{L(\mathcal{O})}(\Gr_G)$ of Construction~\ref{colax definition} is compatible with the associativity constraints of Remark~\ref{convolution} underlying the monoidal categories $D_{G(\mathcal{O})}(\Gr_G)$ and $D_{L(\mathcal{O})}(\Gr_L)$. That is, for objects $A,B,C \in D_{G(\mathcal{O})}(\Gr_G)$, the following diagram in $D_{L(\mathcal{O})}(\Gr_L)$ commutes:
    $$
    \begin{tikzcd}
        i^*((A \star B) \star C) \ar[rr,"i^*\alpha_{A,B,C}"] \ar[d, "\theta_{A \star B,C}"] & & i^*(A \star (B \star C)) \ar[d, "\theta_{A, B \star C}"]\\
        i^*(A \star B) \star i^*C \ar[d, "\operatorname{\theta}_{A,B} \star \operatorname{id}"] & & i^*A \star i^*(B \star C) \ar[d, "\operatorname{id} \star \operatorname{\theta}_{B,C}"] \\
        (i^*A \star i^*B) \star i^*C \ar[rr, "\alpha_{i^*A,i^*B,i^*C}"] & & i^*A \star (i^*B \star i^*C)
    \end{tikzcd}
    $$
\end{proposition}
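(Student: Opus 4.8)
The plan is to prove the hexagon commutes by cutting it into a handful of elementary cells, each closing for a structural reason, after unwinding every definition involved. Since $\alpha_{A,B,C}=\gamma_{A,B,C}^{-1}\circ\beta_{A,B,C}$ and the associativity constraint on the target factors the same way, it suffices to establish the two ``halves'' of the diagram separately: the cell comparing $i^*\beta_{A,B,C}$ with $\beta_{i^*A,i^*B,i^*C}$ along the legs $\theta_{A\star B,C}$ and $\theta_{A,B}\star\operatorname{id}$ (routed through $i^*(A\star B\star C)$), together with its mirror image built from $\gamma$ and $m^{23}$ instead of $\beta$ and $m^{12}$. These two are proved by word-for-word identical arguments, interchanging the squares $i_2\circ m^{12}=m^{12}\circ i_3$ with $i_2\circ m^{23}=m^{23}\circ i_3$, so I will only describe the $\beta$-side.

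Next, recall that $\beta_{A,B,C}=\mathrm{comp}^{12}_{A,B,C}\circ\sigma_{A,B,C}$ with $\sigma_{A,B,C}=m_*\bigl(m^{12}_*(\delta_{A,B,C})\circ\epsilon_{A,B,C}\bigr)$, and that $\theta_{A,B}=m_*(\omega_{A,B})\circ\rho$, where $\rho$ is the base change morphism attached to the commutative square $m\circ i_2=i\circ m$. Substituting these, the $\beta$-cell splits into three sub-cells, one for each of $\mathrm{comp}^{12}$, $m^{12}_*(\delta)$, and $\epsilon$. The $\mathrm{comp}^{12}$ sub-cell commutes by the \emph{pasting law for base change morphisms}: the identities $m^{123}\circ i_3=i\circ m^{123}$, $m\circ i_2=i\circ m$, $m^{12}\circ i_3=i_2\circ m^{12}$ exhibit the first square as the vertical composite of the latter two along the factorization $m^{123}=m\circ m^{12}$, and the base change morphism of such a composite of commuting squares is the pasting of the base change morphisms of the pieces — a formal consequence of the triangle identities. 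The $m^{12}_*(\delta)$ sub-cell commutes once one knows that the twisted-external-product associativity isomorphism $\delta$ is compatible with the comparison isomorphisms $\omega$; this is a statement about the construction of $-\tildeboxtimes-$ and of $\Gr^{(2)}$, $\Gr^{(3)}$, which I would verify by pulling everything back along the smooth surjection $G(\mathcal{K})\times\Gr_G\twoheadrightarrow\Gr_G^{(2)}$ (and its threefold analogue), over which $\tildeboxtimes$, $\delta$, and $\omega$ all reduce to the evident structure on ordinary external tensor products and the compatibility is manifest, hence descends; the $L$-versions sit inside the $G$-versions in a way compatible with the atlases.

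The main obstacle is the $\epsilon$ sub-cell. Unlike $\mathrm{comp}^{12}$, $\delta$, and $\omega$, the isomorphism $\epsilon_{E,F}$ is not built from honest isomorphisms of spaces: it is the adjoint of the composite $m^{12,*}(m_*E\tildeboxtimes F)\xrightarrow{\xi^{12}_{E,F}}m^*m_*E\tildeboxtimes F\xrightarrow{\eta_E\tildeboxtimes 1}E\tildeboxtimes F$, which uses the possibly non-invertible base change morphism $\xi^{12}$ for a non-Cartesian square (exactly the phenomenon flagged in Remark~\ref{apology}) together with the counit $\eta_E\colon m^*m_*E\to E$. Interchanging $\epsilon$ with $\rho$ therefore amounts to a compatibility inside a commutative cube whose faces are the squares relating $m$, $m^{12}$, $i$, $i_2$, and $i_3$; I would isolate this as a lemma to the effect that, for any such cube, the induced base change morphisms, units, and counits assemble into a commuting diagram. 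This is a standard but genuinely tedious diagram chase from the triangle identities and the naturality of the base change morphism in its square, and once it is in place every cell of the expanded hexagon closes, proving the proposition. (The suppressed unitality datum of Construction~\ref{colax definition} is compatible with the unit constraints by the analogous but much easier argument; dualizing via Remark~\ref{lax definition} then yields the lax monoidal structure on $i^!$ with its associativity compatibility.)
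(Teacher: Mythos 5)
Your proposal is correct and follows essentially the same route as the paper's proof: factor $\alpha$ through $\beta$ and $\gamma$, treat the two halves symmetrically, and decompose the $\beta$-cell into sub-cells for $\mathrm{comp}^{12}$ (pasting of base change morphisms), the $\delta$--$\omega$ compatibility of twisted external products, and the genuinely delicate interchange of $\epsilon$ with the base change map $\rho$, which the paper likewise resolves by adjunction, the triangle identities, and naturality. The only divergence is cosmetic: you verify the $\delta$--$\omega$ cell by descent along the atlas $G(\mathcal{K})\times\Gr_G\twoheadrightarrow\Gr_G^{(2)}$, where the paper simply records the corresponding diagram and leaves its (evident) commutativity to the reader.
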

\begin{proof}
    We expand the diagram horizontally, using the definition~\eqref{associativityconstraint} of the associativity isomorphism $\alpha$:
    $$
    \begin{tikzcd}
        i^*((A \star B) \star C) \ar[rr,"i^*\beta_{A,B,C}"] \ar[d, "\theta_{A \star B,C}"] & & i^*(A \star B \star C) \ar[rr, "i^*\gamma_{A,B,C}^{-1}"] \ar[dd, "\theta_{A,B,C}"]& & i^*(A \star (B \star C)) \ar[d, "\theta_{A, B \star C}"]\\
        i^*(A \star B) \star i^*C \ar[d, "\operatorname{\theta}_{A,B} \star \operatorname{id}"] & & & & i^*A \star i^*(B \star C) \ar[d, "\operatorname{id} \star \operatorname{\theta}_{B,C}"] \\
        (i^*A \star i^*B) \star i^*C \ar[rr, "\beta_{i^*A,i^*B,i^*C}"] & & i^*A \star i^*B \star i^*C \ar[rr, "\gamma_{i^*A,i^*B,i^*C}^{-1}"] & & i^*A \star (i^*B \star i^*C)
    \end{tikzcd}
    $$
    It suffices to show that both of the small rectangles in this diagram commute. We will show that the one on the left commutes; the proof that the one on the right commutes is completely analogous. We now expand the diagram on the left, using the definitions \eqref{thetadefinition} and \eqref{betadefinition} of $\theta$ and $\beta$, respectively:
    $$
    \begin{tikzcd}
        i^*m_*(m_*(A \tildeboxtimes B) \tildeboxtimes C) \ar[rr,"i^*m_*\sigma_{A,B,C}"] \ar[d, "\rho_{m_*(A \tildeboxtimes B) \tildeboxtimes C}"] & & i^*m_*m_*^{12}(A \tildeboxtimes B \tildeboxtimes C) \ar[rr, "\mathrm{comp}^{12}_{A,B,C}"] \ar[d,"\rho_{m^{12}_*(A \tildeboxtimes B \tildeboxtimes C)}"] && i^*m^{123}_*(A \tildeboxtimes B \tildeboxtimes C) \ar[dd, "\rho^{123}_{A \tildeboxtimes B \tildeboxtimes C}",swap] \\
        m_*i_{2}^*(m_*(A \tildeboxtimes B) \tildeboxtimes C) \ar[d, "m_*\omega_{m_*(A \tildeboxtimes B), C}"] \ar[rr, "m_*i_2^*\sigma_{A,B,C}"] & & m_*i_2^*m_*^{12}(A \tildeboxtimes B \tildeboxtimes C) \ar[d, "m_*\rho^{12,3}_{A \tildeboxtimes B \tildeboxtimes C}"] \\
        m_*(i^*m_*(A \tildeboxtimes B) \tildeboxtimes i^*C) \ar[d, "m_*(\rho_{A,B} \tildeboxtimes 1)"] & & m_*m_*^{12}i_3^*(A \tildeboxtimes B \tildeboxtimes C) \ar[rr, "\mathrm{comp}^{12}_{i_3^*(A \tildeboxtimes B \tildeboxtimes C)}"] \ar[dd, "m_*m_*^{12}\omega_{A,B,C}"] & & m_{*}^{123} i_3^*(A \tildeboxtimes B \tildeboxtimes C) \ar[dd, "m^{123}_* \omega_{A,B,C}",swap] \\
        m_*(m_*i_2^*(A \tildeboxtimes B) \tildeboxtimes i^*C) \ar[d, "m_*(m_*\omega_{A,B} \tildeboxtimes 1)"] \\
        m_*(m_*(i^*A \tildeboxtimes i^*B) \tildeboxtimes i^*C) \ar[rr, "m_*\sigma_{i^*A,i^*B,i^*C}"] & & m_*m_*^{12}(i^*A \tildeboxtimes i^*B \tildeboxtimes i^*C) \ar[rr, "\mathrm{comp}_{i^*A,i^*B,i^*C}^{12}"] & & m^{123}_*(i^*A \tildeboxtimes i^*B \tildeboxtimes i^*C)
    \end{tikzcd}
    $$
    Here, we have introduced the base change maps $\rho^{12,3}: i_2^*m_*^{12} \rightarrow m_*^{12}i_3^*$ and $\rho^{123}: i^*m_*^{123} \rightarrow m_*^{123}i_3^*$ deduced from the equalities $i_2m^{12}=m^{12}i_3$ and $i m^{123} = m^{123}i_3$, respectively. The upper left rectangle commutes by the naturality of the base change transformation $\rho: i^*m_* \rightarrow m_* i_2^*$ of \eqref{rhodefinition} applied to the isomorphism $\sigma_{A,B,C}$. The upper right rectangle commutes by the compatiblity of compositional isomorphisms with base change (see Proposition 2.11.7 of \cite{achar2020} and note that the Cartesian hypothesis is superfluous). The lower right rectangle commutes by the naturality of the compositional isomorphism $\mathrm{comp}^{12}: m_*m_*^{12} \simeq m_*^{123}$ applied to the isomorphism $\omega_{A,B,C}$. 
    
    It therefore suffices to show that the lower left rectangle commutes. It is obtained by applying $m_*$ to the the following diagram:
    $$
    \begin{tikzcd}
        i_{2}^*(m_*(A \tildeboxtimes B) \tildeboxtimes C) \ar[d, "\omega_{m_*(A \tildeboxtimes B), C}"] \ar[rr, "i_2^*\sigma_{A,B,C}"] & & i_2^*m_*^{12}(A \tildeboxtimes B \tildeboxtimes C) \ar[d, "\rho^{12,3}_{A \tildeboxtimes B \tildeboxtimes C}"] \\
        i^*m_*(A \tildeboxtimes B) \tildeboxtimes i^*C \ar[d, "\rho_{A,B} \tildeboxtimes 1"] & & m_*^{12}i_3^*(A \tildeboxtimes B \tildeboxtimes C) \ar[dd, "m_*^{12}\omega_{A,B,C}"] \\
        m_*i_2^*(A \tildeboxtimes B) \tildeboxtimes i^*C \ar[d, "m_*\omega_{A,B} \tildeboxtimes 1"] & & \\
        m_*(i^*A \tildeboxtimes i^*B) \tildeboxtimes i^*C \ar[rr, "\sigma_{i^*A,i^*B,i^*C}"] & & m_*^{12}(i^*A \tildeboxtimes i^*B \tildeboxtimes i^*C)
    \end{tikzcd}
    $$
    To show that the above diagram commutes, we expand it horizontally using the definition~\eqref{sigmadefinition} of $\sigma$ to obtain:
    $$\hspace{-0.5cm}
    \begin{tikzcd}
        i_{2}^*(m_*(A \tildeboxtimes B) \tildeboxtimes C) \ar[d, "\omega_{m_*(A \tildeboxtimes B), C}"] \ar[rr, "i_2^*\epsilon_{A,B,C}"] & & i_2^*m_*^{12}((A \tildeboxtimes B) \tildeboxtimes C) \ar[rr, "i_2^*m_*^{12}\delta_{A,B,C}"] \ar[d, "\rho^{12,3}_{(A \tildeboxtimes B) \tildeboxtimes C}"] & & i_2^*m_*^{12}(A \tildeboxtimes B \tildeboxtimes C) \ar[d, "\rho^{12,3}_{A \tildeboxtimes B \tildeboxtimes C}"] \\
        i^*m_*(A \tildeboxtimes B) \tildeboxtimes i^*C \ar[d, "\rho_{A,B} \tildeboxtimes 1"] & (\star) & m_*^{12}i_3^*((A \tildeboxtimes B) \tildeboxtimes C) \ar[rr, "m^{12}_* i_3^*\delta_{A,B,C}"] \ar[d, "m_*^{12}\omega_{A \tildeboxtimes B, C}"] & & m_*^{12}i_3^*(A \tildeboxtimes B \tildeboxtimes C) \ar[dd, "m_*^{12}\omega_{A,B,C}"] \\
        m_*i_2^*(A \tildeboxtimes B) \tildeboxtimes i^*C \ar[d, "m_*\omega_{A,B} \tildeboxtimes 1"] \ar[rr, "\epsilon_{i_2^*(A \tildeboxtimes B), i^*C}"] & & m_*^{12}(i_2^*(A \tildeboxtimes B) \tildeboxtimes i^*C) \ar[d, "m_*^{12}(\omega_{A,B} \tildeboxtimes 1)"] & &\\
        m_*(i^*A \tildeboxtimes i^*B) \tildeboxtimes i^*C \ar[rr, "\epsilon_{i^*A,i^*B,i^*C}"] & & m_*^{12}((i^*A \tildeboxtimes i^*B) \tildeboxtimes i^*C) \ar[rr, "m_{*}^{12}\delta_{i^*A,i^*B,i^*C}"] & & m_*^{12}(i^*A \tildeboxtimes i^*B \tildeboxtimes i^*C)
    \end{tikzcd}
    $$
    The upper right rectangle commutes by the naturality of the base change map $\rho^{12,3}: i_2^*m_*^{12} \rightarrow m^{12}_*i^*_3$ applied to the isomorphism $\delta_{A,B,C}$. The lower left rectangle commutes by the naturality of the isomorphism $\epsilon_{-,i^*C}: m_*(-) \tildeboxtimes i^*C \simeq m_*^{12}(- \tildeboxtimes i^*C)$ of \eqref{epsilondefinition} applied to the isomorphism $\omega_{A,B}: i_2^*(A \tildeboxtimes B) \simeq i_2^*A \tildeboxtimes i_2^*B$ of \eqref{omegadefinition}.  The lower right rectangle commutes because it is obtained by applying the functor $m_*^{12}$ to the commutative diagram:
    $$
    \begin{tikzcd}
        i_3^*((A \tildeboxtimes B) \tildeboxtimes C) \ar[rr, " i_3^*\delta_{A,B,C}"] \ar[d, "\omega_{A \tildeboxtimes B, C}"] & & i_3^*(A \tildeboxtimes B \tildeboxtimes C) \ar[dd, "\omega_{A,B,C}"] \\
        i_2^*(A \tildeboxtimes B) \tildeboxtimes i^*C \ar[d, "\omega_{A,B} \tildeboxtimes 1"] & & \\
        (i^*A \tildeboxtimes i^*B) \tildeboxtimes i^*C \ar[rr, "\delta_{i^*A,i^*B,i^*C}"] & & i^*A \tildeboxtimes i^*B \tildeboxtimes i^*C
    \end{tikzcd}
    $$
    It remains to show that the upper left rectangle $(\star)$ commutes, which follows from the more general assertion that for $E \in D_{G(\mathcal{O})}(\Gr_G^{(2)})$, $F \in D_{G(\mathcal{O})}(\Gr_G)$, the following diagram commutes:
    $$
    \begin{tikzcd}
        i_{2}^*(m_*E \tildeboxtimes F) \ar[d, "\omega_{m_*E, F}"] \ar[rr, "i_2^*\epsilon_{E,F}"] & & i_2^*m_*^{12}(E \tildeboxtimes F) \ar[d, "\rho^{12,3}_{E \tildeboxtimes F}"] \\
        i^*m_*E \tildeboxtimes i^*F \ar[d, "\rho_{E} \tildeboxtimes 1"] & & m_*^{12}i_3^*(E \tildeboxtimes F) \ar[d, "m^{12}_*\omega_{E,F}"] \\
        m_*i_2^*E \tildeboxtimes i^*F \ar[rr, "\epsilon_{i_2^*E, i^*F}"] & & m_*^{12}(i_2^*E \tildeboxtimes i^*F)
    \end{tikzcd}
    $$
    By adjunction (and the definition~\eqref{epsilondefinition} of $\epsilon$, which is used below to expand the bottom row horizontally), it suffices to show that the outer rectangle in the following diagram commutes (note that the middle vertical arrow $\mathrm{comp}$ refers to the compositional isomorphism $m^{12,*}i_2^* \simeq i_3^*m^{12,*}$ evaluated on $i_2^*m_*^{12}(E \tildeboxtimes F)$):
    $$
    \begin{tikzcd}
        m^{12,*}i_{2}^*(m_*E \tildeboxtimes F) \ar[dd, "m^{12,*}\omega_{m_*E, F}"] \ar[rr, "m^{12,*}i_2^*\epsilon_{E,F}"] & & m^{12,*}i_2^*m_*^{12}(E \tildeboxtimes F) \ar[rr, equals] \ar[d,"\mathrm{comp}"] & & m^{12,*}i_2^*m_*^{12}(E \tildeboxtimes F) \ar[d, "m^{12,*}\rho^{12,3}_{E \tildeboxtimes F}"] \\
        & & i_3^*m^{12,*}m_{*}^{12}(E \tildeboxtimes F) \ar[d, "i_3^*\eta^{12}_{E \tildeboxtimes F}"] & & m^{12,*}m^{12}_*i_3^*(E \tildeboxtimes F) \ar[d, "\eta^{12}_{i_3^*(E \tildeboxtimes F)}"] \\
        m^{12,*}(i^*m_*E \tildeboxtimes i^*F) \ar[d, "m^{12,*}(\rho^{12}_{E} \tildeboxtimes 1)"] & & i_3^*(E \tildeboxtimes F) \ar[rr, equals] & & i_3^*(E \tildeboxtimes F) \ar[d, "\omega_{E,F}"] \\
        m^{12,*}(m_*i_2^*E \tildeboxtimes i^*F) \ar[rr, "\xi^{12}_{m_*i_2^*E, i^*F}"] & & m^{*}m_*i_2^*E \tildeboxtimes i^*F \ar[rr, "\eta_{i_2^*E} \tildeboxtimes 1"] & & i_2^*E \tildeboxtimes i^*F
    \end{tikzcd}
    $$
    The upper right rectangle commutes by the very definition of the base change morphism $\rho^{12,3}_{E \tildeboxtimes F}$. Thus, it suffices to show that the outer rectangle in the following diagram commutes.
    $$
    \begin{tikzcd}
        m^{12,*}i_{2}^*(m_*E \tildeboxtimes F) \ar[dd, "m^{12,*}\omega_{m_*E, F}"] \ar[rr, equals] & & m^{12,*}i_2^*(m_*E \tildeboxtimes F) \ar[rr, "m^{12,*}i_2^*\epsilon_{E,F}"] \ar[d, "\mathrm{comp}"] & & m^{12,*}i_2^*m_*^{12}(E \tildeboxtimes F) \ar[d,"\mathrm{comp}"] \\
        & & i_3^*m^{12,*}(m_*E \tildeboxtimes F) \ar[d, "i_3^*\xi^{12}_{m_*E, F}"] \ar[rr, "i_3^*m^{12,*}\epsilon_{E,F}"] & &  i_3^*m^{12,*}m_{*}^{12}(E \tildeboxtimes F) \ar[d, "i_3^*\eta^{12}_{E \tildeboxtimes F}"] \\
        m^{12,*}(i^*m_*E \tildeboxtimes i^*F) \ar[dd, "m^{12,*}(\rho^{12}_{E} \tildeboxtimes 1)"] & & i_3^*(m^*m_*E \tildeboxtimes F) \ar[rr, "i_3^*(\eta_E \tildeboxtimes 1)"] \ar[d, "\omega_{m^*m_*E, F}"] & & i_3^*(E \tildeboxtimes F) \ar[d, "\omega_{E,F}"] \\
        & & i_2^*m^*m_*E \tildeboxtimes F \ar[rr, "i_2^*\eta_E \tildeboxtimes 1"] & & i_2^*E \tildeboxtimes i^*F \ar[d, equals]\\
        m^{12,*}(m_*i_2^*E \tildeboxtimes i^*F) \ar[rr, "\xi^{12}_{m_*i_2^*E, i^*F}"] & & m^*m_* i_2^*E \tildeboxtimes i^*F \ar[rr, "\eta_{i_2^*E} \tildeboxtimes 1"] & & i_2^*E \tildeboxtimes i^*F
    \end{tikzcd}
    $$
    The upper right rectangle commutes by the naturality of the isomorphism $\mathrm{comp}: m^{12,*}i_2^* \simeq i_3^*m^{12,*}$. The rectangle below it commutes by ($i_3^*$ applied to) the definition~\eqref{epsilondefinition} of $\epsilon_{E,F}$. Hence, it suffices to show that the outer rectangle in the following diagram commutes:
    $$
    \begin{tikzcd}
        m^{12,*}i_{2}^*(m_*E \tildeboxtimes F) \ar[dd, "m^{12,*}\omega_{m_*E, F}"] \ar[rrrr, "\mathrm{comp}"] & & & & i_3^*m^{12,*}(m_*E \tildeboxtimes F) \ar[d, "i_3^*\xi^{12}_{m_*E, F}"] \\
         & & (\dagger) & &  i_3^*(m^*m_*E \tildeboxtimes F) \ar[d, "\omega_{m^*m_*E, F}"] \\
        m^{12,*}(i_2^*m_*E \tildeboxtimes i^*F) \ar[d, "m^{12,*}(\rho^{12}_{E} \tildeboxtimes 1)"] \ar[rr, "\xi^{12}_{i_2^*m_*E, i^*F}"] & & m^*i_2^*m_*E \tildeboxtimes i^*F \ar[rr, "\mathrm{comp} \tildeboxtimes 1"] \ar[d, "m^*\rho_E \tildeboxtimes 1"] & & i_2^*m^*m_*E \tildeboxtimes i^*F \ar[d, "i_2^*\eta_E \tildeboxtimes 1"]\\
        m^{12,*}(m_*i_2^*E \tildeboxtimes i^*F) \ar[rr, "\xi^{12}_{m_*i_2^*E, i^*F}"] && m^*m_*i_2^*E \tildeboxtimes i^*F \ar[rr, "\eta_{i_2^*E} \tildeboxtimes 1"] && i_2^*E \tildeboxtimes i^*F
    \end{tikzcd}
    $$
    The lower left rectangle commutes by the naturality of the isomorphism $\xi^{12}: m^{12,*}(- \tildeboxtimes -) \simeq m^*(-) \tildeboxtimes -$ applied to the map $\rho_E^{12} \tildeboxtimes 1$. The lower right rectangle is obtained by applying the functor $- \tildeboxtimes i^*F$ to the diagram
    $$
    \begin{tikzcd}
        m^*i_2^*m_*E \ar[rr, "\mathrm{comp}"] \ar[d, "m^*\rho^{12}_E"] & & i_2^*m^*m_*E \ar[d, "i_2^*\eta_E"] \\
        m^*m_*i_2^*E \ar[rr, "\eta_{i_2^*E}"] & & i_2^*E.
    \end{tikzcd}
    $$
    This square commutes by the very definition of the base change map $\rho^{12}_E: i_2^*m_*E \rightarrow m_*i_2^*E$. We must verify the commutativity of the upper rectangle $(\dagger)$. The claim is that for $X,Y \in D_{G(\mathcal{O})}(\Gr_G)$, the following diagram commutes.
    $$
    \begin{tikzcd}
        m^{12,*}i_{2}^*(X \tildeboxtimes Y) \ar[dd, "m^{12,*}\omega_{X, Y}"] \ar[rrrr, "\mathrm{comp}"] & & & & i_3^*m^{12,*}(X \tildeboxtimes Y) \ar[d, "i_3^*\xi^{12}_{X, Y}"] \\
         & & & &  i_3^*(m^*X \tildeboxtimes Y) \ar[d, "\omega_{m^*X, Y}"] \\
        m^{12,*}(i_2^*X \tildeboxtimes Y) \ar[rr, "\xi^{12}_{i_2^*X, i^*Y}"] & & m^*i_2^*X \tildeboxtimes i^*Y \ar[rr, "\mathrm{comp} \tildeboxtimes 1"] & & i_2^*m^*X \tildeboxtimes i^*Y \\
    \end{tikzcd}
    $$
    Checking the commutativity of this diagram is left to the reader (reduce to verifying the commutativity of the corresponding diagram with untwisted external products, which follows from a straightforward compatiblity between the isomorphism $f^*(- \otimes -) \simeq f^*(-) \otimes f^*(-)$ and the compositional isomorphisms $(gf)^* \simeq f^*g^*$). 
\end{proof}

Next, we turn to the following problem. Suppose that $M \subseteq L$ is a connected reductive subgroup of $L$ (the reader should keep in the mind the case in which $M \subseteq L$ is an inclusion of Levi subgroups of $G$; for instance, when $M = T$ is a maximal torus). Let $j: \Gr_M \hookrightarrow \Gr_L$ denote the induced map on affine Grassmannians. We also have the composition $k = i \circ j: \Gr_M \hookrightarrow \Gr_G$. In Construction~\ref{colax definition}, we equipped the functors $i^*$, $j^*$, and $k^*$ with (non-unital, for now) colax monoidal structures. On the other hand, we have a canonical isomorphism $k^* \simeq j^* \circ i^*$. We would like to show that this natural isomorphism is an isomorphism of (non-unital) colax monoidal functors.

\begin{proposition}\label{transitivity of restriction}
    Let $j: \Gr_M \hookrightarrow \Gr_L$, $i: \Gr_L \hookrightarrow \Gr_G$, and $k = i \circ j: \Gr_M \hookrightarrow \Gr_G$ be as above. Equip the functors $i^*$, $j^*$, and $k^*$ with the (non-unital) colax monoidal structures of Construction~\ref{colax definition}. Then, the compositional isomorphism $k^* \simeq j^* \circ i^*$ is an isomorphism of (non-unital) colax monoidal functors. More precisely, let $A,B \in D_{G(\mathcal{O})}(\Gr_G)$. Then, the following diagram in $D_{M(\mathcal{O})}(\Gr_M)$ commutes:
    \begin{equation}\label{transitivity of restriction diagram0}
    \begin{tikzcd}
        j^*i^* (A \star B) \ar[d, "\mathrm{comp}"] \ar[r, "j^*\theta_{A,B}"] & j^*(i^*A \star i^*B) \ar[r, "\theta_{i^*A,i^*B}"] & j^*i^*A \star j^*i^*B \ar[d, "\operatorname{comp} \star \operatorname{comp}"] \\
        k^*(A \star B) \ar[rr, "\theta_{A,B}"] & & k^*A \star k^*B
    \end{tikzcd}
    \end{equation}
\end{proposition}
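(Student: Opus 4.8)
The plan is to unwind the definition $\theta_{A,B} = m_*(\omega_{A,B}) \circ \rho_{A \tildeboxtimes B}$ of Construction~\ref{colax definition} for each of the three inclusions $i$, $j$, and $k$, and then to observe that $\theta$ is built from two constituents — the base-change transformation $\rho \colon i^* m_* \to m_* i_2^*$ and the twisted external-product comparison $\omega$ — each of which is separately transitive with respect to composition of closed immersions. Concretely, after expanding $\theta$ in \eqref{transitivity of restriction diagram0}, one inserts the intermediate object $m_* j_2^* i_2^*(A \tildeboxtimes B) \simeq m_* k_2^*(A \tildeboxtimes B)$ (using $k_2 = i_2 \circ j_2$), splitting the diagram into a top subdiagram involving only $\rho^i, \rho^j, \rho^k$ and a bottom subdiagram obtained by applying $m_*$ to a diagram involving only $\omega^i, \omega^j, \omega^k$.

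First I would treat the top ($\rho$) subdiagram. Here one uses the identities $i \circ m = m \circ i_2$ and $j \circ m = m \circ j_2$ (so $k \circ m = m \circ k_2$), which exhibit the commutative square for $k$ as the vertical pasting of those for $i$ and $j$. The required commutativity is precisely the statement that the base-change 2-cell attached to a pasted pair of commutative squares is the composite of the base-change 2-cells of the constituent squares — i.e. the pseudofunctoriality, in the compositional variables, of the exchange morphism $(-)^*(-)_* \Rightarrow (-)_*(-)^*$. This is standard and, in the present sheaf-theoretic setting, follows from the compatibility of $*$-pullback and pushforward with composition recorded in \cite[§2.11]{achar2020} (the same circle of facts used in the proof of Proposition~\ref{associativity}); one must also note that the compositional isomorphism $k_2^* \simeq j_2^* i_2^*$ on $\Gr^{(2)}$ is carried by $m_*$ to the one $k^* \simeq j^* i^*$ on $\Gr$, which is part of the same package.

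Next I would treat the bottom ($\omega$) subdiagram. After stripping off the outer $m_*$, this reduces to showing that $\omega^k_{A,B} \colon k_2^*(A \tildeboxtimes B) \simeq k^*A \tildeboxtimes k^*B$ agrees, under the compositional isomorphisms $k_2^* \simeq j_2^* i_2^*$ and $k^* \simeq j^* i^*$, with the composite $j_2^* i_2^*(A \tildeboxtimes B) \xrightarrow{j_2^*\omega^i_{A,B}} j_2^*(i^*A \tildeboxtimes i^*B) \xrightarrow{\omega^j_{i^*A, i^*B}} j^*i^*A \tildeboxtimes j^*i^*B$. Since $\Gr^{(2)}$ is not literally a product but the twisted product $\Gr \tildetimes \Gr$, I would first pull the assertion back along the presentation of $\tildeboxtimes$ used in Remark~\ref{convolution}, reducing it to the familiar coherence for ordinary external products: the natural isomorphism $f^*(P \boxtimes Q) \simeq f^*P \boxtimes f^*Q$ is compatible with composites $(gf)^* \simeq f^* g^*$. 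Applying $m_*$ then yields the bottom subdiagram.

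The main obstacle I anticipate is bookkeeping rather than conceptual: matching up the non-invertible base-change 2-cells for the two constituent squares with the one for the pasted square — exactly the kind of compatibility that made Proposition~\ref{associativity} lengthy — together with the nuisance of transporting the $\omega$-coherence across the twisting in $\tildeboxtimes$. As with Proposition~\ref{associativity}, all of this could in principle be absorbed into a suitable $(\infty,2)$-categorical sheaf-and-correspondence formalism, but in keeping with Remark~\ref{apology} I would carry it out directly, drawing the single large diagram, inserting the object $m_* k_2^*(A \tildeboxtimes B)$, and verifying the two halves as above.
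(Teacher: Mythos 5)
Your plan is correct and matches the paper's argument essentially step for step: the paper likewise splits the expanded diagram into a base-change square (commuting by the compatibility of $\rho$ with composition, \cite[Proposition 2.11.7]{achar2020}) and an $\omega$-square under $m_*$ (reduced, as in the last step of Proposition~\ref{associativity}, to the coherence of untwisted external products with $(gf)^* \simeq f^*g^*$). The only piece of bookkeeping you leave implicit — needed to separate the interleaved top row into pure-$\rho$ and pure-$\omega$ parts — is the naturality of $j^*m_* \to m_*j_2^*$ applied to $\omega_{A,B}$, which is exactly how the paper finishes.
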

\begin{proof}
    We will notationally suppress all composition isomorphisms in the following argument (there are several that appear, but there is never ambiguity about which is meant). We can more explicitly express the above diagram \eqref{transitivity of restriction diagram0} as follows.
    \begin{equation}
    \begin{tikzcd}\label{transitivity of restriction diagram1}
        j^*i^*m_*(A \tildeboxtimes B) \ar[d] \ar[r, "j^*\theta_{A,B}"] & j^*m_*(i^*A \tildeboxtimes i^*B) \ar[r, "\theta_{i^*A,i^*B}"] & m_*(j^*i^*A \tildeboxtimes j^*i^*B) \ar[d] \\
        k^*m_*(A \tildeboxtimes B) \ar[rr, "\theta_{A,B}"] & & m_*(k^*A \tildeboxtimes k^*B)
    \end{tikzcd}
    \end{equation}
    The following diagram commutes by the compatibility of composition with base change (Proposition 2.11.7 of \cite{achar2020}).
    \begin{equation}
    \begin{tikzcd}\label{transitivity of restriction diagram2}
        j^*i^*m_*(A \tildeboxtimes B) \ar[d] \ar[r, "j^*\rho_{A \tildeboxtimes B}"] & j^*m_*i_2^*(A \tildeboxtimes B) \ar[r,"\rho_{i_2^*(A \tildeboxtimes B)}"] & m_*j_2^*i_2^*(A \tildeboxtimes B) \ar[d] \\
         k^*m_*(A \tildeboxtimes B) \ar[rr,"\rho_{A \tildeboxtimes B}"] & & m_*k_2^*(A \tildeboxtimes B)
    \end{tikzcd}
    \end{equation}
    The following diagram commutes because it is obtained by applying the functor $m_*$ to the diagram which witnesses the compatibility of $\omega_{A,B}$ with composition (see the last step in the proof of Proposition~\ref{associativity}).
    \begin{equation}
    \begin{tikzcd}\label{transitivity of restriction diagram3}
        m_*j_2^*i_2^*(A \tildeboxtimes B) \ar[d] \ar[r, "m_*\omega_{A,B}"] & m_*j_2^*(i^*A \tildeboxtimes i^*B) \ar[r, "m_*\omega_{i^*A, i^*B}"] & m_*(j^*i^*A \tildeboxtimes j^*i^*B) \ar[d]\\
        m_*k_2^*(A \tildeboxtimes B) \ar[rr,"m_*\omega_{A,B}"] & & m_*(k^*A \tildeboxtimes k^*B). 
    \end{tikzcd}
    \end{equation}
    Pasting diagrams \eqref{transitivity of restriction diagram2} and \eqref{transitivity of restriction diagram3} together horizontally yields a commutative rectangle in which the left vertical, right vertical, and bottom horizontal arrows are equal to those of the original diagram \eqref{transitivity of restriction diagram1}. It therefore suffices to show that the top horizontal arrows coincide. Since the first and last morphisms in these four-fold compositions are equal, the claim amounts to verifying the commutativity of the following diagram.
    $$
    \begin{tikzcd}
    j^*m_*i_2^*(A \tildeboxtimes B) \ar[r] \ar[d] & j^*m_*(i^*A \tildeboxtimes i^*B)  \ar[d]\\
    m_*j_2^*i_2^*(A \tildeboxtimes B)\ar[r] & m_*j_2^*(i^*A \tildeboxtimes i^*B)
    \end{tikzcd}
    $$
    This diagram commutes by the naturality of the base change map $j^*m_* \rightarrow m_*j_2^*$ applied to the isomorphism $i_2^*(A \tildeboxtimes B) \simeq i^*A \tildeboxtimes i^*B$. 
\end{proof}

\begin{remark}
    Recall (for example, from \cite[\href{https://kerodon.net/tag/00CC}{Tag 00CC}]{kerodon}) that to give a monoidal structure on a category $\mathcal{C}$, it suffices to give an associative product $\star$ on $\mathcal{C}$ together with a \textit{unit object} of $\mathcal{C}$ (with respect to $\star$), that is, an object $1_\mathcal{C} \in \mathcal{C}$ equipped with an isomorphism
    $$
    1_{\mathcal{C}} \star 1_{\mathcal{C}} \simeq 1_{\mathcal{C}}.
    $$
    We use this simplification below when discussing the unit constraint on the derived Satake category. 
\end{remark}

\begin{remark}
    We recall the definition of the unit constraint on $(D_{G(\mathcal{O})}(\Gr_G), \star)$ (see \cite[Lemma 9.2.2]{achar2020}). Let $\mathrm{IC}_0 = \underline{\CC}_{\Gr_G^0}$ denote the denote ${G}(\mathcal{O})$-equivariant skyscraper at the basepoint $t^0 \in \Gr_G$ and let $j: \mathrm{pt} \hookrightarrow \Gr_G$ denote the closed inclusion of $t^0$. Let $j_2: \mathrm{pt} \hookrightarrow \Gr_G \tildetimes \Gr_G$ denote the inclusion of the point $\Gr_G^0 \tildetimes \Gr_G^0$. Firstly, there is an isomorphism
    $$
    \xi: \mathrm{IC}_0 \tildeboxtimes \mathrm{IC}_0
    = j_*\underline{\CC}_{\mathrm{pt}} \tildeboxtimes j_*\underline{\CC}_{\mathrm{pt}} \simeq (j_2)_*(\underline{\CC}_{\mathrm{pt}} \boxtimes \underline{\CC}_{\mathrm{pt}}) \simeq (j_2)_*\underline{\CC}_{\mathrm{pt}}.
    $$
    Since $m \circ j_2 = j$, we have an isomorphism $(m j_2)_* \simeq m_*(j_2)_{*}$. Applying $m_*$ to $\xi$ therefore yields an isomorphism
    $$
    \eta: \mathrm{IC}_0 \star \mathrm{IC}_0 \simeq m_*(j_2)_* \underline{\CC}_{\mathrm{pt}} \simeq j_*\underline{\CC}_{\mathrm{pt}} = \mathrm{IC}_0. 
    $$
    The isomorphism $\eta$ is the unit contraint. 
\end{remark}

\begin{construction}\label{unit}
    Let $\mathrm{IC}_0' \in D_{L(\mathcal{O})}(\Gr_L)$ denote the monoidal unit and $j': \mathrm{pt} \hookrightarrow \Gr_L$ the inclusion of the basepoint. We have an isomorphism $\chi: i^*\mathrm{IC}_0 \xrightarrow{\sim} \mathrm{IC}_0'$, which can be precisely defined as the composition 
    $$
    \chi: i^*\mathrm{IC}_0 = i^* j_*\underline{\CC}_{\mathrm{pt}} \simeq i^*i_*j'_*\underline{\CC}_{\mathrm{pt}} \xrightarrow{\sim} j'_*\underline{\CC}_{\mathrm{pt}} = \mathrm{IC}_0'. 
    $$
\end{construction}

\begin{proposition}\label{unitality of restriction}
    The natural transformation $\theta: i^*(- \star -) \rightarrow i^*(-) \star i^*(-)$ of functors $D_{G(\mathcal{O})}(\Gr_G) \times D_{G(\mathcal{O})}(\Gr_G) \rightarrow D_{L(\mathcal{O})}(\Gr_G)$ defined in Construction~\ref{colax definition} is compatible with the unit constraints underlying the monoidal categories $D_{G(\mathcal{O})}(\Gr_G)$ and $D_{L(\mathcal{O})}(\Gr_L)$. More precisely, the following diagram in $D_{L(\mathcal{O})}(\Gr_L)$ commutes:
    \begin{equation}\label{unitality of restriction diagram0}
    \begin{tikzcd}
        i^*(\mathrm{IC}_0 \star \mathrm{IC}_0) \ar[rr, "i^*\eta"]\ar[d, "\theta_{\mathrm{IC}_0, \mathrm{IC}_0}"] & & i^*\mathrm{IC}_0  \ar[d, "\chi"] \\
        i^*\mathrm{IC}_0 \star i^*\mathrm{IC}_0 \ar[r, "\chi \star \chi"] & \mathrm{IC}'_0 \star \mathrm{IC}'_0 \ar[r, "\eta"] & \mathrm{IC}_0'
    \end{tikzcd}
    \end{equation}
\end{proposition}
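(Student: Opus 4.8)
The plan is to reduce the square \eqref{unitality of restriction diagram0} to a chase of standard compatibilities, exactly in the spirit of the proof of Proposition~\ref{associativity} but with two nested convolution Grassmannians replaced by one convolution Grassmannian and a point. Write $m_G\colon \Gr_G^{(2)} \to \Gr_G$ and $m_L\colon \Gr_L^{(2)} \to \Gr_L$ for the two multiplication maps, $j\colon \mathrm{pt}\hookrightarrow\Gr_G$ and $j'\colon \mathrm{pt}\hookrightarrow\Gr_L$ for the basepoint inclusions, and $j_2, j_2'$ for their counterparts on the convolution Grassmannians, so that $\mathrm{IC}_0 = j_*\underline{\CC}_{\mathrm{pt}}$, $\mathrm{IC}_0' = j'_*\underline{\CC}_{\mathrm{pt}}$, $m_G\circ j_2 = j = i\circ j'$, $m_L\circ j_2' = j'$, $i_2\circ j_2' = j_2$, and $m_G\circ i_2 = i\circ m_L$. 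First I would observe, as in the final step of the proof of Proposition~\ref{associativity}, that since $\mathrm{IC}_0$ is a skyscraper at the basepoint of $\Gr_G$, the twisted products $\mathrm{IC}_0\tildeboxtimes\mathrm{IC}_0$ and $\mathrm{IC}_0'\tildeboxtimes\mathrm{IC}_0'$ are skyscrapers at the basepoints of $\Gr_G^{(2)}$ and $\Gr_L^{(2)}$ respectively, so that one may everywhere replace $\tildeboxtimes$ by the ordinary external product and $\omega_{\mathrm{IC}_0,\mathrm{IC}_0}$ by the Künneth isomorphism for the closed immersion $i\times i$.

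Unwinding the three maps in \eqref{unitality of restriction diagram0} --- $\theta_{\mathrm{IC}_0,\mathrm{IC}_0} = m_{L,*}(\omega_{\mathrm{IC}_0,\mathrm{IC}_0})\circ\rho_{\mathrm{IC}_0\tildeboxtimes\mathrm{IC}_0}$ from Construction~\ref{colax definition}; the unit constraint $\eta$, which is $m_{G,*}$ of the Künneth isomorphism $\xi\colon \mathrm{IC}_0\tildeboxtimes\mathrm{IC}_0\simeq (j_2)_*\underline{\CC}_{\mathrm{pt}}$ followed by the compositional isomorphism $m_{G,*}(j_2)_*\simeq j_*$; and $\chi$ from Construction~\ref{unit}, built from the counit $i^*i_*\xrightarrow{\sim}\mathrm{id}$ of the closed immersion $i$ --- the square \eqref{unitality of restriction diagram0} decomposes, after inserting the intermediate object $m_{L,*}i_2^*(\mathrm{IC}_0\tildeboxtimes\mathrm{IC}_0)$, into two cells. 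The first is the commutativity of
$$
\begin{tikzcd}
i^*m_{G,*}(\mathrm{IC}_0\tildeboxtimes\mathrm{IC}_0) \ar[r, "i^*m_{G,*}\xi"] \ar[d, "\rho"] & i^*m_{G,*}(j_2)_*\underline{\CC}_{\mathrm{pt}} \ar[r, "\sim"] \ar[d, "\rho"] & i^*j_*\underline{\CC}_{\mathrm{pt}} \ar[d, "\chi"] \\
m_{L,*}i_2^*(\mathrm{IC}_0\tildeboxtimes\mathrm{IC}_0) \ar[r, "m_{L,*}i_2^*\xi"] & m_{L,*}i_2^*(j_2)_*\underline{\CC}_{\mathrm{pt}} \ar[r, "\sim"] & j'_*\underline{\CC}_{\mathrm{pt}},
\end{tikzcd}
$$
whose left square is the naturality of the base change transformation $\rho\colon i^*m_{G,*}\to m_{L,*}i_2^*$ of \eqref{rhodefinition} applied to $\xi$, and whose right square is an instance of the compatibility of base change with compositional isomorphisms (Proposition 2.11.7 of \cite{achar2020}, the Cartesian hypothesis again being superfluous) for the identities $m_G j_2 = j = i j'$, $m_L j_2' = j'$, $i_2 j_2' = j_2$, $m_G i_2 = i m_L$. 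The second cell, after stripping off $m_{L,*}$, asserts that under $\omega_{\mathrm{IC}_0,\mathrm{IC}_0}$, the map $\chi\tildeboxtimes\chi$, and the compositional isomorphism $i_2^*(j_2)_*\underline{\CC}_{\mathrm{pt}}\simeq (j_2')_*\underline{\CC}_{\mathrm{pt}}$, the map $i_2^*\xi$ is carried to the Künneth isomorphism $\xi'$ on $\Gr_L^{(2)}$; having already reduced $\tildeboxtimes$ to $\boxtimes$, this is the elementary statement that the Künneth isomorphism for skyscraper sheaves commutes with $*$-restriction along a closed immersion.

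I expect the only real obstacle to be organizational: one must lay out the enlarged diagram carefully enough that every cell is visibly one of the four standard compatibilities used above (naturality of $\rho$; functoriality of the counit $i^*i_*\xrightarrow{\sim}\mathrm{id}$; base change versus composition; and the Künneth compatibility), together with the reduction of twisted to ordinary external products. No input is required beyond what already appears in the proofs of Propositions~\ref{associativity} and~\ref{transitivity of restriction}.
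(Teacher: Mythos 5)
Your argument is correct, but it is not the route the paper takes. The paper's proof exploits the fact that every term of \eqref{unitality of restriction diagram0} is supported at the basepoint $t^0 \in \Gr_L$: it applies $j^*$ (restriction to the basepoint), embeds the resulting square into a larger diagram whose cells commute by naturality of $\theta$, the compatibility of $\chi$ with composition, and the transitivity statements of the type proved in Proposition~\ref{transitivity of restriction}, and thereby reduces everything to the degenerate case $L = \mathrm{pt}$, where the claim is immediate. You instead unwind the definitions of $\theta$, $\eta$, and $\chi$ directly and split the square along the intermediate object $m_{L,*}i_2^*(\mathrm{IC}_0 \tildeboxtimes \mathrm{IC}_0)$ into (i) naturality of the base change map $\rho$ of \eqref{rhodefinition} applied to $\xi$ together with the pasting compatibility of base change with the compositional isomorphisms for $m_G j_2 = j = i j'$, $i_2 j_2' = j_2$, $m_L j_2' = j'$, and (ii) the assertion that, via $\omega$ and $\chi \tildeboxtimes \chi$, the map $i_2^*\xi$ is carried to $\xi'$; your bookkeeping is sound, and composing the two cells does recover both composites of \eqref{unitality of restriction diagram0}. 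What each approach buys: the paper's reduction to $L = \mathrm{pt}$ is short and recycles machinery already set up for Proposition~\ref{transitivity of restriction}, at the price of a somewhat large auxiliary diagram; yours is self-contained and structurally parallel to the proof of Proposition~\ref{associativity}, at the price that cell (ii) — the compatibility of the skyscraper Künneth isomorphism (and the identification of twisted with untwisted external products near the basepoint) with $*$-restriction along $i_2$ — is exactly the kind of unwinding of the torsor-descent definition of $\tildeboxtimes$ that the paper elsewhere leaves to the reader, so it must be carried out honestly rather than simply asserted. One small terminological point: the isomorphism $i_2^*(j_2)_*\underline{\CC}_{\mathrm{pt}} \simeq (j_2')_*\underline{\CC}_{\mathrm{pt}}$ you use is a base change isomorphism (invertible because $i_2$ is a closed immersion through which $j_2$ factors), not a compositional one; this does not affect the argument, since the pasting compatibility you invoke is stated for base change morphisms.
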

\begin{proof}
    It is at least obvious that all of the terms in \eqref{unitality of restriction diagram0} are isomorphic and supported at $t^0 \in \Gr_L$. Hence, it suffices to prove the commutativity of the diagram after applying the functor $j^*$, where $j: \mathrm{pt} \rightarrow \Gr_L$ denotes the inclusion of the basepoint. Let $k = ji : \mathrm{pt} \rightarrow \Gr_G$ denote the composition. The diagram $j^*$\eqref{unitality of restriction diagram0} can be embedded into the following diagram:
    $$
    \begin{tikzcd}
        k^*(\mathrm{IC}_0 \star \mathrm{IC}_0) \ar[rrrr, "k^*\eta"]\ar[d, "\mathrm{comp}"] & & & & k^*(\mathrm{IC}_0) \ar[d, "\mathrm{comp}"]\\
        j^*i^*(\mathrm{IC}_0 \star \mathrm{IC}_0) \ar[rrrr, "j^*i^*\eta"]\ar[d,"j^*\theta_{\mathrm{IC}_0, \mathrm{IC}_0}"] & & & &j^*i^*\mathrm{IC}_0  \ar[d, "j^*\chi"] \\
        j^*(i^*\mathrm{IC}_0 \star i^*\mathrm{IC}_0) \ar[rr, "j^*(\chi\, \star \, \chi)"] \ar[d, "\theta_{i^*\mathrm{IC}_0, i^*\mathrm{IC}_0}"] & & j^*(\mathrm{IC}'_0 \star \mathrm{IC}'_0) \ar[d, "\theta_{\mathrm{IC}'_0, \mathrm{IC}'_0}"] \ar[rr, "j^*\eta"]& & j^*\mathrm{IC}_0'\ar[d, "\chi'"] \\
        j^*i^*\mathrm{IC}_0 \star j^*i^*\mathrm{IC}_0 \ar[rr,"(j^*\chi)\, \star \,(j^*\chi)"] \ar[d, "\operatorname{comp} \star \operatorname{comp}"] & &j^*\mathrm{IC}_0' \star j^*\mathrm{IC}_0' \ar[rr] \ar[d, "\chi' \star \chi'"] & & \underline{\CC}_{\mathrm{pt}} \ar[d, equals] \\
        k^*\mathrm{IC}_0 \star k^* \mathrm{IC}_0 \ar[rr, "\chi''\, \star \, \chi''"] & & \underline{\CC}_{\mathrm{pt}} \star \underline{\CC}_{\mathrm{pt}} \ar[rr, "1 \otimes 1 \mapsto 1"] & & \underline{\CC}_{\mathrm{pt}}
    \end{tikzcd}
    $$
    Here, the isomorphism $\chi': j^*\mathrm{IC}_0' \rightarrow \underline{\CC}_{\mathrm{pt}}$ (respectively, $\chi'': k^*\mathrm{IC}_0 \underline{\CC}_{\mathrm{pt}}$) is obtained from Construction~\ref{unit} after replacing the pair $L \subseteq G$ by the pair $\mathrm{pt} \subseteq L$ (respectively, $\mathrm{pt} \subseteq G$). Moreover, the unlabelled map $j^*\mathrm{IC}'_0 \star j^*\mathrm{IC}_0' \rightarrow \underline{\CC}_{\mathrm{pt}}$ is defined as the composition $j^*\mathrm{IC}_0' \star j^*\mathrm{IC}_0' \rightarrow \underline{\CC}_{\mathrm{pt}} \star \underline{\CC}_{\mathrm{pt}}  \simeq \underline{\CC}_{\mathrm{pt}}$ (and is included only for formatting purposes). 
    
    The commutativity of the top rectangle follows from the naturality of the isomorphism $\mathrm{comp}: j^*i^* \simeq k^*$ applied to the morphism $\eta$. The commutativity of the upper left small rectangle follows from the naturality of the morphism $\theta: j^*(- \star -) \rightarrow j^*(-) \star j^*(-)$ applied to $\chi \star \chi$. The commutativity of the upper right small rectangle will follow from the case $L = \mathrm{pt}$. The commutativity of the lower right small rectangle is trivial. The lower left small rectangle commutes by the compatiblity of $\chi$ with composition. The commutativity of the large outer rectangle will follow from the case $L = \mathrm{pt}$. These observations imply the commutativity of the second rectangle from the top (that is, the diagram $j^*$\eqref{unitality of restriction diagram0}). 

    We may therefore assume that $L = \mathrm{pt}$ is the trivial subgroup. In this case, the claim is obvious. 
\end{proof}

\begin{corollary}
    \begin{enumerate}
        \item The constructions of \ref{colax definition} and \ref{unit} equip the functor $i^*: D_{G(\mathcal{O})}(\Gr_G) \rightarrow D_{L(\mathcal{O})}(\Gr_L)$ with a colax monoidal structure. 
        \item The constructions of \ref{lax definition}, \ref{colax definition}, and \ref{unit} equip the functor $i^!: D_{G(\mathcal{O})}(\Gr_G) \rightarrow D_{L(\mathcal{O})}(\Gr_L)$ with a lax monoidal structure. 
    \end{enumerate}
\end{corollary}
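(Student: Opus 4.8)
The plan is to observe that the substantive content of the Corollary is already contained in the two preceding propositions: the associativity coherence of the structure map is Proposition~\ref{associativity}, and the unit coherence is Proposition~\ref{unitality of restriction}. What remains is to package these into the definition of a (co)lax monoidal functor for part~(1), and then to transfer the result along Verdier duality for part~(2).

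First I would dispatch part~(1). I would invoke the reduction --- parallel to the one recalled just before Remark~\ref{unit} for monoidal categories themselves --- that a colax monoidal structure on a functor $F \colon \mathcal{C} \to \mathcal{D}$ between monoidal categories amounts to a natural transformation $\theta_{A,B} \colon F(A \star B) \to F(A) \star F(B)$ together with a morphism $\chi \colon F(1_{\mathcal{C}}) \to 1_{\mathcal{D}}$ (not required to be invertible), subject only to two conditions: (i) $\theta$ is compatible with the associativity constraints, and (ii) $\chi$ is compatible with the unit witnesses $1 \star 1 \xrightarrow{\sim} 1$ on $\mathcal{C}$ and on $\mathcal{D}$, in the sense of the square of Proposition~\ref{unitality of restriction}; the two unitor triangles then follow formally. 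For the functor $i^*$, Construction~\ref{colax definition} supplies $\theta$, Construction~\ref{unit} supplies $\chi$, condition~(i) is exactly Proposition~\ref{associativity}, and condition~(ii) is exactly Proposition~\ref{unitality of restriction}. This yields part~(1).

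For part~(2), I would follow the recipe indicated in Remark~\ref{lax definition}. The two ingredients are that $\mathbb{D}_G$ (respectively $\mathbb{D}_L$) is a monoidal equivalence $D_{G(\mathcal{O})}(\Gr_G) \xrightarrow{\sim} D_{G(\mathcal{O})}(\Gr_G)^{\mathrm{op}}$ --- via the isomorphism $\mathbb{D}_G(A \star B) \simeq \mathbb{D}_G(A) \star \mathbb{D}_G(B)$ of Remark~\ref{lax definition} and the self-duality $\mathbb{D}_G(\mathrm{IC}_0) \simeq \mathrm{IC}_0$ of the skyscraper unit --- and that there is a natural isomorphism $\mathbb{D}_L \circ i^! \simeq i^* \circ \mathbb{D}_G$. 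Since passing to opposite monoidal categories interchanges lax and colax, the colax monoidal functor $i^*$ of part~(1) produces a lax monoidal functor $(i^*)^{\mathrm{op}} \colon D_{G(\mathcal{O})}(\Gr_G)^{\mathrm{op}} \to D_{L(\mathcal{O})}(\Gr_L)^{\mathrm{op}}$; pre-composing with $\mathbb{D}_G$ and post-composing with $\mathbb{D}_L^{-1}$ and then using the displayed isomorphism identifies $i^!$ with $\mathbb{D}_L^{-1} \circ (i^*)^{\mathrm{op}} \circ \mathbb{D}_G$, a composite of lax and strong monoidal functors, hence lax monoidal. Unwinding the transport shows that its lax structure maps are precisely those assembled from Constructions~\ref{lax definition}, \ref{colax definition}, and~\ref{unit}.

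The only step I expect to require genuine care is the reduction used in part~(1): one should check that feeding Propositions~\ref{associativity} and~\ref{unitality of restriction} into the passage from "a non-unital colax structure plus unit-witness compatibility" to a full colax monoidal structure is not circular and really does produce the unitor triangles. This is the same bookkeeping that justifies the description of monoidal categories recalled before Remark~\ref{unit}, and I would either import it verbatim or, to stay self-contained, verify the two unitor triangles directly by a diagram chase in the same style as Propositions~\ref{associativity} and~\ref{unitality of restriction}, using the explicit definitions of $\eta$, $\theta$, and $\chi$. No other input is needed; in particular, Proposition~\ref{transitivity of restriction} plays no role in this Corollary, though it will be relevant later when comparing restriction functors along chains of subgroups.
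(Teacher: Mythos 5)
Your proposal is correct and is essentially the paper's intended argument: the corollary is assembled exactly by feeding Proposition~\ref{associativity} and Proposition~\ref{unitality of restriction} into the unit-object simplification recalled before Remark~\ref{unit} to get the colax structure on $i^*$, and then dualizing along $\mathbb{D}_L \circ i^! \simeq i^* \circ \mathbb{D}_G$ as in Remark~\ref{lax definition} to obtain the lax structure on $i^!$. Your cautionary remark about verifying that the unit-witness compatibility really yields the unitor triangles is the right (and only) point of care, and handling it as you describe matches the paper's treatment.
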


\subsection{Fusion}\label{Fusion} In this section, we will give an alternative description of our lax monoidal structure on $i^!$. We recall some definitions and constructions from \cite[Section 2.1.2]{gaitsgory2000construction}, to which we refer the reader for more details. Let $\mathrm{Aut}$ denote the automorphism group scheme of the $\CC$-algebra $\mathcal{O}$. It is a pro-algebraic group and acts naturally on the affine Grassmannian $\Gr_G$ by loop rotation. Let $X = \mathbb{A}^1$ and let $\tilde{X} \rightarrow X$ denote the canonical $\mathrm{Aut}$-torsor over $X$ (whose fiber over $x \in X$ is the $\mathrm{Aut}$-torsor parameterizing uniformizers of the completed local ring $\widehat{\mathcal{O}}_{X,x}$). Recall that the \textit{Beilinson-Drinfeld Grassmannian} is the ind-scheme over $X$ given by the twisted product
$$
\Gr_{G,X} = \tilde{X} \times^{\mathrm{Aut}} \Gr_G \rightarrow X. 
$$
For any affine $X$-scheme $x: S \rightarrow X$, the set $\Gr_{G,X}(S)$ can be identified canonically with the set of isomorphism classes of pairs $(\mathcal{P}, \sigma)$, where $\mathcal{P}$ is a $G$-torsor over $X_S = X \times S$ and $\sigma$ is a trivialization of $\mathcal{P}$ over the subscheme $X_S \setminus \Gamma_x$, the complement of the graph $\Gamma_x: S \rightarrow X_S$ (Lemma 3 of \cite[Section 2.1.2]{gaitsgory2000construction}). The group scheme $\mathrm{Aut}$ acts through group automorphisms on the arc group $G(\mathcal{O})$. Therefore, we can form the group scheme $\mathcal{G}_X$ over $X$ given by 
$$
\mathcal{G}_X := \tilde{X} \times^{\mathrm{Aut}} G(\mathcal{O}) \rightarrow X.
$$
Since the action $G(\mathcal{O}) \times \Gr_G \rightarrow \Gr_G$ is $\mathrm{Aut}$-equivariant, we obtain an action of the group $X$-scheme $\mathcal{G}_X$ on the $X$-ind-scheme $\Gr_{G,X}$. The twisted external product defines a functor 
$$
p_2^\dagger:=\underline{\CC}_{\tilde{X}}[1] \tildeboxtimes -: D_{G(\mathcal{O}) \rtimes \mathrm{Aut}}(\Gr_G) \rightarrow D_{\mathcal{G}_X}(\Gr_{G,X}). 
$$
See \cite[Section 2.4.1]{acharriche2023} for a brief discussion of the definition of the above $\mathcal{G}_X$-equivariant derived category, and \cite[Chapter 10]{acharriche2023} for a thorough treatment. On the other hand, we have Gaitsgory's degeneration $\pi: \Gr_{G,X}' \rightarrow X$ from \cite[Section 3.1.1]{gaitsgory2000construction}. It is the ind-scheme over $X$ whose points over an affine $X$-scheme $x: S \rightarrow X$ are given by isomorphism classes of pairs $(\mathcal{P}, \sigma)$, where $\mathcal{P}$ is a $G$-torsor over $X_S$ and $\sigma$ is a trivialization of $\mathcal{P}$ over the subscheme $X_S \setminus (\Gamma_x \cup \Gamma_0)$, where $\Gamma_0$ denotes the graph of the map $S \rightarrow \operatorname{Spec}{\CC} \xrightarrow{0} \mathbb{A}^1 = X$. Note that $\mathcal{G}_X$ acts on $\Gr_{G,X}'
$ through modification of the trivialization $\sigma$. There are canonical isomorphisms 
\begin{align*}
    &\Gr_{G,0}' := \Gr_{G,X}' \vert_{\{0\}} \simeq \Gr_G \\
    &\Gr'_{G, X - 0} := \Gr_{G,X}' \vert_{X \setminus \{0\}} \simeq \Gr_G \times \Gr_{G,X} \vert_{X \setminus \{0\}} = \Gr_G \times \Gr_{G, X -0}. 
\end{align*}
Hence, we obtain a functor 
$$
- \boxtimes p_2^\dagger(-) \vert_{\Gr_{G,X-0}}: D_{G(\mathcal{O})}(\Gr_G) \times D_{G(\mathcal{O}) \rtimes \mathrm{Aut}}(\Gr_G) \rightarrow D_{\mathcal{G}_{X - 0}}(\Gr_{G, X - 0}'). 
$$
Here, we write $\mathcal{G}_{X - 0}$ for the restriction of $\mathcal{G}_X$ to the open subscheme $X \setminus \{0\}$. Finally, let $\Psi: D_{\mathcal{G}_{X - 0}}(\Gr_{G,X}') \rightarrow D_{G(\mathcal{O})}(\Gr_G)$ denote the functor of nearby cycles in the family $\pi$ (we are implicitly using the canonical identification of group schemes $\mathcal{G}_X \vert_{\{0\}} \simeq G(\mathcal{O})$ provided by the uniformizer $t \in \widehat{\mathcal{O}}_{X,0} = \CC[[t]]$ here). Following Gaitsgory, we define
\begin{equation}\label{Cdefinition}
C(A,B) := \Psi(A \boxtimes p_2^\dagger B \vert_{\Gr_{G,X-0}}).
\end{equation}
Because the forgetful functor $D_{G(\mathcal{O}) \rtimes \mathrm{Aut}}(\Gr_G) \rightarrow D_{G(\mathcal{O})}(\Gr_G)$ is an equivalence of categories (it is fully faithful because $\mathrm{Aut}$ is pro-unipotent, and essentially surjective because its image contains $\mathcal{P}_{G(\mathcal{O})}(\Gr_G)$ by \cite[Proposition 3.2.2]{nadler2004perverse} which generates $D_{G(\mathcal{O})}(\Gr_G)$ as a triangulated category), we may regard $C(-,-)$ as a bifunctor 
$$
C(-,-): D_{G(\mathcal{O})}(\Gr_G) \times D_{G(\mathcal{O})}(\Gr_G) \rightarrow D_{G(\mathcal{O})}(\Gr_G). 
$$
As usual, we will use the same notation for the corresponding objects defined using the subgroup $L$. We have closed immersions
$$
i_X: \Gr_{L,X} = \tilde{X} \times^{\mathrm{Aut}} \Gr_L \xrightarrow{\mathrm{id}_X \tildeboxtimes i} \tilde{X} \times^{\mathrm{Aut}} \Gr_G = \Gr_{G,X} 
$$
$$
i'_X: \Gr'_{L,X} \rightarrow \Gr_{G,X}'
$$
globalizing the $\mathrm{Aut}$-equivariant closed immersion $i: \Gr_L \hookrightarrow \Gr_G$. The morphism $i_X'$ takes a point $(\mathcal{P},\sigma)$ of $\Gr_{L,X}'$ over an affine $X$-scheme $x: S \rightarrow X$ to the point $(\mathcal{P}', \sigma') \in \Gr_{G,X}'(S)$ given by the induced $G$-torsor $\mathcal{P'} = \mathcal{P} \times^{L} G$ and the trivialization $\sigma'$ of $\mathcal{P}'$ over $X_S \setminus (\Gamma_x \cup \Gamma_0)$ induced by $\sigma$. Let $i_{X - 0}$, $i'_{X - 0}$ denote their restrictions to $X \setminus \{0\}$. 

\begin{construction}\label{Thetadefinition}
    Let $A,B \in D_{G(\mathcal{O})}(\Gr_G)$. We will define a natural map
    $$
    \Theta_{A,B}: i^*C(A, B) \rightarrow C(i^*A,i^*B) 
    $$
    in $D_{L(\mathcal{O})}(\Gr_G)$. We have a natural isomorphism (witnessing the monoidality of $*$-pullback with respect to the tensor product)
    \begin{equation}\label{Omegadefinition}
    \Omega_{A,B}: (i_{X - 0}')^*(A \boxtimes p_2^\dagger 
    B \vert_{\Gr_{G,X-0}}) \xrightarrow{\Omega_1} i^*A \boxtimes i^* p_2^\dagger B \vert_{\Gr_{G,X-0}} \xrightarrow{1 \, \boxtimes \, \Omega_2} i^*A \boxtimes p_2^\dagger (i^* B) \vert_{\Gr_{G,X-0}}. 
    \end{equation}
    We have also used the compositional isomorphism $i_2^* p_2^* \simeq p_2^* i^*$ to commute $i^*$ with $p_2^\dagger$. There is also a natural transformation \cite[Lemma 4.4.8]{achar2020}
    \begin{equation}\label{zetadefinition}
    \zeta: i^* \circ \Psi \rightarrow \Psi \circ (i_{X - 0}')^*. 
    \end{equation}
    Applying $\Psi$ to the isomorphism $\Omega_{A,B}$ and then precomposing with $\zeta$ defines the natural transformation $\Theta_{A,B}$. That is,
    $$
    \Theta_{A,B} = \Psi(\Omega_{A,B}) \circ \zeta_{A \, \boxtimes \, p_2^\dagger B \vert_{\Gr_{G,X-0}}}. 
    $$
\end{construction}

The following fundamental result of Gaitsgory relates the bifunctor $C(-,-)$ to the convolution product.
\begin{theorem}[{Gaitsgory \cite[Proposition 6]{gaitsgory2000construction}}]\label{convolution and fusion}
    Let $A,B \in D_{G(\mathcal{O})}(\Gr_G)$. There is a natural isomorphism in $D_{G(\mathcal{O})}(\Gr_G)$
    $$
    \varpi_{A,B}: C(A,B) \simeq A \star B. 
    $$
\end{theorem}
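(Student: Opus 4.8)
The plan is to recall Gaitsgory's argument, whose essential geometric input is a \emph{convolution degeneration} interpolating between the convolution diagram over $0 \in X$ and the trivial product away from $0$. Concretely, introduce the ind-scheme $\widetilde{\Gr}'_{G,X} \to X$ classifying quadruples $(\mathcal{P}_1, \sigma_1, \mathcal{P}_2, \beta)$ over an affine $X$-scheme $x \colon S \to X$, where $(\mathcal{P}_1,\sigma_1)$ is a $G$-torsor on $X_S$ trivialized away from $\Gamma_0$, $\mathcal{P}_2$ is a $G$-torsor on $X_S$, and $\beta \colon \mathcal{P}_1|_{X_S\setminus\Gamma_x} \xrightarrow{\sim} \mathcal{P}_2|_{X_S\setminus\Gamma_x}$. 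Forgetting $\mathcal{P}_1$ and equipping $\mathcal{P}_2$ with the trivialization induced by $\sigma_1$ and $\beta$ away from $\Gamma_0 \cup \Gamma_x$ defines a (ind-)proper map $\mu \colon \widetilde{\Gr}'_{G,X} \to \Gr'_{G,X}$ over $X$. Over $X \setminus \{0\}$ the graphs $\Gamma_x$ and $\Gamma_0$ are disjoint, so $\mu$ restricts to an isomorphism $\widetilde{\Gr}'_{G,X-0} \simeq \Gr'_{G,X-0}$; over $0$ one has $\widetilde{\Gr}'_{G,0} \simeq \Gr_G \tildetimes \Gr_G$ and $\mu_0 = m$. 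The construction is $\mathcal{G}_X$-equivariant; the first factor $(\mathcal{P}_1,\sigma_1)$ contributes a constant copy of $\Gr_G$, and the second ($\mathcal{P}_2,\beta$) data carries the degeneration, so the twisted external product $A \tildeboxtimes p_2^\dagger B$ makes sense as a complex on $\widetilde{\Gr}'_{G,X}$ whose restriction to the special fiber is $A \tildeboxtimes B$ and whose image under $\mu_*|_{X \setminus 0}$ is $A \boxtimes p_2^\dagger B|_{\Gr'_{G,X-0}}$.

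Granting this, the proof is a two-step computation. First, since $\mu$ is proper, nearby cycles commutes with $\mu_*$: writing $\Psi^{\Gr'}$ and $\Psi^{\widetilde{\Gr}'}$ for the nearby-cycles functors in the families $\Gr'_{G,X}\to X$ and $\widetilde{\Gr}'_{G,X}\to X$, there is a canonical isomorphism $\Psi^{\Gr'}\circ \mu_* \simeq \mu_{0,*}\circ \Psi^{\widetilde{\Gr}'}$. Applying this to $A \tildeboxtimes p_2^\dagger B$ and using that $\mu$ is an isomorphism over $X\setminus\{0\}$ gives
\[
C(A,B) = \Psi^{\Gr'}\bigl(A\boxtimes p_2^\dagger B|_{\Gr'_{G,X-0}}\bigr) \simeq m_*\,\Psi^{\widetilde{\Gr}'}\bigl(A\tildeboxtimes p_2^\dagger B\bigr).
\]
Second, identify $\Psi^{\widetilde{\Gr}'}(A\tildeboxtimes p_2^\dagger B) \simeq A\tildeboxtimes B$. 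Here the point is that $p_2^\dagger B = \underline{\CC}_{\tilde X}[1]\tildeboxtimes B$ is pulled back (in the twisted-product sense) from the smooth base $\tilde X$, hence lisse along $X$, while the $A$-factor lives on the constant $\Gr_G$; the local structure of $\widetilde{\Gr}'_{G,X}$ near its special fiber then shows that $A\tildeboxtimes p_2^\dagger B$ is universally locally acyclic over $X$, so its unipotent nearby cycles along the family is computed by restriction to the special fiber $\Gr_G \tildetimes \Gr_G$, giving $A \tildeboxtimes B$. Combining the two steps yields $C(A,B) \simeq m_*(A\tildeboxtimes B) = A\star B$, and every identification is manifestly functorial in $A$ and $B$, producing the natural isomorphism $\varpi_{A,B}$.

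The main obstacle is the second step: making precise that the nearby cycles of $A \tildeboxtimes p_2^\dagger B$ on the convolution degeneration is the twisted external product $A \tildeboxtimes B$. This requires understanding the local structure of $\widetilde{\Gr}'_{G,X}$ near its special fiber --- it is not literally a product $\Gr_G \times (\text{family over } X)$, since the second factor is twisted by the $\mathcal{P}_1$-bundle --- and checking that the twisted-product construction $-\tildeboxtimes-$ is compatible with taking nearby cycles in the second variable, using that the $\mathcal{P}_1$-twist is transverse to the degeneration direction. One must also carry the $\mathcal{G}_X$-equivariance (equivalently, after forgetting, the $G(\mathcal{O})$-equivariance) through the whole argument so that $\varpi_{A,B}$ is an isomorphism in $D_{G(\mathcal{O})}(\Gr_G)$; this is routine but notationally heavy. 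Everything else --- properness of $\mu$, compatibility of nearby cycles with proper direct image, and the identifications of the special and general fibers of both degenerations --- is standard.
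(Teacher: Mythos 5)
Your outline reproduces Gaitsgory's argument, which is exactly what the paper does: Theorem~\ref{convolution and fusion} is cited rather than reproved, and Construction~\ref{varpidefinition} unpacks the same two steps you give — introduce $\widetilde{\Gr}'_{G,X}$ with its ind-proper map $\mu$ to $\Gr'_{G,X}$ (your $(\mathcal{P}_1,\sigma_1,\mathcal{P}_2,\beta)$ is a relabelling of the paper's $(\mathcal{P}_0,\mathcal{P}_1,\eta_0,\eta_1)$), commute $\Psi$ past $\mu_*$ by properness via \eqref{nearbycyclesandpushforward}, identify the restriction of $\mu_*(A \tildeboxtimes p_2^\dagger B)$ over $X\setminus\{0\}$ with $A \boxtimes p_2^\dagger B\vert_{\Gr_{G,X-0}}$ (the paper quotes Gaitsgory's Lemma 5(b); your observation that $\mu$ is an isomorphism off $0$ is the geometric content of that lemma), and use $\mu_0 = m$. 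The only real divergence is the step you single out as the main obstacle: you propose to prove $\Psi(A \tildeboxtimes p_2^\dagger B) \simeq A \tildeboxtimes B$ by a universal-local-acyclicity analysis of the local structure of $\widetilde{\Gr}'_{G,X}$ near its special fibre. No such analysis is needed: the family $\widetilde{\Gr}'_{G,X} \simeq \Gr_G \tildetimes \Gr_{G,X} \to X$ is \emph{globally} trivializable, because a global coordinate $t$ on $X=\mathbb{A}^1$ trivializes the $\mathrm{Aut}$-torsor $\tilde X$, hence $\Gr_{G,X} \simeq \Gr_G \times X$ and $\widetilde{\Gr}'_{G,X} \simeq \Gr_G^{(2)} \times X$, and under this identification $A \tildeboxtimes p_2^\dagger B$ is (up to the shift on the $X$-factor) constant along $X$; nearby cycles of a constant family is restriction to the special fibre, which is the isomorphism \eqref{nearbycyclesinatrivialfamily}. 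Your ULA route could no doubt be pushed through (and would also have to address unipotent versus full nearby cycles and the twisted-product compatibilities you list), but it buys nothing here — the trivialization makes the step immediate, and equivariance is carried along exactly as you say.
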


\begin{remark}
    Gaitsgory's result only requires $G(\mathcal{O})$-equivariance on one of the factors; the other need only belong to the constructible derived category $D(\Gr_G)$ (with the obvious caveat that $\varpi_{A,B}$ is in that case only an isomorphism of non-equivariant complexes). However, we will only need the version stated above. 
\end{remark}

\begin{construction}\label{varpidefinition}
    We need to unpack the construction of the isomorphism $\varpi_{A,B}$ from Gaitsgory's proof of \cite[Proposition 6]{gaitsgory2000construction}. Let $\widetilde{\Gr}'_{G,X} \rightarrow X$ denote the ind-scheme over $X$ whose set of points $\widetilde{\Gr}'_{G,X}(S)$ over an affine $X$-scheme $x: S \rightarrow X$ is the set of isomorphism classes of tuples $( \mathcal{P}_0, \mathcal{P}_1, \eta_0, \eta_1)$, where $\mathcal{P}_0$ and $\mathcal{P}_1$ are $G$-torsors over $X_S$, $\eta_0$ is an isomorphism of $G$-torsors $\eta_0: \mathcal{P}_0 \vert_{X_S \setminus \Gamma_x} \simeq \mathcal{P}_1 \vert_{X_S \setminus \Gamma_x}$, and $\eta_1$ is a trivialization of the $G$-torsor $\mathcal{P}_1$ over $X_S \setminus \Gamma_0$. We have a map $p: \widetilde{\Gr}_{G,X}' \rightarrow \Gr_{G,X}$ of $X$-ind-schemes taking an $S$-point $(\mathcal{P}_0, \mathcal{P}_1, \eta_0, \eta_1) \in \widetilde{\Gr}'_{G,X}(S)$ to the $S$-point $(\mathcal{P}_1, \eta_1) \in \Gr_{G,X}(S)$ and a map $\mu: \widetilde{\Gr}_{G,X}' \rightarrow \Gr_{G,X}'$ taking an $S$-point $(\mathcal{P}_0, \mathcal{P}_1, \eta_0, \eta_1) \in \widetilde{\Gr}'_{G,X}(S)$ to the $S$-point $(\mathcal{P}_0, \eta_0 \circ \eta_1 \vert_{X_S \setminus (\Gamma_x \cup \Gamma_0)}) \in {\Gr}'_{G,X}(S)$. 

    On the other hand, consider the $G(\mathcal{O})$-torsor $\widetilde{\Gr}_{G,X} \rightarrow \Gr_{G,X}$, defined by the fact that for any affine $X$-scheme $x: S \rightarrow X$, $\widetilde{\Gr}_{G,X}(S)$ is the set of isomorphism classes of tuples $(\mathcal{P},\eta, \gamma)$, where $(\mathcal{P}, \eta)$ is an $S$-point of $\Gr_{G,X}$ and $\gamma$ is a trivialization of $\mathcal{P}$ over the formal neighborhood $\widehat{\Gamma}_0$ of $\Gamma_0 \subseteq X_S$. Then, there is an isomorphism
    $$
    \widetilde{\Gr}_{G,X}' \simeq \Gr_G \times^{G(\mathcal{O})} \widetilde{\Gr}_{G,X} =: \Gr_G \tildetimes \Gr_{G,X}
    $$ 
    of ind-schemes over $X$, under which $p$ corresponds to the natural projection to the second factor. This description allows us to consider the twisted external product
    $$
    - \tildeboxtimes -: D_{G(\mathcal{O})}(\Gr_G) \times D_{\mathcal{G}_X}(\Gr_{G,X}) \rightarrow D_{\mathcal{G}_X}(\widetilde{\Gr}_{G,X}'). 
    $$
    Let $A, B \in D_{G(\mathcal{O})}(\Gr_G)$. Form the complex $A \tildeboxtimes p_2^\dagger B \in D_{\mathcal{G}_X}(\widetilde{\Gr}'_{G,X})$. Then, there is a natural isomorphism \cite[Lemma 5(b)]{gaitsgory2000construction}
    \begin{equation}\label{gaitsgorylemma5b}
    \mu_{*}(A \tildeboxtimes p_2^\dagger B) \vert_{\Gr_{G,X-0}} \simeq A \boxtimes (p_2^\dagger B \vert_{\Gr_{G,X-0}})
    \end{equation}
    induced by the projection formula. Let $\mu_0$ denote the fiber of $\mu$ over $0 \in X$. There is a natural transformation \cite[Lemma 4.8.8]{achar2020}
    \begin{equation}\label{nearbycyclesandpushforward}
    \Psi \mu_* \rightarrow \mu_{0*} \Psi,
    \end{equation}
    which is an isomorphism by the ind-properness of $\mu$ (note that the nearby cycles functor $\Psi$ appearing in the domain of this natural transformation is that corresponding to the family $\Gr_{G,X}' \rightarrow X$ whereas that in the domain is nearby cycles in the family $\widetilde{\Gr}_{G,X}' \rightarrow X$). Combining it with \eqref{gaitsgorylemma5b} yields an isomorphism
    $$
    \mu_{0*} \Psi (A \tildeboxtimes p_2^\dagger B) \simeq C(A,B). 
    $$
    Since $\widetilde{\Gr}_{G,X}' \rightarrow X$ can be trivialized by the choice of a global coordinate $t$ on $\mathbb{A}^1$, we also have a natural isomorphism
    \begin{equation}\label{nearbycyclesinatrivialfamily}
    \Psi(A \tildeboxtimes p_2^\dagger B) \simeq (A \tildeboxtimes p_2^\dagger B) \vert_{\Gr_G^{(2)}} \simeq  A \tildeboxtimes B,
    \end{equation}
    where $\Gr_G^{(2)}$ is identified with with the fiber of $\widetilde{\Gr}_{G,X}' \rightarrow X$ over $0 \in X$. Since $\mu_0 = m$ is the convolution map, we obtain the desired isomorphism 
    $$
    \varpi_{A,B}: A \star B = m_*(A \tildeboxtimes B) \simeq \mu_{0*} \Psi (A \tildeboxtimes p_2^\dagger B) \simeq C(A,B).
    $$
\end{construction}

\begin{construction}
    We can define a morphism 
    $$
    \tilde{i}'_X: \widetilde{\Gr}_{L,X}' \rightarrow  \widetilde{\Gr}'_{G,X} 
    $$
    by sending an $S$-point (for $x: S \rightarrow X$ an affine $X$-scheme) of $\widetilde{\Gr}_{L,X}'$ given by a tuple $(\mathcal{P}_0,\mathcal{P}_1,\eta_0,\eta_1)$ to the $S$-point of $\widetilde{\Gr}_{G,X}'$ given by $(\mathcal{P}'_0,\mathcal{P}'_1,\eta'_0,\eta'_1)$, where $\mathcal{P}'_i = \mathcal{P}_i \times^L G$ is the $G$-torsor associated to the $L$-torsor $\mathcal{P}_i$, $\eta_0'$ is the isomorphism $\mathcal{P}'_0 \vert_{X_S \setminus \Gamma_x} \simeq \mathcal{P}'_1 \vert_{X_S \setminus \Gamma_x}$ induced from $\eta_0'$, and $\eta_1'$ is the trivialization of $\mathcal{P}'_1$ away from $\Gamma_0$ induced from the trivialization $\eta_1$ of $\mathcal{P}_1$ away from $\Gamma_0$. Given $A,B \in D_{G(\mathcal{O})}(\Gr_G)$, we have a natural isomorphism
    \begin{equation}\label{Omegatildedefinition}
        \widetilde{\Omega}_{A,B}: (\tilde{i}'_X)^{*}(A \tildeboxtimes p_2^\dagger B) \simeq i^*A \tildeboxtimes p_2^\dagger i^*B.
    \end{equation}
    Note that the fiber of $\tilde{i}_X'$ over $0 \in X$ identifies with the map $i_2: \Gr_L^{(2)} \rightarrow \Gr_G^{(2)}$. Hence, the construction of \eqref{zetadefinition} defines a natural transformation 
    $$
    \zeta_2: i_2^*\Psi \rightarrow \Psi (\tilde{i}_X')^*.
    $$
\end{construction}

We can now formulate the global interpretation of our lax monoidal structure on $i^!$ (equivalently, of the colax monoidal structure on $i^*$).
\begin{proposition}\label{globalinterpretation}
    Let $A,B \in D_{G(\mathcal{O})}(\Gr_G)$. Then, the following diagram in $D_{L(\mathcal{O})}(\Gr_L)$ commutes:
    $$
    \begin{tikzcd}
        i^*C(A,B) \ar[d, "\varpi_{A,B}"]\ar[r, "\Theta_{A,B}"] & C(i^*A, i^*B) \ar[d, "\varpi_{A,B}"] \\
        i^*(A \star B) \ar[r, "\theta_{A,B}"] & i^*A \star i^*B
    \end{tikzcd}
    $$
\end{proposition}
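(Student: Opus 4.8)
The plan is to unwind Gaitsgory's isomorphism $\varpi$ as constructed in Construction~\ref{varpidefinition}, and thereby to reduce the square to a short list of elementary compatibilities, in the spirit of the proof of Proposition~\ref{associativity}. Recall that $\varpi_{A,B}$ is the composite of three isomorphisms: the trivial-family identification $\Psi(A \tildeboxtimes p_2^\dagger B) \simeq A \tildeboxtimes B$ of~\eqref{nearbycyclesinatrivialfamily}, coming from the trivialization of $\widetilde{\Gr}'_{G,X} \to X$ by the global coordinate $t$; the base-change isomorphism $\Psi \mu_* \simeq \mu_{0*} \Psi$ of~\eqref{nearbycyclesandpushforward}, valid because $\mu$ is ind-proper; and the projection-formula isomorphism $\mu_*(A \tildeboxtimes p_2^\dagger B)\vert_{\Gr_{G,X-0}} \simeq A \boxtimes (p_2^\dagger B \vert_{\Gr_{G,X-0}})$ of~\eqref{gaitsgorylemma5b}, together with the equality of maps $\mu_0 = m$. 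I would substitute this expansion into both vertical arrows of the square and, at the same time, expand $\theta_{A,B} = m_*(\omega_{A,B}) \circ \rho_{A \tildeboxtimes B}$ using~\eqref{thetadefinition} and~\eqref{rhodefinition}, and $\Theta_{A,B} = \Psi(\Omega_{A,B}) \circ \zeta$ using Construction~\ref{Thetadefinition}. The result is a large diagram pasted from three horizontal strips, one for each step of $\varpi$, and it will suffice to check that each strip commutes.

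The object tying the strips together is the morphism $\tilde{i}'_X : \widetilde{\Gr}'_{L,X} \to \widetilde{\Gr}'_{G,X}$, together with the isomorphism $\widetilde{\Omega}_{A,B} : (\tilde{i}'_X)^*(A \tildeboxtimes p_2^\dagger B) \simeq i^*A \tildeboxtimes p_2^\dagger i^*B$ of~\eqref{Omegatildedefinition} and the nearby-cycles base-change transformation $\zeta_2 : i_2^* \Psi \to \Psi (\tilde{i}'_X)^*$; here one uses that $\mu \circ \tilde{i}'_X = i'_X \circ \mu^{L}$ holds on the nose, that over $0 \in X$ this degenerates to $m \circ i_2 = i \circ m^{L}$, and that the fibre of $\tilde{i}'_X$ over $0$ is $i_2$. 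With this set up, the three strips I would verify are: (i) compatibility of~\eqref{nearbycyclesinatrivialfamily} with $i^*$ --- this holds because $\tilde{i}'_X$ is a morphism of \emph{trivial} families over $X$ (both $\widetilde{\Gr}'_{L,X}$ and $\widetilde{\Gr}'_{G,X}$ are trivialized by $t$), under which $\zeta_2$ becomes the identity transformation (nearby cycles in a trivial family is restriction to the special fibre, and $\zeta$ is assembled from the corresponding adjunction units and counits) and $\widetilde{\Omega}_{A,B}$ restricts over $0$ to the monoidality isomorphism $\omega_{A,B}$ of~\eqref{omegadefinition}; (ii) compatibility of the ind-proper base change $\Psi\mu_* \simeq \mu_{0*}\Psi$ with $i^*$ --- this is the interchange of the base-change $2$-cell $\zeta$ (for the commuting square of $*$-pullbacks along $i'_X$, $\tilde{i}'_X$ and nearby cycles) with the base-change $2$-cell $\Psi\mu_* \to \mu_{0*}\Psi$ (for the commuting square of pushforwards along $\mu$, $\mu_0$ and nearby cycles), which is a formal consequence of the construction of both $2$-cells from adjunction (co)units, cf.\ the treatment of nearby cycles in \cite[\S\S 4.4, 4.8]{achar2020}; and (iii) compatibility of~\eqref{gaitsgorylemma5b} with $(i'_{X-0})^*$ and $(\tilde{i}'_X)^*$ --- this unwinds to the naturality of the projection formula under $*$-pullback and to the mutual compatibility of the monoidality isomorphisms $\omega$, $\Omega$ of~\eqref{Omegadefinition}, and $\widetilde{\Omega}$ for the various external products, which is exactly the bookkeeping carried out at the end of the proof of Proposition~\ref{associativity}.

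The main obstacle is organizational rather than conceptual: as with Proposition~\ref{associativity}, the statement carries no real content beyond the coherence of these compatibilities, and the difficulty lies in writing the full diagram without error. The one genuinely non-routine region is strip (ii), where the two a priori unrelated base-change $2$-cells --- nearby cycles against $*$-pullback, and nearby cycles against proper pushforward --- must be checked to commute past one another; this is precisely the $(\infty,2)$-categorical coherence that Remark~\ref{apology} declines to formalize, so I would verify strip (ii) directly and leave strips (i) and (iii), which involve only $*$-pullback, external products, and fibre restriction, to the reader. As a sanity check, evaluating the square on $A = B = \mathrm{IC}_0$ should reproduce the compatibility of the unit isomorphism $\chi$ of Construction~\ref{unit} with the unit constraints established in Proposition~\ref{unitality of restriction}.
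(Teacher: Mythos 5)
Your proposal is correct and follows essentially the same route as the paper: expand $\varpi_{A,B}$ into its three constituent isomorphisms \eqref{gaitsgorylemma5b}, \eqref{nearbycyclesandpushforward}, \eqref{nearbycyclesinatrivialfamily}, expand $\theta$ and $\Theta$, and verify the pasted diagram rectangle by rectangle via $\tilde{i}'_X$, $\widetilde{\Omega}_{A,B}$, and $\zeta_2$, with the only non-routine region being the interchange of the two nearby-cycles base-change $2$-cells (your strip (ii), the paper's middle-left rectangle, handled by adjunction against $m^*\dashv m_*$) and the remaining strips reducing to naturality and the compatibility of the \cite[Lemma 4.8.8]{achar2020} transformation and of $\omega$, $\Omega$, $\widetilde{\Omega}$ with compositional isomorphisms, exactly as in the paper. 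Your grouping into three horizontal strips is just a different bookkeeping of the same sub-diagrams the paper checks.
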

\begin{proof}
    We expand the diagram using the definition \eqref{Cdefinition} of $C(A,B)$ as well as the definitions \ref{varpidefinition}, \ref{Thetadefinition}, and \ref{thetadefinition} of $\varpi_{A,B}$, $\Theta_{A,B}$, and $\theta_{A,B}$, respectively. We will suppress many subscripts on our natural transformations (indicating the objects that they are evaluated on) to preserve readability. 
    $$\hspace{-1.5cm}
    \begin{tikzcd}
        i^*\Psi(A \boxtimes p_2^\dagger B \vert_{\Gr_{G,X-0}}) \ar[r,"\zeta"] \ar[d,"i^*\Psi{\eqref{gaitsgorylemma5b}}"] & \Psi (i'_{X - 0})^*(A \boxtimes  p_2^\dagger B \vert_{\Gr_{G,X-0}}) \ar[d,"\Psi(i_{X - 0}')^*{\eqref{gaitsgorylemma5b}}"] \ar[rr,"\Psi (\Omega_{A,B})"] & & \Psi(i^*A \boxtimes p_2^\dagger i^*B) \ar[d,"\Psi {\eqref{gaitsgorylemma5b}}",swap] \\
        i^*\Psi\mu_*(A \tildeboxtimes p_2^\dagger B) \ar[d,"i^*{\eqref{nearbycyclesandpushforward}}"] \ar[r,"\zeta"] & \Psi(i'_{X - 0})^*\mu_*(A \tildeboxtimes p_2^\dagger B)\ar[r,"\Psi(\rho')"] & \Psi \mu_* (\tilde{i}_X')^*(A \tildeboxtimes p_2^\dagger B) \ar[r,"\Psi(\mu_*\widetilde{\Omega})"] \ar[d,"{\eqref{nearbycyclesandpushforward}}"] & \Psi\mu_*(i^*A \tildeboxtimes p_2^\dagger i^*B)  \ar[d,"{\eqref{nearbycyclesandpushforward}}",swap] \\
        i^*m_*\Psi(A \tildeboxtimes p_2^\dagger B) \ar[d,"i^*m_*{\eqref{nearbycyclesinatrivialfamily}}"] \ar[r,"\rho"] & m_*i_2^*\Psi(A \tildeboxtimes p_2^\dagger B) \ar[r,"m_*\zeta_2"] \ar[d,"m_*i_2^*{\eqref{nearbycyclesinatrivialfamily}}"] & m_*\Psi (\tilde{i}_X')^*(A \tildeboxtimes p_2^\dagger B) \ar[r,"m_*\Psi( \widetilde{\Omega})"] & m_*\Psi(i^*A \tildeboxtimes p_2^\dagger i^*B) \ar[d,"m_*{\eqref{nearbycyclesinatrivialfamily}}",swap]\\
        i^*m_*(A \tildeboxtimes B) \ar[r,"\rho"] & m_*i_2^*(A \tildeboxtimes B) \ar[rr,"m_*\omega_{A,B}"] & & m_*(i^*A \tildeboxtimes i^*B)
    \end{tikzcd}
    $$
    The upper left rectangle commutes by the naturality of the map $\zeta: i^*\Psi \rightarrow \Psi (i_{X-0}')^*$ of \eqref{zetadefinition} applied to the morphism \eqref{gaitsgorylemma5b}. The middle right rectangle commutes by the naturality of the map $\Psi \mu_* \rightarrow m_* \Psi$ of \eqref{nearbycyclesandpushforward} applied to the isomorphism $\Omega_{A,B}$ of \eqref{Omegadefinition}. The lower left rectangle commutes by the naturality of the base change map $\rho: i^*m_* \rightarrow m_*i_2^*$ of \eqref{rhodefinition} applied to the isomorphism $\Psi(A \tildeboxtimes p_2^\dagger B) \simeq A \tildeboxtimes B$ of \eqref{gaitsgorylemma5b}. The lower right rectangle is obtained by applying the functor $m_*$ to the diagram
    $$
    \begin{tikzcd}
        i_2^*\Psi(A \tildeboxtimes p_2^\dagger B) \ar[r,"\zeta_2"] \ar[d,"i_2^*{\eqref{nearbycyclesinatrivialfamily}}"] &  \Psi (\tilde{i}_X')^* (A \tildeboxtimes p_2^\dagger B) \ar[r,"\Psi( \widetilde{\Omega}_{A,B})"] \ar[d, "\eqref{nearbycyclesinatrivialfamily}"]& \Psi(i^*A \tildeboxtimes p_2^\dagger i^*B) \ar[d,"{\eqref{nearbycyclesinatrivialfamily}}"] \\
        i_2^*(A \tildeboxtimes B) \ar[rr, "\omega_{A,B}", bend right = 12,swap] \ar[r] & (\tilde{i}_X')^* (A \tildeboxtimes p_2^\dagger B) \vert_{\Gr_G^{(2)}} \ar[r,"\widetilde{\Omega}_{A,B} \vert_{\Gr^{(2)}_G}"] & i^*A \tildeboxtimes i^*B. 
    \end{tikzcd}
    $$
    The right rectangle commutes by the naturality of the isomorphism $\Psi \simeq (-)\vert_{\Gr_G^{(2)}}$ of \eqref{nearbycyclesinatrivialfamily} applied to the isomorphism 
    $\widetilde{\Omega}_{A,B}$ of \eqref{Omegatildedefinition}. Note that the isomorphism $\Psi \simeq (-) \vert_{\Gr_G^{(2)}}$ of \eqref{nearbycyclesinatrivialfamily} is defined by applying the construction of \eqref{zetadefinition} to the closed inclusion $\Gr_G^{(2)} \hookrightarrow \widetilde{\Gr}_{G,X}'$ of the zero fiber. Hence, the commutativity of the left rectangle follows from the straightforward compatiblity of the natural transformation $g^*\Psi \rightarrow \Psi g^*$ of \cite[Lemma 4.8.8]{achar2020} with the compositional isomorphisms for $*$-pullback. 
    
    We return to the original diagram and consider its upper right rectangle, which is obtained by applying $\Psi$ to the diagram
    $$
    \begin{tikzcd}
        (i'_{X - 0})^*(A \boxtimes p_2^\dagger B \vert_{\Gr_{G,X-0}}) \ar[rr, "\Omega_{A,B}"] \ar[d,"(i'_{X-0})^*{\eqref{gaitsgorylemma5b}}"] & & i^*A \boxtimes p_2^\dagger i^*B \ar[d,"\eqref{gaitsgorylemma5b}"] \\
        (i'_{X - 0})^*\mu_*(A \tildeboxtimes p_2^\dagger B) \ar[r, "\rho'"] & \mu_*(\tilde{i}_X')^*(A \tildeboxtimes p_2^\dagger B) \ar[r,"\mu_*\widetilde{\Omega}"] & \mu_*(i^*A \boxtimes p_2^\dagger i^*B).
    \end{tikzcd}
    $$
    The proof that this diagram commutes is sufficiently similar to the arguments in the proof of Proposition~\ref{associativity} that we omit it and leave it to the reader (use the adjunction $\mu^* \dashv \mu_*$ and then unwind the definitions of \eqref{gaitsgorylemma5b} and the base change map $\rho'$ to reduce the commutativity of this diagram to an assertion about the compatibility of $\Omega$ and $\widetilde{\Omega}$ with the compositional isomorphisms for $*$-pullback). 
    
    The commutativity of the middle left rectangle of the original diagram follows from the more general assertion that for any object $E \in D_{\mathcal{G}_X}(\widetilde{\Gr}_{G,X}')$, the following diagram commutes in $D_{L(\mathcal{O})}(\Gr_L)$:
    $$
    \begin{tikzcd}
        i^*\Psi \mu_*E \ar[r,"\zeta_{\mu_*E}"] \ar[d, "i^*\eqref{nearbycyclesandpushforward}"] & \Psi (i'_X)^*\mu_*E \ar[r, "\Psi(\rho')"] & \Psi\mu_* (\tilde{i}_X')^*E \ar[d, "\eqref{nearbycyclesandpushforward}"] \\
        i^*m_*\Psi(E) \ar[r,"\rho"] & m_*i_2^*\Psi(E) \ar[r, "m_* \zeta_2"] & m_*\Psi (\tilde{i}_X')^*E
    \end{tikzcd}
    $$
    The commutativity of this diagram is equivalent, under the adjunction $m^* \dashv m_*$, to the commutativity of the outer rectangle in the following diagram:
    $$
    \begin{tikzcd}
        m^*i^*\Psi \mu_*E \ar[rrr, "m^*\zeta_{\mu_*E}"] \ar[dddd,"m^*i^*\eqref{nearbycyclesandpushforward}"] &&& m^*\Psi (i_X')^*\mu_*E \ar[r,"m^*\Psi(\rho')"] \ar[d,"\gamma"] & m^*\Psi\mu_* (\tilde{i}_X')^*E \ar[d,"\gamma"] \\
        &  & & \Psi \mu^*(i_X')^*\mu_*E \ar[d,"\mathrm{comp}"] \ar[r, "\Psi\mu^*(\rho')"] & \Psi \mu^*\mu_* (\tilde{i}'_X)^*E \ar[d,"\Psi(\mathrm{counit})"] \\
        && & \Psi (\tilde{i}'_X)^* \mu^*\mu_* E \ar[r, "\Psi i_2^*(\mathrm{counit})"] & \Psi (\tilde{i}'_X)^*E \ar[dd, equals]\\
        & i_2^* m^*m_*\Psi(E) \ar[rrd, "i_2^*\mathrm{counit}"] & & i_2^*\Psi \mu^*\mu_* E \ar[d,"i_2^*\mathrm{counit}"] \ar[u, "\zeta_2"]\\
        m^*i^*m_*\Psi(E) \ar[r,"m^*\rho_{\Psi(E)}"] \ar[ur, "\mathrm{comp}"] & m^*m_*i_2^*\Psi(E) \ar[rr, "\mathrm{counit}"] & & i_2^*\Psi(E) \ar[r] & \Psi (\tilde{i}'_X)^*E
    \end{tikzcd}
    $$
    The upper right rectangle commutes by the naturality of the map $\gamma: m^*\Psi \rightarrow \Psi \mu^*$ of \cite[Lemma 4.8.8]{achar2020}. The rectangle below it commutes by the definition of the base change map $\rho': (i_X')^*\mu_* \rightarrow \mu_*(\tilde{i}_X')^*$. The rectangle below that commutes by the naturality of the map $\zeta_2: i_2^*\Psi \rightarrow \Psi (\tilde{i}_X')$ applied to the counit map $\mu^*\mu_* E \rightarrow E$. The commutativity of the bottom left triangle follows from the definition of the base change map $\rho: i^*m_* \rightarrow m_*i_2^*$. 
    Thus, we are reduced to showing that the following diagram commutes. 
    $$
    \begin{tikzcd}
        m^*i^*\Psi \mu_*E \ar[rrr,"m^*\zeta_{\mu_*E}"] \ar[rd,"\mathrm{comp}"] \ar[dddd,"m^*i^*\eqref{nearbycyclesandpushforward}"] &&& m^*\Psi (i_X')^*\mu_*E \ar[d,"\gamma"]\\
        & i_2^*m^*\Psi \mu_*E \ar[rrdd, "i_2^*\gamma_{\mu_*E}"] \ar[ddd,"i_2^*m^*\eqref{nearbycyclesandpushforward}"]\ar[rr,phantom,"(\dagger)"] & & \Psi \mu^*(i'_X)^*\mu_*E \ar[d,"\mathrm{comp}"] \\
        && & \Psi (\tilde{i}_X')^*\mu^*\mu_* E \\
        & & & i_2^*\Psi \mu^*\mu_* E \ar[d,"i_2^*(\mathrm{counit})"] \ar[u,"\zeta_2"] \\
        m^*i^*m_*\Psi(E) \ar[r,"\mathrm{comp}"] & i_2^* m^*m_*\Psi(E) \ar[rr,"i_2^*(\mathrm{counit})"] & & i_2^*\Psi(E) 
    \end{tikzcd}
    $$
    The left quadrilateral commutes by the naturality of the compositional isomorphism $\mathrm{comp}: m^*i^* \simeq i_2^*m^*$ applied to the map $\Psi\mu_* \rightarrow m_*\Psi$ of \eqref{nearbycyclesandpushforward}. The lower right quadrilateral is obtained by applying $i_2^*$ to the diagram
    $$
    \begin{tikzcd}
    m^*\Psi \mu_*E \ar[d,"m^*\eqref{nearbycyclesandpushforward}"] \ar[r,"\gamma_{\mu_*E}"] & \Psi \mu^*\mu_*E \ar[d, "\Psi(\mathrm{counit})"] \\
    m^*m_* \Psi(E) \ar[r,"\mathrm{counit}"] & \Psi(E).
    \end{tikzcd}
    $$
    Both composite maps $m^*\Psi\mu_*E \rightarrow \Psi(E)$ are left adjunct to the morphism $\Psi\mu_*E \rightarrow m_*\Psi(E)$ of \eqref{nearbycyclesandpushforward}, hence are equal. Finally, the upper figure $(\dagger)$ commutes by the more general assertion that for any $F \in D_{\mathcal{G}_X}(\Gr_{G,X}')$, the following diagram commutes:
    $$
    \begin{tikzcd}
        m^*i^*\Psi(F) \ar[r,"m^*\zeta_2"] \ar[d,"\mathrm{comp}"] & m^*\Psi (i_X')^*(F) \ar[r,"\gamma_{(i'_X)^*F}"] & \Psi \mu^*(i_X')^*(F) \ar[d,"\Psi(\mathrm{comp})"] \\
        i_2^* m^* \Psi(F) \ar[r,"i_2^*\gamma_F"] & i_2^*\Psi \mu^*(F) \ar[r,"\zeta_2"] & \Psi (\tilde{i}_X')^*\mu^*(F)
    \end{tikzcd}
    $$
    This diagram expresses a straightforward compatibility between the natural transformation of \cite[Lemma 4.8.8]{achar2020} with the compositional isomorphisms for $*$-pullback, whose proof we leave to the reader (since it is easy but requires unravelling the definition of the nearby cycles functor $\Psi$, which we will avoid doing here). 
\end{proof}

\subsection{Parabolic restriction}\label{parabolic restriction} We now return to the situation in which $L \subseteq G$ is a Levi subgroup of $G$. In \cite[Section 5.3.28]{beilinsondrinfeld}, Beilinson and Drinfeld define a natural operation taking complexes on $\Gr_G$ to complexes on $\Gr_L$ which we will refer to as \textit{parabolic restriction} (other possible names are the \textit{constant term functor} or the \textit{Jacquet functor}). We also refer the reader to \cite[Section 15]{baumann2018notes} for a complete treatment. It is a functor 
$$
\mathrm{Res}^G_L: D(\Gr_G) \rightarrow D(\Gr_L)
$$
defined through the following procedure. We identify $L$ with the quotient $P/V$ of $P$ by its unipotent radical. We obtain a correspondence of ind-schemes (the induction diagram):
$$
\begin{tikzcd}
    & \Gr_P \ar[rd, "r"] \ar[ld, "q", swap] \\
    \Gr_L & & \Gr_G
\end{tikzcd}
$$
The functor of (unnormalized) parabolic restriction is defined as the hyperbolic restriction 
$$
\mathrm{Res}^G_L = q_* \circ r^!: D(\Gr_G) \rightarrow D(\Gr_L). 
$$
We will also need an equivariant version of this functor. We define it as the composition 
\begin{equation}\label{equivariant parabolic restriction}
\mathrm{Res}^G_L = q_* \circ r^! \circ \mathrm{For}^{G(\mathcal{O})}_{L(\mathcal{O})}: D_{G(\mathcal{O})}(\Gr_G) \rightarrow D_{L(\mathcal{O})}(\Gr_L),
\end{equation}
where by $r^!$ we mean the $L(\mathcal{O})$-equivariant $!$-pullback functor and by $q_*$ we mean the $L(\mathcal{O})$-equivariant $*$-pushforward functor.  

\begin{remark} 
In keeping with the notational convention of not distinguishing between the equivariant and non-equivariant versions of a sheaf functor, we continue to use the notation $\mathrm{Res}^G_L$ for the equivariant (unnormalized) parabolic restriction.
\end{remark}

\begin{remark}\label{normalized parabolic restriction}
    Let $\Gr^\chi_L \subseteq \Gr_L$ denote a connected component of $\Gr_L$ indexed by a character $\chi \in \Lambda$ of $Z(\check{L})$. According to \cite[Proposition 5.3.29(1)]{beilinsondrinfeld} (see also \cite[Lemma 15.1]{baumann2018notes}), if $F \in\mathcal{P}_{G(\mathcal{O})}(\Gr_G)$ is perverse, then the complex $\mathrm{Res}^G_L(F) \vert_{\Gr_L^{\chi}}$ is concentrated in perverse degree $\langle 2\rho_G - 2\rho_L, \chi \rangle$. It is therefore useful to introduce the \textit{normalized} parabolic restriction functor $\mathrm{Res}^{G,\natural}_L$ defined by
    $$
    \mathrm{Res}^{G,\natural}_L := \bigoplus_{\chi \in \pi_0(\Gr_L)} \mathrm{Res}^{G}_L \vert_{\Gr_L^\chi}[-\langle 2\rho_G - 2\rho_L, \chi \rangle]. 
    $$
    The functor $\mathrm{Res}^{G,\natural}_L$ is now $t$-exact. Of course, these considerations apply verbatim to the equivariant version \eqref{equivariant parabolic restriction} of $\mathrm{Res}_L^G$, and we let $\mathrm{Res}^{G,\natural}_L$ also denote the normalized equivariant parabolic restriction functor.
\end{remark}

\begin{remark}
    Throughout this work, we have used the notation $i^!$ to refer to the functor 
    $$
    i^!: D_{G(\mathcal{O})}(\Gr_G) \rightarrow D_{L(\mathcal{O})}(\Gr_G)
    $$
    given by composing the forgetful functor $\mathrm{For}^{G(\mathcal{O})}_{L(\mathcal{O})}: D_{G(\mathcal{O})}(\Gr_G) \rightarrow D_{L(\mathcal{O})}(\Gr_G)$ with the $L(\mathcal{O})$-equivariant $!$-pullback $i^!: D_{L(\mathcal{O})}(\Gr_G) \rightarrow D_{L(\mathcal{O})}(\Gr_L)$. However, at some points in this section, it will be clearer to use the notation $i^!$ only for the latter functor $i^!: D_{L(\mathcal{O})}(\Gr_G) \rightarrow D_{L(\mathcal{O})}(\Gr_G)$ of $L(\mathcal{O})$-equivariant $!$-pullback and to make the forgetful functor $\mathrm{For}_{L(\mathcal{O})}^{G(\mathcal{O})}$ explicit. However, context should make clear which sense we assign to $i^!$. 
\end{remark}

\begin{construction}\label{Xiconstruction}
    We can compare $\mathrm{Res}^G_L$ to $i^! \circ \mathrm{For}^{G(\mathcal{O})}_{L(\mathcal{O})}$ as follows. Let $j: \Gr_L \hookrightarrow \Gr_P$ denote the closed inclusion. We have $rj = i$. Hence, we have a natural compositional isomorphism 
    $$
    \mathrm{comp}: i^! \circ \mathrm{For}^{G(\mathcal{O})}_{L(\mathcal{O})} \simeq j^! \circ r^! \circ \mathrm{For}^{G(\mathcal{O})}_{L(\mathcal{O})}. 
    $$
    Since $j$ is a closed immersion, we have a morphism $\mathrm{counit}: j_*j^! \rightarrow \mathrm{id}$. Since $qj = \mathrm{id}_{\Gr_L}$, we have an equality $(qj)_* = \mathrm{id}$. Thus, we obtain a natural transformation 
    $$
    \Delta: j^! = (qj)_*j^! \xrightarrow{\mathrm{comp}(j^!)} q_*j_*j^! \xrightarrow{q_*(\mathrm{counit})} q_*,
    $$
    where $\mathrm{comp}: (qj)_* \simeq q_*j_*$ is the compositional isomorphism. Therefore, we have constructed a natural transformation
    \begin{equation}\label{Xidefinition}
    \Xi: i^!\circ \mathrm{For}^{G(\mathcal{O})}_{L(\mathcal{O})} \xrightarrow{\mathrm{comp}} j^! \circ r^! \circ \mathrm{For}^{G(\mathcal{O})}_{L(\mathcal{O})} \xrightarrow{\Delta\left(r^!\circ \mathrm{For}^{G(\mathcal{O})}_{L(\mathcal{O})}\right)}  q_* \circ r^! \circ \mathrm{For}^{G(\mathcal{O})}_{L(\mathcal{O})} = \mathrm{Res}^G_L.
    \end{equation}
\end{construction}

\begin{construction}\label{Xiveeconstruction}
    The natural transformation $\Xi: i^! \circ \mathrm{For}^{G(\mathcal{O})}_{L(\mathcal{O})} \rightarrow \mathrm{Res}^G_L$ is the primary means by which we will compare the direct $!$-restriction with the parabolic restriction. However, it will be convenient (for the proof of Proposition~\ref{parabolic restriction vs corestriction}) to formulate a ``dual'' transformation 
    $$
    \Xi^\vee: q_! \circ r^* \circ \mathrm{For}^{G(\mathcal{O})}_{L(\mathcal{O})} 
    \rightarrow i^* \circ \mathrm{For}^{G(\mathcal{O})}_{L(\mathcal{O})}. 
    $$
    We now mimic Construction~\ref{Xiconstruction}. Namely, we start from the compositional isomorphism
    $$
    \mathrm{comp}: j^* \circ r^* \circ \mathrm{For}^{G(\mathcal{O})}_{L(\mathcal{O})} \simeq i^* \circ \mathrm{For}^{G(\mathcal{O})}_{L(\mathcal{O})}. 
    $$
    Since $j$ is a closed immersion, we have that $j_! = j_*$ and thus have a unit transformation $\mathrm{unit}: \mathrm{id} \rightarrow j_!j^*$. We also have an equality $\mathrm{id} = (qj)_!$. Therefore, we have a natural transformation 
    \begin{equation}\label{Deltaveedefinition}
    \Delta^\vee: q_! \xrightarrow{q_!\mathrm{unit}} q_!j_!j^* \xrightarrow{\mathrm{comp}(j^*)} (qj)_!j^* = j^*,
    \end{equation}
    where $\mathrm{comp}: q_! j_! \simeq (qj)_!$ is the compositional isomorphism. Finally, we obtain the natural transformation
    \begin{equation}\label{Xiveedefinition}
        \Xi^\vee: q_! \circ r^* \circ \mathrm{For}^{G(\mathcal{O})}_{L(\mathcal{O})} \xrightarrow{\Delta^\vee\left(r^* \circ \mathrm{For}^{G(\mathcal{O})}_{L(\mathcal{O})} \right)} j^* \circ r^* \circ \mathrm{For}^{G(\mathcal{O})}_{L(\mathcal{O})} \xrightarrow{\mathrm{comp}} i^* \circ \mathrm{For}^{G(\mathcal{O})}_{L(\mathcal{O})}. 
    \end{equation}
    The following lemma is an immediate consequence of our definition of $\Xi^\vee$ (note the contravariance of $\mathbb{D}$).
\end{construction}

\begin{lemma}\label{Xi vs Xivee}
    Let 
    $$
    \Xi: i^!\circ \mathrm{For}^{G(\mathcal{O})}_{L(\mathcal{O})} \rightarrow q_* \circ r^! \circ \mathrm{For}^{G(\mathcal{O})}_{L(\mathcal{O})}
    $$
    and
    $$
    \Xi^\vee: q_! \circ r^* \circ \mathrm{For}^{G(\mathcal{O})}_{L(\mathcal{O})} 
    \rightarrow i^* \circ \mathrm{For}^{G(\mathcal{O})}_{L(\mathcal{O})}. 
    $$
    be defined as in Construction~\ref{Xiconstruction} and Construction~\ref{Xiveeconstruction}, respectively. Then, we have commutative diagram of natural transformations
    $$
    \begin{tikzcd}
        i^! \circ \mathrm{For}^{G(\mathcal{O})}_{L(\mathcal{O})} \circ \mathbb{D} \ar[r, "\Xi \, \circ \, \mathbb{D}"] \ar[d] & q_* \circ r^! \circ \mathrm{For}^{G(\mathcal{O})}_{L(\mathcal{O})} \circ \mathbb{D} \ar[d] \\
        \mathbb{D} \circ i^* \circ \mathrm{For}^{G(\mathcal{O})}_{L(\mathcal{O})} \ar[r, "\mathbb{D}(\Xi^\vee)"] & \mathbb{D} \circ q_! \circ r^* \circ \mathrm{For}^{G(\mathcal{O})}_{L(\mathcal{O})}. 
    \end{tikzcd}
    $$
    Here, the vertical arrows are compositions of the standard isomorphisms $i^! \circ \mathbb{D} \simeq \mathbb{D} \circ i^*$, $q_* \circ \mathbb{D} \simeq \mathbb{D} \circ q_!$, $r^! \circ \mathbb{D} \simeq \mathbb{D} \circ r^*$ (see \cite[Corollary 3.9.10]{achar2020}), and the isomorphism $\mathrm{For}^{G(\mathcal{O})}_{L(\mathcal{O})} \circ \mathbb{D} \simeq \mathbb{D} \circ \mathrm{For}^{G(\mathcal{O})}_{L(\mathcal{O})}$ witnessing the compatiblity of the forgetful functor with Verdier duality (see \cite[Theorem 3.5.2(3)]{BL94}). 
\end{lemma}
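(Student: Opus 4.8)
## Proof strategy for Lemma~\ref{Xi vs Xivee}

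The plan is to unwind both natural transformations into their defining compositions and check that, arrow by arrow, the diagram commutes after we transport everything across the duality isomorphisms. Since $\Xi$ is built (Construction~\ref{Xiconstruction}) from the compositional isomorphism $\mathrm{comp}: i^! \simeq j^!r^!$, the counit $\mathrm{counit}: j_*j^! \to \mathrm{id}$, and the compositional isomorphism $(qj)_* \simeq q_*j_*$, while $\Xi^\vee$ is built (Construction~\ref{Xiveeconstruction}) from $\mathrm{comp}: j^*r^* \simeq i^*$, the unit $\mathrm{unit}: \mathrm{id} \to j_!j^*$, and $q_!j_! \simeq (qj)_!$, the content of the lemma is entirely formal: it expresses that passing to Verdier duals interchanges these two packages of data. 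First I would reduce to the case where the forgetful functor $\mathrm{For}^{G(\mathcal{O})}_{L(\mathcal{O})}$ is absent, since $\mathrm{For}^{G(\mathcal{O})}_{L(\mathcal{O})} \circ \mathbb{D} \simeq \mathbb{D} \circ \mathrm{For}^{G(\mathcal{O})}_{L(\mathcal{O})}$ is compatible with all the relevant structure maps (this is exactly \cite[Theorem 3.5.2(3)]{BL94}), so it can be stripped off at the outset and reinserted at the end.

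Having done that, I would decompose the square into three smaller squares, one for each of the three elementary arrows making up $\Xi$ and $\Xi^\vee$. The key inputs are the standard compatibilities, for a closed immersion $j$ and a map $q$ with $qj = \mathrm{id}$: (i) under $\mathbb{D}$, the counit $j_*j^! \to \mathrm{id}$ corresponds to the unit $\mathrm{id} \to j_!j^* = j_*j^*$, using $\mathbb{D}\circ j^! \simeq j^*\circ \mathbb{D}$ and $\mathbb{D}\circ j_* \simeq j_!\circ\mathbb{D} = j_*\circ\mathbb{D}$ (this is the duality between the $(j_*, j^!)$ and $(j_!, j^*)$ adjunctions, see \cite[Corollary 3.9.10]{achar2020}); (ii) under $\mathbb{D}$, the compositional isomorphism $(qj)_* \simeq q_*j_*$ corresponds to $(qj)_! \simeq q_!j_!$, via $\mathbb{D}\circ q_* \simeq q_!\circ\mathbb{D}$; and (iii) under $\mathbb{D}$, the compositional isomorphism $i^! \simeq j^!r^!$ corresponds to $i^* \simeq j^*r^*$, via $\mathbb{D}\circ r^! \simeq r^*\circ\mathbb{D}$. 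Each of these is a ``standard'' naturality statement that one finds in Achar's book; I would cite them and assemble the three resulting commuting squares into the desired one.

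The main obstacle — really the only place requiring care — is bookkeeping the direction of arrows and the placement of the duality isomorphisms, because $\mathbb{D}$ is contravariant, so a natural transformation $\alpha: F \to G$ dualizes to $\mathbb{D}(\alpha): \mathbb{D}G \to \mathbb{D}F$, and one must verify that reading $\Xi$ "forward" corresponds to reading $\Xi^\vee$ "backward" in exactly the way the statement records (note the placement of $\mathbb{D}(\Xi^\vee)$ on the bottom row going left-to-right, which already incorporates one such reversal). Concretely, I would track the single nontrivial compatibility: that the counit $\epsilon: j_*j^! \to \mathrm{id}$ and the unit $\eta: \mathrm{id} \to j_!j^*$ are Verdier-dual in the sense that the square
$$
\begin{tikzcd}
\mathbb{D} \ar[r, "\mathbb{D}(\epsilon)"] \ar[d, equals] & \mathbb{D} j_* j^! \ar[d, "\sim"] \\
\mathbb{D} \ar[r, "\eta(\mathbb{D})"] & j_! j^* \mathbb{D}
\end{tikzcd}
$$
commutes, where the right vertical is the composite of $\mathbb{D}j_* \simeq j_!\mathbb{D}$ and $\mathbb{D}j^! \simeq j^*\mathbb{D}$. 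This follows from the fact that Verdier duality exchanges the adjoint pairs $(j_!, j^!)$ and $(j_*, j^*)$ — or, since $j$ is a closed immersion, $(j_*, j^!)$ and $(j_* = j_!, j^*)$ — compatibly with units and counits, which is part of the six-functor formalism as packaged in \cite[\S 3.9]{achar2020}. Granting this, every remaining square is the naturality of a compositional isomorphism against one of the maps already identified, and the lemma follows by pasting. I would present the argument as ``the diagram commutes by unwinding Construction~\ref{Xiconstruction} and Construction~\ref{Xiveeconstruction} and applying the duality compatibilities of \cite[Corollary 3.9.10]{achar2020} and \cite[Theorem 3.5.2(3)]{BL94} arrow by arrow,'' leaving the routine diagram-pasting to the reader, exactly in the spirit of the ``immediate consequence'' remark preceding the statement.
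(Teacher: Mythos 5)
Your proposal is correct and follows essentially the same route as the paper, which offers no written proof at all: it simply declares the lemma an immediate consequence of the definition of $\Xi^\vee$ (noting the contravariance of $\mathbb{D}$). Your arrow-by-arrow decomposition into the three constituent squares, with the counit/unit duality for the closed immersion $j$ as the only nontrivial compatibility, is exactly the routine verification the paper leaves implicit.
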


\begin{construction}\label{parabolic restriction monoidal structure}
Beilinson and Drinfeld equip (the non-equivariant version of) $\mathrm{Res}^G_L$ with a monoidal structure in \cite[Section 5.3.30]{beilinsondrinfeld} through the use of the global fusion product. See \cite[Proposition 15.2]{baumann2018notes} for a detailed construction. We will give a slightly different (but conceptually identical) presentation of this monoidal structure (on the equivariant parabolic restriction functor $\mathrm{Res}^G_L$), using Gaitsgory's description Theorem~\ref{convolution and fusion} of the convolution product on $D_{G(\mathcal{O})}(\Gr_G)$ through the nearby cycles functor instead of the fusion product of \cite[Section 5]{MV04}. First of all, we have an analog of the induction diagram for the twofold convolution Grassmannians
$$
\begin{tikzcd}
    & \Gr_P^{(2)} \ar[rd, "r_2"] \ar[ld, "q_2", swap] \\
    \Gr_L^{(2)} & & \Gr_G^{(2)}.
\end{tikzcd}
$$
Here, $q_2$ and $r_2$ are constructed from $q$ and $r$, respectively, just as $i_2: \Gr_L^{(2)} \rightarrow \Gr_G^{(2)}$ was constructed from the map $i: \Gr_L \rightarrow \Gr_G$. That is, we have maps $P(\mathcal{K}) \hookrightarrow G(\mathcal{K})$ (resp. $P(\mathcal{K}) \twoheadrightarrow L(\mathcal{K})$) and $P(\mathcal{O}) \hookrightarrow G(\mathcal{O})$ (resp. $P(\mathcal{O}) \twoheadrightarrow L(\mathcal{O})$) which induce the morphism $r: \Gr_P \rightarrow \Gr_G$ (resp. $q: \Gr_P \rightarrow \Gr_L$) on the quotient ind-schemes. We therefore obtain a map $P(\mathcal{K}) \times \Gr_P \rightarrow G(\mathcal{K}) \times \Gr_G$ (resp. $P(\mathcal{K}) \times \Gr_P \rightarrow L(\mathcal{K}) \times \Gr_L$) which intertwines the diagonal action of $P(\mathcal{O})$ on $P(\mathcal{K}) \times \Gr_P$ with the diagonal action of $G(\mathcal{O})$ (resp. $L(\mathcal{O})$) on $G(\mathcal{K}) \times \Gr_G$ (resp. $L(\mathcal{K}) \times \Gr_L$). Therefore, the map $P(\mathcal{K}) \times \Gr_P \rightarrow G(\mathcal{K}) \times \Gr_G$ (resp. $P(\mathcal{K}) \times \Gr_P \rightarrow L(\mathcal{K}) \times \Gr_L$) descends to a morphism of ind-schemes 
$$
r_2: \Gr_P^{(2)} = \Gr_P \tildetimes \Gr_P \rightarrow \Gr_G \tildetimes \Gr_G = \Gr^{(2)}_G
$$
(resp. 
$$
q_2: \Gr_P^{(2)} = \Gr_P \tildetimes \Gr_P \rightarrow \Gr_L \tildetimes \Gr_L = \Gr^{(2)}_L ).
$$
The ind-schemes and morphisms of §\ref{Fusion} have direct analogs for the algebraic group $P$ (of course, $P$ is not reductive, but that is not essential for the properties of $\Gr_P$ that we are discussing here). Thus, we introduce the $X$-ind-schemes $\Gr_{P,X}$, $\Gr'_{P,X}$, $\widetilde{\Gr}_{P,X}$ and $\widetilde{\Gr}'_{P,X}$ and the $X$-group scheme $\mathcal{P}_X$. For each of the these ind-schemes, we have an analog of the induction diagram in the category of ind-schemes over $X$. In particular, we can define morphisms $r_{X}: \Gr_{P,X} \rightarrow \Gr_{G,X}$ and $q_{X}: \Gr_{P,X} \rightarrow \Gr_{L,X}$, as well as $r_{X}': \Gr_{P,X}' \rightarrow \Gr_{G,X}'$, $\tilde{r}_X: \widetilde{\Gr}_{P,X} \rightarrow \widetilde{\Gr}_{G,X}$, $\tilde{r}_X': \widetilde{\Gr}'_{P,X} \rightarrow \widetilde{\Gr}'_{G,X}$ and $q_{X}': \Gr_{P,X}' \rightarrow \Gr_{L,X}'$, $\tilde{q}_X: \widetilde{\Gr}_{P,X} \rightarrow \widetilde{\Gr}_{L,X}$, $\tilde{q}_X': \widetilde{\Gr}'_{P,X} \rightarrow \widetilde{\Gr}'_{L,X}$. We leave it to the reader to spell out the definitions of these morphisms by giving functorial maps on $S$-points (for any affine $X$-scheme $x: S \rightarrow X$).

We can also define the appropriate analogs of the parabaolic restriction functor $\mathrm{Res}^G_L = q_*\circ r^! \circ \mathrm{For}^{G(\mathcal{O})}_{L(\mathcal{O})}$ for each version of the Grassmannian. For example, the functor 
$$
\mathrm{Res}^G_{L,X}: D_{\mathcal{G}_X}(\Gr_{G,X}) \rightarrow D_{\mathcal{L}_X}(\Gr_{L,X})
$$
is defined by 
$$
\mathrm{Res}^G_{L,X} = (q_X)_* \circ r_X^! \circ \mathrm{For}^{\mathcal{G}_X}_{\mathcal{L}_X}.
$$
Similarly, we have the functor ${^{'}\mathrm{Res}^G_{L,X}}:D_{\mathcal{G}_X}(\Gr_{G,X}') \rightarrow D_{\mathcal{L}_X}(\Gr_{L,X}')$. Following Construction~\ref{Xiconstruction}, we can define natural transformations
$$
\Xi_{X}: i_X^! \circ \mathrm{For}^{\mathcal{G}_X}_{\mathcal{L}_X} \rightarrow \mathrm{Res}^G_{L,X} 
$$
$$
{^{'}{\Xi}}_{X}: (i'_X)^! \circ \mathrm{For}^{\mathcal{G}_X}_{\mathcal{L}_X} \rightarrow {^{'}{\mathrm{Res}^G_{L,X}}}.  
$$
We can also follow Construction~\ref{Xiveeconstruction} to define the natural transformations $\Xi_{X}^\vee$ and ${^{'}{\Xi}}_{X}^\vee$. We leave it to the reader to spell out the appropriate analogs of Lemma~\ref{Xi vs Xivee}. 

We will continue the notational laziness initiated in Remark~\ref{associativityconstraint} by using the same names for analogous maps and functors defined for the groups $L$, $P$, and $G$ (for example, $m: \Gr^{(2)}_P \rightarrow \Gr_P$ denotes the local convolution morphism, while $\mu: \widetilde{\Gr}'_{P,X} \rightarrow \Gr'_{P,X}$ denotes Gaitsgory's global convolution map from Construction~\ref{varpidefinition}). Moreover, we replace the subscript $X$ by $X - 0$ on an $X$-ind-scheme or a morphism of $X$-ind-schemes to indicate the restriction of this object to the open subscheme $X \setminus \{0\} \subseteq X$. For example, we have the morphism $r_{X - 0} := r_X \times_X \mathrm{id}_{X \setminus \{0\}}: \Gr_{P, X - 0} := \Gr_{P,X} \times_X (X \setminus \{0\}) \rightarrow \Gr_{G,X} \times_{X} (X \setminus \{0\}) =: \Gr_{G, X - 0}$ on the restrictions of these Beilinson-Drinfeld Grassmannians to $X \setminus \{0\}$. Sometimes we opt to save space and use the subscript $X$ on a morphism when we really should use $X - 0$, but we will only do so if it is unlikely to cause confusion. 

Let $A,B \in D_{G(\mathcal{O})}(\Gr_G)$. We form the complex $A \boxtimes p_2^\dagger B \vert_{\Gr_{G,X - 0}} \in D_{\mathcal{G}_{X - 0}}(\Gr'_{G,X - 0})$. We have a natural isomorphism in $D_{\mathcal{L}_{X - 0}}(\Gr'_{L,X-0})$
\begin{align}
\Sigma: (q'_{X - 0})_*(r'_{X - 0})^!(A \boxtimes p_2^\dagger B \vert_{\Gr_{G,X-0}}) &\simeq q_*r^!A \boxtimes ({q}_{X - 0})_*{r}_{X - 0}^! \left(p_2^\dagger B \vert_{\Gr_{G,X - 0}}\right)\label{Sigmadefinition1}\\
&\simeq q_*r^!A \boxtimes p_2^\dagger(q_*r^!B) \vert_{\Gr_{G,X - 0}}\label{Sigmadefinition2}.
\end{align}
The first line is the composition of the isomorphisms of \cite[Proposition 3.10.1]{achar2020} (to pass the $*$-pushforward into the external product) and \cite[Proposition 3.10.6(3)]{achar2020} (to pass the $!$-pullback into the external product). The second line uses the evident analogs of these natural isomorphisms for the twisted external product so as to push the $!$-pullback and $*$-pushforward past the functor $p_2^\dagger = \underline{\CC}_X[1] \tildeboxtimes -$. Moreover, the second line uses the fact that $q_*$ and $r^!$ commute with restriction to the \textit{open} sub-ind-scheme $\Gr_{G,X -0} \subseteq \Gr_{G,X}$ (by smooth base change). 

Next, consider a complex $F \in D_{\mathcal{G}_{X - 0}}(\Gr'_{G,X-0})$. We can apply \cite[Lemma 4.8.8]{achar2020} to construct a Verdier dual pair (in the sense that the first map yields the second after applying $\mathbb{D}$ and replacing $F$ by $\mathbb{D}F$) of natural maps
\begin{align}
\Psi ((q_{X- 0}')_*(r'_{X - 0})^!F) &\rightarrow q_*r^!\Psi(F) \label{parabolic restriction monoidal structure eq1}\\
q_!r^*\Psi(F) &\rightarrow \Psi((q_{X- 0}')_!(r'_{X - 0})^*F). \label{parabolic restriction monoidal structure eq2}
\end{align}
On the other hand, we may identify $r: \Gr_P \rightarrow \Gr_G$ (resp. $i: \Gr_L \rightarrow \Gr_G$, $q: \Gr_P \rightarrow \Gr_L$)  with the attracting ind-scheme (resp. the fixed point ind-scheme, the canonical projection from the attracting ind-scheme to the fixed point ind-scheme) for the action of $\mathbb{G}_m$ on $\Gr_G$ through $2\rho_G - 2\rho_L$, see \cite[Theorem 1.2.6]{acharriche2023} as well as Proposition~\ref{stabilizercomputation} above. Therefore, Braden's hyperbolic localization theorem (see \cite{braden2003hyperbolic}, \cite{drinfeld2014theorem}, and \cite{richarz2018spaces}) provides canonical isomorphisms $q_!r^* \simeq q_*r^!$ and $(q_{X- 0}')_!(r'_{X - 0})^* \simeq (q_{X- 0}')_*(r'_{X - 0})^!$. Thus, the functors $q_!r^*$ and $q_*r^!$ are examples of hyperbolic restriction functors, in the sense of \cite{braden2003hyperbolic}. It follows from Richarz's work \cite{richarz2018spaces} (especially \cite[Theorem 3.3]{richarz2018spaces}) that the nearby cycles functor commutes with hyperbolic restriction (even in the \'etale setting, and even with $\mathbb{G}_m$-equivariance weakened to the property of being $\mathbb{G}_m$-monodromic). Hence, the natural transformations \eqref{parabolic restriction monoidal structure eq1} and \eqref{parabolic restriction monoidal structure eq2} are isomorphisms. 

Now we can put the ingredients together. Gaitsgory's Theorem~\ref{convolution and fusion} provides the isomorphism $\varpi_{A,B}: A \star B \simeq C(A,B)$, which yields an isomorphism $q_*r^!\varpi_{A,B}: q_*r^!(A \star B) \simeq q_*r^!C(A,B)$. We can invoke \eqref{parabolic restriction monoidal structure eq1} to obtain an isomorphism 
\begin{equation}\label{parabolic restriction monoidal structure step1}
q_*r^!C(A,B) = q_*r^!\Psi(A \boxtimes p_2^\dagger B \vert_{\Gr_{G,X - 0}}) \simeq \Psi ({q}'_{X - 0})_*({r}'_{X - 0})^!(A \boxtimes p_2^\dagger B \vert_{\Gr_{G, X - 0}}). 
\end{equation}
Then, we can apply $\Psi$ to \eqref{Sigmadefinition1}, \eqref{Sigmadefinition2} to obtain an isomorphism 
\begin{align}\label{parabolic restriction monoidal structure step2}
\Psi(\Sigma): \Psi ({q}'_{X - 0})_*({r}'_{X - 0})^!(A \boxtimes p_2^\dagger B \vert_{\Gr_{G, X - 0}}) &\simeq \Psi\left(q_*r^!A \boxtimes p_2^\dagger(q_*r^!B) \vert_{\Gr_{G,X - 0}} \right)\\
&\simeq \Psi\left( \mathrm{Res}^G_L(A) \boxtimes p_2^\dagger\left(\mathrm{Res}^G_L(B) \right) \right). \nonumber
\end{align}
Finally, we have the isomorphism
\begin{equation}\label{parabolic restriction monoidal structure step3}
\varpi_{\mathrm{Res}^G_L(A), \mathrm{Res}^G_L(B)}^{-1}: \Psi\left( \mathrm{Res}^G_L(A) \boxtimes p_2^\dagger\left(\mathrm{Res}^G_L(B) \right) \right) \simeq \mathrm{Res}^G_L(A) \star \mathrm{Res}^G_L(B).
\end{equation}
Composing the isomorphisms \eqref{parabolic restriction monoidal structure step1}, \eqref{parabolic restriction monoidal structure step2}, and \eqref{parabolic restriction monoidal structure step3} defines the isomorphism
\begin{equation}
\varepsilon_{A,B}: \mathrm{Res}^G_L(A \star B) \simeq \mathrm{Res}^G_L(A) \star \mathrm{Res}^G_L(B). 
\end{equation}
We leave the task of verifying the compatibility of $\varepsilon_{A,B}$ with the associativity and unitality constraints on $D_{G(\mathcal{O})}(\Gr_G)$ and $D_{L(\mathcal{O})}(\Gr_L)$ (described in Remark~\ref{convolution} and Remark~\ref{unit}) to the reader. 
\end{construction}

\begin{proposition}\label{parabolic restriction vs corestriction}
    Regard $i^!: D_{G(\mathcal{O})}(\Gr_G) \rightarrow D_{L(\mathcal{O})}(\Gr_L)$ as a lax monoidal functor through Construction~\ref{colax definition} and Construction~\ref{lax definition}. Regard the parabolic restriction functor of \eqref{equivariant parabolic restriction}
    $$
    \mathrm{Res}^G_L: D_{G(\mathcal{O})}(\Gr_G) \rightarrow D_{L(\mathcal{O})}(\Gr_L)
    $$ 
    as a lax monoidal functor through Construction~\ref{parabolic restriction monoidal structure}. Then, the natural transformation of \eqref{Xidefinition} 
    $$
    \Xi: i^! \rightarrow \mathrm{Res}^G_L
    $$
    of functors $D_{G(\mathcal{O})}(\Gr_G) \rightarrow D_{L(\mathcal{O})}(\Gr_L)$ is a natural transformation of lax monoidal functors.
\end{proposition}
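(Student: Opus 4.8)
The plan is to transport the statement through Verdier duality to one about the transformation $\Xi^\vee$, and then to verify it in the global ``fusion'' picture, in which both monoidal structures involved are manufactured by the nearby cycles functor. Two compatibilities must be checked. The unit compatibility is the easy one: for $A = B = \mathrm{IC}_0$ every object appearing is a (shifted) skyscraper supported at $t^0 \in \Gr_L$, so it suffices to check the diagram after applying $(j')^*$ for the inclusion $j'\colon \mathrm{pt} \hookrightarrow \Gr_L$, and --- exactly as at the end of the proof of Proposition~\ref{unitality of restriction} --- this reduces to $L = \mathrm{pt}$, where the diagram becomes an identity of copies of $\CC$. The content is therefore the multiplicative square relating $\Xi_{A \star B}$, the lax structure maps of $i^!$ and of $\mathrm{Res}^G_L$, and $\Xi_A \star \Xi_B$, for general $A, B \in D_{G(\mathcal{O})}(\Gr_G)$.

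By Lemma~\ref{Xi vs Xivee}, together with the compatibility of $\mathbb{D}$ with convolution and the identification $\mathbb{D}_L \circ i^! \simeq i^* \circ \mathbb{D}_G$ recorded in Remark~\ref{lax definition}, this multiplicative square is equivalent to the assertion that $\Xi^\vee \colon q_! r^* \circ \mathrm{For}^{G(\mathcal{O})}_{L(\mathcal{O})} \to i^* \circ \mathrm{For}^{G(\mathcal{O})}_{L(\mathcal{O})}$ of Construction~\ref{Xiveeconstruction} intertwines the colax monoidal structure of Construction~\ref{colax definition} on $i^*$ with the colax structure on $q_! r^* \simeq q_* r^!$ (Braden) that is $\mathbb{D}$-dual to the monoidal structure $\varepsilon$ of Construction~\ref{parabolic restriction monoidal structure}. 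Both of these colax structures are obtained by applying $\Psi$ to transformations of complexes on the degenerations $\Gr'_{G,X-0}$ and $\Gr'_{L,X-0}$: for $i^*$ this is Proposition~\ref{globalinterpretation} together with Gaitsgory's isomorphism $\varpi$ (Theorem~\ref{convolution and fusion}), and for $q_! r^* = q_* r^!$ it is the $\mathbb{D}$-dual of Construction~\ref{parabolic restriction monoidal structure}, assembled from the external-product comparison $\Sigma$ of \eqref{Sigmadefinition1} and \eqref{Sigmadefinition2} and the natural transformations \eqref{parabolic restriction monoidal structure eq1} and \eqref{parabolic restriction monoidal structure eq2} (which are isomorphisms because $q_* r^!$ is hyperbolic restriction for the $\mathbb{G}_m$-action through $2\rho_G - 2\rho_L$ and $\Psi$ commutes with hyperbolic restriction). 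After rewriting everything through $\varpi$ and $\Psi$, the square to be proved becomes $\Psi$ applied to a square of complexes on $\Gr'_{G,X-0}$ and $\Gr'_{L,X-0}$ assembled from the global avatar ${}'\Xi^\vee_{X-0}$ of $\Xi^\vee$, the isomorphism $\Omega$ of \eqref{Omegadefinition}, and the isomorphism $\Sigma$.

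The essential input is then that $\Xi^\vee$ commutes with nearby cycles: there is a canonical commutative square identifying $\Psi \circ {}'\Xi^\vee_{X-0}$ with $\Xi^\vee \circ \Psi$, through the canonical comparisons of $\Psi$ with $*$-restriction to the fixed locus $\Gr_L \subseteq \Gr_G$ and with hyperbolic restriction. This is proved by the same mechanism that makes \eqref{parabolic restriction monoidal structure eq1} and \eqref{parabolic restriction monoidal structure eq2} isomorphisms: $\Xi^\vee$ is assembled in Construction~\ref{Xiveeconstruction} from the proper pushforward $j_! = j_*$, from $*$-pullbacks, and from the unit of $(j^*, j_*)$, so one combines the compatibility of $\Psi$ with proper pushforward and with $*$-pullback (\cite[Lemmas 4.4.8 and 4.8.8]{achar2020}) with Richarz's theorem \cite[Theorem 3.3]{richarz2018spaces} that $\Psi$ is compatible with the whole hyperbolic localization package (not merely with the hyperbolic restriction functor), using that $r$, $q$, $i$, $j$ globalize, over $X$, the attracting, projection, fixed-point, and zero-section maps for the $2\rho_G - 2\rho_L$ action. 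Granting this, the square reduces to a statement over $X \setminus \{0\}$ alone, where $\Gr'_{G,X-0} \simeq \Gr_G \times \Gr_{G,X-0}$ and the maps ${}'\Xi^\vee_{X-0}$, $\Sigma$, $\Omega$ all decompose as external products of the corresponding maps on the two factors; the residual identity is the K\"unneth compatibility of $\Xi^\vee$ (that it commutes with external products), a formal consequence of the naturality of the base-change and projection-formula isomorphisms of \cite[Propositions 3.10.1 and 3.10.6]{achar2020} out of which $\Sigma$ is built, which I would verify by a short diagram chase in the spirit of Proposition~\ref{associativity}. I expect the nearby-cycles compatibility of $\Xi^\vee$ to be the main obstacle: it is the one genuinely non-formal ingredient, and it requires not merely that $\Psi$ commutes with hyperbolic restriction but that it does so compatibly with the natural ``Braden map'' $\Xi^\vee$ linking $*$-restriction to the fixed locus with hyperbolic restriction; everything else is formal naturality, made easy by the product structure of the degenerations over $X \setminus \{0\}$.
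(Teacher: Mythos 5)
Your proposal follows essentially the same route as the paper: dualize via Lemma~\ref{Xi vs Xivee} to the colax statement for $\Xi^\vee$, interpret both colax structures through the nearby-cycles picture (Proposition~\ref{globalinterpretation} for $i^*$, the dual of Construction~\ref{parabolic restriction monoidal structure} for $q_!r^*$), reduce to a square on the degenerations assembled from the global avatar of $\Xi^\vee$, $\Omega$ of \eqref{Omegadefinition} and $\Sigma$ of \eqref{Sigmadefinition1}--\eqref{Sigmadefinition2}, and split this into a compatibility of $\Xi^\vee$ with $\Psi$ plus a K\"unneth compatibility over $X \setminus \{0\}$; your treatment of the unit constraint by reduction to the basepoint and to $L = \mathrm{pt}$, as in Proposition~\ref{unitality of restriction}, is also fine. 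The one point where you diverge is your claim that the compatibility of $\Xi^\vee$ with nearby cycles is the ``genuinely non-formal ingredient,'' to be extracted from a strengthened form of Richarz's theorem asserting that $\Psi$ respects the Braden map relating $i^*$ to $q_!r^*$. In the paper this step is purely formal: since $\Xi^\vee$ is built only from the unit of the $(j^*, j_! = j_*)$ adjunction and compositional isomorphisms, the relevant square commutes by naturality of the canonical transformations $f^*\Psi \rightarrow \Psi f_X^*$ and $f_!\Psi \rightarrow \Psi f_{X!}$ of \cite[Lemma 4.8.8]{achar2020} together with their standard compatibilities with compositional isomorphisms and with adjunction units for closed immersions; no Braden or Richarz input is needed for the commutativity, and \cite[Theorem 3.3]{richarz2018spaces} does not literally provide the enhanced statement you invoke. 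Braden/Richarz enter only in Construction~\ref{parabolic restriction monoidal structure}, to guarantee that \eqref{parabolic restriction monoidal structure eq1} and \eqref{parabolic restriction monoidal structure eq2} are isomorphisms so that the lax structure on $\mathrm{Res}^G_L$ is defined at all; they play no role in verifying that $\Xi^\vee$ intertwines the structures. So your plan goes through, but you should replace the appeal to a strengthened Richarz theorem by the formal diagram chase.
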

\begin{proof}
Recall that the lax monoidal structure of Construction~\ref{lax definition} on $i^!$ is obtained from the colax monoidal structure of Construction~\ref{colax definition} by Verdier duality. Hence, by Lemma~\ref{Xi vs Xivee}, the proposition is equivalent to the dual assertion that the natural transformation $\Xi^\vee$ of \eqref{Xiveedefinition} is a morphism of colax monoidal functors. As in Construction~\ref{Xiveeconstruction}, we will use the description of $\mathrm{Res}^G_L$ in terms of left adjoints throughout the proof. Moreover, we will use the description of the colax monoidal structure on $i^*$ given in Proposition~\ref{globalinterpretation} through Gaitsgory's interpretation Theorem~\ref{convolution and fusion} of the convolution product.

Let $A,B \in D_{G(\mathcal{O})}(\Gr_G)$. It suffices to show that the following diagram in $D_{L(\mathcal{O})}(\Gr_L)$ commutes. 
$$
\begin{tikzcd}
     \mathrm{Res}^G_L \left( \Psi \left( A \boxtimes p_2^\dagger B \vert_{\Gr_{G,X-0}}\right)\right) \ar[rrrr, "\Xi^\vee"] \ar[dd,"\eqref{parabolic restriction monoidal structure eq2}"] & & & &i^*\Psi \left(A \boxtimes p_2^\dagger B \vert_{\Gr_{G,X - 0}} \right) \ar[dd,"\zeta"]\\
     && (\dagger) \\
     \Psi{^{'}{\mathrm{Res}^G_{L,X-0}}} \left(A \boxtimes p_2^\dagger B \vert_{\Gr_{G,X-0}} \right) \ar[rrrr,"\Psi\left({^{'}{\Xi^\vee_{X - 0}}} \right)"] \ar[d,"\Psi\left(\eqref{Sigmadefinition2} \, \circ \, \eqref{Sigmadefinition1} \right)"]&& &&  \Psi (i'_{X - 0})^*\left(A \boxtimes p_2^\dagger B \vert_{\Gr_{G,X - 0}}\right) \ar[d, "\Psi(\Omega)"]\\ 
    \Psi\left(\mathrm{Res}^G_L(A) \boxtimes p_2^\dagger \mathrm{Res}^G_L(B) \vert_{\Gr_{L,X - 0}} \right) \ar[rrrr, "\Psi\left(\Xi^\vee \,\boxtimes\, p_2^\dagger(\Xi^\vee) \vert_{\Gr_{L,X - 0}}\right)"] & & &  & \Psi\left(i^*A \boxtimes p_2^\dagger i^*B \vert_{\Gr_{G,X-0}} \right)
\end{tikzcd}
$$
The bottom rectangle is obtained by applying the nearby cycles functor $\Psi$ to the outer rectangle in the following diagram.
$$\hspace{-1.5cm}
\begin{tikzcd}
     {^{'}{\mathrm{Res}^G_{L,X-0}\left(A \boxtimes p_2^\dagger B \vert_{\Gr_{G,X-0}} \right)}} \ar[rr,"{^{'}{\Xi^\vee_{X - 0}}}"] \ar[dd,"\eqref{Sigmadefinition1}"] &   & (i'_{X - 0})^*\left(A \boxtimes p_2^\dagger B \vert_{\Gr_{G,X-0}} \ar[dd, "\Omega_1",swap] \right) \\
     & (\star)\\
     \mathrm{Res}^G_L(A) \boxtimes \mathrm{Res}^G_{L,X-0}\left( p_2^\dagger B \vert_{\Gr_{G,X-0}} \right)\ar[r,"\Xi^\vee \, \boxtimes \, 1"] \ar[d,"1 \, \boxtimes \, \eqref{Sigmadefinition2}"] & i^*A \boxtimes \mathrm{Res}^G_{L,X-0}\left(p_2^\dagger B \vert_{\Gr_{G,X-0}} \right) \ar[r,"1 \, \boxtimes \, \Xi_{X-0}^\vee"] \ar[d, "1 \, \boxtimes \, \eqref{Sigmadefinition2}"] &  i^* A \boxtimes i_{X - 0}^*\left(p_2^\dagger B \vert_{\Gr_{G,X-0}}\right) \ar[d,"1 \, \boxtimes \, \Omega_2", swap] \\
     \mathrm{Res}^G_L(A) \boxtimes p_2^\dagger \mathrm{Res}^G_L(B) \vert_{\Gr_{L,X-0}} \ar[r, "\Xi^\vee \boxtimes 1"]  & i^*A \boxtimes p_2^\dagger\mathrm{Res}^G_L(B)\vert_{\Gr_{L,X-0}} \ar[r,"\mathrm{1} \, \boxtimes \, p_2^\dagger(\Xi^\vee) \vert_{\Gr_{L,X-0}}"] & i^* A \boxtimes p_2^\dagger (i^* B) \vert_{\Gr_{L,X-0}} \\
\end{tikzcd}
$$
The lower left rectangle commutes by the naturality of the map $\Xi^\vee \,\boxtimes\, -$ applied to the isomorphism $p_2^\dagger \left( \mathrm{Res}^G_{L,X-0}(B)\right) \vert_{\Gr_{L,X - 0}} \simeq \mathrm{Res}^G_L\left(p_2^\dagger B \vert_{\Gr_{G,X - 0}} \right)$ of \eqref{Sigmadefinition2}. The lower right rectangle commutes because it is obtained by applying the functor $i^*A \boxtimes -$ to the following diagram.
$$
\begin{tikzcd}
    (q_{X - 0})_!r_{X - 0}^*\left(p_2^\dagger B \vert_{\Gr_{G,X - 0}} \right) \ar[rr,"\Xi^\vee_{X - 0}"] \ar[d,"\eqref{Sigmadefinition2}"] && i_{X - 0}^* \left(p_2^\dagger B \vert_{\Gr_{G,X - 0}}\right) \ar[d,"\eqref{Sigmadefinition2}"]\\
    (q_{X - 0})_!p_2^\dagger r^*B \vert_{\Gr_{P,X - 0}} \ar[d,"\eqref{Sigmadefinition2}"] && i_{X - 0}^* \left(p_2^\dagger B \vert_{\Gr_{G,X - 0}}\right) \ar[d,"\eqref{Sigmadefinition2}"]\\
    p_2^\dagger\left(q_!r^*B \right)\vert_{\Gr_{L,X - 0}} \ar[rr,"p_2^\dagger(\Xi^\vee) \vert_{\Gr_{L,X-0}}"] && p_2^\dagger (i^* B)  \vert_{\Gr_{L, X - 0}}
\end{tikzcd}
$$
We expand the diagram horizontally using the definitions of $\Xi^\vee$ \eqref{Xiveedefinition} and of $\Xi^\vee_{X-0}$. 
$$
\begin{tikzcd}
    (q_{X - 0})_!r_{X - 0}^*\left(p_2^\dagger B \vert_{\Gr_{G,X - 0}} \right) \ar[r,"\Delta^\vee_{X-0}"] \ar[d,"\eqref{Sigmadefinition2}"] & j_{X-0}^*r_{X-0}^*\left(p_2^\dagger B \vert_{\Gr_{G,X - 0}} \right) \ar[r,"\mathrm{comp}"] \ar[d,"\eqref{Sigmadefinition2}"] & i_{X - 0}^* \left(p_2^\dagger B \vert_{\Gr_{G,X - 0}}\right) \ar[d,"\eqref{Sigmadefinition2}"]\\
    (q_{X - 0})_!p_2^\dagger r^*B \vert_{\Gr_{P,X - 0}} \ar[r,"\Delta^\vee_{X-0}"] \ar[d,"\eqref{Sigmadefinition2}"] & j_{X-0}^*p_2^\dagger r^*B \vert_{\Gr_{P,X - 0}}  \ar[d,"\eqref{Sigmadefinition2}"] & i_{X - 0}^* \left(p_2^\dagger B \vert_{\Gr_{G,X - 0}}\right) \ar[d,"\eqref{Sigmadefinition2}"]\\
    p_2^\dagger\left(q_!r^*B \right)\vert_{\Gr_{L,X - 0}} \ar[r,"p_2^\dagger \Delta^\vee \vert_{\Gr_{L,X-0}}"] & p_2^\dagger j^*r^*B \vert_{\Gr_{L,X-0}} \ar[r,"p_2^\dagger(\mathrm{comp}) \vert_{\Gr_{L,X-0}}"]& p_2^\dagger (i^* B)  \vert_{\Gr_{L, X - 0}}
\end{tikzcd}
$$
The top left square commutes by the naturality of the map $\Delta_{X - 0}^\vee: (q_{X - 0})_! \rightarrow j_{X - 0}^*$ applied to the morphism $r_{X -0}^*p_2^\dagger B \vert_{\Gr_{G,X-0}} \rightarrow p_2^\dagger r^*B \vert_{\Gr_{P,X-0}}$ of \eqref{Sigmadefinition2}. The right rectangle commutes by the evident compatibility of \eqref{Sigmadefinition2} with the compositional isomorphisms for $*$-pullback. To show that the bottom left square commutes, we may replace $r^*B$ by an arbitrary complex $M \in D_{P(\mathcal{O})}(\Gr_P)$ and show that the outer rectangle in the following diagram commutes.
$$
\begin{tikzcd}
(q_{X - 0})_!p_2^\dagger M \vert_{\Gr_{P,X-0}} \ar[rr,"(q_{X-0})_! \mathrm{unit}"] \ar[dd,"\eqref{Sigmadefinition2}"]&& (q_{X-0})_{!} (j_{X - 0})_!j_{X - 0}^* p_2^\dagger M \vert_{\Gr_{P,X-0}}\ar[r,"\mathrm{comp}"] \ar[d,"\eqref{Sigmadefinition2}"] & j_{X -0}^* p_2^\dagger M \vert_{\Gr_{P,X-0}} \ar[d,"\eqref{Sigmadefinition2}"]\\
&& (q_{X-0})_! (j_{X-0})_{!}p_2^\dagger j^*M \vert_{\Gr_{P,X-0}} \ar[r,"\mathrm{comp}"] \ar[d,"\eqref{Sigmadefinition2}"] & p_2^\dagger j^*M\vert_{\Gr_{L,X-0}} \ar[d, equals]\\
p_2^\dagger q_!M \vert_{\Gr_{L,X-0}} \ar[rr,"\mathrm{p}_2^\dagger(q_!\mathrm{unit})\vert_{\Gr_{L,X-0}}"] && p_2^\dagger q_! j_!j^* M \vert_{\Gr_{L,X-0}} \ar[r,"p_2^\dagger(\mathrm{comp})\vert_{\Gr_{L,X-0}}"] & p_2^\dagger j^*M\vert_{\Gr_{L,X-0}}
\end{tikzcd}
$$
The top right rectangle commutes by the naturality of the compositional isomorphism $(q_{X-0})_! (j_{X-0})_! \simeq \mathrm{id}$ applied to the morphism $j_{X-0}^* p_2^{\dagger} M \vert_{\Gr_{P,X-0}} \rightarrow p_2^{\dagger} j^* M \vert_{\Gr_{L,X-0}}$. The bottom right rectangle commutes by the compatibility of \eqref{Sigmadefinition2} with the compositional isomorphisms for $!$-pushforward. The left rectangle can be expanded to the following diagram.
$$
\begin{tikzcd}
    (q_{X - 0})_!p_2^\dagger M \vert_{\Gr_{P,X-0}} \ar[rrr,"(q_{X-0})_! \mathrm{unit}"] \ar[dd,equals]&&& (q_{X-0})_{!} (j_{X - 0})_!j_{X - 0}^* p_2^\dagger M \vert_{\Gr_{P,X-0}} \ar[d,"\eqref{Sigmadefinition2}"]\\
    &&& (q_{X-0})_! (j_{X-0})_{!}p_2^\dagger j^*M \vert_{\Gr_{P,X-0}} \ar[d,"\eqref{Sigmadefinition2}"] \\
    (q_{X-0})_!p_2^\dagger M \vert_{\Gr_{P,X-0}} \ar[rrr,"(q_{X-0})_! p_2^\dagger (\mathrm{unit}) \vert_{\Gr_{P,X-0}}"] \ar[d,"\eqref{Sigmadefinition2}"] & & &(q_{X-0})_! p_2^\dagger j_!j^*M \vert_{\Gr_{P,X-0}} \ar[d,"\eqref{Sigmadefinition2}"]\\
    p_2^\dagger q_!M \vert_{\Gr_{L,X-0}} \ar[rrr,"\mathrm{p}_2^\dagger(q_!\mathrm{unit})\vert_{\Gr_{L,X-0}}"] & && p_2^\dagger q_! j_!j^* M \vert_{\Gr_{L,X-0}}
\end{tikzcd}
$$
The bottom rectangle commutes by the naturality of the morphism $(q_{X-0})_!p_2^\dagger M \vert_{\Gr_{P,X-0}} \rightarrow p_2^\dagger q_! M \vert_{\Gr_{L,X-0}}$ applied to the unit map $\mathrm{unit}: M \rightarrow j_!j^*M$. The top rectangle is obtained by applying the functor $(q_{X-0})_!$ to the diagram
$$
\begin{tikzcd}
    p_2^\dagger M \vert_{\Gr_{P,X-0}} \ar[r,"\mathrm{unit}"] \ar[dd,equals]& (j_{X - 0})_!j_{X - 0}^* p_2^\dagger M \vert_{\Gr_{P,X-0}} \ar[d,"\eqref{Sigmadefinition2}"]\\
    & (j_{X-0})_{!}p_2^\dagger j^*M \vert_{\Gr_{P,X-0}} \ar[d, "\eqref{Sigmadefinition2}"] \\
    p_2^\dagger M \vert_{\Gr_{P,X-0}} \ar[r,"\mathrm{p}_2^\dagger \mathrm{unit}"] &  p_2^\dagger j_!j^*M \vert_{\Gr_{P,X-0}}.
\end{tikzcd}
$$
This diagram commutes by the very definition of the isomorphism \eqref{Sigmadefinition2}. 

We now turn to showing the commutativity of the top rectangle $(\star)$. In doing so, we may replace $p_2^\dagger B \vert_{\Gr_{G,X - 0}}$ by an arbitrary complex $C \in D_{\mathcal{G}_{X - 0}}(\Gr_{G,X - 0})$ and show that the following diagram commutes in $D_{\mathcal{L}_{X-0}}(\Gr_{L,X}')$. 
$$
\begin{tikzcd}
    (q_{X - 0}')_!(r_{X - 0}')^*(A \boxtimes C)\ar[rrrr, "{^{'}{\Xi_{X - 0}^\vee}}"] \ar[d, "\eqref{Sigmadefinition1}"] & & & & (i_{X - 0}')^*(A \boxtimes C) \ar[d,"\Omega_1"] \\
    q_!r^*(A) \boxtimes (q_{X - 0})_! r_{X-0}^*(C) \ar[rr, "\Xi^\vee \boxtimes \, 1"] &&  i^*A \boxtimes (q_{X - 0})_!r_{X - 0}^*C \ar[rr, "1 \, \boxtimes \, \Xi_{X - 0}^\vee"] && i^*A \boxtimes i^*_{X - 0}C
\end{tikzcd}
$$
Checking the commutativity of this diagram is an easy (if tedious) exercise, which we omit. The interested reader should expand the diagram using the definitions of $\Xi^\vee$ and ${^{'}{\Xi}}^\vee_{X-0}$, and then deduce the commutativity of resulting diagram from an appropriate compatibility between the isomorphisms of \cite[Proposition 2.5.45(a)]{achar2020} (for the $*$-pullback) and \cite[Proposition 3.10.1]{achar2020} (for the $!$-pushforward) and the compositional isomorphisms for the $*$-pullback and $!$-pushforward, respectively.

We can finally return to the original diagram. The commutativity of the top rectangle $(\dagger)$ follows from the more general assertion that for $E \in D_{\mathcal{G}_{X}}(\Gr_{G,X}')$, the following diagram commutes in $D_{L(\mathcal{O})}(\Gr_L)$. 
$$
\begin{tikzcd}
    q_! r^* \Psi(E)\ar[rr,"\Delta^\vee_{r^*\Psi(E)}"] \ar[d] && j^* r^* \Psi(E) \ar[r,"\mathrm{comp}"] \ar[d]  & i^* \Psi(E) \ar[dd,"\zeta"]\\
    q_!\Psi (r'_X)^* E \ar[rr,"\Delta^\vee_{\Psi(r'_X)^*E}"] \ar[d] && j^* \Psi (r_X')^*E \ar[d]  \\
    \Psi (q_X')_!(r_X')^* E \ar[rr,"\Psi \Delta^\vee_{(r_X')^*E}"] && \Psi (j'_X)^*(r'_X)^*E \ar[r,"\Psi(\mathrm{comp})"]& \Psi (i_X')^* E
\end{tikzcd}
$$
All of the unlabelled vertical maps are constructed from \cite[Lemma 4.8.8]{achar2020} (like $\zeta$, which was defined in \eqref{zetadefinition}) The commutativity of the top left square follows from the naturality of the morphism $\Delta^\vee: q_! \xrightarrow{q_!\mathrm{unit}} q_!j_!j^* \simeq j^*$ of \eqref{Deltaveedefinition} applied to the map $r^*\Psi(E) \rightarrow \Psi (r'_X)^*E$ of \cite[Lemma 4.8.8]{achar2020}. The commutativity of the right rectangle follows from the compatibility of the natural transformation $i^*\Psi \rightarrow \Psi (i'_X)^*$ of \cite[Lemma 4.8.8]{achar2020} with the compositional isomorphisms for the $*$-pullback functors (as asserted in the final step of the proof of Proposition~\ref{globalinterpretation}). To show the commutativity of the bottom left square, we may replace $(r'_X)^*E$ by an arbitrary complex $F \in D_{\mathcal{P}_X}(\Gr_{P,X}')$ and show that the following diagram commutes.
$$
\begin{tikzcd}
    q_!\Psi(F) \ar[rr,"q_!\mathrm{unit}"] \ar[dd,equals] && q_!j_!j^*\Psi(F) \ar[r,"\mathrm{comp}"] \ar[d] & j^* \Psi(F) \ar[d] \\
    && q_!j_! \Psi(j_X')^*F \ar[r,"\mathrm{comp}"] \ar[d] & \Psi (j_X')^*F \ar[dd,equals]\\ 
    q_!\Psi(F) \ar[rr,"q_!\Psi(\mathrm{unit})"] \ar[d] && q_!\Psi (j'_X)_!(j_X')^*F \ar[d] \\
    \Psi(q_X')_!F\ar[rr,"\Psi (q'_X)_{!} \mathrm{unit}"] && \Psi (q'_X)_!(j'_X)_!(j'_X)^*F \ar[r,"\mathrm{comp}"] & \Psi (j'_X)^*F
\end{tikzcd}
$$
Again, the unlabelled vertical maps are all the appropriate maps from \cite[Lemma 4.8.8]{achar2020}. The lower left rectangle commutes by the naturality of the map $ q_!\Psi \rightarrow \Psi (q'_X)_!$ applied to the unit $F \rightarrow (j'_X)_!(j'_X)^*F$. The upper right rectangle commutes by the naturality of the isomorphism $\mathrm{comp}: q_!j_! \simeq \mathrm{id}$ applied to the map $j^* \Psi(F) \rightarrow \Psi(j_X')^*F$. The commutativity of the lower right rectangle follows from the compatibility of the natural transformation of \cite[Lemma 4.8.8]{achar2020} with the compositional isomorphisms for the $!$-pushforward functors. The upper left rectangle is obtained by applying the functor $q_!$ to the following diagram 
$$
\begin{tikzcd}
    \Psi(F) \ar[rr,"\mathrm{unit}"] \ar[dd,equals] & & j_!j^* \Psi(F) \ar[d] \\
    & & j_!\Psi(j'_X)^* F \ar[d] \\
    \Psi(F) \ar[rr, "\Psi(\mathrm{unit})"] \ar[rr]& & \Psi(j'_X)_!(j'_X)^* F 
\end{tikzcd}
$$
This compatibility between the natural transformations of \cite[Lemma 4.8.8]{achar2020} (for the $*$-pushforward and $*$-pullback) and the unit of the adjunction between $*$-pushforward and $*$-pullback (recall that $j$, $j_X$, and $j_X'$ are closed immersions) is an easy consequence of the definitions. 
\end{proof}

\begin{remark}\label{parabolic restriction transitivity}
    The parabolic restriction functors enjoy the following transitivity property. Let $Q \subseteq P$ denote an inclusion of standard parabolic subgroups of $G$, and let $M \subseteq L$ denote the induced inclusion of Levi subgroups. Note the $Q \cap L$ is a parabolic subgroup of $L$ with Levi subgroup $M$. In particular, we may consider the parabolic restriction functor 
    $$
    \mathrm{Res}^L_M: D_{L(\mathcal{O})}(\Gr_L) \rightarrow D_{M(\mathcal{O})}(\Gr_M).
    $$
    Then, we claim that there is a natural isomorphism of lax monoidal functors $D_{G(\mathcal{O})}(\Gr_G) \rightarrow D_{M(\mathcal{O})}(\Gr_M)$
    $$
    \tau_{M \subseteq L \subseteq G}: \mathrm{Res}^L_M \circ \mathrm{Res}^G_L \simeq \mathrm{Res}^G_M. 
    $$
    This compatibility is noted in \cite[Eq. 6.3.2]{GR13}. Observe that we have a commutative diagram of $M(\mathcal{O})$-equivariant ind-schemes
    $$
    \begin{tikzcd}
        & & \Gr_Q \ar[ld, "q'",swap] \ar[rd,"s'"] \ar[rrdd, bend left = 25, "r''"] \ar[lldd, bend right=25, "q''",swap]\\
        & \Gr_{Q \cap L} \ar[rd,"s"] \ar[ld,"t",swap] & & \Gr_P \ar[rd,"r"] \ar[ld,"q",swap]\\
        \Gr_M & & \Gr_L & & \Gr_G.
    \end{tikzcd}
    $$
    Note that the middle square is Cartesian. Therefore, we have the isomorphism
    \begin{align*}
    \tau_{M \subseteq L \subseteq G}: \mathrm{Res}^L_M \circ \mathrm{Res}^G_L &= t_*s^! \circ \mathrm{For}^{L(\mathcal{O})}_{M(\mathcal{O})} \circ q_*r^! \circ \mathrm{For}^{G(\mathcal{O})}_{L(\mathcal{O})}\\
    &\simeq  t_*s^!q_*r^!\circ \mathrm{For}^{L(\mathcal{O})}_{M(\mathcal{O})} \mathrm{For}^{G(\mathcal{O})}_{L(\mathcal{O})}\\
    &\simeq t_*s^!q_*r^! \circ \mathrm{For}^{G(\mathcal{O})}_{M(\mathcal{O})}\\
    &\simeq t_*(q')_*(s')^!r^! \circ \mathrm{For}^{G(\mathcal{O})}_{M(\mathcal{O})}\\
    &\simeq (q'')_* (r'')^! \mathrm{For}^{G(\mathcal{O})}_{M(\mathcal{O})}\\
    &= \mathrm{Res}^G_M.
    \end{align*}
    The first (non-equality) isomorphism is given by the compatiblity \cite[Theorem 3.4.1(i)]{BL94} of the forgetful functor $\mathrm{For}^{L(\mathcal{O})}_{M(\mathcal{O})}$ with $!$-pullback and $*$-pushforward. The second isomorphism is given by the transitivity of the forgetful functors. The third isomorphism is the proper base change theorem. The fourth isomorphism is induced by the compositional isomorphisms $t_*(q'_*) \simeq (q'')_*$ and $(s')^!r^! \simeq (r'')^!$. 
\end{remark}

\begin{remark}\label{lax monoidal structure on cohomology}
    We recall the definition of the standard lax monoidal structure on the equivariant cohomology functor 
    $$
    R\Gamma_{G(\mathcal{O})}(\Gr_G, -): D_{G(\mathcal{O})}(\Gr_G) \rightarrow D_{G(\mathcal{O})}(\mathrm{pt}).
    $$
    Let $A,B \in D_{G(\mathcal{O})}(\Gr_G)$. Then, we have a natural map
    \begin{equation}\label{lax monoidal structure on cohomology eq1}
    R\Gamma_{G(\mathcal{O})}(\Gr_G, A) \otimes R\Gamma_{G(\mathcal{O})}(\Gr_G, B) \xrightarrow{- \tildeboxtimes -} R\Gamma_{G(\mathcal{O})}(\Gr_G \tildetimes \Gr_G, A \tildeboxtimes B)
    \end{equation}
    as well as an isomorphism
    \begin{equation}\label{lax monoidal structure on cohomology eq2}
    R\Gamma_{G(\mathcal{O})}(\Gr_G \tildetimes \Gr_G, A \tildeboxtimes B) \simeq R\Gamma_{G(\mathcal{O})}(\Gr_G, m_*(A \tildeboxtimes B)) = R\Gamma_{G(\mathcal{O})}(\Gr_G, A \star B). 
    \end{equation}
    The composition of the morphisms \eqref{lax monoidal structure on cohomology eq1} and \eqref{lax monoidal structure on cohomology eq2} equips $R\Gamma_{G(\mathcal{O})}(\Gr_G,-)$ with a lax monoidal structure. 

    In particular, the functor 
    $$
    H^*_{G(\mathcal{O})}(\Gr_G, -): D_{G(\mathcal{O})}(\Gr_G) \rightarrow \mathrm{mod}(R_G) 
    $$
    is lax monoidal. Here, for any graded $\CC$-algebra $A$, we follow \cite{bgs} and write $\mathrm{mod}(A)$ for the category of \textit{graded} (left) $A$-modules.  
\end{remark}

\subsection{Action of equivariant homology}\label{equivarianthomologysubsection} Let $H^*_{L(\mathcal{O})}(\Gr_G, \CC)$ denote the $L(\mathcal{O})$-equivariant cohomology ring of $\Gr_G$. We explain how the results of Yun and Zhu \cite{yun2009integral} on the $T$-equivariant cohomology $H_{T(\mathcal{O})}^*(\Gr_G, \CC)$ generalize to describe the $L$-equivariant cohomology $H^*_{L(\mathcal{O})}(\Gr_L, \CC)$ (we will have more to say in §\ref{comparison}).

\begin{construction}\label{hopfalgebrastructure}
    The graded vector space $H^*_{L(\mathcal{O})}(\Gr_G, \CC)$ is naturally a graded module over the graded ring $R_L := H^*_{L(\mathcal{O})}(\mathrm{pt},\CC)$. Via the homeomorphism $\Omega_{\mathrm{poly}}G_c \cong \Gr_G$ of $\Gr_G$ with the based polynomial loop group of a maximal compact subgroup $G_c \subseteq G$, the ring $H^*_{L(\mathcal{O})}(\Gr_G, \CC) \simeq H^*_{L_c}(\Omega_{\mathrm{poly}}G_c, \CC)$ acquires the structure of a commutative and cocommutative graded Hopf $R_L$-algebra (independently of the choice of $G_c$). We refer the reader to the beautiful and classical treatment \cite{milnormoore} of Milnor and Moore for a discussion of the Hopf algebra structure on the cohomology of a Lie group.  
    
    Following \cite[Eq. 2.10]{yun2009integral}, we define the $L(\mathcal{O})$-equivariant homology $H^{L(\mathcal{O})}_*(\Gr_G, \CC)$ of $\Gr_G$ to be the $R_L$-linear graded dual of $H^*_{L(\mathcal{O})}(\Gr_G, \CC)$:
    $$
    H^{L(\mathcal{O})}_*(\Gr_G, \CC) := \mathrm{Hom}_{R_L}(H^*_{L(\mathcal{O})}(\Gr_G,\CC), R_L)^{\mathrm{gr}}. 
    $$
    That is, the $n$th graded component $H^{L(\mathcal{O})}_n(\Gr_G, \CC)$ consists of all $R_L$-module homomorphisms $H^*_{L(\mathcal{O})}(\Gr_L, \CC) \rightarrow R_L$ which take $H^i_{L(\mathcal{O})}(\Gr_L, \CC)$ to $R^{i-n}_L$ (this sign convention keeps the homology in non-negative degrees). 
\end{construction}

\begin{remark}\label{equivariantformalityofgr}
    Note that $\Gr_G$ is an equivariantly formal $L(\mathcal{O})$-space. That is, the spectral sequence
    $$
    E_2^{p,q} = H^p_{L(\mathcal{O})}(\mathrm{pt}, H^q(\Gr_G,\CC)) \implies H^{p + q}_{L(\mathcal{O})}(\Gr_G, \CC)
    $$
    degenerates at $E_2$. Indeed, the claim follows from the fact that $H^*(\Gr_G, \CC)$ is concentrated in even degrees (since $\Gr_G$ admits a paving by affine spaces, namely the orbits of the Iwahori subgroup $I \subseteq G(\mathcal{O})$). It follows that $H^*_{L(\mathcal{O})}(\Gr_G,\CC)$ is a free $R_L$-module. As the graded dual of a free and finitely generated Hopf $R_L$-algebra, $H^{L(\mathcal{O})}_*(\Gr_G, \CC)$ inherits a natural structure of Hopf $R_L$-algebra. 
\end{remark}

\begin{construction}\label{comodulestructure}
    Let $A \in D_{L(\mathcal{O})}(\Gr_G)$. We wish to equip $H^*_{L(\mathcal{O})}(\Gr_G, A)$ with the structure of a comodule over $H^{L(\mathcal{O})}_*(\Gr_G, \CC)$. To do so, we fix a basis $\{h^i\}$ of the free $R_L$-module $H^*_{L(\mathcal{O})}(\Gr_G, \CC)$ (none of the constructions below actually depend on this basis). Let $\{h_i\}$ denote the dual basis of $H^{L(\mathcal{O})}_*(\Gr_G, \CC)$. We must specify a graded $R_L$-linear map 
    $$
    \sigma: H^*_{L(\mathcal{O})}(\Gr_G, A) \rightarrow H_{L(\mathcal{O})}^*(\Gr_G, A) \otimes_{R_L} H^{L(\mathcal{O})}_*(\Gr_G, \CC). 
    $$
    The right hand side is a \textit{module} over $H^{L(\mathcal{O})}_*(\Gr_G, \CC)$, so it is equivalent to the define the unique $H^{L(\mathcal{O})}_*(\Gr_G, \CC)$-linear extension of $\sigma$ instead:
    $$
    \tilde{\sigma}: H^*_{L(\mathcal{O})}(\Gr_G, A) \otimes_{R_L} H^{L(\mathcal{O})}_*(\Gr_G, \CC) \rightarrow H_{L(\mathcal{O})}^*(\Gr_G, A) \otimes_{R_L} H^{L(\mathcal{O})}_*(\Gr_G, \CC). 
    $$
    The map $\tilde{\sigma}$ is defined \cite[Lemma 3.1]{yun2009integral} to be the automorphism given explicitly on an element $v \otimes h$ ($v \in H^*_{L(\mathcal{O})}(\Gr_G, A)$, $h \in H_*^{L(\mathcal{O})}(\Gr_G, \CC)$) by the formula
    $$
    \tilde{\sigma}(v \otimes h) = \sum_i (h^i \cup v) \otimes (h_i \wedge h).
    $$
    In this formula, $h^i \cup -$ denotes the action of the cohomology class $h^i \in H_{L(\mathcal{O})}^*(\Gr_G, \CC)$ on the vector space $H_{L(\mathcal{O})}^*(\Gr_G, A)$ and $\wedge$ denotes the (Pontryagin) product on the Hopf algebra $H^{L(\mathcal{O})}_*(\Gr_G, \CC)$. Note that this sum is actually finite because $A$ is supported on $\Gr_{G}^{\leq \lambda}$ for $\lambda$ sufficiently large.     
\end{construction}

\begin{remark}\label{change of group}
    The forgetful map $H^*_{G(\mathcal{O})}(\Gr_G, \CC) \rightarrow H^*_{L(\mathcal{O})}(\Gr_L, \CC) $ is an $R_G$-algebra homomorphism, where $H^*_{L(\mathcal{O})}(\Gr_G, \CC)$ is viewed as an $R_G$-module via restriction of scalars along the forgetful homomorphism $R_G = H_{G(\mathcal{O})}^*(\mathrm{pt},\CC) \rightarrow H_{L(\mathcal{O})}^*(\mathrm{pt}, \CC) = R_L$. Hence, it extends uniquely to an $R_L$-algebra homomorphism
    $$
    \phi: H^*_{G(\mathcal{O})}(\Gr_G, \CC) \otimes_{R_G} R_L \rightarrow H^*_{L(\mathcal{O})}(\Gr_G, \CC).
    $$
    The morphism $\phi$ is in fact an isomorphism. Indeed, note that $R_L$ is a graded ring with augmentation module $R_L/\mathfrak{m}_L \simeq H^*(\mathrm{pt},\CC) \simeq \CC$ (where $\mathfrak{m}_L \subseteq R_L$ denotes the irrelevant ideal). Tensoring $\phi$ with the augmentation module $R_L/\mathfrak{m}_L$ yields the identity map on $H^*(\Gr_G,\CC)$ (by Remark~\ref{equivariantformalityofgr}). Since $\phi$ is a homomorphism of free graded $R_L$-modules concentrated in non-negative degrees, we deduce from the graded form of Nakayama's lemma that $\phi$ is an isomorphism. 

    Recall that the comultiplications on $H^*_{G(\mathcal{O})}(\Gr_G, \CC)$ and $H^*_{L(\mathcal{O})}(\Gr_G, \CC)$ are induced by pullback along the multiplication map $\Gr_G \times \Gr_G \simeq \Omega_{\mathrm{poly}} G_c \times \Omega_{\mathrm{poly}} G_c \rightarrow \Omega_{\mathrm{poly}} G_c \simeq \Gr_G$. The forgetful map $H^*_{G(\mathcal{O})}(\Gr_G, \CC) \rightarrow H^*_{L(\mathcal{O})}(\Gr_G, \CC)$ is compatible with the pullback of cohomology classes (an obvious consequence of the definitions), so we deduce that $\phi$ is an $R_L$-coalgebra homomorphism. Therefore, it an isomorphism of Hopf $R_L$-algebras (note that a bialgebra homomorphism automatically respects antipodes). 

    Applying graded duality over the base ring $R_L$ to $\phi$ yields an isomorphism of Hopf $R_L$-algebras 
    $$
    \phi^\vee_{L \subseteq G} := \phi^\vee: H_*^{L(\mathcal{O})}(\Gr_G, \CC) \simeq H_*^{G(\mathcal{O})}(\Gr_G, \CC) \otimes_{R_G} R_L. 
    $$
    If $M \subseteq L$ is a further Levi subgroup of $L$, then we have the following commutative diagram of Hopf $R_M$-algebras, which expresses the transitivity of this construction.
    \begin{equation}\label{homology transitivity}
    \begin{tikzcd}
        H_*^{L(\mathcal{O})}(\Gr_G, \CC) \otimes_{R_L} R_M \ar[rr, "\phi^\vee_{L \subseteq G} \otimes 1"] \ar[d, "\phi^\vee_{M \subseteq L}"]&& \left( H_*^{G(\mathcal{O})}(\Gr_G, \CC) \otimes_{R_G} R_L\right) \otimes_{R_L} R_M \ar[d, "\sim"]\\
        H_*^{M(\mathcal{O})}(\Gr_G, \CC) \ar[rr,"\phi^\vee_{M \subseteq G}"] && H_*^{G(\mathcal{O})}(\Gr_G,\CC) \otimes_{R_G} R_M.
    \end{tikzcd}
    \end{equation}
    Here, the right vertical map is the standard isomorphism. 
\end{remark}

\begin{construction}\label{homomorphism on regular centralizers}
    Consider now the closed subspace $i: \Gr_L \hookrightarrow \Gr_G$. We have an induced map in equivariant cohomology 
    $$
    i^*: H^*_{L(\mathcal{O})}(\Gr_G, \CC) \rightarrow H^*_{L(\mathcal{O})}(\Gr_L, \CC). 
    $$
    Note that there is a commutative diagram of $L_c$-spaces (where $L_c \subseteq L$ is a maximal compact contained in $G_c$):
    $$
    \begin{tikzcd}
        \Omega_{\mathrm{poly}} L_c \ar[r, "i", hookrightarrow] \ar[d, "\sim"] & \Omega_{\mathrm{poly}} G_c \ar[d, "\sim"]\\
        \Gr_L \ar[r, "i", hookrightarrow] & \Gr_G
    \end{tikzcd}.
    $$
    Since the inclusion $\Gr_L \simeq \Omega_{\mathrm{poly}}L_c \hookrightarrow \Omega_{\mathrm{poly}}G_c \simeq \Gr_G$ is a group homomorphism, it follows immediately from the definition of the comultiplications on $H^*_{L(\mathcal{O})}(\Gr_G, \CC)$ and $H^*_{L(\mathcal{O})}(\Gr_G, \CC)$ that the map 
    $$
    i^*: H^*_{L(\mathcal{O})}(\Gr_G, \CC) \rightarrow H^*_{L(\mathcal{O})}(\Gr_L, \CC)
    $$
    is a homomorphism of $\CC$-coalgebras. Of course, $i^*$ is a $\CC$-algebra homomorphism (the products on these algebras are given by the cup product in equivariant cohomology), so $i^*$ is a Hopf $\CC$-algebra homomorphism. Passing to $R_L$-linear graded duals, we deduce that the pushforward map
    $$
    i_* := (i^*)^{\vee}: H_*^{L(\mathcal{O})}(\Gr_L, \CC) \rightarrow H_*^{L(\mathcal{O})}(\Gr_G, \CC)
    $$
    is a homomorphism of graded Hopf $R_L$-algebras. We can now bring in Remark~\ref{change of group} and define a homomorphism of graded Hopf $R_L$-algebras
    \begin{equation}\label{pushforwarddefinition}
    i_*: H_*^{L(\mathcal{O})}(\Gr_L, \CC) \xrightarrow{i_*} H_*^{L(\mathcal{O})}(\Gr_G, \CC) \xrightarrow{\phi^\vee_{L \subseteq G}} H_*^{G(\mathcal{O})}(\Gr_G, \CC) \otimes_{R_G} R_L. 
    \end{equation}
    Suppose now that $M \subseteq L$ is a further Levi subgroup of $L$ and that $j: \Gr_M \hookrightarrow \Gr_L$ denotes the inclusion of Grassmannians. Let $k = i \circ j: \Gr_M \hookrightarrow \Gr_G$ denote the composition. Then, it follows from \eqref{homology transitivity} that the following diagram of Hopf $R_M$-algebras commutes. 
    \begin{equation}
    \begin{tikzcd}
        H_*^{M(\mathcal{O})}(\Gr_M, \CC) \ar[r, "k_*"] \ar[d, "j_*"] & H_*^{G(\mathcal{O})}(\Gr_G, \CC) \otimes_{R_G} R_M \ar[d, "\sim"]\\
        H_*^{L(\mathcal{O})}(\Gr_L,\CC) \otimes_{R_L} R_M \ar[r,"i_* \otimes 1"] &  \left(H_*^{G(\mathcal{O})}(\Gr_G, \CC) \otimes_{R_G} R_L\right) \otimes_{R_L} R_M
    \end{tikzcd}
    \end{equation}
    Again, the right vertical map is the standard isomorphism. 
\end{construction}

\begin{remark}\label{homomorphism on regular centralizers, group schemes}
    It is useful to reformulate these constructions in the language of group schemes. Since $H^{G(\mathcal{O})}_*(\Gr_G, \CC)$ is a commutative and cocommutative Hopf $R_G$-algebra, its spectrum 
    $$
    \mathfrak{A}_G := \operatorname{Spec}{H_*^{G(\mathcal{O})}(\Gr_G, \CC)} \rightarrow \operatorname{Spec}{H^*_{G(\mathcal{O})}}(\mathrm{pt},\CC) \simeq \mathfrak{c}_{G}
    $$ 
    is a commutative affine $\mathfrak{c}_G$-group scheme, where $\mathfrak{c}_G \simeq \mathfrak{t}//W$ is the Chevalley scheme. Then, \eqref{pushforwarddefinition} of Construction~\ref{homomorphism on regular centralizers} defines a homomorphism of $\mathfrak{c}_L$-group schemes 
    $$
    \rho^G_L := \operatorname{Spec}{i_*} : \mathfrak{A}_G \times_{\mathfrak{c}_G} \mathfrak{c}_L \rightarrow \mathfrak{A}_L. 
    $$
    The transitivity property \eqref{homology transitivity} dualizes to the fact that the composition 
    $$
    \mathfrak{A}_G \times_{\mathfrak{c}_G} \mathfrak{c}_M \simeq \left( \mathfrak{A}_G \times_{\mathfrak{c}_G} \mathfrak{c}_L \right) \times_{\mathfrak{c}_L} \mathfrak{c}_M \xrightarrow{\rho^G_L \times \mathrm{id}} \mathfrak{A}_L \times_{\mathfrak{c}_L} \mathfrak{c}_M \xrightarrow{\rho^L_M} \mathfrak{A}_M
    $$
    is equal to $\rho^G_M$. 
    
\end{remark}

\subsection{Summary}\label{automorphic summary} We will now bring together all of the ingredients from §\ref{automorphic}. We start by reviewing the notation that we use when multiple Levi subgroups of $G$ are in play. 

\begin{notation}
    Recall that $\Delta$ denotes the set of simple roots of $G$. For each subset $I \subseteq \Delta$, we define $L_I$ to be the corresponding standard Levi subgroup. That is, $L_I$ is the unique Levi subgroup of $G$ containing the fixed maximal torus $T$ such that the simple root spaces $\mathfrak{g}_\alpha \subseteq \mathfrak{g}$ contained in $\mathfrak{l}_I := \mathrm{Lie}(L_I) \subseteq \mathfrak{g}$ are exactly those labelled by the roots $\alpha \in I$. For example, $L_{\emptyset} = T$ and $L_\Delta = G$. If $I \subseteq J$ is an inclusion of subsets of $\Delta$, then we have the containment $L_I \subseteq L_J$. Moreover, we define $P_I \subseteq G$ to be the corresponding standard parabolic subgroup. That is, $P_I$ is the unique parabolic subgroup of $G$ containing $T$ such that the negative simple root spaces $\mathfrak{g}_{-\alpha} \subseteq \mathfrak{g}$ contained in $\mathfrak{p}_I := \mathrm{Lie}(P_I)$ are exactly those root spaces labelled by the elements $-\alpha \in -I$. Let $V_I \subseteq P_I$ denote the unipotent radical of $P_I$. Then, $L_I \subseteq P_I$ is a Levi factor of $P_I$ and the quotient $P_I/V_I$ identifies canonically with $L_I$. Once again, an inclusion $I \subseteq J$ yields inclusions $P_I \subseteq P_J$ and $V_J \subseteq V_I$. Let $W_I \subseteq W$ denote the Weyl group of $L_I$ (it is the subgroup of $W$ generated by the simple reflections $s_\alpha \in W$ for $\alpha \in I$). Let $\Phi_I \subseteq \Phi$ denote the set of roots of $L_I$.

    Let $\Gr_I := \Gr_{L_I}$. Instead of $\Gr_\Delta = \Gr_G$, we simply write $\Gr$ in this subsection. If $I \subseteq J$, we have the inclusion $L_I \subseteq L_J$ and therefore have a closed immersion $i_{I \subseteq J}: \Gr_I \hookrightarrow \Gr_J$ of affine Grassmannians. When $J = \Delta$, we simply write $i_I$ for $i_{I \subseteq \Delta}$.
\end{notation}

\begin{notation}
    Let $I \subseteq \Delta$. Let $R_I := R_{L_I} = H_{L_I}^*(\mathrm{pt},\CC)$. We write $R$ instead of $R_\Delta$. For any inclusion $I \subseteq J$, we have a homomorphism $R_J \rightarrow R_I$. Recall that there is a canonical isomorphism of graded $\CC$-algebras
    $$
    R_\emptyset = H_T^*(\mathrm{pt}, \CC) \simeq \operatorname{Sym}{\mathfrak{t}}^*
    $$
    where $\mathfrak{t} = \mathrm{Lie}(T)$. Moreover, the Weyl group $W$ acts naturally on $R_\emptyset$ and the homomorphism $R_I \rightarrow R_\emptyset$ identifies $R_I$ with the $W_I \subseteq W$-invariants. Hence, there is a canonical isomorphism 
    $$
    R_I \simeq \left(\operatorname{Sym}{\mathfrak{t}}^* \right)^{W_I}.
    $$
    In particular, the $W_I$-invariant products 
    \begin{align}
    f_I &:= \prod_{\alpha \in \Phi_I} \alpha \in \left(\operatorname{Sym}{\mathfrak{t}}^* \right)^{W_I} \label{fIdefinition}\\
    g_I &:= \prod_{\alpha \not\in \Phi_I} \alpha\in \left(\operatorname{Sym}{\mathfrak{t}}^* \right)^{W_I} \label{gIdefinition}
    \end{align}
    define elements of $R_I$. 
\end{notation}

\begin{remark}
    Let $I \subseteq J \subseteq \Delta$. In §§\ref{parabolic restriction}, we studied the (unnormalized, equivariant) parabolic restriction functor \eqref{equivariant parabolic restriction} of Beilinson-Drinfeld \cite[§5.3.28]{beilinsondrinfeld}
    $$
    \operatorname{Res}_{I \subseteq J} := \operatorname{Res}^{L_J}_{L_I}: D_{L_J(\mathcal{O})}(\Gr_J) \rightarrow D_{L_I(\mathcal{O})}(\Gr_I). 
    $$
    When $J = \Delta$, we simply write $\operatorname{Res}_I$ for $\operatorname{Res}_{I \subseteq \Delta}$. One of the main constructions of §§\ref{parabolic restriction} is Construction~\ref{Xiconstruction}, which defines a natural transformation \eqref{Xidefinition} 
    $$
    \Xi_{I \subseteq J}: i_{I \subseteq J}^! \rightarrow \operatorname{Res}_{I \subseteq J}
    $$
    of functors $D_{L_J(\mathcal{O})}(\Gr_J) \rightarrow D_{L_I(\mathcal{O})}(\Gr_I)$. Let $A \in D_{L_{J}(\mathcal{O})}(\Gr_J)$ denote a $L_J(\mathcal{O})$-equivariant complex. Then, we can evaluate the natural transformation $\Xi_{I\subseteq J}$ on $A$ and pass to $L_I(\mathcal{O})$-equivariant cohomology to obtain an $R_I$-module homomorphism
    \begin{equation}\label{map induced by XiI on cohomology}
    \xi_{I \subseteq J} := H^*_{L_I(\mathcal{O})}(\Xi_{I \subseteq J}): H_{L_I(\mathcal{O})}^*(\Gr_I, i_{I \subseteq J}^! A) \rightarrow H_{L_I(\mathcal{O})}^*(\Gr_I, \operatorname{Res}_{I \subseteq J}(A)). 
    \end{equation}
    When $J = \Delta$, we make the usual abbreviations $\Xi_I := \Xi_{I \subseteq \Delta}$ and $\xi_{I} := \xi_{I \subseteq \Delta}$. We can now bring in the results of §§\ref{equivariant localization subsection} to establish the following proposition. 
\end{remark}

\begin{proposition}\label{corestriction vs parabolic restriction localization}
    Let $A \in D_{G(\mathcal{O})}(\Gr)$ denote a $G(\mathcal{O})$-equivariant complex on $\Gr$. Assume that the underlying complex $\mathrm{For}^{G(\mathcal{O})}(A) \in D(\Gr)$ is $!$-parity (in the sense of \cite{paritysheaves}; see also Remark~\ref{parityandformality} above). Consider the natural $R_I$-module homomorphism 
    $$
    \xi_{I}: H_{L_I(\mathcal{O})}^*(\Gr_I, i_I^! A) \rightarrow H_{L_I(\mathcal{O})}^*(\Gr_I, \operatorname{Res}_I(A))
    $$
    of \eqref{map induced by XiI on cohomology}. Then, the localization $(\xi_I)_{g_I}$ at the element $g_I \in R_I$ of \eqref{fIdefinition} is an isomorphism. Moreover, $\xi_I$ is injective. These assumptions apply if $A \in \mathcal{P}_{G(\mathcal{O})}(\Gr)$ is perverse. 
\end{proposition}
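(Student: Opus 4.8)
The plan is to reduce the statement, through the forgetful functor to $T$-equivariance, to the equivariant localization theorem in exactly the form used to prove Proposition~\ref{localization}. Granting that the source $M := H^*_{L_I(\mathcal{O})}(\Gr_I, i_I^! A)$ is a free $R_I$-module (established below), the injectivity of $\xi_I$ follows formally from $(\xi_I)_{g_I}$ being an isomorphism: $M$ is $g_I$-torsion-free, so it embeds in $M_{g_I}$, and this embedding is the composite of $\xi_I$ with the localization map into $(H^*_{L_I(\mathcal{O})}(\Gr_I, \operatorname{Res}_I A))_{g_I}$. So it suffices to prove (i) freeness of $M$ over $R_I$, and (ii) that $(\xi_I)_{g_I}$ is an isomorphism --- here $g_I = \prod_{\alpha \notin \Phi_I}\alpha$ of \eqref{gIdefinition} generates the same localization of $R_I$ as $\prod_{\alpha \in \Phi_G^+ \setminus \Phi_{L_I}^+}\alpha$.

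\emph{Freeness.} Since $\mathrm{For}^{G(\mathcal{O})}(A)$ is $!$-parity, Remark~\ref{parityandformality} shows $i_I^! A$ is $!$-parity on $\Gr_I$; as $\Gr_I$ admits a $T$-stable affine paving, the Cousin spectral sequence argument of Proposition~\ref{equivariantformality} shows $H^*(\Gr_I, i_I^! A)$ is concentrated in a single parity of degree, and since $R_I \cong (\Sym \mathfrak{t}^*)^{W_I}$ is concentrated in even degrees the spectral sequence computing $H^*_{L_I(\mathcal{O})}(\Gr_I, i_I^! A)$ degenerates, so $M$ is free. The same applies to $\operatorname{Res}_I(A)$: for perverse $A$ one uses the semisimplicity of $\mathcal{P}_{G(\mathcal{O})}(\Gr)$, Remark~\ref{normalized parabolic restriction}, and the compatibility of $\operatorname{Res}^{G,\natural}_{L_I}$ with the fiber functor to see that $\operatorname{Res}_I A$ is a direct sum of shifts of parity complexes whose global cohomology is concentrated in even degrees on each component of $\Gr_I$; for general $!$-parity $A$ one invokes that hyperbolic restriction preserves $!$-parity. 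Both sides are therefore equivariantly formal, so base change along the finite (hence faithfully flat) extension $R_I \hookrightarrow R_T$ identifies $(\xi_I)_{g_I}$ with $(\xi_I^T)_{g_I}$, where $\xi_I^T$ is the $T$-equivariant analogue of $\xi_I$. It is enough to show that $\xi_I^T$ becomes an isomorphism after inverting $g_I$.

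\emph{The localization step.} Unwinding Construction~\ref{Xiconstruction} with $i_I = r \circ j$, where $j\colon \Gr_{L_I}\hookrightarrow \Gr_{P_I}$ is the closed embedding and $q\circ j = \mathrm{id}$, the transformation $\Xi_I\colon i_I^! \to \operatorname{Res}_I = q_* r^!$ is the hyperbolic-localization transformation; applying $R\Gamma_T(\Gr_I, -)$ and using $R\Gamma_T(\Gr_I, q_*(-)) \simeq R\Gamma_T(\Gr_{P_I}, -)$ identifies $\xi_I^T$, up to a harmless isomorphism, with $R\Gamma_T$ of the counit $j_* j^!(r^! A) \to r^! A$ on $\Gr_{P_I}$. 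Now $\Gr_{L_I}$, $r$, and $q$ are, respectively, the fixed locus, the attracting locus, and the retraction for the $\mathbb{G}_m$-action on $\Gr_G$ (restricting to one on $\Gr_{P_I}$) through $2\rho_G - 2\rho_{L_I}$, by \cite[Theorem 1.2.6]{acharriche2023} and Proposition~\ref{stabilizercomputation}; and the evident extension of Proposition~\ref{stabilizercomputation} to $\Gr_{P_I}$ --- that for $x \in \Gr_{P_I}\setminus \Gr_{L_I}$ the stabilizer $T_x \subseteq T$ annihilates some $\alpha \in \Phi_G^+ \setminus \Phi_{L_I}^+$ --- holds by the same semi-infinite orbit computation. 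Hence the equivariant localization theorem \cite[Theorem A.1.13]{zhu2016introduction}, applied to $r^! A$ on $\Gr_{P_I}$ with its $T$-stable closed subscheme $\Gr_{L_I}$ exactly as in the proof of Proposition~\ref{localization}(a), shows that $\xi_I^T$ is an isomorphism after inverting $g_I$. Since $r$ is not ind-proper one must here work with the $T$-stable, affine-paved support of $r^! A$ and a compactly-supported form of the localization theorem, or equivalently replace $q_* r^!$ by $q_! r^*$ via Braden's theorem and use Richarz's compatibility of hyperbolic restriction with supports \cite{braden2003hyperbolic,richarz2018spaces}.

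The main obstacle is the content of the last paragraph: matching the transformation $\Xi_I$ of Construction~\ref{Xiconstruction} with the counit/hyperbolic-localization picture on $\Gr_{P_I}$, confirming that the element of $R_T$ whose inversion makes the localization theorem apply is exactly $g_I$ (this uses the $\Gr_{P_I}$-analogue of Proposition~\ref{stabilizercomputation}), and accommodating the non-properness of $r$. The remaining ingredients are routine adaptations of Propositions~\ref{equivariantformality} and~\ref{localization}; the final clause of the statement (validity of the hypotheses for perverse $A$) is immediate from the semisimplicity of $\mathcal{P}_{G(\mathcal{O})}(\Gr)$ and the parity vanishing of the $\mathrm{IC}_\lambda$.
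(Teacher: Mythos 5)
Your proposal is correct and follows essentially the same route as the paper's proof: reduce to $T$-equivariance using the parity-induced equivariant formality and freeness (the paper passes through $W_I$-invariants rather than faithfully flat base change along $R_I \hookrightarrow R_\emptyset$, which amounts to the same thing), rewrite the parabolic-restriction side as $H^*_{T}(\Gr_{P_I}^\chi, (r_I^\chi)^!A)$ so that $\xi_I$ becomes the counit/support map for the closed embedding $\Gr_I^\chi \subseteq \Gr_{P_I}^\chi$, and then apply Proposition~\ref{stabilizercomputation} together with the equivariant localization theorem to invert $g_I$, with injectivity coming from freeness and the perverse case from semisimplicity plus parity vanishing. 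The one divergence is your worry about the non-properness of $r$: the paper applies the localization theorem directly to the pair $\Gr_I^\chi \subseteq \Gr_{P_I}^\chi$ restricted to the finite-type support of $(r_I^\chi)^!A$, where properness is not needed (the complement contributes $g_I$-torsion by the stabilizer bound alone), so your compactly-supported or Braden-type workaround is harmless but unnecessary.
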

\begin{proof}
    For each $\chi \in \pi_0(\Gr_I)$ corresponding to a connected component $\Gr^\chi_I \subseteq \Gr_I$, let $i_I^\chi: \Gr_I^\chi \hookrightarrow \Gr$ denote the induced closed immersion into $\Gr$. Let $\Gr_{P_I}^\chi = \Gr_{P_I} \times_{\Gr_I} \Gr_I^{\chi} \subseteq \Gr_{P_I}$. We have inclusions $k^\chi_I: \Gr_I^\chi \hookrightarrow \Gr_{P_I}^\chi$ (a closed immersion) and $r_I^\chi: \Gr_P^{\chi} \rightarrow \Gr$ such that $i_I^\chi = r_I^\chi \circ k^\chi_I$.  
    
    Since $\Gr_{P_I}$ (resp. $\Gr_I$) is the scheme-theoretic disjoint union over $\chi \in \pi_0(\Gr_I)$ of its sub-ind-schemes $\Gr_{P_I}^\chi$ (resp. $\Gr_I^\chi$), it follows from the definition \eqref{equivariant parabolic restriction} that $\xi_I$ is the direct sum over $\chi \in \pi_0(\Gr_I)$ of the natural $R_I$-module homomorphisms
    \begin{align*}
    \xi_{I}^\chi: H_{L_I(\mathcal{O})}^*(\Gr_I^\chi, (i_I^\chi)^! A) \simeq\,  &H_{L_I(\mathcal{O})}^*(\Gr_I^\chi, (k^\chi_I)^!(r_I^\chi)^! A)\\
    \simeq\, &H_{L_I(\mathcal{O})}^*(\Gr_{P_I}^\chi, (k^\chi_I)_*(k^\chi_I)^!(r_I^\chi)^! A)\\
    \rightarrow\, &H^*_{L_I(\mathcal{O})}(\Gr_{P_I}^\chi, (r_I^\chi)^!A).
    \end{align*}
    It therefore suffices to show that each $\xi_I^\chi$ is injective and localizes to an isomorphism away from $g_I \in R_I$. 

    By Proposition~\ref{stabilizercomputation}, the element $g_I \in \operatorname{Sym}{\mathfrak{t}^*}$ vanishes on the stabilizer $\mathrm{Lie}(T_x)$ for any $x \in \Gr_{P_I}^\chi \setminus \Gr^\chi_I$ (where $T_x \subseteq T$ is the stabilizer of the point $x \in \Gr$ in $T$). It follows from the equivariant localization theorem, say in the form of \cite[Theorem A.1.13]{zhu2016introduction}, that the natural $R_\emptyset$-module homomorphism
    $$
    \tilde{\xi}_I^\chi: H_{T(\mathcal{O})}^*(\Gr_I^\chi, (i_I^\chi)^! A) \simeq H_{T(\mathcal{O})}^*(\Gr_{P_I}^\chi, (k^\chi_I)_*(k^\chi_I)^!(r_I^\chi)^! A) \rightarrow H^*_{T(\mathcal{O})}(\Gr_{P_I}^\chi, (r_I^\chi)^!A)
    $$
    becomes an isomorphism after localization at $g_I \in R_I$. See the proof of Proposition~\ref{localization} and the discussion preceding it for a slightly more detailed explanation. 

    Since $A$ is $!$-parity, it follows from the proof of Proposition~\ref{equivariantformality} (see Remark~\ref{parityandformality}) that the complexes $(i^\chi_I)^!A$ and and $(r_I^\chi)^!A$ are $T$-equivariantly formal. Hence, applying the functor of $W_I$-invariants to $\tilde{\xi}_I^\chi$ recovers the morphism $\xi_I^\chi$. Since taking $W_I$-invariants commutes with localization at the $W_I$-invariant element $g_I \in R_I$ (exercise), it follows that $\xi_I^\chi$ becomes an isomorphism after inverting $g_I$. Moreover, the equivariant formality of $(i_I^\chi)^!A$ yields the freeness of $H_{L_I(\mathcal{O})}(\Gr_I^\chi, (i_I^\chi)^!A)$ over the ring $R_I$, from which we deduce the injectivity of $\xi_I^\chi$ (as in the proof of Proposition~\ref{localization}).
    
    If $A \in \mathcal{P}_{G(\mathcal{O})}(\Gr)$ is perverse, then it is a direct sum of irreducible perverse sheaves. Hence, it follows from the proof of Proposition~\ref{equivariantformality} that $A$ is $!$-parity, verifying the last assertion of the proposition. 
\end{proof}

\begin{remark}\label{xiIisaringhomomorphism}
    We can apply Proposition~\ref{corestriction vs parabolic restriction localization} to the case in which $A = \mathcal{F}_\reg$ is the regular object. Recall from Remark~\ref{ring structure on the regular sheaf} that $\mathcal{F}_\reg$ is naturally a ring object in $D_{G(\mathcal{O})}(\Gr)$. In §§\ref{monoidal structure section}, we equipped the functor $i_I^!: D_{G(\mathcal{O})}(\Gr) \rightarrow D_{L_I(\mathcal{O})}(\Gr_I)$ with a lax monoidal structure. Hence, $i_I^!\mathcal{F}_\reg$ is equipped with a ring structure in $D_{L_I(\mathcal{O})}(\Gr_I)$. On the other hand, in §§\ref{parabolic restriction}, we recalled the construction of Beilinson-Drinfeld \cite{beilinsondrinfeld} of a lax monoidal structure on the parabolic restriction functor $\mathrm{Res}_I: D_{G(\mathcal{O})}(\Gr) \rightarrow D_{L_I(\mathcal{O})}(\Gr_I)$. Hence, $\operatorname{Res}_I(\mathcal{F}_\reg)$ is naturally a ring object in $D_{L_I(\mathcal{O})}(\Gr_I)$. In Proposition~\ref{parabolic restriction vs corestriction} we showed that the natural map 
    $$
    \Xi_I: i_I^! \rightarrow \operatorname{Res}_I
    $$
    is a natural transformation of lax monoidal functors. Hence, the morphism 
    $$
    \Xi_I(\mathcal{F}_\reg): i_I^!\mathcal{F}_\reg \rightarrow \operatorname{Res}_I(\mathcal{F}_\reg)
    $$
    is a ring homomorphism. Since $H_{L_I(\mathcal{O})}^*(\Gr_I, -)$ is a lax monoidal functor (see Remark~\ref{lax monoidal structure on cohomology}), we deduce that the natural map 
    $$
    \xi_I: H_{L_I(\mathcal{O})}^*(\Gr_I, i_I^!\mathcal{F}_\reg) \rightarrow H_{L_I(\mathcal{O})}^*(\Gr_I, \operatorname{Res}_I(\mathcal{F}_\reg))
    $$
    is a homomorphism of graded $R_I$-\textit{algebras}. By Proposition~\ref{corestriction vs parabolic restriction localization} above, its localization at $g_I \in R_I$ is a canonical isomorphism 
    \begin{equation}\label{ungraded anti-generic comparison automorphic side}
        (\xi_I)_{g_I}: H_{L_I(\mathcal{O})}^*(\Gr_I, i_I^!\mathcal{F}_\reg)_{g_I} \xrightarrow{\sim} H_{L_I(\mathcal{O})}^*(\Gr_I, \operatorname{Res}_I(\mathcal{F}_\reg))_{g_I}
    \end{equation}
    of graded $(R_I)_{g_I}$-algebras. 
\end{remark}
\begin{remark}\label{normalized corestriction}
    Recall from Remark~\ref{normalized parabolic restriction} that the parabolic restriction functor should be normalized so as to preserve perversity. Mimicking the definition given there of $\operatorname{Res}_I^{\natural} := \operatorname{Res}^{G,\natural}_{L_I}$, we introduce the normalized version of the corestriction functor 
    $$
    i^{!,\natural}_I := \bigoplus_{\chi \in \pi_0(\Gr_I)} i^{!}_I \vert_{\Gr_I^{\chi}}[-\langle 2\rho_G - 2\rho_L, \chi \rangle ]. 
    $$
    The morphism $\Xi_I$ now gives rise to a morphism of $\pi_0(\Gr_I) = \Lambda/\Lambda_I$-graded functors 
    $$
    \Xi_I^\natural: i^{!,\natural}_I \rightarrow \operatorname{Res}_I^{\natural}. 
    $$
    Hence, the isomorphism of \eqref{ungraded anti-generic comparison automorphic side} becomes an isomorphism 
    \begin{equation}\label{graded anti-generic comparison automorphic side}
    (\xi_I^\natural)_{g_I}: H_{L_I(\mathcal{O})}^*(\Gr_I, i_I^{!,\natural}\mathcal{F}_\reg)_{g_I} \xrightarrow{\sim} H_{L_I(\mathcal{O})}^*(\Gr_I, \operatorname{Res}_I^\natural(\mathcal{F}_\reg))_{g_I}
    \end{equation}
    of ($\ZZ$-)graded $(R_I)_{g_I}$-algebras.
\end{remark}
\begin{remark}\label{anti-generic comparison automorphic side and the regular centrlizer}
    Recall the $\mathfrak{c}_I:=\mathfrak{c}_{L_I}$-group scheme $\mathfrak{A}_I := \mathfrak{A}_{L_I} := \operatorname{Spec}{H_*^{L_I(\mathcal{O})}}(\Gr_I, \CC)$ from Remark~\ref{homomorphism on regular centralizers, group schemes}. In Construction~\ref{comodulestructure}, it was shown that the cohomology $H^*_{L_I(\mathcal{O})}(\Gr_I, A)$ of any complex $A \in D_{L_I(\mathcal{O})}(\Gr_I)$ is naturally a $\mathfrak{A}_I$-module (i.e. a $H_*^{L(\mathcal{O})}(\Gr_I, \CC)$-comodule). It is evident that if $A \rightarrow B$ is a morphism in $D_{L_I(\mathcal{O})}(\Gr_I)$, then the induced map on equivariant cohomology $H^*_{L_I(\mathcal{O})}(\Gr_I, A) \rightarrow H^*_{L_I(\mathcal{O})}(\Gr_I, B)$ is a $\mathfrak{A}_I$-module homomorphism. Applying this observation to the morphisms $\Xi_I(\mathcal{F}_\reg)$ and $\Xi_I^\natural(\mathcal{F}_\reg)$, we deduce that the isomorphisms $(\xi_I)_{g_I}$ of \eqref{ungraded anti-generic comparison automorphic side} and $(\xi_I^{\natural})_{g_I}$ of \eqref{graded anti-generic comparison automorphic side} are isomorphisms of graded $\mathfrak{A}_I \times_{\mathfrak{c}_I} \mathfrak{c}_{I-\gen}^-$-algebras. 
\end{remark}

\begin{remark}\label{generic comparison automorphic side}
    On the other hand, by Proposition~\ref{localization}, the natural map of graded $R_\emptyset$-modules
    $$
    H^*_{T(\mathcal{O})}(\Gr_T, i_{\emptyset}^{!} \mathcal{F}_\reg) \rightarrow H^*_{T(\mathcal{O})}(\Gr_I, i_I^!\mathcal{F}_\reg)
    $$
    becomes an isomorphism after localization at $f_I \in R_\emptyset$. By the equivariant formality of $i_I^!\mathcal{F}_\reg$, we have an isomorphism of $R_I$-modules
    $$
    H^*_{L_I(\mathcal{O})}(\Gr_I, i_I^!\mathcal{F}_\reg) \otimes_{R_I} R_{\emptyset} \simeq H^*_{T(\mathcal{O})}(\Gr_I, i_I^!\mathcal{F}_\reg). 
    $$
    Localization at $f_I \in R_\emptyset$ therefore yields an isomorphism of $R_\emptyset$-modules 
    $$
    H^*_{L_I(\mathcal{O})}(\Gr_I, i_I^!\mathcal{F}_\reg)_{f_I} \otimes_{(R_I)_{f_I}} (R_{\emptyset})_{f_I} \simeq H^*_{T(\mathcal{O})}(\Gr_T, i_{\emptyset}^{!} \mathcal{F}_\reg)_{f_I}. 
    $$
\end{remark}

\section{Spectral Side}\label{Hamiltonian Reduction}

We will now study the ``spectral side'' of Theorem~\ref{Main Theorem}. Namely, we will study the relationship between the Hamiltonian $\check{G}$-varieties $T^*(\check{G}/(\check{U}, \psi_I))$ for varying additive characters $\psi_I \in \check{\mathfrak{u}}^*$.

\subsection{Partial Kostant-Whittaker reduction} We will start by defining the main construction appearing on the spectral side of our work, the \textit{partial Kostant-Whittaker reduction}. 

\begin{notation}\label{additivecharacterdefinition}
    Fix a subset $I \subseteq \Delta$ of simple roots. We define the additive character $\psi_I$ of $\check{U}$ to be the composition 
    $$
    \begin{tikzcd}
    \check{U} \ar[r,twoheadrightarrow] & \check{U}/[\check{U},\check{U}] \ar[r,"\sim"] & \bigoplus_{\alpha \in \Delta} \mathbb{G}_a \ar[r,"\mathrm{pr}_I",twoheadrightarrow] & \bigoplus_{\alpha \in I} \mathbb{G}_a \ar[r, "+"] & \mathbb{G}_a.
    \end{tikzcd}
    $$
    Here, $\mathrm{pr}_I$ is given by projection onto the factors indexed by $I$, and the last map is summation. We also denote by $\psi_I$ the linear form $(d\psi_I)_1 \in \check{\mathfrak{u}}^*$, where $1 \in \check{U}$ is the identity. We write $\check{\mathfrak{u}}^\perp \subseteq \check{\mathfrak{g}}^*$ for the linear complement to $\check{\mathfrak{u}} \subseteq \check{\mathfrak{g}}$. 
\end{notation}

Throughout this section, $\check{M}$ denotes a smooth and quasi-affine Hamiltonian $\check{G}$-scheme, equipped with a moment map $\mu: \check{M} \rightarrow \check{\mathfrak{g}}^*$. 

\begin{remark}
    To avoid questions of derived algebraic geometry, we will assume that $\check{G}$ acts freely on $\check{M}$, or equivalently that $\check{M}$ is smooth over $\check{\mathfrak{g}}^*$. In particular, we assume that $\check{M}$ is flat over $\check{\mathfrak{g}}^*$.
\end{remark}

\begin{construction}
    On one hand, we may form the Hamiltonian reduction of $\check{M}$ with respect to the $\check{U}$-action to obtain a Hamiltonian $\check{T}$-scheme $\check{M}\tripslash\check{U} := (\check{M} \times_{\check{\mathfrak{g}}^*} \check{\mathfrak{u}}^\perp)/\check{U}$. Via the reduced moment map 
    $$
    \mu_{\mathrm{red}}: \check{M}\tripslash\check{U} \rightarrow \check{\mathfrak{t}}^*,
    $$
    we regard $\check{M}\tripslash\check{U}$ as a $\check{\mathfrak{t}}^*$-scheme. It carries an action of the constant $\check{\mathfrak{t}}^*$-group scheme $\check{T}_{\check{\mathfrak{t}}^*}$. 
\end{construction}

\begin{construction}
    On the other hand, we may form the \textit{partial Kostant-Whittaker reduction} 
    $$
    \check{M}\tripslash(\check{U}, \psi_I) := (\check{M} \times_{\check{\mathfrak{g}}^*} (\check{\mathfrak{u}}^\perp + \psi_I))/\check{U}.
    $$
    Here, we slightly abuse notation and write $\check{\mathfrak{u}}^\perp + \psi_I$ for the space of linear forms on $\check{\mathfrak{g}}$ which restrict to $\psi_I$ on the subspace $\check{\mathfrak{u}}$. We have a natural projection
    \begin{equation}\label{Characteristic polynomial}
    \chi_{\check{M}}: \check{M}\tripslash(\check{U},\psi_I) \rightarrow (\check{\mathfrak{u}}^\perp + \psi_I)/\check{U} \hookrightarrow \check{\mathfrak{g}}^*/\check{U} \rightarrow \check{\mathfrak{c}}. 
    \end{equation}
    Here, we write $\check{\mathfrak{c}}$ for the (coadjoint) Chevalley space of $\check{G}$; that is, the spectrum of $\check{G}$-invariant polynomials on $\check{\mathfrak{g}}^*$. It is the GIT quotient of $\check{\mathfrak{g}}^*$ under the coadjoint action of $\check{G}$ and the coarse moduli scheme underlying the Artin stack $\check{\mathfrak{g}}^*/\check{G}$. We let $\chi: \check{\mathfrak{g}}^* \rightarrow \check{\mathfrak{c}}$ denote the natural projection. It is $\check{G}$-invariant, hence induces a morphism $\check{\mathfrak{g}}^*/\check{U} \rightarrow \check{\mathfrak{c}}$. This morphism is the last map in the composition \eqref{Characteristic polynomial}. 
\end{construction}

\begin{construction}
    Since the character $\psi_I$ is trivial on the unipotent radical $\check{V}_I$ of $\check{P}_I$, it descends to an additive character $\overline{\psi}_I$ of $\check{U}/\check{V}_I = \check{U}_I$, which we identify with a maximal unipotent subgroup of $\check{L}_I \simeq \check{P}_I/\check{V}_I$. Since $\psi_I$ is non-trivial on each simple root space of $\check{L}_I$, the character $\overline{\psi}_I$ is non-degenerate. Let $\check{\mathfrak{v}}_I = \mathrm{Lie}(\check{V}_I)$ and write $\check{\mathfrak{u}}_I^\perp + \overline{\psi}_I$ for the space of linear forms on $\check{\mathfrak{l}}_I$ restricting to $\overline{\psi}_I$ on $\check{\mathfrak{u}}_I$.  We have an evident equality of closed subschemes of $\check{\mathfrak{g}}^*$
    $$
    \check{\mathfrak{u}}^\perp + \psi_I = \check{\mathfrak{v}}_I^\perp \times_{\check{\mathfrak{l}}_I^*} (\check{\mathfrak{u}}_I^\perp + \overline{\psi}_I).
    $$
    Here, the projection $\check{\mathfrak{v}}_I^\perp \rightarrow \check{\mathfrak{l}}_I^*$ is given by restriction to $\check{\mathfrak{p}}_I$ followed by descent along the projection $\check{\mathfrak{p}}_I \twoheadrightarrow \check{\mathfrak{l}}_I$. We may form the fiber product with $\check{M}$ over $\check{\mathfrak{g}}^*$ to obtain an isomorphism 
    \begin{equation*}
        \tilde{\eta}_I: \check{M} \times_{\check{\mathfrak{g}}^*} \check{\mathfrak{v}}_I^\perp \times_{\check{\mathfrak{l}}_I^*} (\check{\mathfrak{u}}_I^\perp + \overline{\psi}_I) \simeq \check{M} \times_{\check{\mathfrak{g}}} (\check{\mathfrak{u}}^\perp + \psi_I). 
    \end{equation*}
    Passing to quotients by the induced $\check{U}$-actions yields an isomorphism 
    \begin{align}\label{Iterated reduction computation}
        \eta_I: (\check{M}\tripslash\check{V}_I)\tripslash(\check{U}, \overline{\psi}_I) &= ((\check{M} \times_{\check{\mathfrak{g}}^*} \check{\mathfrak{v}}_I^\perp)/\check{V}_I \times_{\check{\mathfrak{l}}_I^*} (\check{\mathfrak{u}}_I^\perp + \overline{\psi}_I))/\check{U}_I\\
        &\simeq (\check{M} \times_{\check{\mathfrak{g}}^*} \check{\mathfrak{v}}_I^\perp \times_{\check{\mathfrak{l}}_I^*} (\check{\mathfrak{u}}_I^\perp + \overline{\psi}_I))/\check{U} \nonumber \\
        &\simeq (\check{M} \times_{\check{\mathfrak{g}}} (\check{\mathfrak{u}}^\perp + \psi_I))/\check{U} \nonumber \\
        &= \check{M}\tripslash(\check{U}, \psi_I).\nonumber
    \end{align}
    Let $\check{\mathfrak{c}}_I = \operatorname{Spec} \mathcal{O}(\check{\mathfrak{l}}_I^*)^{\check{L}_I}$ denote the Chevalley space of $\check{L}_I$. As a Hamiltonian reduction of a Hamiltonian $\check{L}_I$-space, the iterated reduction $(\check{M}\tripslash\check{V}_I)\tripslash(\check{U}_I, \overline{\psi}_I)$ is equipped with a natural map
    \begin{equation}\label{projectiontochevalley}
    \chi_{\check{M}\tripslash\check{V}_I}: (\check{M}\tripslash\check{V}_I)\tripslash(\check{U}_I, \overline{\psi}_I) \rightarrow \check{\mathfrak{c}}_I.
    \end{equation}
\end{construction}

\begin{remark}
    The triangular decomposition $\check{\mathfrak{g}} = \check{\mathfrak{u}} \oplus \check{\mathfrak{t}} \oplus \check{\mathfrak{u}}^-$ yields a projection $\check{\mathfrak{g}} \twoheadrightarrow \check{\mathfrak{l}}_I$. Dualizing, we obtain a closed immersion $\check{\mathfrak{l}}_I^* \hookrightarrow \check{\mathfrak{g}}^*$ intertwining the coadjoint actions of $\check{L}_I$ and $\check{G}$. We obtain an induced map on invariant-theoretic quotients 
    $$
    \pi_I: \mathfrak{c}_I \rightarrow \mathfrak{c}. 
    $$
\end{remark}

The following result is useful for reducing the study of a partial Kostant-Whittaker reduction to the non-degenerate case.

\begin{lemma}\label{Reduction in Steps}
    Equip $\check{M}\tripslash\check{V}_I$ with its natural structure of Hamiltonian $\check{L}_I$-scheme. We have a canonical isomorphism of schemes
    $$
    \eta_I: (\check{M}\tripslash\check{V}_I)\tripslash(\check{U}_I, \overline{\psi}_I) \simeq \check{M}\tripslash(\check{U},\psi_I)
    $$
    fitting into a commutative diagram
    \begin{equation}
    \begin{tikzcd}\label{Reduction in Steps, Diagram 1}
        (\check{M}\tripslash\check{V}_I)\tripslash(\check{U}_I, \overline{\psi}_I) \ar[d,"\eta_I"] \ar[rr, "\chi_{\check{M}\tripslash\check{V}_I}"] & &  \check{\mathfrak{c}}_I \ar[d, "\pi_I"]  \\
        \check{M}\tripslash(\check{U},\psi_I) \ar[rr, "\chi_{\check{M}}"] & &
        \check{\mathfrak{c}}.
    \end{tikzcd}
    \end{equation}
\end{lemma}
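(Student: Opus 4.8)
The isomorphism $\eta_I$ demanded by the statement has in fact already been produced: it is the composite \eqref{Iterated reduction computation}. So the only content left is the commutativity of the square \eqref{Reduction in Steps, Diagram 1}, and the plan is to peel this off in two steps, reducing it first to a statement about the ``linear'' bases of the two reductions and then to a classical Lie-theoretic fact about adjoint quotients.

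First I would record that both projections to Chevalley spaces factor through reduced moment maps. The map $\chi_{\check M}$ of \eqref{Characteristic polynomial} is the reduced moment map $\bar\mu\colon \check M\tripslash(\check U,\psi_I)\to(\check{\mathfrak u}^\perp+\psi_I)/\check U$ followed by the map $(\check{\mathfrak u}^\perp+\psi_I)/\check U\hookrightarrow\check{\mathfrak g}^*/\check U\to\check{\mathfrak c}$ induced by the $\check G$-invariant projection $\chi\colon\check{\mathfrak g}^*\to\check{\mathfrak c}$; likewise $\chi_{\check M\tripslash\check V_I}$ of \eqref{projectiontochevalley} is the reduced moment map $\bar\mu_I$ of the Hamiltonian $\check L_I$-scheme $\check M\tripslash\check V_I$ followed by $(\check{\mathfrak u}_I^\perp+\overline\psi_I)/\check U_I\hookrightarrow\check{\mathfrak l}_I^*/\check U_I\to\check{\mathfrak c}_I$. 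The key bookkeeping step is then to check that, under $\eta_I$, the map $\bar\mu_I$ is identified with $p\circ\bar\mu$, where $p\colon(\check{\mathfrak u}^\perp+\psi_I)/\check U\to(\check{\mathfrak u}_I^\perp+\overline\psi_I)/\check U_I$ is the map induced by the restriction $\xi\mapsto\xi|_{\check{\mathfrak l}_I}$ and the surjection $\check U\twoheadrightarrow\check U_I$; here one uses that $\psi_I$ vanishes on $\check{\mathfrak v}_I$, so that $\check{\mathfrak u}^\perp+\psi_I\subseteq\check{\mathfrak v}_I^\perp$ and the restriction lands in $\check{\mathfrak u}_I^\perp+\overline\psi_I$, and more conceptually that the moment map of $\check M\tripslash\check V_I$ is by construction induced by $\mu$ followed by the $\check V_I$-moment map $\check{\mathfrak v}_I^\perp\to\check{\mathfrak l}_I^*$, under which the level $\check{\mathfrak u}_I^\perp+\overline\psi_I$ pulls back to $\check{\mathfrak u}^\perp+\psi_I$. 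This is a straightforward, if tedious, unwinding of each of the four isomorphisms making up \eqref{Iterated reduction computation}. Granting it, the commutativity of \eqref{Reduction in Steps, Diagram 1} becomes the assertion that the two maps $\check{\mathfrak u}^\perp+\psi_I\to\check{\mathfrak c}$ given by $\chi|_{\check{\mathfrak u}^\perp+\psi_I}$ and by $\pi_I\circ\chi_I\circ(\xi\mapsto\xi|_{\check{\mathfrak l}_I})$ agree.

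To finish I would unwind the definition of $\pi_I$: it is the map on Chevalley spaces induced by the closed immersion $\iota_I\colon\check{\mathfrak l}_I^*\hookrightarrow\check{\mathfrak g}^*$ dual to the triangular projection $\check{\mathfrak g}\twoheadrightarrow\check{\mathfrak l}_I$, so $\pi_I\circ\chi_I=\chi\circ\iota_I$, and the claim reduces to $\chi(\xi)=\chi(\iota_I(\xi|_{\check{\mathfrak l}_I}))$ for all $\xi\in\check{\mathfrak u}^\perp+\psi_I$. Here $\xi$ and $\iota_I(\xi|_{\check{\mathfrak l}_I})$ have the same restriction to $\check{\mathfrak p}_I=\check{\mathfrak l}_I\oplus\check{\mathfrak v}_I$ and differ only in the directions transverse to $\check{\mathfrak p}_I$; transporting through the Killing identification $\check{\mathfrak g}^*\cong\check{\mathfrak g}$, this says $\xi$ corresponds to an element $X_0+N$ with $X_0\in\check{\mathfrak l}_I$ (the image of $\iota_I(\xi|_{\check{\mathfrak l}_I})$) and $N$ in the nilradical $\check{\mathfrak v}_I^-$ of $\check{\mathfrak p}_I^-$. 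Since $\chi$ is $\check G$-invariant it suffices to see that $X_0+N$ is $\check V_I^-$-conjugate to $X_0$, and this holds on the dense open locus where $X_0$ is regular semisimple in $\check{\mathfrak g}$: there $\operatorname{ad}(X_0)$ is invertible on the $\check{\mathfrak l}_I$-submodule $\check{\mathfrak v}_I^-$, so $N$ can be removed by an iterated $\check V_I^-$-conjugation, exactly as in the proof that $\chi|_{\check{\mathfrak b}^-}$ factors through $\check{\mathfrak b}^-\twoheadrightarrow\check{\mathfrak t}$; one then extends to all of $\check{\mathfrak u}^\perp+\psi_I$ by density and continuity of $\chi$. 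The substantive input is only this last, classical fact (compatibility of adjoint quotients with parabolic descent); the genuine work in the argument is the bookkeeping of the second paragraph — tracking the moment maps through the iterated reduction defining $\eta_I$ — which I expect to be the main, though purely routine, obstacle.
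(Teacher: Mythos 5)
Your proposal is correct and its skeleton matches the paper's: you take $\eta_I$ to be the isomorphism \eqref{Iterated reduction computation}, and you reduce the commutativity of \eqref{Reduction in Steps, Diagram 1}, by tracking the moment maps through the iterated reduction, to the single linear statement that $\chi$ and $\pi_I\circ\chi_I\circ r_I$ agree on $\check{\mathfrak{u}}^\perp+\psi_I$ (this is exactly the paper's ``right rectangle''; the paper dispatches your ``bookkeeping'' step as immediate from the definition of $\tilde\eta_I$). Where you genuinely diverge is the proof of this last classical fact. The paper transports it through $\check{\mathfrak{g}}\simeq\check{\mathfrak{g}}^*$, observes that both maps extend to $\check{P}_I$-invariant maps on all of $\check{\mathfrak{p}}_I$, conjugates into $\check{\mathfrak{b}}$, and reduces to the standard commutativity of $\check{\mathfrak{b}}\twoheadrightarrow\check{\mathfrak{t}}\rightarrow\check{\mathfrak{c}}$ with $\chi\vert_{\check{\mathfrak{b}}}$; you instead stay on the slice, write the corresponding element as $X_0+N$ with $X_0$ in the Levi and $N$ in a nilradical, remove $N$ by unipotent conjugation on the dense locus where $X_0$ is regular semisimple in $\check{\mathfrak{g}}$ (invertibility of $\operatorname{ad}(X_0)$ on the nilradical plus successive approximation), and conclude by density and separatedness. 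Both are valid forms of parabolic descent for the adjoint quotient: the paper's route gives the identity on all of $\check{\mathfrak{p}}_I$ with no limiting argument, while yours avoids invoking $\check{P}_I$-invariance and conjugacy into the Borel. Two minor points to tighten: with the natural (Killing-type) identification, the difference $\xi-\iota_I(\xi\vert_{\check{\mathfrak{l}}_I})$ annihilates $\check{\mathfrak{p}}_I$ and hence corresponds to an element of the nilradical $\check{\mathfrak{v}}_I$ of $\check{\mathfrak{p}}_I$, not of $\check{\mathfrak{v}}_I^-$ — a harmless convention slip, since the conjugation argument works verbatim on either side — and the density of the locus where $X_0$ is $\check{\mathfrak{g}}$-regular semisimple deserves a word (it follows, e.g., from the surjectivity of $\check{\mathfrak{u}}_I^\perp+\overline{\psi}_I\rightarrow\check{\mathfrak{c}}_I$ given by Kostant's theorem for $\check{L}_I$), but both are routine.
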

\begin{proof}
    The isomorphism $\eta_I$ is that of \eqref{Iterated reduction computation}. It remains to check that the diagram \eqref{Reduction in Steps, Diagram 1} commutes. It suffices to show that following diagram commutes.
    $$
    \begin{tikzcd}
        (\check{M} \times_{\check{\mathfrak{g}}^*} \check{\mathfrak{v}}_I^\perp ) \times_{\check{\mathfrak{l}}_I^*} (\check{\mathfrak{u}}_I^\perp + \overline{\psi}_I) \ar[d, "\tilde{\eta}_I"] \ar[r] & \check{\mathfrak{v}}_I^\perp \times _{\check{\mathfrak{l}}_I^*} (\check{\mathfrak{u}}_I^\perp + \overline{\psi}_I) \ar[d, "\sim"] \ar[r, "\mathrm{pr}_2"] & \check{\mathfrak{u}}_I^\perp + \overline{\psi}_I \ar[r, hookrightarrow] & \check{\mathfrak{l}}_I^* \ar[r, "\chi_I"] & \check{\mathfrak{c}}_I \ar[d, "\pi_I"] \\
        \check{M} \times_{\check{\mathfrak{g}}^*} (\check{\mathfrak{u}}^\perp + \psi_I) \ar[r, "\mathrm{pr}_2"] & \check{\mathfrak{u}}^\perp + \psi_I \ar[rr, hookrightarrow] & & \check{\mathfrak{g}}^* \ar[r, "\chi"] & \check{\mathfrak{c}}.
    \end{tikzcd}
    $$
    Here, $\chi_I: \check{\mathfrak{l}}_I^* \rightarrow \check{\mathfrak{c}}_I$ is the characteristic polynomial map on $\check{\mathfrak{l}}_I^*$. The commutativity of the left square follows immediately from the definition of $\tilde{\eta}_I$. Under the identification $\check{\mathfrak{g}} \simeq \check{\mathfrak{g}}^*$, the commutativity of the right rectangle is equivalent to the commutativity of the following diagram.
    $$
    \begin{tikzcd}
        \check{\mathfrak{b}} + e_I \ar[d, hookrightarrow] \ar[r, twoheadrightarrow] & \check{\mathfrak{b}}_I + \overline{e}_I 
        \ar[r, hookrightarrow] & \check{\mathfrak{l}}_I \ar[d, equals]\ar[r, "\chi_I"] & \check{\mathfrak{c}}_I \ar[d, equals] \\
        \check{\mathfrak{p}}_I \ar[rr, twoheadrightarrow] \ar[d, hookrightarrow] & &\check{\mathfrak{l}}_I \ar[r, "\chi_I"] \ar[r] & \check{\mathfrak{c}}_I \ar[d, "\pi_I"] \\
        \check{\mathfrak{g}} \ar[rrr, "\chi"] & & & \check{\mathfrak{c}}.
    \end{tikzcd}
    $$
    The commutativity of the upper half of the diagram is obvious. The bottom half embeds into the diagram
    $$
    \begin{tikzcd}
        \check{\mathfrak{b}} \ar[d, hookrightarrow] \ar[r, twoheadrightarrow] & \check{\mathfrak{b}}_I \ar[r, twoheadrightarrow] \ar[d, hookrightarrow]& \check{\mathfrak{t}} \ar[r, equals, "\chi_\emptyset"] & \check{\mathfrak{t}} \ar[d, "\pi"] \\
        \check{\mathfrak{p}}_I \ar[r, twoheadrightarrow] \ar[d, hookrightarrow] & \check{\mathfrak{l}}_I \ar[rr, "\chi_I"] & & \check{\mathfrak{c}}_I \ar[d, "\pi_I"] \\
        \check{\mathfrak{g}} \ar[rrr, "\chi"] & & & \check{\mathfrak{c}}.
    \end{tikzcd}
    $$
    The two maps $\check{\mathfrak{p}}_I \rightarrow \check{\mathfrak{c}}$ that we claim are equal are $\check{P}_I$-invariant. Since every element of $\check{\mathfrak{p}}_I$ is conjugate to an element of $\check{\mathfrak{b}}$, it suffices to show that the compositions $\check{\mathfrak{b}} \hookrightarrow \check{\mathfrak{p}}_I \rightarrow \check{\mathfrak{c}}$ coincide. Given the obvious commutativity of the upper left square, it suffices to show that the outer rectangle and upper right rectangle commute. The upper right rectangle becomes the outer rectangle after replacing $\check{\mathfrak{g}}$ by $\check{\mathfrak{l}}_I$, so it suffices to show the commutativity of the outer rectangle. 

    Thus, we are reduced to the observation that the diagram 
    $$
    \begin{tikzcd}
        \check{\mathfrak{b}} \ar[r, twoheadrightarrow] \ar[d, hookrightarrow] & \check{\mathfrak{t}} \ar[r, hookrightarrow] & \check{\mathfrak{g}} \ar[ld, "\chi"] \\
        \check{\mathfrak{g}} \ar[r, "\chi"] &\check{\mathfrak{c}}
    \end{tikzcd}
    $$
    commutes, which is standard. 
\end{proof}
\subsection{Action of the regular centralizer} \label{spectralactionofthecentralizer}

We begin by reviewing Ng\^{o}'s construction of the regular centralizer group scheme as explained in \cite{Ri17}. Let $\check{\mathfrak{g}}^*_{\reg} \subseteq \check{\mathfrak{g}}^*$ denote the open subscheme of regular elements; namely, those elements $\xi \in \check{\mathfrak{g}}^*$ with coadjoint centralizer $Z_{\check{G}}(\xi)$ of minimal dimension. Over $\check{\mathfrak{g}}^*$, we have the universal (coadjoint) centralizer group scheme $\mathfrak{I}_{\mathrm{univ}}$, defined as the fiber product
$$
\begin{tikzcd}
\mathfrak{I}_{\mathrm{univ}} \ar[r, hookrightarrow] \ar[d] & \check{G} \times \check{\mathfrak{g}}^* \ar[d,"\operatorname{act} \times \operatorname{pr}_2"] \\
\check{\mathfrak{g}}^* \ar[r, hookrightarrow, "\Delta"] & \check{\mathfrak{g}}^* \times \check{\mathfrak{g}}^*.
\end{tikzcd}
$$
We form the restriction $\mathfrak{I}_{\mathrm{reg}} = \mathfrak{I}_{\mathrm{univ}} \times_{\check{\mathfrak{g}}^*} \check{\mathfrak{g}}_{\mathrm{reg}}^*$, a commutative affine group scheme over $\check{\mathfrak{g}}^*_{\reg}$. In \cite[Lemme 2.1.1]{Ngo10}, smooth descent (of affine morphisms) is used to construct a smooth affine group scheme $\mathfrak{J}$ over $\check{\mathfrak{c}}$ equipped with a canonical isomorphism of group schemes over $\check{\mathfrak{g}}^*_{\mathrm{reg}}$
$$
\mathfrak{I}_{\mathrm{reg}} \simeq \mathfrak{J} \times_{\mathfrak{c},\chi} \check{\mathfrak{g}}^*_{\mathrm{reg}}. 
$$
Furthermore, this isomorphism extends uniquely (by an application of Hartog's principle) to a homomorphism of group schemes
\begin{equation}\label{Ngohomomorphism}
\rho: \mathfrak{J} \times_{\mathfrak{c},\chi} \check{\mathfrak{g}}^*\rightarrow \mathfrak{I}_{\mathrm{univ}}.
\end{equation}
We will need a well-known alternative construction of $\mathfrak{J}$. 

\begin{remark}
We recall the Kostant slice $\kappa: \check{\mathfrak{c}} \rightarrow \check{\mathfrak{g}}^*$ to the regular nilpotent orbit. Consider the regular nilpotent element\footnote{It is a fundamental observation of Ginzburg, going back to \cite{Gi95}, that the nilpotent element $e$ arises through Tannakian formalism from the Chern class $c_1(\mathcal{L}_{\mathrm{det}}) \in H^2(\Gr_G, \CC)$ of the determinant line bundle.}
$$
e = \sum_{\alpha \in \Delta} X_\alpha,
$$
where $X_\alpha \in \check{\mathfrak{g}}_\alpha$ is the non-zero element provided by the pinning of $\check{G}$. The map $\mathrm{ad}^*_e: \check{\mathfrak{b}}^\perp \rightarrow \check{\mathfrak{u}}^\perp$ is injective. Let $\check{\mathfrak{s}} \subseteq \check{\mathfrak{u}}^\perp$ denote a $\mathbb{G}_m$-stable complement to $\mathrm{ad}^*_e(\check{\mathfrak{b}}^\perp)$, where $\mathbb{G}_m$ acts through the homomorphism $2\check{\rho}: \mathbb{G}_m \rightarrow \check{T}$. Then, Kostant's theorem \cite[Theorem 3.2.2]{Ri17} implies that the subspace $\check{\mathfrak{s}} + \psi$ maps isomorphically onto $\check{\mathfrak{c}} = \check{\mathfrak{g}}^*\doubslash \check{G}$. The inverse map followed by the inclusion $\check{\mathfrak{s}} + \psi \hookrightarrow \check{\mathfrak{g}}^*$ defines the morphism $\kappa: \check{\mathfrak{c}} \rightarrow \check{\mathfrak{g}}^*$. Clearly, we have an isomorphism of $\check{\mathfrak{c}}$-group schemes
\begin{equation}\label{Regular Centralizer as Restriction}
\mathfrak{J} \simeq \mathfrak{I}_{\mathrm{univ}} \times_{\check{\mathfrak{g}}^*,\kappa} \check{\mathfrak{c}}.
\end{equation}
\end{remark}
Similarly, we can always describe a Kostant-Whittaker reduction $\check{M}\tripslash(\check{U}, \psi)$ by restricting $\check{M}$ along the Kostant slice:
\begin{lemma}\label{Reduction by Restriction}
    Let $\psi = \psi_\Delta$ denote a non-degenerate additive character of $\check{U}$. There is a canonical $\check{\mathfrak{c}}$-isomorphism 
    $$
    \delta_\psi^{\check{M}}: \check{M}\tripslash(\check{U}, \psi) \simeq \check{M} \times_{\check{\mathfrak{g}}^*,\kappa} \check{\mathfrak{c}}.
    $$
\end{lemma}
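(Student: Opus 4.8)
The plan is to realize $\delta_\psi^{\check M}$ as an instance of the general principle that the Hamiltonian (Kostant--Whittaker) reduction of $\check M$ at the level $\psi$ may be computed by restricting $\check M$ along \emph{any} section of the free $\check U$-action on the affine subspace $\check{\mathfrak u}^\perp + \psi \subseteq \check{\mathfrak g}^*$. First I would record the two facts that make this work. The affine space $\check{\mathfrak u}^\perp + \psi$ is stable under the coadjoint action of $\check U$: for $u \in \check U$ and $\xi$ restricting to $\psi$ on $\check{\mathfrak u}$, the difference $\mathrm{Ad}^*_u \xi - \xi$ annihilates $\check{\mathfrak u}$ since $\psi$ kills $[\check{\mathfrak u},\check{\mathfrak u}]$, so $\mathrm{Ad}^*_u\xi$ again restricts to $\psi$. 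And Kostant's theorem in the form of \cite[Theorem 3.2.2]{Ri17}, applied to the regular nilpotent $e$ and its dual $\psi$, says that the action map
$$
\check U \times (\check{\mathfrak s} + \psi) \longrightarrow \check{\mathfrak u}^\perp + \psi, \qquad (u,s) \longmapsto \mathrm{Ad}^*_u s,
$$
is an isomorphism of schemes; equivalently, $\check U$ acts freely on $\check{\mathfrak u}^\perp + \psi$ and the Kostant slice $\check{\mathfrak s} + \psi$ is a section of the quotient map, which is canonically $\check{\mathfrak c}$ (with inverse section $\kappa$, by the very definition of $\kappa$ recalled in the excerpt).

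Next I would prove the general restriction-vs-reduction lemma: for a $\check U$-stable locally closed $N \subseteq \check{\mathfrak g}^*$ equipped with a subscheme $S \hookrightarrow N$ for which the action map $\check U \times S \xrightarrow{\sim} N$ is an isomorphism, and for any $\check U$-variety $\check M$ with $\check U$-equivariant moment map $\mu\colon \check M \to \check{\mathfrak g}^*$, the assignment $(m,u,s) \mapsto (u^{-1}\cdot m, s)$ descends to an isomorphism
$$
(\check M \times_{\check{\mathfrak g}^*} N)/\check U \;\xrightarrow{\ \sim\ }\; \check M \times_{\check{\mathfrak g}^*} S .
$$
One checks directly (after transporting along $N \cong \check U \times S$ over $\check{\mathfrak g}^*$) that this map is well defined, $\check U$-invariant, and inverse to the obvious map induced by $S \hookrightarrow N$; moreover the $\check U$-action on $\check M \times_{\check{\mathfrak g}^*} N \cong \check M \times_{\check{\mathfrak g}^*}(\check U \times S)$ is free with trivial, hence representable, quotient, so that $\check M \tripslash(\check U,\psi)$ is a genuine scheme with the expected universal property. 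Applying this with $N = \check{\mathfrak u}^\perp + \psi$ and $S = \check{\mathfrak s} + \psi$ gives
$$
\check M \tripslash(\check U,\psi) = \bigl(\check M \times_{\check{\mathfrak g}^*}(\check{\mathfrak u}^\perp + \psi)\bigr)/\check U \;\simeq\; \check M \times_{\check{\mathfrak g}^*}(\check{\mathfrak s}+\psi),
$$
and composing with the isomorphism $\check{\mathfrak c} \xrightarrow{\sim} \check{\mathfrak s}+\psi$ whose composite with the inclusion $\check{\mathfrak s}+\psi \hookrightarrow \check{\mathfrak g}^*$ is $\kappa$ yields $\delta_\psi^{\check M}$. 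Finally I would observe that this identification is over $\check{\mathfrak c}$: chasing the definition \eqref{Characteristic polynomial} of $\chi_{\check M}$ through the chain of isomorphisms turns it into the projection of $\check M \times_{\check{\mathfrak g}^*,\kappa}\check{\mathfrak c}$ onto its second factor, using $\chi \circ \kappa = \mathrm{id}_{\check{\mathfrak c}}$.

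I do not expect a serious obstacle here. The only genuinely nontrivial input, Kostant's structure theorem for the Whittaker slice, is quoted, and what remains is the routine verification that the restriction-vs-reduction lemma is an isomorphism of schemes and that all identifications are canonical and $\check{\mathfrak c}$-linear. The single point requiring a little care is legitimizing the quotient defining $\check M\tripslash(\check U,\psi)$ and checking it carries the expected universal property; this is precisely what the triviality of the $\check U$-torsor $\check M \times_{\check{\mathfrak g}^*}(\check{\mathfrak u}^\perp+\psi) \to \check M\times_{\check{\mathfrak g}^*,\kappa}\check{\mathfrak c}$ provides, with the standing smoothness/flatness of $\check M$ over $\check{\mathfrak g}^*$ used only to keep the fiber products well behaved.
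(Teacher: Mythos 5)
Your proposal is correct and follows essentially the same route as the paper: the only real input in both is Kostant's theorem \cite[Theorem 3.2.2]{Ri17} that $\chi\colon \check{\mathfrak{u}}^\perp+\psi\to\check{\mathfrak{c}}$ is a $\check U$-torsor split by the Kostant section $\kappa$. The paper packages the remaining formal step as a pasting of two Cartesian squares (the right-hand one Cartesian because it is a morphism of $\check U$-torsors over $\check{\mathfrak{c}}$), while you trivialize the torsor and write the inverse map $(m,u,s)\mapsto(u^{-1}\cdot m,s)$ explicitly — the same argument in different clothing.
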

\begin{proof}
    By definition, $\check{M}\tripslash(\check{U},\psi) = (\check{M} \times_{\check{\mathfrak{g}}^*} (\check{\mathfrak{u}}^\perp + \psi_I))/\check{U}$. Let 
    $$
    q: \check{M} \times_{\check{\mathfrak{g}}^*} (\check{\mathfrak{u}}^\perp + \psi_I) \rightarrow \check{M}\tripslash(\check{U},\psi)
    $$
    denote the quotient map. It is a $\check{U}$-torsor. So, we have a commutative diagram
    $$
    \begin{tikzcd}
        \check{M} \times_{\check{\mathfrak{g}}^*,\kappa} \check{\mathfrak{c}} \ar[r] \ar[d,"\operatorname{pr}_2"] & \check{M} \times_{\check{\mathfrak{g}}^*} (\check{\mathfrak{u}}^\perp + \psi) \ar[d, "\mathrm{pr}_2"] \ar[r, "q"] & \check{M}\tripslash(\check{U},\psi) \ar[d, "\chi_{\check{M}}"] \\
        \check{\mathfrak{c}} \ar[r,"\kappa"] \ar[rr, "\mathrm{id}_{\check{\mathfrak{c}}}",bend right = 10,swap]& \check{\mathfrak{u}}^\perp + \psi \ar[r,"\chi"] & \check{\mathfrak{c}}.
    \end{tikzcd}
    $$
    By Kostant's theorem \cite[Theorem 3.2.2]{Ri17}, the map $\chi: \check{\mathfrak{u}}^\perp + \psi \rightarrow \check{\mathfrak{c}}$ is a $\check{U}$-torsor. Hence, the commutative square on the right is a morphism of $\check{U}$-torsors, hence is Cartesian. The commutative square on the left is evidently Cartesian. Therefore, the composite square is Cartesian. It follows that the composition of the top row $(\delta_\psi^{\check{M}})^{-1}: \check{M} \times_{\check{\mathfrak{g}}^*,\kappa} \check{\mathfrak{c}} \rightarrow \check{M}\tripslash(\check{U},\psi)$ is an isomorphism, as needed. 
\end{proof}

\begin{example}\label{universal hamiltonian reduction}
Consider the ``universal'' example of $\check{M} = T^*\check{G}$. Then, Lemma~\ref{Reduction by Restriction} is simply the observation that we have a canonical $\check{\mathfrak{c}}$-isomorphism
\begin{equation}\label{deltapsidefinition}
\delta_\psi := \delta_\psi^{T^*\check{G}}: T^*(\check{G}/(\check{U}, \psi)) \simeq T^*\check{G} \times_{\check{\mathfrak{g}}^*,\kappa} \check{\mathfrak{c}} \simeq \check{G} \times \check{\mathfrak{c}}. 
\end{equation}
\end{example}

\begin{remark}
    In our intended application, there will be another algebraic group acting compatibly on $\check{M}$. Therefore, we introduce into our setup an algebraic group $\check{H}$ with an action on $\check{M}$ that commutes with the given action of $\check{G}$. As we construct various isomorphisms in this section, we will stop to observe that they are in fact $\check{H}$-equivariant. 
    
    Because the $\check{H}$-action on $\check{M}$ commutes with the $\check{G}$-action, it follows easily that the moment map $\mu$ is $\check{H}$-invariant. Therefore, $\check{H}$ acts naturally on $\check{M} \times_{\check{\mathfrak{g}}^*} \kappa$. Similarly, $\check{H}$ acts naturally on $\check{M} \times_{\check{\mathfrak{g}}^*} (\check{\mathfrak{u}}^\perp + \psi)$. This action commutes with that of $\check{U}$, so we obtain an action of $\check{H}$ on the quotient $\check{M}\tripslash(\check{U},\psi)$. 
\end{remark}

\begin{lemma}\label{Reduction by restriction, equivariant}
    Suppose that $\check{M}$ is equipped with an action of an algebraic group $\check{H}$ commuting with the action of $\check{G}$. Then, the isomorphism 
    $$
    \delta_\psi^{\check{M}}: \check{M}\tripslash(\check{U}, \psi) \simeq \check{M} \times_{\check{\mathfrak{g}}^*,\kappa} \check{\mathfrak{c}}
    $$
    of Lemma~\ref{Reduction by Restriction} is $\check{H}$-equivariant.
\end{lemma}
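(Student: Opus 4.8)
The plan is to revisit the proof of Lemma~\ref{Reduction by Restriction} and check that every object and arrow appearing there is naturally $\check{H}$-equivariant, so that the construction of $\delta_\psi^{\check{M}}$ produces an $\check{H}$-equivariant isomorphism with no extra work. The one substantive input, already recorded in the remark preceding the statement, is that the $\check{H}$-action on $\check{M}$ commutes with the $\check{G}$-action, and hence the moment map $\mu\colon \check{M}\to\check{\mathfrak{g}}^*$ is $\check{H}$-invariant. We therefore let $\check{H}$ act trivially on $\check{\mathfrak{g}}^*$, on the affine subspace $\check{\mathfrak{u}}^\perp+\psi\subseteq\check{\mathfrak{g}}^*$, and on $\check{\mathfrak{c}}$.

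First I would note that, with these conventions, the fibre products $\check{M}\times_{\check{\mathfrak{g}}^*,\kappa}\check{\mathfrak{c}}$ and $\check{M}\times_{\check{\mathfrak{g}}^*}(\check{\mathfrak{u}}^\perp+\psi)$ inherit $\check{H}$-actions, the projections to the second factor are tautologically $\check{H}$-equivariant, and $\kappa$ and $\chi$ are $\check{H}$-equivariant because $\check{H}$ acts trivially on their sources and targets. The $\check{H}$-action on $\check{M}\times_{\check{\mathfrak{g}}^*}(\check{\mathfrak{u}}^\perp+\psi)$ commutes with the $\check{U}$-action used to form the quotient — since $\check{U}\subseteq\check{G}$ and the $\check{G}$- and $\check{H}$-actions commute — so the quotient map $q$ descends to an $\check{H}$-equivariant map onto $\check{M}\tripslash(\check{U},\psi)$ equipped with the induced $\check{H}$-action (this is precisely the action named in the statement). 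Thus every arrow in the diagram
$$
\begin{tikzcd}
\check{M} \times_{\check{\mathfrak{g}}^*,\kappa} \check{\mathfrak{c}} \ar[r] \ar[d,"\operatorname{pr}_2"] & \check{M} \times_{\check{\mathfrak{g}}^*} (\check{\mathfrak{u}}^\perp + \psi) \ar[d, "\operatorname{pr}_2"] \ar[r, "q"] & \check{M}\tripslash(\check{U},\psi) \ar[d, "\chi_{\check{M}}"] \\
\check{\mathfrak{c}} \ar[r,"\kappa"] & \check{\mathfrak{u}}^\perp + \psi \ar[r,"\chi"] & \check{\mathfrak{c}}
\end{tikzcd}
$$
from the proof of Lemma~\ref{Reduction by Restriction} is $\check{H}$-equivariant, the left square is $\check{H}$-equivariantly Cartesian by construction, and the right square is Cartesian (being a morphism of $\check{U}$-torsors, exactly as in that proof).

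It follows that the composite square is Cartesian in the category of $\check{H}$-schemes, so its top row $(\delta_\psi^{\check{M}})^{-1}\colon \check{M}\times_{\check{\mathfrak{g}}^*,\kappa}\check{\mathfrak{c}}\xrightarrow{\sim}\check{M}\tripslash(\check{U},\psi)$ is an $\check{H}$-equivariant isomorphism; inverting it gives the lemma. I do not expect any real obstacle here — the argument is essentially a bookkeeping exercise in tracking the (trivial-on-$\check{\mathfrak{g}}^*$) $\check{H}$-action through the Cartesian diagram. The only point meriting a moment's care is the commutation of the $\check{H}$- and $\check{U}$-actions on $\check{M}\times_{\check{\mathfrak{g}}^*}(\check{\mathfrak{u}}^\perp+\psi)$, which guarantees that the quotient carries a well-defined $\check{H}$-action in the first place; this is immediate from $\check{U}\subseteq\check{G}$ together with the hypothesis that the $\check{G}$- and $\check{H}$-actions on $\check{M}$ commute.
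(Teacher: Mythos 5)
Your proposal is correct and follows essentially the same route as the paper: one checks that the two maps $\mathrm{id}_{\check{M}}\times\kappa$ and $q$ composing to $(\delta_\psi^{\check{M}})^{-1}$ are $\check{H}$-equivariant (the quotient map by the very definition of the induced action, the other because $\check{H}$ acts trivially on $\check{\mathfrak{c}}$ and $\check{\mathfrak{u}}^\perp+\psi$), and equivariance of the isomorphism follows. Your extra bookkeeping about the squares being $\check{H}$-equivariantly Cartesian is harmless but not needed.
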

\begin{proof}
    It suffices to check that the maps 
    \begin{align*}
    \mathrm{id}_{\check{M}} \times \kappa &: \check{M} \times_{\check{\mathfrak{g}}^*, \kappa} \check{\mathfrak{c}} \rightarrow \check{M} \times_{\check{\mathfrak{g}}^*} (\check{\mathfrak{u}}^\perp + \psi)\\
    q &: \check{M} \times_{\check{\mathfrak{g}}^*} (\check{\mathfrak{u}}^\perp + \psi) \rightarrow \check{M}\tripslash(\check{U},\psi)
    \end{align*}
    are $\check{H}$-equivariant. We defined the $\check{H}$-action on $\check{M}\tripslash(\check{U},\psi)$ as the unique one making the quotient map $\check{M} \times_{\check{\mathfrak{g}}^*} (\check{\mathfrak{u}}^\perp + \psi) \rightarrow \check{M}\tripslash(\check{U},\psi)$ $\check{H}$-invariant, so its $\check{H}$-equivariance is trivial. The map $\mathrm{id}_{\check{M}} \times \kappa$ is obtained by pulling back the (trivially $\check{H}$-equivariant) map $\kappa: \check{\mathfrak{c}} \rightarrow (\check{\mathfrak{u}}^\perp + \psi)$ along the $\check{H}$-equivariant projection $\mathrm{pr}_2: \check{M} \times_{\check{\mathfrak{g}}^*} (\check{\mathfrak{u}}^\perp + \psi) \rightarrow (\check{\mathfrak{u}}^\perp + \psi)$, so the claim follows. 
\end{proof}

\begin{construction}
    Via the isomorphism of Lemma~\ref{Reduction by Restriction}, we may equip the Kostant-Whittaker reduction $\check{M}\tripslash(\check{U},\psi)$ with an action of the group scheme $\mathfrak{I}_{\mathrm{univ}} \times_{\check{\mathfrak{g}}^*,\kappa} \check{\mathfrak{c}}$. Thus, the identification \eqref{Regular Centralizer as Restriction} equips the $\check{\mathfrak{c}}$-scheme $\check{M}\tripslash(\check{U},\psi)$ with an action of the regular centralizer $\mathfrak{J}$. 
    
    Now, let $\mathfrak{J}_{I} \rightarrow \check{\mathfrak{c}}_I$ denote the regular centralizer associated to the reductive group $\check{L}_I$. By \eqref{Reduction in Steps}, we can realize $\check{M}\tripslash(\check{U},\psi_I)$ as the non-degenerate Kostant-Whittaker reduction of the Hamiltonian $\check{L}_I$-space $\check{M}\tripslash\check{V}_I$. Thus, we obtain as above an action of the regular centralizer $\mathfrak{J}_{I}$ on the $\mathfrak{\check{c}}_I$-scheme $\check{M}\tripslash(\check{U},\psi_I)$. 
\end{construction}

\begin{example}
    If $I = \emptyset$, then $\psi_I = 1$ is the trivial character. Hence, $\check{M}\tripslash(\check{U},\psi_I) = \check{M}\tripslash\check{U}$. Furthermore, $\check{\mathfrak{c}}_I = \check{\mathfrak{t}}^*$ and $\mathfrak{J}_{I} = \check{T} \times \check{\mathfrak{t}}^*$. Thus, the $\mathfrak{J}_{I}$-action on $\check{M}\tripslash(\check{U},\psi_I)$ is simply the natural action of the constant group scheme $\check{T} \times \check{\mathfrak{t}}^*$ on $\check{M}\tripslash\check{U}$, which is regarded as a $\check{\mathfrak{t}}^*$-scheme through the reduced $\check{T}$ moment map. 
\end{example}

\subsection{Generic comparison of Hamiltonian and Kostant-Whittaker reductions} We will now compare the quotients $\check{M}\tripslash(\check{U},\psi)$ and $\check{M}\tripslash\check{U}$ (we write $\psi = \psi_\Delta$ for the non-degenerate additive character on $\check{U}$). Let $\check{\mathfrak{t}}^*_{\gen} \subseteq \check{\mathfrak{t}}^*$ denote the open subscheme defined by the non-vanishing $\check{\alpha} \neq 0$ of each coroot $\check{\alpha} \in \Phi$ of $\check{G}$. It is $W$-stable, hence descends to an open subscheme $\check{\mathfrak{c}}_{\gen} \subseteq \check{\mathfrak{c}}$. Note that the projection $\check{\mathfrak{t}}^*_{\gen} \rightarrow \check{\mathfrak{c}}_{\gen}$ is a $W$-torsor. 

We will now establish a version of Lemma~\ref{Reduction by Restriction} for the trivial character. The penalty for taking the Hamiltonian reduction at level $0 \in \check{\mathfrak{u}}^*$ instead of at the generic level $\psi \in \mathfrak{u}^*$ is that we will only able be able to construct an analog of Lemma~\ref{Reduction by Restriction} after restricting to the open subscheme $\check{\mathfrak{t}}^*$. We adapted the following argument directly from Kostant's proof of \cite[Theorem 3.2.2]{Ri17}.

\begin{lemma}\label{Reduction via Restriction II}
There is a canonical $\check{\mathfrak{t}}^*_\gen$-isomorphism
$$
\delta_\gen^{\check{M}}: \check{M}\tripslash \check{U} \times_{\check{\mathfrak{t}}^*} \check{\mathfrak{t}}^*_\gen \simeq \check{M} \times_{\check{\mathfrak{g}}^*} \check{\mathfrak{t}}^*_\gen. 
$$
\end{lemma}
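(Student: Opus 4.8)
The plan is to mimic the proof of Lemma~\ref{Reduction by Restriction}, replacing the role played there by Kostant's theorem (that $\chi\colon \check{\mathfrak{u}}^\perp+\psi\to\check{\mathfrak{c}}$ is a $\check{U}$-torsor) by the analogous statement for the zero level, which however only holds after restricting to the regular locus $\check{\mathfrak{t}}^*_\gen$. Let $\iota\colon \check{\mathfrak{t}}^*\hookrightarrow\check{\mathfrak{u}}^\perp\subseteq\check{\mathfrak{g}}^*$ denote the inclusion of the middle summand of the triangular decomposition $\check{\mathfrak{g}}^*=\check{\mathfrak{u}}^*\oplus\check{\mathfrak{t}}^*\oplus(\check{\mathfrak{u}}^-)^*$ (equivalently, the map dual to the projection $\check{\mathfrak{g}}\twoheadrightarrow\check{\mathfrak{t}}$; it is a section of the projection $p\colon\check{\mathfrak{u}}^\perp\twoheadrightarrow\check{\mathfrak{t}}^*$ defining the reduced moment map on $\check{M}\tripslash\check{U}$, and it plays the role of the Kostant slice $\kappa$ here). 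The substantive step is:

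\textbf{Claim.} The coadjoint action map $a\colon \check{U}\times\check{\mathfrak{t}}^*_\gen\to\check{\mathfrak{u}}^\perp\times_{\check{\mathfrak{t}}^*}\check{\mathfrak{t}}^*_\gen$, $(u,\xi)\mapsto(u\cdot\xi,\xi)$, is an isomorphism; equivalently, $\check{\mathfrak{u}}^\perp\times_{\check{\mathfrak{t}}^*}\check{\mathfrak{t}}^*_\gen\to\check{\mathfrak{t}}^*_\gen$ is a $\check{U}$-torsor, trivialized by the section $\iota$.

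To prove the Claim: since $\check{\mathfrak{u}}$ is an ideal of $\check{\mathfrak{b}}$, the subspace $\check{\mathfrak{u}}^\perp\subseteq\check{\mathfrak{g}}^*$ is $\check{B}$-stable and $p$ is $\check{B}$-equivariant for the action of $\check{B}$ on $\check{\mathfrak{t}}^*$ through $\check{B}\to\check{T}$; hence $\check{U}$ fixes $\check{\mathfrak{t}}^*$ pointwise and $a$ is well defined and $\check{U}$-equivariant for left translation. By homogeneity it suffices to check $a$ is étale at the points $(e,\xi)$, $\xi\in\check{\mathfrak{t}}^*_\gen$. There the differential $\check{\mathfrak{u}}\oplus\check{\mathfrak{t}}^*\to T_\xi(\check{\mathfrak{u}}^\perp\times_{\check{\mathfrak{t}}^*}\check{\mathfrak{t}}^*_\gen)$ sends $(X,\eta)\mapsto(\mathrm{ad}^*_X\xi+\iota\eta,\eta)$, and a root-by-root computation shows its kernel is zero: $\mathrm{ad}^*_X\xi=0$ forces $\langle\xi,[X,Y]\rangle=0$ for all $Y$, and only the diagonal brackets $[X_{\check\alpha},Y_{-\check\alpha}]$, which land in $\check{\mathfrak{t}}$, pair nontrivially with $\xi\in\check{\mathfrak{t}}^*$; since $\langle\xi,[X_{\check\alpha},X_{-\check\alpha}]\rangle\ne0$ (this is precisely the defining non-vanishing condition of $\check{\mathfrak{t}}^*_\gen$ applied to the coroots of $\check{G}$), each $X_{\check\alpha}$, hence $X$, must vanish. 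As $\dim\check{U}+\dim\check{T}=\dim\check{\mathfrak{u}}^\perp$, the differential is an isomorphism and $a$ is étale. For bijectivity, injectivity reduces to $Z_{\check{U}}(\xi)=\{1\}$ (a unipotent group with trivial Lie algebra is trivial), which follows from the same kernel computation; surjectivity holds because for each $\xi$ the orbit $\check{U}\cdot\xi$ is open (étaleness) and closed (orbits of unipotent groups on affine schemes are closed) in the irreducible affine-space fiber $p^{-1}(\xi)$, hence equals it. A bijective étale morphism in characteristic zero is an isomorphism.

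Granting the Claim, the Lemma follows formally just as in Lemma~\ref{Reduction by Restriction}. Let $q\colon \check{M}\times_{\check{\mathfrak{g}}^*}\bigl(\check{\mathfrak{u}}^\perp\times_{\check{\mathfrak{t}}^*}\check{\mathfrak{t}}^*_\gen\bigr)\to\check{M}\tripslash\check{U}\times_{\check{\mathfrak{t}}^*}\check{\mathfrak{t}}^*_\gen$ be the $\check{U}$-torsor quotient map. Form the rectangle whose left square is the base change along $\mathrm{pr}_2$ of the section $\check{\mathfrak{t}}^*_\gen\hookrightarrow\check{\mathfrak{u}}^\perp\times_{\check{\mathfrak{t}}^*}\check{\mathfrak{t}}^*_\gen$ (hence Cartesian, identifying the pullback with $\check{M}\times_{\check{\mathfrak{g}}^*}\check{\mathfrak{t}}^*_\gen$ via $\iota$) and whose right square is the passage to $\check{U}$-quotients of $\mathrm{pr}_2\colon\check{M}\times_{\check{\mathfrak{g}}^*}(\check{\mathfrak{u}}^\perp\times_{\check{\mathfrak{t}}^*}\check{\mathfrak{t}}^*_\gen)\to\check{\mathfrak{u}}^\perp\times_{\check{\mathfrak{t}}^*}\check{\mathfrak{t}}^*_\gen$, a morphism of $\check{U}$-torsors over $\check{\mathfrak{t}}^*_\gen$ (hence Cartesian, using the Claim to identify $(\check{\mathfrak{u}}^\perp\times_{\check{\mathfrak{t}}^*}\check{\mathfrak{t}}^*_\gen)/\check{U}\simeq\check{\mathfrak{t}}^*_\gen$). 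The composite rectangle is Cartesian with bottom edge $\mathrm{id}_{\check{\mathfrak{t}}^*_\gen}$, so its top edge
$$
\check{M}\times_{\check{\mathfrak{g}}^*}\check{\mathfrak{t}}^*_\gen \longrightarrow \check{M}\tripslash\check{U}\times_{\check{\mathfrak{t}}^*}\check{\mathfrak{t}}^*_\gen
$$
is an isomorphism; define $\delta_\gen^{\check{M}}$ to be its inverse, which is manifestly canonical. If $\check{M}$ carries a commuting $\check{H}$-action as in §\ref{spectralactionofthecentralizer}, then each map in this rectangle is $\check{H}$-equivariant (the moment map being $\check{H}$-invariant), so $\delta_\gen^{\check{M}}$ is $\check{H}$-equivariant, exactly as in Lemma~\ref{Reduction by restriction, equivariant}.

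The main obstacle is the Claim. Its one genuine subtlety, relative to the non-degenerate case of Lemma~\ref{Reduction by Restriction}, is that one really must restrict to $\check{\mathfrak{t}}^*_\gen$: at a non-regular $\xi$ the stabilizer $Z_{\check{U}}(\xi)$ is positive-dimensional, so $a$ fails to be an isomorphism, which is precisely why the conclusion is only asserted after base change to $\check{\mathfrak{t}}^*_\gen$. Everything downstream of the Claim is routine bookkeeping with Cartesian squares.
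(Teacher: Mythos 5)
Your proof is correct, and its formal skeleton is the same as the paper's: everything is reduced to the claim that the projection $\check{\mathfrak{u}}^\perp \times_{\check{\mathfrak{t}}^*} \check{\mathfrak{t}}^*_\gen \rightarrow \check{\mathfrak{t}}^*_\gen$ is a $\check{U}$-torsor trivialized by the evident section, after which the two Cartesian squares (base change along the section, and a morphism of $\check{U}$-torsors) give the isomorphism exactly as in Lemma~\ref{Reduction by Restriction}. Where you genuinely diverge is in the proof of that torsor claim. The paper transfers the statement through the $\check{G}$-equivariant identification $\check{\mathfrak{g}} \simeq \check{\mathfrak{g}}^*$ and argues group-theoretically, following Kostant: injectivity of $\check{U} \times \check{\mathfrak{t}}_\gen \rightarrow \check{\mathfrak{b}}_\gen$ because an element of $\check{U}$ conjugating one regular semisimple element of $\check{\mathfrak{t}}$ to another lies in $N(\check{T}) \cap \check{U} = \{1\}$; surjectivity via the Jordan decomposition and conjugacy of Cartan subalgebras; and then ``bijective morphism of smooth varieties is an isomorphism.'' You instead argue infinitesimally and orbit-geometrically: the root-space computation shows the differential is an isomorphism at points $(e,\xi)$ (with the genericity condition entering exactly through $\langle \xi, [X_{\check\alpha}, X_{-\check\alpha}]\rangle \neq 0$), the same computation gives trivial unipotent stabilizers and hence injectivity, and surjectivity follows from Kostant--Rosenlicht closedness of unipotent orbits in the affine fiber plus a dimension count (note that openness of the orbit in the fiber should be obtained by base-changing the étale map $a$ along $\{\xi\} \hookrightarrow \check{\mathfrak{t}}^*_\gen$, as you implicitly do, since the fiber is not open in $\check{\mathfrak{u}}^\perp_\gen$). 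Both routes are valid; yours has the advantage that it is essentially the same étale-plus-surjectivity-plus-stabilizer argument the paper deploys later for the harder Lemma~\ref{Anti-Generic torsor}, so it would unify the two proofs, whereas the paper's argument here avoids any tangent-space computation but is special to the Borel case. Your closing remark on $\check{H}$-equivariance is not part of the stated lemma but is harmless and matches Lemma~\ref{Reduction by restriction, equivariant}.
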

\begin{proof}
    Note that $\check{\mathfrak{t}}^*_\gen \subseteq \check{\mathfrak{g}}^*_{\reg}$. Indeed, under the $\check{G}$-equivariant isomorphism $\check{\mathfrak{g}} \simeq \check{\mathfrak{g}}^*$, the elements of $\check{\mathfrak{t}}^*_\gen$ map to regular semisimple elements of $\check{\mathfrak{g}}$. Let $\check{\mathfrak{u}}_\gen^\perp = \check{\mathfrak{u}}^\perp \times_{\check{\mathfrak{t}}^*} \check{\mathfrak{t}}^*_\gen$ and let 
    $$
    q: \check{\mathfrak{u}}_\gen^\perp \rightarrow \check{\mathfrak{t}}^*_\gen
    $$
    denote the projection to the second factor. Let $p: \check{M} \times_{\check{\mathfrak{g}}^*} \check{\mathfrak{u}}^\perp \rightarrow \check{M}\tripslash \check{U}$ denote the quotient map. It is a $\check{U}$-torsor (recall that $\check{G}$ acts freely on $\check{M}$ by hypothesis). Hence, we obtain a $\check{U}$-torsor
    $$
    p_{\gen}: \check{M} \times_{\check{\mathfrak{g}}^*} \check{\mathfrak{u}}^\perp_\gen \simeq (\check{M} \times_{\check{\mathfrak{g}}^*} \check{\mathfrak{u}}^\perp) \times_{\check{\mathfrak{t}}^*} \check{\mathfrak{t}}^*_\gen \xrightarrow{p} \check{M}\tripslash\check{U} \times_{\check{\mathfrak{t}}^*} \check{\mathfrak{t}}^*_{\gen}. 
    $$
    Moreover, we have a commutative diagram
    $$
    \begin{tikzcd}
    \check{M} \times_{\check{\mathfrak{g}}^*} \check{\mathfrak{t}}^*_\gen \ar[r, hookrightarrow]  \ar[d, "\mathrm{pr}_2"] & \check{M} \times_{\check{\mathfrak{g}}^*} \check{\mathfrak{u}}^\perp_\gen \ar[r,"p_{\gen}"] \ar[d, "\mathrm{pr}_2"] & \check{M}\tripslash\check{U} \times_{\check{\mathfrak{t}}^*} \check{\mathfrak{t}}^*_\gen \ar[d, "\mathrm{pr}_2"] \\
    \check{\mathfrak{t}}^*_\gen \ar[r, hookrightarrow] \ar[rr, "\mathrm{id}_{\check{\mathfrak{t}}_\gen^*}",bend right = 10, swap]& \check{\mathfrak{u}}^\perp_{\gen} \ar[r,"q"]  & \check{\mathfrak{t}}^*_\gen.
    \end{tikzcd}
    $$
    The map $\check{\mathfrak{t}}^*_\gen \hookrightarrow \check{\mathfrak{u}}^\perp_\gen$ is given by pulling back linear forms from $\check{\mathfrak{t}}$ to $\check{\mathfrak{b}}$ and then extending them trivially over $\check{\mathfrak{u}}^-$. We claim that $q$ is a $\check{U}$-torsor. It suffices to show that the action map $a: \check{U} \times \check{\mathfrak{t}}^*_\gen \rightarrow \check{\mathfrak{u}}^\perp_\gen$ is an isomorphism. Under the $\check{G}$-equivariant isomorphism $\check{\mathfrak{g}} \simeq \check{\mathfrak{g}}^*$, the claim translates to the assertion that $\check{U} \times \check{\mathfrak{t}}_\gen \rightarrow \check{\mathfrak{b}}_\gen$ is an isomorphism, where $\check{\mathfrak{b}}_\gen = \check{\mathfrak{b}} \times_{\check{\mathfrak{t}}} \check{\mathfrak{t}}_\gen$ and $\check{\mathfrak{t}}_\gen \subseteq \check{\mathfrak{t}}$ is the open subscheme defined by ${\alpha} \neq 0$ for each root ${\alpha} \in \check{\Phi}$ of $\check{G}$. 
    
    Let $u \in \check{U}$ and $x \in \check{\mathfrak{t}}_\gen$. Suppose that $\mathrm{ad}_u x \in \check{\mathfrak{t}}_\gen$. Then, $u$ conjugates the centralizer $C_{\check{G}}(x)$ to $C_{\check{G}}(\mathrm{ad}_u x)$. Since $x$ and $\mathrm{ad}_u x$ are regular semisimple and belong to the Cartan subalgebra $\check{\mathfrak{t}}$, it follows that $C_{\check{G}}(x) = C_{\check{G}}(\mathrm{ad}_u x) = \check{T}$. Thus, $u$ normalizes $\check{T}$, hence $u \in N(\check{T}) \cap \check{U} = \{1\}$. So, $u = 1$. If $\mathrm{ad}_u x = \mathrm{ad}_{u'} x'$ for $u,u' \in \check{U}$ and $x,x' \in \check{\mathfrak{t}}_\gen$, we deduce that $u = u'$ and $x = x'$. Therefore, $\check{U} \times \check{\mathfrak{t}}_\gen \rightarrow \check{\mathfrak{b}}_\gen$ is injective (on closed points). 

    Let $x \in \check{\mathfrak{b}}_\gen$. We have a Jordan decomposition $x = x_{{ss}} + x_{{n}}$, where $x_{{n}} \in \check{\mathfrak{u}}$. The image of $x$ in $\check{\mathfrak{t}} = \check{\mathfrak{b}}/\check{\mathfrak{u}}$ is equal to that of $x_{{ss}}$, so we conclude that $x_{{ss}}$ is regular semisimple. On the other hand, $[x_{{ss}}, x_{{n}}] = 0$. Therefore, $x_{{n}}$ belongs to the centralizer $\mathfrak{z}_{\check{\mathfrak{g}}}(x_{{ss}})$. Since $x_{{ss}}$ is regular semisimple, it follows that $\mathfrak{z}_{\check{\mathfrak{g}}}(x_{{ss}})$ is a Cartan subalgebra of $\check{\mathfrak{b}}$, hence that $x_{n} = 0$. Furthermore, $\mathfrak{z}_{\check{\mathfrak{g}}}(x_{{ss}})$ maps isomorphically onto $\check{\mathfrak{t}}$, so by Levi's theorem $\mathfrak{z}_{\check{\mathfrak{g}}}(x_{{ss}})$ is $\check{U}$-conjugate to $\check{\mathfrak{t}}$. In particular, $x = x_{{ss}} \in \mathfrak{z}_{\check{\mathfrak{g}}}(x_{{ss}})$ is $\check{U}$-conjugate to an element of $\check{\mathfrak{t}}_\gen$. Therefore, $a: \check{U} \times \check{\mathfrak{t}}_\gen \rightarrow \check{\mathfrak{b}}_\gen$ is surjective (on closed points). Since $a$ is a bijective morphism of smooth varieties (over $\CC$), it is an isomorphism by Zariski's Main Theorem. 
    
    Thus, the square on the right is a morphism of $\check{U}$-torsors, therefore is Cartesian. The square on the left is clearly Cartesian. Hence, the composite square is Cartesian. Therefore, the composition of the top row is an isomorphism $(\delta_\gen^{\check{M}})^{-1}: \check{M} \times_{\check{\mathfrak{g}}^*} \check{\mathfrak{t}}_\gen^* \rightarrow \check{M}\tripslash\check{U} \times_{\check{\mathfrak{t}}^*} \check{\mathfrak{t}}^*_\gen$, as needed. 
\end{proof}

\begin{example}\label{universal kostant-whittaker reduction}
    Consider the example of $\check{M} = T^*\check{G}$. Since $\check{M}\tripslash\check{U} = T^*(\check{G}/\check{U})$, Lemma~\ref{Reduction via Restriction II} provides a canonical $\check{G}$-equivariant $\check{\mathfrak{t}}_\gen^*$-isomorphism 
    \begin{equation}\label{deltacotangentbundledefinition}
    \delta_\gen := \delta_\gen^{T^*\check{G}}: T^*(\check{G}/\check{U}) \times_{\check{\mathfrak{t}}^*} \check{\mathfrak{t}}^*_\gen \simeq T^*\check{G} \times_{\check{\mathfrak{g}}^*} \check{\mathfrak{t}}^*_\gen \simeq (\check{G} \times \check{\mathfrak{g}}^*) \times_{\check{\mathfrak{g}}^*} \check{\mathfrak{t}}^*_\gen = \check{G} \times \check{\mathfrak{t}}^*_\gen. 
    \end{equation}
    Concretely, this means (in particular) that we have given a section to the moment map $\mu_{\mathrm{red}}: T^*(\check{G}/\check{U}) \rightarrow \check{\mathfrak{t}}^*$ over the open subscheme $\check{\mathfrak{t}}^*_\gen \subseteq \check{\mathfrak{t}}^*$ which picks out an element with trivial $\check{G}$-stabilizer.
\end{example}

\begin{remark}\label{Cotangent bundle comparison}
    We can combine Lemma~\ref{Reduction by Restriction} and Lemma~\ref{Reduction via Restriction II} (in the guise of Example~\ref{universal hamiltonian reduction} and Example~\ref{universal kostant-whittaker reduction}) to obtain a $\check{G}$-equivariant (by Lemma~\ref{Reduction by restriction, equivariant}) $\check{\mathfrak{t}}^*_\gen$-isomorphism
    \begin{align}\label{varepsilondefinitionspectral}
    \tilde{\varepsilon}: T^*(\check{G}/\check{U}) \times_{\check{\mathfrak{t}}^*} \check{\mathfrak{t}}^*_\gen &\operatornamewithlimits{\simeq}^{\delta_\gen} \check{G} \times \check{\mathfrak{t}}^*_\gen \\
    &\, = (\check{G} \times \check{\mathfrak{c}}) \times_{\check{\mathfrak{c}}} \check{\mathfrak{t}}^*_\gen \nonumber \\
    &\, \operatornamewithlimits{\simeq}^{\delta_\psi^{-1}} T^*(\check{G}/(\check{U}, \psi)) \times_{\check{\mathfrak{c}}} \check{\mathfrak{t}}^*_\gen. \nonumber 
    \end{align}
    In fact, we can be slightly more precise. In the proof of Lemma~\ref{Reduction via Restriction II}, we defined a morphism 
    $$
    T^*(\check{G}/(\check{U},\psi)) \times_{\check{\mathfrak{c}}} \check{\mathfrak{t}}^* \operatornamewithlimits{\simeq}^{\delta_\psi} (\check{G} \times \check{\mathfrak{c}}) \times_{\check{\mathfrak{c}}} \check{\mathfrak{t}}^* \simeq \check{G} \times \check{\mathfrak{t}}^* \hookrightarrow \check{G} \times \check{\mathfrak{u}}^\perp \twoheadrightarrow (\check{G} \times \check{\mathfrak{u}}^\perp)/\check{U} \simeq T^*(\check{G}/\check{U})
    $$
    whose pullback to $\check{\mathfrak{t}}^*_\gen$ yields the isomorphism \eqref{varepsilondefinitionspectral}. 
    This comparison between $T^*(\check{G}/\check{U})$ and $T^*(\check{G}/(\check{U},\psi))$ over $\check{\mathfrak{t}}^*_\gen$ will facilitate our comparison between the Hamiltonian and Kostant-Whittaker reductions of any Hamiltonian $\check{G}$-variety below (see Proposition~\ref{Generic Comparison}).
\end{remark}

\begin{lemma}\label{Hamiltonian reduction via the cotangent bundle}
    There is a canonical $\check{T}$-equivariant $\check{\mathfrak{t}}^*$-isomorphism
    $$
    \check{M}\tripslash\check{U} \simeq \check{G} \backslash (\check{M} \times_{\check{\mathfrak{g}}^*} T^*(\check{G}/\check{U})).
    $$
    Here, the map $T^*(\check{G}/\check{U}) \rightarrow \check{\mathfrak{g}}^*$ is the moment map for the left $\check{G}$-action, and $T^*(\check{G}/\check{U})$ is viewed as a $\check{\mathfrak{t}}^*$-scheme via the moment map $T^*(\check{G}/\check{U}) \rightarrow \check{\mathfrak{t}}^*$ for the right $\check{T}$-action. Moreover, $\check{G}$ acts diagonally on the product $\check{M} \times_{\check{\mathfrak{g}}^*} T^*(\check{G}/\check{U})$. 
\end{lemma}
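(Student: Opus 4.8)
The plan is to prove the lemma by a direct \emph{shifting} (reduction-in-stages) argument: I would exhibit explicit mutually inverse morphisms between the two sides, using crucially the standing hypothesis that $\check{G}$ acts freely on $\check{M}$, so that every quotient in sight is an honest geometric quotient whose points are as expected, and so that the scheme-level isomorphism can be read off from its effect on points.

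First I would make the right-hand side completely explicit. Recall that $T^*(\check{G}/\check{U})$ is the Hamiltonian reduction of $T^*\check{G}$ by the right-translation action of $\check{U}$ at level $0$, so in a left trivialization $T^*\check{G} \simeq \check{G} \times \check{\mathfrak{g}}^*$ it is the geometric quotient $(\check{G} \times \check{\mathfrak{u}}^\perp)/\check{U}$; the left $\check{G}$-action descends, with moment map $[g,\xi] \mapsto \operatorname{Ad}^*_g \xi$, and the residual right $\check{T} \simeq \check{B}/\check{U}$-action descends with moment map induced by $\check{G} \times \check{\mathfrak{u}}^\perp \to \check{\mathfrak{u}}^\perp \twoheadrightarrow \check{\mathfrak{t}}^*$, the last arrow being restriction to $\check{\mathfrak{t}} \subseteq \check{\mathfrak{g}}/\check{\mathfrak{u}}$ coming from the triangular decomposition (this is the map called $\mu_{\mathrm{red}}$ elsewhere). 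Hence, up to the sign conventions fixed in this section,
\[
\check{M} \times_{\check{\mathfrak{g}}^*} T^*(\check{G}/\check{U}) = \bigl\{ (m, [g,\xi]) : \mu(m) = \operatorname{Ad}^*_g \xi \bigr\},
\]
with $\check{G}$ acting diagonally by $h \cdot (m, [g,\xi]) = (h\cdot m, [hg, \xi])$.

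Next I would define the comparison isomorphism. One map sends the class of a point $m \in \check{M} \times_{\check{\mathfrak{g}}^*} \check{\mathfrak{u}}^\perp$ (i.e.\ $m \in \check{M}$ with $\mu(m) \in \check{\mathfrak{u}}^\perp$) to the class of $(m, [1, \mu(m)])$; I would check independence of the $\check{U}$-orbit of $m$ by transporting $(u\cdot m, [1,\operatorname{Ad}^*_u\mu(m)])$ by $u^{-1} \in \check{G}$ and absorbing the resulting unipotent back into the $\check{U}$-quotient defining $T^*(\check{G}/\check{U})$. The inverse map sends the class of $(m, [g,\xi])$ to the class of $g^{-1}\cdot m$: this lands in $\check{M} \tripslash \check{U}$ because $\mu(g^{-1}\cdot m) = \operatorname{Ad}^*_{g^{-1}}\mu(m) = \xi \in \check{\mathfrak{u}}^\perp$, it is $\check{G}$-invariant by construction, and it is invariant under the $\check{U}$-action defining the source, hence descends. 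That these two maps are mutually inverse is then immediate from the geometric content: because the $\check{G}$-action is free, every point of $\check{M} \times_{\check{\mathfrak{g}}^*} T^*(\check{G}/\check{U})$ is carried by a unique element of $\check{G}$ into the slice $\{g = 1\}$, which is precisely $\check{M} \times_{\check{\mathfrak{g}}^*} \check{\mathfrak{u}}^\perp$, so passing to $\check{G}$-orbits returns $\check{M}\tripslash\check{U}$. I would phrase both maps directly as morphisms of schemes, their mutual inverseness following from the universal property of the free geometric quotient.

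Finally I would verify the $\check{T}$-equivariance and the compatibility with the maps to $\check{\mathfrak{t}}^*$. On the left the $\check{T}$-action is the reduced Hamiltonian one; on the right it is induced by the residual right $\check{T}$-action on $T^*(\check{G}/\check{U})$, which commutes with the left $\check{G}$-action and therefore descends to the $\check{G}$-quotient (and its moment map is left $\check{G}$-invariant, so it too descends). Under the explicit description above, both actions act through the action of $\check{T}$ on $\check{\mathfrak{u}}^\perp$ in the same way, on $\xi$ on one side and on $\mu(m)$ on the other, so the reduced moment maps agree. I expect the only real friction to be clerical, namely fixing the left/right translation and moment-map sign conventions for $T^*\check{G}$ consistently throughout; the essential point is just the freeness of the $\check{G}$-action, which trivializes the slicing.
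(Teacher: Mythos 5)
Your proposal is correct and follows essentially the paper's own route: the paper implements the same untwisting via the single $\check{G}\times\check{U}$-equivariant automorphism $\iota(m,g,\chi)=(g^{-1}m,g,\chi)$ of $\check{M}\times_{\check{\mathfrak{g}}^*}(\check{G}\times\check{\mathfrak{u}}^\perp)$ (with the two different structure maps to $\check{\mathfrak{g}}^*$) and then forms the $\check{G}$- and $\check{U}$-quotients in the two possible orders, which is precisely your pair of mutually inverse maps $m\mapsto(m,[1,\mu(m)])$ and $(m,[g,\xi])\mapsto g^{-1}\cdot m$. The only slight imprecision is your heuristic that inverseness follows from freeness of $\check{G}$ on $\check{M}$ together with a unique group element moving each point into the slice $\{g=1\}$ (that element is unique only modulo $\check{U}$, and the relevant freeness is that of translation on the $\check{G}$-factor rather than on $\check{M}$), but this is harmless since your explicit formulas are well defined and the two composites can be checked directly.
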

\begin{proof}
    Consider the isomorphism
    $$
    \iota: \check{M} \times_{\check{\mathfrak{g}}^*} (\check{G} \times \check{\mathfrak{u}}^\perp) \simeq \check{M} \times_{\check{\mathfrak{g}}^*} (\check{G} \times \check{\mathfrak{u}}^\perp)
    $$
    given by $\iota(m,g,\chi) = (g^{-1}m, g, \chi)$. On the left, the structure map $\check{G} \times \check{\mathfrak{u}}^\perp \rightarrow \check{\mathfrak{g}}^*$ is the action map, whereas on the right we take the structure map $\check{G} \times \check{\mathfrak{u}}^\perp \rightarrow \check{\mathfrak{g}}^*$ to be the composition of the projection to the second factor followed by the inclusion $\check{\mathfrak{u}}^\perp \hookrightarrow \check{\mathfrak{g}}^*$. 
    
    Let $\check{G}$ act on the left hand side diagonally and on the right hand side by $h \cdot (m,g,\chi)= (m,hg,\chi)$. Then, $\iota$ is $\check{G}$-equivariant. Let $\check{U}$ act on the left hand side by $u \cdot (m, g, \chi) = (m, gu^{-1}, u \cdot \chi)$ (where $u \cdot \chi$ is the coadjoint action) and on the right hand side by $u \cdot (m, g, \chi) = (um, gu^{-1}, u \cdot \chi)$. Then, $\iota$ is $\check{U}$-equivariant. Therefore, we obtain an isomorphism $\overline{\iota}$ on $\check{G} \times \check{U}$-quotients:
    $$
    \overline{\iota}: \check{G} \backslash (\check{M} \times_{\check{\mathfrak{g}}^*} (\check{G} \times \check{\mathfrak{u}}^\perp))/\check{U} \simeq \check{G} \backslash (\check{M} \times_{\check{\mathfrak{g}}^*} (\check{G} \times \check{\mathfrak{u}}^\perp))/\check{U}.
    $$
    On the left hand side, we quotient by $\check{U}$ and then $\check{G}$ to obtain an isomorphism
    $$
    \check{G} \backslash (\check{M} \times_{\check{\mathfrak{g}}^*} (\check{G} \times \check{\mathfrak{u}}^\perp))/\check{U} \simeq \check{G} \backslash (\check{M} \times_{\check{\mathfrak{g}}^*} T^*(\check{G}/\check{U})). 
    $$
    On the right hand side, we quotient by $\check{G}$ and then $\check{U}$ to obtain an isomorphism 
    $$
    \check{G} \backslash (\check{M} \times_{\check{\mathfrak{g}}^*} (\check{G} \times \check{\mathfrak{u}}^\perp))/\check{U} \simeq (\check{M} \times_{\check{\mathfrak{g}}^*} \check{\mathfrak{u}}^\perp)/\check{U} = \check{M}\tripslash\check{U}.  
    $$
    Putting it all together constructs the desired isomorphism. 
\end{proof}

The proof of the following lemma is identical to that of Lemma~\ref{Hamiltonian reduction via the cotangent bundle}. 

\begin{lemma}\label{Kostant-Whittaker reduction via the cotangent bundle}
    There is a canonical $\mathfrak{J}$-equivariant isomorphism of $\check{\mathfrak{c}}$-schemes
    $$
    \check{M}\tripslash(\check{U},\psi) \simeq \check{G} \backslash (\check{M} \times_{\check{\mathfrak{g}}^*} T^*(\check{G}/(\check{U}, \psi))).
    $$
    Here, the map $T^*(\check{G}/(\check{U}, \psi)) \rightarrow \check{\mathfrak{g}}^*$ is the moment map for the left $\check{G}$-action, and the cotangent bundle $T^*(\check{G}/(\check{U},\psi))$ is viewed as a $\check{\mathfrak{c}}$-scheme via the natural projection 
    $$
    T^*(\check{G}/(\check{U},\psi)) = (\check{G} \times (\check{\mathfrak{u}}^\perp + \psi))/\check{U} \rightarrow (\check{\mathfrak{u}}^\perp + \psi)/\check{U} \simeq \check{\mathfrak{c}}. 
    $$
    Moreover, $\check{G}$ acts diagonally on the product $\check{M} \times_{\check{\mathfrak{g}}^*} T^*(\check{G}/(\check{U},\psi))$. 
\end{lemma}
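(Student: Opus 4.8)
The proof follows the pattern of Lemma~\ref{Hamiltonian reduction via the cotangent bundle} verbatim, replacing the linear subspace $\check{\mathfrak{u}}^\perp \subseteq \check{\mathfrak{g}}^*$ everywhere by the affine subspace $\check{\mathfrak{u}}^\perp + \psi$ (which is again $\check{U}$-stable, since $\psi \in \check{\mathfrak{u}}^*$ is fixed by the coadjoint $\check{U}$-action). First I would introduce the isomorphism
$$
\iota: \check{M} \times_{\check{\mathfrak{g}}^*} \bigl(\check{G} \times (\check{\mathfrak{u}}^\perp + \psi)\bigr) \xrightarrow{\ \sim\ } \check{M} \times_{\check{\mathfrak{g}}^*} \bigl(\check{G} \times (\check{\mathfrak{u}}^\perp + \psi)\bigr), \qquad \iota(m,g,\chi) = (g^{-1}m, g, \chi),
$$
where on the source the structure map $\check{G} \times (\check{\mathfrak{u}}^\perp + \psi) \to \check{\mathfrak{g}}^*$ is the $\check{G}$-action map $(g,\chi) \mapsto g\cdot\chi$ and on the target it is the second projection followed by the inclusion $\check{\mathfrak{u}}^\perp + \psi \hookrightarrow \check{\mathfrak{g}}^*$. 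As in the earlier lemma, $\iota$ is equivariant for a diagonal $\check{G}$-action on the source and the $\check{G}$-action translating only the $\check{G}$-factor on the target, and for the $\check{U}$-actions $u\cdot(m,g,\chi) = (m, gu^{-1}, u\cdot\chi)$ (source) and $u\cdot(m,g,\chi) = (um, gu^{-1}, u\cdot\chi)$ (target).

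Next I would pass to $\check{G} \times \check{U}$-quotients. On the target, quotienting by $\check{G}$ first trivializes the $\check{G}$-factor and identifies the quotient with $\check{M} \times_{\check{\mathfrak{g}}^*} (\check{\mathfrak{u}}^\perp + \psi)$, whose further $\check{U}$-quotient is $\check{M}\tripslash(\check{U},\psi)$ by definition. On the source, quotienting by $\check{U}$ first turns $\check{G} \times (\check{\mathfrak{u}}^\perp + \psi)$ into the Kostant--Whittaker reduction $T^*(\check{G}/(\check{U},\psi))$, with structure map to $\check{\mathfrak{g}}^*$ the left-translation moment map, and the subsequent quotient by the diagonal $\check{G}$ gives $\check{G}\backslash(\check{M} \times_{\check{\mathfrak{g}}^*} T^*(\check{G}/(\check{U},\psi)))$; this yields the asserted isomorphism of schemes. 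Compatibility with the projections to $\check{\mathfrak{c}}$ is then immediate: on $\check{M}\tripslash(\check{U},\psi)$ the map $\chi_{\check{M}}$ of \eqref{Characteristic polynomial} and the $\check{\mathfrak{c}}$-structure of $T^*(\check{G}/(\check{U},\psi))$ are both induced by $\check{\mathfrak{u}}^\perp + \psi \to (\check{\mathfrak{u}}^\perp + \psi)/\check{U} \simeq \check{\mathfrak{c}}$, and $\iota$ fixes the $(\check{\mathfrak{u}}^\perp + \psi)$-coordinate.

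The one point requiring genuine care is the $\mathfrak{J}$-equivariance, which I would deduce by reducing both $\mathfrak{J}$-actions to the tautological one via the Kostant-slice description. Applying $\delta_\psi^{\check{M}}$ of Lemma~\ref{Reduction by Restriction} and $\delta_\psi$ of Example~\ref{universal hamiltonian reduction} identifies $\check{M}\tripslash(\check{U},\psi) \simeq \check{M} \times_{\check{\mathfrak{g}}^*,\kappa} \check{\mathfrak{c}}$ and $T^*(\check{G}/(\check{U},\psi)) \simeq T^*\check{G} \times_{\check{\mathfrak{g}}^*,\kappa} \check{\mathfrak{c}}$, on which the regular centralizer $\mathfrak{J} \simeq \mathfrak{I}_{\mathrm{univ}} \times_{\check{\mathfrak{g}}^*,\kappa}\check{\mathfrak{c}}$ of \eqref{Regular Centralizer as Restriction} acts through the coadjoint-centralizer action on $\check{M}$, respectively on $T^*\check{G}$ through left translation. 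Under these identifications the isomorphism of the lemma becomes the evident one $\check{G}\backslash\bigl(\check{M} \times_{\check{\mathfrak{g}}^*,\kappa}(T^*\check{G} \times_{\check{\mathfrak{g}}^*,\kappa}\check{\mathfrak{c}})\bigr) \simeq \check{M} \times_{\check{\mathfrak{g}}^*,\kappa}\check{\mathfrak{c}}$ (the $T^*\check{G}$-factor being a $\check{G}$-torsor over $\check{\mathfrak{g}}^*$), which manifestly intertwines the two $\mathfrak{I}_{\mathrm{univ}}$-actions since $\iota$ leaves the $\check{G}$-factor's left-translation structure untouched and the centralizer actions are defined fiberwise over $\check{\mathfrak{c}}$ through the common moment-map fiber. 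The main — though entirely routine — obstacle is keeping the conventions straight (left versus right translation on $T^*\check{G}$, and the exact $\check{U}$-action on each factor); once those are pinned down, the argument is a transcription of Lemma~\ref{Hamiltonian reduction via the cotangent bundle} together with the functoriality of $\delta_\psi^{(-)}$ in the Hamiltonian $\check{G}$-scheme.
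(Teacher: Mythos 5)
Your proposal is correct and follows the same route as the paper, which proves this lemma simply by declaring its proof identical to that of Lemma~\ref{Hamiltonian reduction via the cotangent bundle}, i.e.\ by transcribing the $\iota$-argument with $\check{\mathfrak{u}}^\perp$ replaced by the $\check{U}$-stable affine slice $\check{\mathfrak{u}}^\perp + \psi$. Your extra verification of the $\check{\mathfrak{c}}$-structure and of $\mathfrak{J}$-equivariance via $\delta_\psi^{\check{M}}$ and $\delta_\psi$ is exactly the routine check the paper leaves implicit (the $\mathfrak{J}$-actions being defined through the Kostant slice in §\ref{spectralactionofthecentralizer}), and it is carried out correctly.
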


\begin{remark}
    It is not really necessary to use Lemma \ref{Hamiltonian reduction via the cotangent bundle} or Lemma \ref{Kostant-Whittaker reduction via the cotangent bundle} in the following Proposition~\ref{Generic Comparison}. However, we have done so anyway, in order to emphasize the point of view that $T^*(\check{G}/\check{U})$ (resp. $T^*(\check{G}/(\check{U}, \psi)$) is in a sense the ``universal'' example of a Hamilonian (resp. Kostant-Whittaker) reduction of a Hamiltonian $\check{G}$-variety by the action of $\check{U}$. More concretely, we have opted to structure our arguments in a manner which we believe can be adapted to treat the ``quantum version'' of our problem (in which the role of the Poisson algebra $\mathcal{O}(\check{M})$ is taken by a suitable algebra of microlocal differential operators on $\check{M}$). 
\end{remark}

\begin{proposition}\label{Generic Comparison}
    There is a canonical $\check{{T}}_{\check{\mathfrak{t}}^*_\gen}$-equivariant morphism of $\check{\mathfrak{t}}^*$-schemes
    $$
    \eta_{I-\gen}: \check{M}\tripslash(\check{U},\psi) \times_{\check{\mathfrak{c}}} \check{\mathfrak{t}}^* \rightarrow \check{M}\tripslash\check{U} 
    $$
    whose pullback $\eta_{I-\gen} \times_{\check{\mathfrak{t}}^*} \check{\mathfrak{t}}^*_\gen$ to the generic locus yields an isomorphism of $\check{\mathfrak{t}}^*$-schemes
    $$
    \eta_{I-\gen} \times_{\check{\mathfrak{t}}^*} \check{\mathfrak{t}}^*_\gen: (\check{M}\tripslash(\check{U},\psi) \times_{\check{\mathfrak{c}}} \check{\mathfrak{t}}^*) \times_{\check{\mathfrak{t}}^*} \check{\mathfrak{t}}^*_\gen \simeq \check{M}\tripslash\check{U} \times_{\check{\mathfrak{t}}^*} \check{\mathfrak{t}}^*_{\gen}.  
    $$
\end{proposition}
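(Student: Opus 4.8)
The plan is to induce $\eta_{I-\gen}$ from the morphism of Remark~\ref{Cotangent bundle comparison} by expressing both sides through the ``universal'' cotangent-bundle models of Lemma~\ref{Hamiltonian reduction via the cotangent bundle} and Lemma~\ref{Kostant-Whittaker reduction via the cotangent bundle}. First I would invoke Lemma~\ref{Kostant-Whittaker reduction via the cotangent bundle} to identify, $\mathfrak{J}$-equivariantly and over $\check{\mathfrak{c}}$, the Kostant--Whittaker reduction $\check{M}\tripslash(\check{U},\psi)$ with $\check{G}\backslash(\check{M}\times_{\check{\mathfrak{g}}^*}T^*(\check{G}/(\check{U},\psi)))$. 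Base change along the projection $\check{\mathfrak{t}}^*\to\check{\mathfrak{c}}$ commutes with the $\check{G}$-quotient and with the fiber product over $\check{\mathfrak{g}}^*$, so this yields
\[
\check{M}\tripslash(\check{U},\psi)\times_{\check{\mathfrak{c}}}\check{\mathfrak{t}}^* \;\simeq\; \check{G}\backslash\bigl(\check{M}\times_{\check{\mathfrak{g}}^*}(T^*(\check{G}/(\check{U},\psi))\times_{\check{\mathfrak{c}}}\check{\mathfrak{t}}^*)\bigr).
\]
Dually, Lemma~\ref{Hamiltonian reduction via the cotangent bundle} identifies $\check{M}\tripslash\check{U}$ with $\check{G}\backslash(\check{M}\times_{\check{\mathfrak{g}}^*}T^*(\check{G}/\check{U}))$ as $\check{T}_{\check{\mathfrak{t}}^*}$-equivariant $\check{\mathfrak{t}}^*$-schemes. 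I would then define $\eta_{I-\gen}$ by applying the functor $\check{G}\backslash(\check{M}\times_{\check{\mathfrak{g}}^*}-)$ to the $\check{G}$-equivariant morphism $T^*(\check{G}/(\check{U},\psi))\times_{\check{\mathfrak{c}}}\check{\mathfrak{t}}^*\to T^*(\check{G}/\check{U})$ of Remark~\ref{Cotangent bundle comparison}, and composing with the two displayed identifications.

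The resulting $\eta_{I-\gen}$ is then a morphism of $\check{\mathfrak{t}}^*$-schemes, and restricts over $\check{\mathfrak{t}}^*_\gen$ to a $\check{T}_{\check{\mathfrak{t}}^*_\gen}$-equivariant morphism, provided one checks that the $\check{\mathfrak{t}}^*$-structures and the group-scheme actions match those on the cotangent-bundle models under Lemmas~\ref{Hamiltonian reduction via the cotangent bundle} and \ref{Kostant-Whittaker reduction via the cotangent bundle}. For $\check{\mathfrak{t}}^*$-linearity this comes down to the explicit description in Remark~\ref{Cotangent bundle comparison} of the comparison map as $\check{G}\times\check{\mathfrak{t}}^*\hookrightarrow\check{G}\times\check{\mathfrak{u}}^\perp\twoheadrightarrow T^*(\check{G}/\check{U})$, whose composition with the right-$\check{T}$ moment map $T^*(\check{G}/\check{U})\to\check{\mathfrak{t}}^*$ is the tautological projection to $\check{\mathfrak{t}}^*$; for equivariance one uses that over $\check{\mathfrak{t}}^*_\gen$ the regular centralizer restricts to the constant torus, $\mathfrak{J}\times_{\check{\mathfrak{c}}}\check{\mathfrak{t}}^*_\gen\simeq\check{T}_{\check{\mathfrak{t}}^*_\gen}$ (the coadjoint centralizer of a regular semisimple element being a maximal torus, canonically identified with $\check{T}$ through the slice), under which the $\mathfrak{J}$-action on the source goes over to the right-$\check{T}$-action carried by the target via Lemma~\ref{Hamiltonian reduction via the cotangent bundle}.

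Finally, that $\eta_{I-\gen}\times_{\check{\mathfrak{t}}^*}\check{\mathfrak{t}}^*_\gen$ is an isomorphism is immediate: over $\check{\mathfrak{t}}^*_\gen$ the morphism of Remark~\ref{Cotangent bundle comparison} is by construction the isomorphism $\tilde{\varepsilon}$ of \eqref{varepsilondefinitionspectral}, and $\check{G}\backslash(\check{M}\times_{\check{\mathfrak{g}}^*}-)$ carries isomorphisms to isomorphisms. I expect the main obstacle to be the equivariance bookkeeping of the previous paragraph --- in particular making the identification $\mathfrak{J}\times_{\check{\mathfrak{c}}}\check{\mathfrak{t}}^*_\gen\simeq\check{T}_{\check{\mathfrak{t}}^*_\gen}$ precise and tracing it, together with the left-$\check{G}$ and right-$\check{T}$ actions, through the chain of fiber products and quotients; everything else reduces to formal properties of base change and of the ``universal'' reductions already established.
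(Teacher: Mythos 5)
Your proposal follows essentially the same route as the paper's own proof: both express $\check{M}\tripslash(\check{U},\psi)$ and $\check{M}\tripslash\check{U}$ via Lemma~\ref{Kostant-Whittaker reduction via the cotangent bundle} and Lemma~\ref{Hamiltonian reduction via the cotangent bundle} as $\check{G}$-quotients of $\check{M}\times_{\check{\mathfrak{g}}^*}(-)$ applied to the universal reductions, commute the base change to $\check{\mathfrak{t}}^*$ through these identifications, and then apply the morphism of Remark~\ref{Cotangent bundle comparison}, whose restriction to $\check{\mathfrak{t}}^*_\gen$ is the isomorphism $\tilde{\varepsilon}$ of \eqref{varepsilondefinitionspectral}. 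The equivariance bookkeeping you flag (matching the $\mathfrak{J}$- and $\check{T}_{\check{\mathfrak{t}}^*}$-structures through the quotients) is exactly what the paper also relies on, so your argument is correct as written.
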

\begin{proof}
    In Lemma~\ref{Hamiltonian reduction via the cotangent bundle} and Lemma~\ref{Kostant-Whittaker reduction via the cotangent bundle}, we have constructed isomorphisms
    \begin{align*}
     \check{M}\tripslash(\check{U},\psi) &\simeq \check{G} \backslash(\check{M} \times_{\check{\mathfrak{g}}^*} T^*(\check{G}/(\check{U},\psi))), \\
     \check{M}\tripslash\check{U} &\simeq \check{G} \backslash (\check{M} \times_{\check{\mathfrak{g}}^*} T^*(\check{G}/\check{U})).
    \end{align*}
    The first isomorphism is $\mathfrak{J}$-equivariant and the second is $\check{T}$-equivariant. Hence, we have a $\check{T}_{\check{\mathfrak{t}}^*_\gen}$-equivariant isomorphism of $\check{\mathfrak{t}}^*_\gen$-schemes 
    \begin{align*}
        (\check{M}\tripslash(\check{U},\psi) \times_{\check{\mathfrak{c}}} \check{\mathfrak{t}}^*) \times_{\check{\mathfrak{t}}^*} \check{\mathfrak{t}}^*_\gen &\simeq \left(\check{G} \backslash \left(\check{M} \times_{\check{\mathfrak{g}}^*} T^*(\check{G}/(\check{U},\psi)) \right) \right) \times_{\check{\mathfrak{t}}^*} \check{\mathfrak{t}}^*_\gen \\
        &\simeq \check{G} \backslash  \left(\check{M} \times_{\check{\mathfrak{g}}^*} T^*(\check{G}/(\check{U},\psi)) \times_{\check{\mathfrak{c}}} \check{\mathfrak{t}}^*_\gen \right)\\
        &\simeq \check{G} \backslash \left(\check{M} \times_{\check{\mathfrak{g}}^*} \left( \left(T^*(\check{G}/(\check{U},\psi)) \times_{\check{\mathfrak{c}}} \check{\mathfrak{t}}^*_\gen \right) \right) \right).
     \end{align*}
     Similarly, we have a $\check{T}_{\check{\mathfrak{t}}^*_\gen}$-equivariant isomorphism of $\check{\mathfrak{t}}^*_\gen$-schemes 
     \begin{align*}
     \check{M}\tripslash\check{U} \times_{\check{\mathfrak{t}}^*} \check{\mathfrak{t}}^*_\gen &\simeq \check{G} \backslash (\check{M} \times_{\check{\mathfrak{g}}^*} T^*(\check{G}/\check{U})) \times_{\check{\mathfrak{t}}^*} \check{\mathfrak{t}}^*_\gen \\
     &\simeq \check{G} \backslash (\check{M} \times_{\check{\mathfrak{g}}^*} (T^*(\check{G}/\check{U}) \times_{\check{\mathfrak{t}}^*} \check{\mathfrak{t}}^*_\gen)).
    \end{align*}
    Therefore, it suffices to exhibit a $\check{G}$-equivariant $ \check{T}_{\check{\mathfrak{t}}^*}$-equivariant morphism of $\check{\mathfrak{g}}^* \times \check{\mathfrak{t}}^*$-schemes 
    $$
    T^*(\check{G}/(\check{U}, \psi)) \times_{\check{\mathfrak{c}}} \check{\mathfrak{t}}^* \rightarrow T^*(\check{G}/\check{U})
    $$
    whose pullback to $\check{\mathfrak{t}}^*_\gen$ is an isomorphism
    $$
    (T^*(\check{G}/(\check{U}, \psi)) \times_{\check{\mathfrak{c}}} \check{\mathfrak{t}}^*) \times_{\check{\mathfrak{t}}^*} \check{\mathfrak{t}}^*_\gen \simeq T^*(\check{G}/\check{U}) \times_{\check{\mathfrak{t}}^*} \check{\mathfrak{t}}^*_\gen. 
    $$
    We did this in Remark~\ref{Cotangent bundle comparison}. 
\end{proof}
For $I \subseteq \Delta$, we define the open subscheme $\check{\mathfrak{t}}^*_{I-\gen}$ of ``$I$-generic'' elements by the non-vanishing of the roots $\check{\alpha} \in \check{\Phi}_I$. Since $\check{\mathfrak{t}}^*_{I-\gen}$ is $W_I$-stable, it descends to an open subscheme $\mathfrak{\check{c}}_{I-\gen} \subseteq \mathfrak{\check{c}}_I$. Note that the map $\check{\mathfrak{t}}^*_{I-\gen} \rightarrow \check{\mathfrak{c}}_I$ is a $W_I$-torsor over $\check{\mathfrak{c}}_{I-\gen}$. Recall that $\mathfrak{J}_I$ denotes the regular centralizer over $\check{\mathfrak{c}}_I$ associated to $\check{L}_I$.  

We have the following generalization of Proposition~\ref{Generic Comparison}. 

\begin{corollary}\label{Generic Comparison Intermediate}
    Let $I \subseteq \Delta$. There is a canonical $\check{T}_{\check{\mathfrak{t}}^*}$-equivariant morphism of $\check{\mathfrak{t}}^*_{I-\gen}$-schemes
    $$
    \eta_{I-\gen}: \check{M}\tripslash(\check{U},\psi_I) \times_{\check{\mathfrak{c}}_I} \check{\mathfrak{t}}^* \rightarrow  \check{M}\tripslash \check{U}
    $$
    whose pullback to $\check{\mathfrak{t}}^*_{I-\gen}$ yields an isomorphism
    $$
    \eta_{I-\gen} \times_{\check{\mathfrak{t}}} \check{\mathfrak{t}}^*_{I-\gen}: \left(\check{M}\tripslash(\check{U},\psi_I) \times_{\check{\mathfrak{c}}_I} \check{\mathfrak{t}}^* \right) \times_{\check{\mathfrak{t}}^*} \check{\mathfrak{t}}^*_{I-\gen} \simeq \check{M}\tripslash\check{U} \times_{\check{\mathfrak{t}}^*} \check{\mathfrak{t}}^*_{I-\gen}. 
    $$
\end{corollary}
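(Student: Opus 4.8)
The plan is to deduce Corollary~\ref{Generic Comparison Intermediate} from Proposition~\ref{Generic Comparison} applied not to $\check{M}$ itself but to the intermediate Hamiltonian variety $\check{N} := \check{M}\tripslash\check{V}_I$, regarded as a Hamiltonian $\check{L}_I$-scheme via its moment map $\check{N} \to \check{\mathfrak{l}}_I^*$. The two inputs are: first, Lemma~\ref{Reduction in Steps}, which supplies the canonical isomorphism $\check{M}\tripslash(\check{U},\psi_I) \simeq \check{N}\tripslash(\check{U}_I, \overline{\psi}_I)$ identifying the partial Kostant--Whittaker reduction of $\check{M}$ with the \emph{non-degenerate} Kostant--Whittaker reduction of $\check{N}$ relative to $\check{L}_I$, compatibly with the projections to $\check{\mathfrak{c}}_I$; and second, the ``reduction in stages'' isomorphism $\check{M}\tripslash\check{U} \simeq \check{N}\tripslash\check{U}_I$, which is the trivial-character specialization of the chain of isomorphisms~\eqref{Iterated reduction computation} (using $\check{U} = \check{V}_I \rtimes \check{U}_I$ and the equality $\check{\mathfrak{u}}^\perp = \check{\mathfrak{v}}_I^\perp \times_{\check{\mathfrak{l}}_I^*} \check{\mathfrak{u}}_I^\perp$), and which is $\check{T}$-equivariant and compatible with the reduced moment maps to $\check{\mathfrak{t}}^*$.

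Granting these, I would first check that $\check{N}$ satisfies the standing hypotheses of Section~\ref{Hamiltonian Reduction} needed to feed it into Proposition~\ref{Generic Comparison}: it should be a smooth quasi-affine Hamiltonian $\check{L}_I$-scheme on which $\check{L}_I$ acts freely. Smoothness and freeness are straightforward: $\check{V}_I$ acts freely on $\check{M}$ (being a subgroup of the freely acting $\check{G}$), so $\check{N}$ is a smooth geometric quotient; and if $g \in \check{L}_I$ stabilizes a point of $\check{N}$, then some translate $v^{-1}g$ with $v \in \check{V}_I$ stabilizes a point of $\check{M}$, forcing $g = v \in \check{L}_I \cap \check{V}_I = \{1\}$, so that $\check{L}_I$ acts freely, equivalently $\check{N}$ is smooth over $\check{\mathfrak{l}}_I^*$. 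Quasi-affineness of $\check{N}$ follows from that of $\check{M} \times_{\check{\mathfrak{g}}^*}\check{\mathfrak{v}}_I^\perp$ together with standard properties of geometric quotients by free unipotent actions.

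With the hypotheses in hand, Proposition~\ref{Generic Comparison} applied with $\check{G},\psi,\check{\mathfrak{c}},\check{\mathfrak{t}}^*_\gen$ replaced by $\check{L}_I,\overline{\psi}_I,\check{\mathfrak{c}}_I,\check{\mathfrak{t}}^*_{I-\gen}$ (note that the maximal torus of $\check{L}_I$ is still $\check{T}$, so the relevant Cartan is $\check{\mathfrak{t}}^*$ and its generic locus for $\check{L}_I$ is exactly $\check{\mathfrak{t}}^*_{I-\gen}$, and that $\overline{\psi}_I$ is non-degenerate on $\check{U}_I$) produces a canonical $\check{T}_{\check{\mathfrak{t}}^*}$-equivariant morphism $\check{N}\tripslash(\check{U}_I,\overline{\psi}_I)\times_{\check{\mathfrak{c}}_I}\check{\mathfrak{t}}^* \to \check{N}\tripslash\check{U}_I$ of $\check{\mathfrak{t}}^*$-schemes that restricts to an isomorphism over $\check{\mathfrak{t}}^*_{I-\gen}$. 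Transporting this morphism along the two identifications above yields $\eta_{I-\gen}$, and the compatibility of those identifications with the $\check{\mathfrak{t}}^*$-scheme structures — coming from the commuting square of Lemma~\ref{Reduction in Steps}, the compatibility of $\pi_I\colon \check{\mathfrak{c}}_I \to \check{\mathfrak{c}}$ with the two routes $\check{\mathfrak{t}}^* \to \check{\mathfrak{c}}$, and the matching of reduced moment maps for $\check{N}\tripslash\check{U}_I$ and $\check{M}\tripslash\check{U}$ — shows $\eta_{I-\gen}$ is a morphism over $\check{\mathfrak{t}}^*$ with the asserted genericity property. I expect the only real friction to be the bookkeeping of this last compatibility and the verification that $\check{N}$ is quasi-affine and smooth over $\check{\mathfrak{l}}_I^*$; everything else is a direct substitution into the already-established $\check{L}_I$-analogue of Proposition~\ref{Generic Comparison}.
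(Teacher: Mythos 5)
Your proposal is correct and follows essentially the same route as the paper: apply Proposition~\ref{Generic Comparison} to the Hamiltonian $\check{L}_I$-space $\check{M}\tripslash\check{V}_I$ and transport the result along the reduction-in-stages identifications of Lemma~\ref{Reduction in Steps} (and its trivial-character analogue giving $(\check{M}\tripslash\check{V}_I)\tripslash\check{U}_I \simeq \check{M}\tripslash\check{U}$). Your extra verification that $\check{M}\tripslash\check{V}_I$ is smooth, quasi-affine, and acted on freely by $\check{L}_I$ is a point the paper leaves implicit, and it is a welcome addition rather than a divergence.
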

\begin{proof}
    In Lemma~\ref{Reduction in Steps}, we produced an isomorphism 
    $$
    \check{M}\tripslash(\check{U},\psi_I) \simeq (\check{M}\tripslash \check{V}_I)\tripslash (\check{U}_I, \overline{\psi}_I). 
    $$
    Applying Proposition~\ref{Generic Comparison} to the Hamiltonian $\check{L}_I$-space $\check{M}\tripslash\check{V}_I$, we obtain a $\check{T}_{\check{\mathfrak{t}}^*}$-equivariant morphism 
    \begin{equation}\label{Generic Comparison Intermediate Eq1}
        \check{M} \tripslash (\check{U}, \psi_I) \times_{\check{\mathfrak{c}}_I} \check{\mathfrak{t}}^* \simeq (\check{M}\tripslash \check{V}_I)\tripslash (\check{U}_I, \overline{\psi}_I) \times_{\check{\mathfrak{c}}_I} \check{\mathfrak{t}}^* \rightarrow (\check{M}\tripslash\check{V}_I) \tripslash \check{U}_I
    \end{equation}
    which induces an isomorphism 
    \begin{equation}\label{Generic Comparison Intermediate Eq2}
         \left(\check{M}\tripslash(\check{U},\psi_I) \times_{\check{\mathfrak{c}}} \check{\mathfrak{t}}^* \right) \times_{\check{\mathfrak{t}}^*} \check{\mathfrak{t}}^*_{I-\gen} \simeq (\check{M}\tripslash\check{V}_I) \tripslash \check{U}_I \times_{\check{\mathfrak{t}}^*} \check{\mathfrak{t}}^*_\gen. 
    \end{equation}
    By Lemma~\ref{Reduction in Steps}, we obtain an isomorphism $(\check{M}\tripslash\check{V}_I)\tripslash\check{U}_I \simeq \check{M}\tripslash\check{U}$. Inserting this isomorphism into \ref{Generic Comparison Intermediate Eq1} and \ref{Generic Comparison Intermediate Eq2}, we obtain the desired morphism 
    $$
    \eta_{I-\gen}: \left( \check{M}\tripslash(\check{U},\psi_I) \times_{\check{\mathfrak{c}}} \check{\mathfrak{t}}^* \right) \times_{\check{\mathfrak{t}}^*} \check{\mathfrak{t}}^*_{I-\gen} \rightarrow \check{M}\tripslash\check{U} \times_{\check{\mathfrak{t}}^*} \check{\mathfrak{t}}^*_{I-\gen}. 
    $$
\end{proof}
\subsection{Anti-generic comparison of Hamiltonian and Kostant-Whittaker reductions} 

We have studied the partial Kostant-Whittaker reduction $\check{M}\tripslash (\check{U},\psi_I)$ over the open locus $\check{\mathfrak{c}}_{I-\gen}$. We'll next restrict to the open subscheme $\check{\mathfrak{c}}_{I-\gen}^{-} \subseteq \check{\mathfrak{c}}_I$ defined by the non-vanishing $\check{\alpha} \neq 0$ for each root $\check{\alpha} \in \check{\Phi} \setminus \check{\Phi}_{I}$. More precisely, we define $\check{\mathfrak{t}}^{*-}_{I-\gen}$ to be the open subscheme of $\check{\mathfrak{t}}^*$ defined by the non-vanishing $\check{\alpha} \neq 0$ for each $\check{\alpha} \in \check{\Phi} \setminus \check{{\Phi}}_I$. Since $\check{{\Phi}} \setminus \check{{\Phi}}_I$ is $W_I$-stable, it follows that $\check{\mathfrak{t}}^{*-}_{I}$ is $W_I$-stable, hence descends to an open subscheme $\check{\mathfrak{c}}_{I-\gen}^- \subseteq \check{\mathfrak{c}}_{I}$. We refer to $\check{\mathfrak{t}}^{*-}_{I-\gen}$ and $\check{\mathfrak{c}}_{I-\gen}^-$ as the \textit{$I$-anti-generic} loci. 

\begin{remark}
    Note that $\check{\mathfrak{t}}_{I-\gen}^{*-} \neq \check{\mathfrak{t}}_{I^\circ-\gen}^*$, where $I^\circ = \Delta \setminus I$: that is, we are going to avoid all of the ``walls'' in the root system of $\check{G}$ associated to roots of $\check{G}$ that are \textit{not} roots of $\check{L}_I$, even if they are not sums of simple roots in $I^\circ$. For example, consider the case $\check{G} = \mathrm{SL}_3$ with its standard Levi subgroup $\check{L}_I = \mathrm{S}(\mathrm{GL}_2 \times \mathbb{G}_m)$. Let $\lambda_1 - \lambda_2$, $\lambda_2 - \lambda_3$, and $\lambda_1 - \lambda_3$ denote the three positive roots. With this notation, $I = \{\lambda_1 - \lambda_2\}$. Therefore, $\check{\mathfrak{t}}_{I-\gen}^{*-} \subseteq \mathbb{A}^2$ is the open locus complementary to the lines $\lambda_2 = \lambda_3$ and $\lambda_1 = \lambda_3$. On the other hand, $\check{\mathfrak{t}}_{I^\circ-\gen}^*$ is the complement of the single line $\lambda_2 = \lambda_3$. 
\end{remark}

\begin{remark}\label{restriction map to levi}
    Consider the affine subspace $\check{\mathfrak{u}}^\perp + \psi_I \subseteq \check{\mathfrak{g}}^*$. It consists of all linear forms on $\check{\mathfrak{g}}$ which restrict to $\psi_I$ on $\check{\mathfrak{u}}$. Since $\psi_I$ vanishes on $\check{\mathfrak{v}}_I$, the same is true of any element of $\check{\mathfrak{u}}^\perp + \psi_I$. Therefore, regarding $(\check{\mathfrak{p}}_I/\check{\mathfrak{v}}_I)^*$ as the space of linear forms on the parabolic subalgebra $\check{\mathfrak{p}}_I$ which vanish on $\check{\mathfrak{v}}_I$, we have a restriction map
    $$
    r_I: \check{\mathfrak{u}}^\perp + \psi_I \rightarrow (\check{\mathfrak{p}}/\check{\mathfrak{v}})^* \simeq \check{\mathfrak{l}}^*. 
    $$
    Its image is exactly $\check{\mathfrak{u}}_I + \overline{\psi}_I$. 
\end{remark}

\begin{lemma}\label{Anti-Generic torsor}
    Let $I \subseteq \Delta$. Regard $\check{\mathfrak{u}}^\perp + \psi_I$ as a $\check{\mathfrak{c}}_I$-scheme via the composition of the restriction map $r_I: \check{\mathfrak{u}}^\perp + \psi_I \rightarrow \check{\mathfrak{l}}_I^*$ (see Remark~\ref{restriction map to levi}) with the characteristic polynomial map $\chi_I: \check{\mathfrak{l}}^*_I \rightarrow \check{\mathfrak{c}}_I$. Then, the natural projection
    \begin{equation}\label{pi2definition}
    \pi_2: (\check{\mathfrak{u}}^\perp + \psi_I) \times_{\check{\mathfrak{c}}_I} \check{\mathfrak{c}}_{I-\gen}^- \rightarrow \check{\mathfrak{c}}_{I-\gen}^-
    \end{equation}
    is a $\check{U}$-torsor. 
\end{lemma}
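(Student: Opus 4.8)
The plan is to exploit the factorization $\check{U} = \check{V}_I \rtimes \check{U}_I$ (with $\check{V}_I$ normal, being the unipotent radical of the parabolic $\check{P}_I \supseteq \check{U}$, and $\check{U}/\check{V}_I = \check{U}_I$) and to exhibit $\pi_2$ as an iterated torsor: first a $\check{V}_I$-torsor, then a $\check{U}_I$-torsor. Concretely, I would factor $\pi_2$ through the restriction map as
$$(\check{\mathfrak{u}}^\perp + \psi_I) \times_{\check{\mathfrak{c}}_I} \check{\mathfrak{c}}_{I-\gen}^- \xrightarrow{\;r_I\;} (\check{\mathfrak{u}}_I^\perp + \overline{\psi}_I) \times_{\check{\mathfrak{c}}_I} \check{\mathfrak{c}}_{I-\gen}^- \xrightarrow{\;\chi_I\;} \check{\mathfrak{c}}_{I-\gen}^-,$$
using the decomposition $\check{\mathfrak{u}}^\perp + \psi_I = \check{\mathfrak{v}}_I^\perp \times_{\check{\mathfrak{l}}_I^*} (\check{\mathfrak{u}}_I^\perp + \overline{\psi}_I)$ recorded in the construction preceding Lemma~\ref{Reduction in Steps}, so that $r_I$ is literally the projection onto the second factor, base-changed to $\check{\mathfrak{c}}_{I-\gen}^-$. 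One checks routinely that the coadjoint $\check{U}$-action preserves $\check{\mathfrak{u}}^\perp + \psi_I$ (since $\psi_I$ is $\check{U}$-invariant), that $\check{V}_I = \ker(\check{U} \to \check{U}_I)$ acts trivially on the second factor, and that $r_I$ is $\check{U}$-equivariant for the coadjoint $\check{U}_I$-action downstairs. Granting these compatibilities, it is a formal matter that an iterated $(\check{V}_I,\check{U}_I)$-torsor compatible with the ambient $\check{U}$-action is a $\check{U}$-torsor, so the lemma reduces to two claims.

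The $\check{U}_I$-factor is handled by Kostant's theorem: applied to the reductive group $\check{L}_I$ with its non-degenerate character $\overline{\psi}_I$, \cite[Theorem 3.2.2]{Ri17} (invoked in exactly this way in the proof of Lemma~\ref{Reduction by Restriction}) shows that $\chi_I \colon \check{\mathfrak{u}}_I^\perp + \overline{\psi}_I \to \check{\mathfrak{c}}_I$ is a $\check{U}_I$-torsor, and pulling back along the open immersion $\check{\mathfrak{c}}_{I-\gen}^- \hookrightarrow \check{\mathfrak{c}}_I$ gives the second arrow above over the whole anti-generic locus. So the real content is the first claim: that $r_I$ is a $\check{V}_I$-torsor over $\check{\mathfrak{c}}_{I-\gen}^-$. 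I would prove this fiberwise. Via the $\check{G}$-equivariant identification $\check{\mathfrak{g}} \simeq \check{\mathfrak{g}}^*$ from the pinning, a point $\eta \in \check{\mathfrak{u}}_I^\perp + \overline{\psi}_I$ corresponds to an element $y \in \check{\mathfrak{l}}_I$, the fiber $r_I^{-1}(\eta)$ becomes the coset $y + \check{\mathfrak{v}}_I \subseteq \check{\mathfrak{p}}_I$ (an affine space of dimension $\dim \check{\mathfrak{v}}_I$), and $\check{V}_I$ acts on it by the adjoint action. Consider the orbit map $o_y \colon \check{V}_I \to y + \check{\mathfrak{v}}_I$, $v \mapsto \mathrm{Ad}_v y$; its differential at the identity is $X \mapsto [X,y]$, with kernel $\check{\mathfrak{v}}_I \cap \mathfrak{z}_{\check{\mathfrak{g}}}(y)$.

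The heart of the argument is that this kernel vanishes whenever $\chi_I(\eta) \in \check{\mathfrak{c}}_{I-\gen}^-$. Writing the Jordan decomposition $y = y_{\mathrm{ss}} + y_{\mathrm{n}}$ inside $\check{\mathfrak{l}}_I$ and taking $y_{\mathrm{ss}}$ to be $\check{L}_I$-conjugate to $t \in \check{\mathfrak{t}}$ (whose $W_I$-orbit is the semisimple part of $\chi_I(\eta)$), the anti-genericity hypothesis says precisely $\check{\alpha}(t) \neq 0$ for $\check{\alpha} \in \check{\Phi} \setminus \check{\Phi}_I$, hence $\{\check{\alpha} \in \check{\Phi} : \check{\alpha}(t) = 0\} \subseteq \check{\Phi}_I$, hence $\mathfrak{z}_{\check{\mathfrak{g}}}(t) = \check{\mathfrak{t}} \oplus \bigoplus_{\check{\alpha}(t)=0} \check{\mathfrak{g}}_{\check{\alpha}} \subseteq \check{\mathfrak{l}}_I$, hence $\mathfrak{z}_{\check{\mathfrak{g}}}(y) \subseteq \mathfrak{z}_{\check{\mathfrak{g}}}(y_{\mathrm{ss}}) \subseteq \check{\mathfrak{l}}_I$, so $\check{\mathfrak{v}}_I \cap \mathfrak{z}_{\check{\mathfrak{g}}}(y) \subseteq \check{\mathfrak{v}}_I \cap \check{\mathfrak{l}}_I = 0$. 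By $\check{V}_I$-equivariance the differential of $o_y$ is then an isomorphism everywhere, so $o_y$ is étale; it is injective because $Z_{\check{V}_I}(y)$ is a closed subgroup of the connected unipotent group $\check{V}_I$ with trivial Lie algebra, hence trivial; an injective étale morphism is an open immersion, so $o_y(\check{V}_I)$ is a $\check{V}_I$-orbit in the irreducible affine space $y + \check{\mathfrak{v}}_I$, and since orbits of unipotent groups on affine varieties are closed it is everything. Thus $\check{V}_I$ acts simply transitively on each fiber of $r_I$; together with the faithful flatness of $r_I$ (a base change of the linear surjection $\check{\mathfrak{v}}_I^\perp \to \check{\mathfrak{l}}_I^*$), the standard fiberwise criterion identifies $r_I$ as a $\check{V}_I$-torsor, completing the reduction. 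The main obstacle is exactly this vanishing of $\check{\mathfrak{v}}_I \cap \mathfrak{z}_{\check{\mathfrak{g}}}(y)$, and it is here that restricting to the \emph{anti-generic} locus is essential rather than cosmetic: already for $\check{G} = \mathrm{SL}_2$ and $I = \emptyset$ (so $\check{V}_I = \check{U}$, $\psi_I = 0$), the restriction map $\check{\mathfrak{u}}^\perp \to \check{\mathfrak{t}}^*$ is a $\check{U}$-torsor over $\check{\mathfrak{t}}^* \setminus \{0\} = \check{\mathfrak{c}}_{\emptyset-\gen}^-$ but degenerates over the origin. Everything else — the bookkeeping of the two commuting pieces of the $\check{U}$-action and the assembly of the iterated torsor — is routine.
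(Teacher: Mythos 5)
Your proposal is correct, and its skeleton is the same as the paper's: factor $\pi_2$ through $r_I$ using $\check{\mathfrak{u}}^\perp+\psi_I = \check{\mathfrak{v}}_I^\perp\times_{\check{\mathfrak{l}}_I^*}(\check{\mathfrak{u}}_I^\perp+\overline{\psi}_I)$, dispose of the $\check{U}_I$-part by Kostant's theorem for $\check{L}_I$, and reduce the $\check{V}_I$-part, via the Killing-form identification of $\check{\mathfrak{v}}_I^\perp\to\check{\mathfrak{l}}_I^*$ with $\check{\mathfrak{p}}_I\to\check{\mathfrak{l}}_I$, to the statement that anti-genericity forces $\mathfrak{z}_{\check{\mathfrak{g}}}(y)\subseteq\check{\mathfrak{l}}_I$ (your Jordan-decomposition computation is exactly the centralizer argument used in the paper's injectivity step for $\mathrm{ad}_x$ on $\check{\mathfrak{v}}_I$). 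Where you diverge is in how the $\check{V}_I$-torsor property is assembled from this: the paper works in family, showing the action map $a\colon(\check{V}_I\times\check{\mathfrak{l}}_I)\times_{\check{\mathfrak{c}}_I}\check{\mathfrak{c}}_{I-\gen}^-\to\check{\mathfrak{p}}_I\times_{\check{\mathfrak{c}}_I}\check{\mathfrak{c}}_{I-\gen}^-$ is \'etale, then surjective by a separate argument (conjugating an arbitrary element of $\check{\mathfrak{p}}_I$ over the anti-generic locus into $\check{\mathfrak{l}}_I$, again via Jordan decomposition), and finally an isomorphism because its fibers are connected (Zariski's Main Theorem); you instead argue fiberwise, showing each orbit map $\check{V}_I\to y+\check{\mathfrak{v}}_I$ is an injective \'etale map whose image is a closed unipotent orbit (Kostant--Rosenlicht) in an irreducible affine space, hence an isomorphism, and then globalize by the fiberwise-isomorphism criterion for flat, finitely presented morphisms. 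Your route trades the paper's surjectivity-by-conjugation and connected-fibers steps for the closedness of unipotent orbits plus the fiberwise criterion; both are standard inputs, and your version is arguably slightly leaner, though it would be worth citing the globalization step precisely (a bijection on fibers alone is not enough --- you need that the orbit maps are scheme-theoretic isomorphisms, which you do establish, together with flatness of both sides of $\check{V}_I\times X\to X\times_Y X$ over $X$).
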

\begin{proof}
    We can factor the morphism $r_I: \check{\mathfrak{u}}^\perp + \psi_I \rightarrow \check{\mathfrak{l}}_I^*$ as a surjection $\alpha: \check{\mathfrak{u}}^\perp + \psi_I \twoheadrightarrow \check{\mathfrak{u}}_I^\perp + \overline{\psi}_I$ followed by the natural inclusion $\check{\mathfrak{u}}_I^\perp + \overline{\psi}_I \hookrightarrow \check{\mathfrak{l}}_I^*$. Therefore, we have the following commutative diagram.
    $$
    \begin{tikzcd}
        \check{\mathfrak{u}}^\perp + \psi_I \ar[d, "r_I",swap] \ar[r, "\alpha", twoheadrightarrow] & \check{\mathfrak{u}}^\perp_I + \overline{\psi}_I \ar[d, "\chi_I"] \\
        \check{\mathfrak{l}}^*_I \ar[r, "\chi_I"] & \check{\mathfrak{c}}_I
    \end{tikzcd}
    $$    
    We regard $\check{\mathfrak{u}}^\perp_I + \overline{\psi}_I $ as a $\check{\mathfrak{c}}_I$-scheme via $\chi_I: \check{\mathfrak{u}}^\perp_I + \overline{\psi}_I \rightarrow \check{\mathfrak{c}}_I$, so that, by the commutative diagram above, $\alpha$ is a morphism of $\check{\mathfrak{c}}_I$-schemes. We now pull $\alpha$ back to the open subscheme $\check{\mathfrak{c}}_{I-\gen}^- \subseteq \check{\mathfrak{c}}_I$ to obtain a commutative diagram
    $$
    \begin{tikzcd}
        (\check{\mathfrak{u}}^\perp + \psi_I) \times_{\check{\mathfrak{c}}_I} \check{\mathfrak{c}}_{I-\gen}^- \ar[rd, "\pi_2",swap] \ar[rr, "\alpha'"] & &(\check{\mathfrak{u}}_I^\perp + \overline{\psi}_I) \times_{\check{\mathfrak{c}}_I} \check{\mathfrak{c}}_{I-\gen}^- \ar[ld, "\pi_2'"] \\
        & \check{\mathfrak{c}}_{I-\gen}^-.
    \end{tikzcd}
    $$
    Note that $\alpha'$ is $\check{V}_I$-invariant, where $\check{V}_I$ acts through its coadjoint action on the first factor. It therefore suffices to show that (\textit{i}) $\pi_2'$ is a $\check{U}_I = \check{U}/\check{V}_I$-torsor and that (\textit{ii}) $ \alpha'$ is a $\check{V}_I$-torsor. 

    (\textit{i}) The morphism $\pi_2'$ is a base change of the characteristic polynomial $\chi_I: \check{\mathfrak{u}}_I + \overline{\psi}_I \rightarrow \check{\mathfrak{c}}_{I}$, which is a $\check{U}_I$-torsor by Kostant's theorem \cite[Theorem 3.2.2]{Ri17} applied to $\check{\mathfrak{l}}_I$.

    (\textit{ii}) Recall that $\check{\mathfrak{v}}_I^\perp$ denotes the space of linear forms on $\check{\mathfrak{g}}$ vanishing on $\check{\mathfrak{v}}_I = \mathrm{Lie}(\check{V}_I)$. We have a Cartesian diagram
    $$
    \begin{tikzcd}
        (\check{\mathfrak{u}}^\perp + \psi_I) \times_{\check{\mathfrak{c}}_I} \check{\mathfrak{c}}_{I-\gen}^- \ar[r, "\alpha'"] \ar[d, hookrightarrow] &(\check{\mathfrak{u}}_I^\perp + \overline{\psi}_I) \times_{\check{\mathfrak{c}}_I} \check{\mathfrak{c}}_{I-\gen}^- \ar[d,hookrightarrow] \\
        \check{\mathfrak{v}}_I^\perp \times_{\check{\mathfrak{c}}_I} \check{\mathfrak{c}}_{I-\gen}^- \ar[r,"r_I",swap] & \check{\mathfrak{l}}_I^* \times_{\check{\mathfrak{c}}_I} \check{\mathfrak{c}}_{I-\gen}^-.
    \end{tikzcd}
    $$
    Therefore, it suffices to show that the bottom arrow is a $\check{V}_I$-torsor. The $\check{G}$-invariant isomorphisms $\check{\mathfrak{g}}^* \simeq \check{\mathfrak{g}}$ and $\check{\mathfrak{l}}_I^* \simeq \check{\mathfrak{l}}_I$ induce a commutative diagram
    $$
    \begin{tikzcd}
        \check{\mathfrak{v}}_I^\perp \times_{\check{\mathfrak{c}}_I} \check{\mathfrak{c}}_{I-\gen}^-  \ar[d,"\sim"]
        \ar[r,"r_I"] & \check{\mathfrak{l}}_I^* \times_{\check{\mathfrak{c}}_I} \check{\mathfrak{c}}_{I-\gen}^- \ar[d, "\sim"]\\
        \check{\mathfrak{p}}_I \times_{\check{\mathfrak{c}}_I} \check{\mathfrak{c}}_{I,I-\gen}^-  \ar[r,"p_I",swap] & \check{\mathfrak{l}}_I \times_{\check{\mathfrak{c}}_I} \check{\mathfrak{c}}_{I-\gen}^-,
    \end{tikzcd}
    $$
    where $p_I$ is induced by the projection $\check{\mathfrak{p}}_I \twoheadrightarrow \check{\mathfrak{l}}_I$. To show that $p_I$ is a $\check{V}_I$-torsor, we will show that the action map 
    $$
    a: (\check{V}_I \times \check{\mathfrak{l}}_I) \times_{\check{\mathfrak{c}}_I} \check{\mathfrak{c}}_{I-\gen}^- \rightarrow \check{\mathfrak{p}}_I \times_{\check{\mathfrak{c}}_I} \check{\mathfrak{c}}_{I-\gen}^-
    $$
    is an isomorphism. We claim that $a$ is \'etale. Since $a$ is a morphism of smooth varieties, it suffices to check that for any $(v,x) \in (\check{V}_I \times \check{\mathfrak{l}}_I) \times_{\check{\mathfrak{c}}_I} \check{\mathfrak{c}}_{I-\gen}^-$, 
     $$
     da_{(v,x)}: T_{(v,x)}((\check{V}_I \times \check{\mathfrak{l}}_I) \times_{\check{\mathfrak{c}}_I} \check{\mathfrak{c}}_{I-\gen}^-) \rightarrow T_{v \cdot x}(\check{\mathfrak{p}}_I \times_{\check{\mathfrak{c}_I}} \check{\mathfrak{c}}_{I-\gen}^-)
     $$
     is an isomorphism. Since $a$ is $\check{V}_I$-equivariant, it suffices to check the claim when $v = 1$. At such a point, the differential
     $$
     da_{(1,x)}: \check{\mathfrak{v}}_I \oplus \check{\mathfrak{l}}_I \rightarrow \check{\mathfrak{p}}_I
     $$
     is given by
     $$
     da_{(1,x)}(\xi, \zeta) = [\xi, x] + \zeta. 
     $$
     Since the domain and codomain of $da_{(1,x)}$ have the same dimension, it suffices to show surjectivity. The image of $da_{(1,x)}$ is $\mathrm{ad}_x(\check{\mathfrak{v}}_I) + \check{\mathfrak{l}}_I \subseteq \check{\mathfrak{p}}_I$. Since $\check{\mathfrak{p}}_I = \check{\mathfrak{v}}_I + \check{\mathfrak{l}}_I$, it suffices to show that 
     $$
     \mathrm{ad}_x: \check{\mathfrak{v}}_I \rightarrow \check{\mathfrak{v}}_I
     $$
     is surjective. It therefore suffices to demonstrate the injectivity of $\mathrm{ad}_x$. Let $\xi \in \check{\mathfrak{v}}_I$ and suppose that $[\xi, x] = 0$. Let $x = x_{n} + x_{{ss}}$ denote the Jordan decomposition of $x$. We have $[\xi, x_{{ss}}] = [\xi, x_{{n}}] = 0$. Since $x \in \check{\mathfrak{l}}_I$, it follows that $x_{{ss}}, x_{{n}} \in \check{\mathfrak{l}}_I$. Note that $x_{{ss}}$ has the same image in $\check{\mathfrak{c}}_I$ as $x$, so it follows that $x_{{ss}} \in \check{\mathfrak{p}}_I \times_{\check{\mathfrak{c}}_I} \check{\mathfrak{c}}_{I-\gen}^-$ as well. Since $x_{{ss}}$ is semisimple, $\mathrm{ad}_{x_{{ss}}}$ is a semisimple endomorphism of $\check{\mathfrak{v}}_I$. Furthermore, the fact that $x_{{ss}} \in \check{\mathfrak{p}}_I \times_{\check{\mathfrak{c}}_I} \check{\mathfrak{c}}_{I-\gen}^-$ implies that $x_{{ss}}$ acts on $\check{\mathfrak{v}}_I$ with non-zero eigenvalues (the weights of $\check{\mathfrak{v}}_I$ are contained in ${\Phi} \setminus {\Phi}_I$). Hence, the centralizer of $x_{{ss}}$ in $\check{\mathfrak{v}}_I$ is trivial (it is the zero eigenspace of $\mathrm{ad}_{x_{{ss}}}$), and it follows that $\xi = 0$.

     Next, we assert that the action map $a$ is surjective. We must show that any element of $\check{\mathfrak{p}}_I \times_{\check{\mathfrak{c}}_I} \check{\mathfrak{c}}_{I-\gen}^-$ is conjugate under $\check{V}_I$ to an element of $\check{\mathfrak{l}}_I \times_{\check{\mathfrak{c}}_I} \check{\mathfrak{c}}_{I-\gen}^-$. Since $\check{\mathfrak{l}}_I \times_{\check{\mathfrak{c}}_I} \check{\mathfrak{c}}_{I-\gen}^-$ is $\check{L}_I$-stable and $\check{P}_I = \check{V}_I \rtimes \check{L}_I$, it suffices to show that any element of $\check{\mathfrak{p}}_I \times_{\check{\mathfrak{c}}_I} \check{\mathfrak{c}}_{I-\gen}^-$ is conjugate under $\check{P}_I$ to an element of $\check{\mathfrak{l}}_I$. Let $x \in \check{\mathfrak{p}}_I \times_{\check{\mathfrak{c}}_I} \check{\mathfrak{c}}_{I-\gen}^-$. Let $x = x_{{n}} + x_{{ss}}$ denote the Jordan decomposition. Note that $x_{ss} \in \check{\mathfrak{p}}_I \times_{\check{\mathfrak{c}}_I} \check{\mathfrak{c}}_{I-\gen}^-$. After conjugating by $\check{P}_I$, we can assume that $x_{ss} \in \check{\mathfrak{t}}$ and in particular $x_{ss} \in \check{\mathfrak{l}}_I$. The zero eigenspace of $\mathrm{ad}_{x_{ss}}$ on $\check{\mathfrak{u}}$ is the sum of the positive root spaces $\check{\mathfrak{g}}_\alpha$ such that $\alpha(x_{ss}) = 0$. Since $\alpha(x_{ss}) \neq 0$ for every $\alpha \not\in \Phi_{I}$, we deduce that the centralizer of $x_{ss}$ in $\check{\mathfrak{u}}$ is contained in the sum of the positive root spaces $\check{\mathfrak{g}}_\alpha$ such that $\alpha \in {\Phi}_I$, i.e. the subspace $\check{\mathfrak{u}}_I$. Since $[x_n, x_{ss}] = 0$, we conclude that $x_n \in \check{\mathfrak{u}}_I \subseteq \check{\mathfrak{l}}_I$. Therefore, $x \in \check{\mathfrak{l}}_I$ and we are done.

     Therefore, $a$ is an \'etale cover. To show that $a$ is an isomorphism, it suffices to show that it has connected (closed) fibers. Let $x \in \check{\mathfrak{p}}_I$. The fiber $a^{-1}(x)$ is equipped with a natural projection $q_x: a^{-1}(x) \rightarrow \check{\mathfrak{l}}_I$. Since $\check{\mathfrak{l}}_I$ is connected, it suffices to show that the fiber of $q_x$ over any element $y \in \check{\mathfrak{l}}_I$ is connected. Let $v \in \check{V}_I$ denote an element satisfying $a(v,y) = x$ (which exists by the surjectivity of $a$). We have $q_x^{-1}(y) \simeq v\mathrm{Stab}_{\check{V}_I}(y)$. Since $\mathrm{Stab}_{\check{V}_I}(y)$ is a closed subgroup of $\check{V}_I$, it is unipotent, therefore connected, as needed. 
\end{proof}

\begin{construction}\label{partialkostantslice}
    In light of Lemma~\ref{Anti-Generic torsor}, it is natural to introduce a ``partial Kostant section'' 
    $$
    \sigma_I: \check{\mathfrak{c}}_{I} \rightarrow \check{\mathfrak{g}}^*
    $$
    which trivializes the torsor $\pi_2$ of \eqref{pi2definition}. More precisely, we want $\sigma_I$ to factor through $\check{\mathfrak{u}}^\perp + \psi_I$ and for its pullback
    $$
    \sigma_I \times_{\check{\mathfrak{c}}_I} \check{\mathfrak{c}}_{I-\gen}^-: \check{\mathfrak{c}}_{I-\gen}^- \rightarrow (\check{\mathfrak{u}}^\perp + \psi_I)  \times_{\check{\mathfrak{c}}_I} \check{\mathfrak{c}}_{I-\gen}^-
    $$ 
    to be a section to $\pi_2$. Let $\kappa_I: \check{\mathfrak{c}}_I \rightarrow \check{\mathfrak{l}}_I^*$ denote the Kostant section associated to $\check{\mathfrak{l}}_I$. We define $\sigma_I$ to be the composition of $\kappa_I$ with the inclusion $\check{\mathfrak{l}}_I^* \hookrightarrow \check{\mathfrak{g}}^*$ given by pulling back a linear form on $\check{\mathfrak{l}}_I$ to $\check{\mathfrak{p}}_I$ and extending it by zero on $\check{\mathfrak{v}}_I^-$. Note that when $I = \emptyset$, $\sigma_I: \check{\mathfrak{t}}^* \hookrightarrow \check{\mathfrak{g}}^*$ is just the inclusion of $\check{\mathfrak{t}}^*$ into $\check{\mathfrak{g}}^*$ that has already appeared, and when $I = \Delta$, $\sigma_I = \kappa_I = \kappa$ is just the usual Kostant section $\kappa: \check{\mathfrak{c}} \rightarrow \check{\mathfrak{g}}^*$. 
\end{construction}

\begin{lemma}\label{Reduction by Restriction III}
    There is a canonical $\mathfrak{J} \times_{\check{\mathfrak{c}}_I} \check{\mathfrak{c}}_{I-\gen}^-$-equivariant isomorphism of $\check{\mathfrak{c}}_{I-\gen}^-$-schemes
    $$
    \check{M}\tripslash(\check{U},\psi_I) \times_{\check{\mathfrak{c}}_I} \check{\mathfrak{c}}^-_{I-\gen}  \simeq \check{M} \times_{\check{\mathfrak{g}}^*,\sigma_I} \check{\mathfrak{c}}_{I-\gen}^-.  
    $$
\end{lemma}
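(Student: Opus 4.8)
The plan is to mimic the proof of Lemma~\ref{Reduction by Restriction}, replacing the appeal to Kostant's theorem (that $\chi\colon \check{\mathfrak{u}}^\perp+\psi\to\check{\mathfrak{c}}$ is a $\check{U}$-torsor) with Lemma~\ref{Anti-Generic torsor} (that $\pi_2\colon (\check{\mathfrak{u}}^\perp+\psi_I)\times_{\check{\mathfrak{c}}_I}\check{\mathfrak{c}}^-_{I-\gen}\to\check{\mathfrak{c}}^-_{I-\gen}$ is a $\check{U}$-torsor). First I would record that, by definition, $\check{M}\tripslash(\check{U},\psi_I)=(\check{M}\times_{\check{\mathfrak{g}}^*}(\check{\mathfrak{u}}^\perp+\psi_I))/\check{U}$, that the quotient map $q$ is a $\check{U}$-torsor since $\check{G}$ (hence $\check{U}$) acts freely on $\check{M}$, and that the structure map $\check{M}\tripslash(\check{U},\psi_I)\to\check{\mathfrak{c}}_I$ is induced by $\chi_I\circ r_I\colon \check{\mathfrak{u}}^\perp+\psi_I\to\check{\mathfrak{c}}_I$ (Remark~\ref{restriction map to levi}). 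Base-changing $q$ along $\check{\mathfrak{c}}^-_{I-\gen}\hookrightarrow\check{\mathfrak{c}}_I$ then yields a $\check{U}$-torsor $q_{I-\gen}\colon \check{M}\times_{\check{\mathfrak{g}}^*}\bigl((\check{\mathfrak{u}}^\perp+\psi_I)\times_{\check{\mathfrak{c}}_I}\check{\mathfrak{c}}^-_{I-\gen}\bigr)\to\check{M}\tripslash(\check{U},\psi_I)\times_{\check{\mathfrak{c}}_I}\check{\mathfrak{c}}^-_{I-\gen}$. Next I would supply the one small verification that Construction~\ref{partialkostantslice} leaves implicit: that $\sigma_I$ restricts to a section of $\pi_2$. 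Indeed, since $\kappa_I$ lands in $\check{\mathfrak{u}}_I^\perp+\overline{\psi}_I$ by Kostant's theorem for $\check{\mathfrak{l}}_I$, the form $\sigma_I(c)$ vanishes on $\check{\mathfrak{v}}_I$ (it is pulled back from $\check{\mathfrak{l}}_I=\check{\mathfrak{p}}_I/\check{\mathfrak{v}}_I$) and on $\check{\mathfrak{v}}_I^-$ (extended by zero) and restricts to $\psi_I$ on $\check{\mathfrak{u}}=\check{\mathfrak{u}}_I\oplus\check{\mathfrak{v}}_I$, so $\sigma_I(c)\in\check{\mathfrak{u}}^\perp+\psi_I$, and $\chi_I(r_I(\sigma_I(c)))=\chi_I(\kappa_I(c))=c$.

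With these in hand the isomorphism is a Cartesian-square chase exactly parallel to Lemma~\ref{Reduction by Restriction}. I would consider the diagram
$$
\begin{tikzcd}
\check{M}\times_{\check{\mathfrak{g}}^*,\sigma_I}\check{\mathfrak{c}}^-_{I-\gen} \ar[r]\ar[d,"\mathrm{pr}_2",swap] & \check{M}\times_{\check{\mathfrak{g}}^*}\bigl((\check{\mathfrak{u}}^\perp+\psi_I)\times_{\check{\mathfrak{c}}_I}\check{\mathfrak{c}}^-_{I-\gen}\bigr) \ar[d,"\mathrm{pr}_2"]\ar[r,"q_{I-\gen}"] & \check{M}\tripslash(\check{U},\psi_I)\times_{\check{\mathfrak{c}}_I}\check{\mathfrak{c}}^-_{I-\gen} \ar[d,"\chi_{\check{M}}"] \\
\check{\mathfrak{c}}^-_{I-\gen} \ar[r,"\sigma_I"]\ar[rr,"\mathrm{id}",bend right=12,swap] & (\check{\mathfrak{u}}^\perp+\psi_I)\times_{\check{\mathfrak{c}}_I}\check{\mathfrak{c}}^-_{I-\gen} \ar[r,"\pi_2"] & \check{\mathfrak{c}}^-_{I-\gen}.
\end{tikzcd}
$$
The right square is $\check{U}$-equivariant and both of its vertical maps are $\check{U}$-torsors, hence it is Cartesian; the left square is visibly a pullback of $\sigma_I$, hence Cartesian; so the outer rectangle is Cartesian, and since its bottom row $\pi_2\circ\sigma_I=\mathrm{id}$, the top composite $\check{M}\times_{\check{\mathfrak{g}}^*,\sigma_I}\check{\mathfrak{c}}^-_{I-\gen}\to\check{M}\tripslash(\check{U},\psi_I)\times_{\check{\mathfrak{c}}_I}\check{\mathfrak{c}}^-_{I-\gen}$ is an isomorphism. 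Inverting it gives the asserted isomorphism of $\check{\mathfrak{c}}^-_{I-\gen}$-schemes. (Alternatively one could deduce this by combining Lemma~\ref{Reduction in Steps} with Lemma~\ref{Reduction by Restriction} applied to $(\check{L}_I,\check{M}\tripslash\check{V}_I)$ and then trivializing the residual $\check{V}_I$-quotient over the $I$-anti-generic locus using part (ii) of the proof of Lemma~\ref{Anti-Generic torsor}; the two routes are interchangeable.)

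For the equivariance, I would transport the action of the universal centralizer $\mathfrak{I}_{\mathrm{univ}}$ on $\check{M}$ (over $\check{\mathfrak{g}}^*$) through the Cartesian squares above, exactly as in Lemma~\ref{Reduction by restriction, equivariant}. The one new input is that over the anti-generic locus $\sigma_I$ takes values in $\check{\mathfrak{g}}^*_{\reg}$: under $\check{\mathfrak{g}}\simeq\check{\mathfrak{g}}^*$ the element $\sigma_I(c)$ corresponds to $\kappa_I(c)\in\check{\mathfrak{l}}_I\subseteq\check{\mathfrak{g}}$, regular in $\check{\mathfrak{l}}_I$; writing its Jordan decomposition $\kappa_I(c)=x_{ss}+x_n$ with $x_{ss},x_n\in\check{\mathfrak{l}}_I$, the eigenvalues of $\mathrm{ad}_{x_{ss}}$ on $\check{\mathfrak{v}}_I$ are values of roots in $\check{\Phi}\setminus\check{\Phi}_I$, hence nonzero because $c\in\check{\mathfrak{c}}^-_{I-\gen}$, so $\mathrm{ad}_{\kappa_I(c)}$ has trivial kernel on $\check{\mathfrak{v}}_I$ and on $\check{\mathfrak{v}}_I^-$; since $\mathrm{ad}_{\kappa_I(c)}$ preserves $\check{\mathfrak{g}}=\check{\mathfrak{v}}_I^-\oplus\check{\mathfrak{l}}_I\oplus\check{\mathfrak{v}}_I$ we get $Z_{\check{\mathfrak{g}}}(\sigma_I(c))=Z_{\check{\mathfrak{l}}_I}(\kappa_I(c))$, of dimension $\operatorname{rk}\check{G}$, so $\sigma_I(c)$ is regular. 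Consequently $\mathfrak{I}_{\mathrm{univ}}\times_{\check{\mathfrak{g}}^*,\sigma_I}\check{\mathfrak{c}}^-_{I-\gen}\simeq\mathfrak{I}_{\mathrm{reg}}\times_{\check{\mathfrak{g}}^*,\sigma_I}\check{\mathfrak{c}}^-_{I-\gen}\simeq\mathfrak{J}\times_{\check{\mathfrak{c}},\chi\circ\sigma_I}\check{\mathfrak{c}}^-_{I-\gen}$, and $\chi\circ\sigma_I=\pi_I$ by the commutative diagram in the proof of Lemma~\ref{Reduction in Steps}, so this is $\mathfrak{J}\times_{\check{\mathfrak{c}}_I}\check{\mathfrak{c}}^-_{I-\gen}$; its action transports to the claimed one, and I would also note it agrees with the $\mathfrak{J}_I$-action of §\ref{spectralactionofthecentralizer} via Ng\^o's homomorphism $\mathfrak{J}_I\to\pi_I^*\mathfrak{J}$, which is an isomorphism over $\check{\mathfrak{c}}^-_{I-\gen}$. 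The serious input, Lemma~\ref{Anti-Generic torsor}, is already in hand, so the expected main obstacle is purely bookkeeping: keeping the $\check{U}$-torsor identifications and the regular-centralizer action straight through the base changes, with the regularity of $\sigma_I$ on the anti-generic locus and the $\mathfrak{J}$-versus-$\mathfrak{J}_I$ comparison being the fiddliest points.
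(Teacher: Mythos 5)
Your proof is correct and follows essentially the same route as the paper: the paper's own argument is precisely the Cartesian-square chase in which the partial Kostant section $\sigma_I$ of Construction~\ref{partialkostantslice} trivializes the $\check{U}$-torsor $\pi_2$ of Lemma~\ref{Anti-Generic torsor}, with the right square Cartesian as a morphism of $\check{U}$-torsors. Your additional verifications (that $\sigma_I$ genuinely lands in $\check{\mathfrak{u}}^\perp+\psi_I$ and sections $\pi_2$, and the regularity of $\sigma_I$ over the anti-generic locus giving the centralizer equivariance) are correct refinements of points the paper leaves implicit.
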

\begin{proof}
    Let $p: \check{M} \times_{\check{\mathfrak{g}}^*} (\check{\mathfrak{u}}^\perp + \psi_I) \rightarrow \check{M}\tripslash(\check{U},\psi_I)$ denote the quotient map. It is a $\check{U}$-torsor. Now, consider the commutative diagram
    $$
    \begin{tikzcd}
        \check{M} \times_{\check{\mathfrak{g}}^*,\sigma_I} \check{\mathfrak{c}}_{I-\gen}^- \ar[r] \ar[d,"\mathrm{pr}_2"] &\check{M} \times_{\check{\mathfrak{g}}^*} (\check{\mathfrak{u}}^\perp + \psi_I) \times_{\check{\mathfrak{c}}_I} \check{\mathfrak{c}}_{I-\gen}^- \ar[r,"p"] \ar[d,"\mathrm{pr}_2"] & \check{M}\tripslash(\check{U},\psi_I) \times_{\check{\mathfrak{c}}_I} \check{\mathfrak{c}}_{I-\gen}^- \ar[d, "\mathrm{pr}_2"] \\
        \check{\mathfrak{c}}_{I-\gen}^- \ar[r, "\sigma_I"] \ar[rr, "\mathrm{id}", bend right = 10,swap] & (\check{\mathfrak{u}}^\perp + \psi_I) \times_{\check{\mathfrak{c}}_I} \check{\mathfrak{c}}_{I-\gen}^- \ar[r,"\pi_2"] &\check{\mathfrak{c}}_{I-\gen}^-.
    \end{tikzcd}
    $$
    The square on the left is Cartesian by construction. By Lemma~\ref{Anti-Generic torsor}, the square on the right is a morphism of $\check{U}$-torsors, hence is Cartesian. Therefore, the composite square is Cartesian. It follows that the composition of the top row is an isomorphism
    $$
    \check{M} \times_{\check{\mathfrak{g}}^*,\sigma_I} \check{\mathfrak{c}}_{I-\gen}^- \xrightarrow{\sim} \check{M}\tripslash(\check{U},\psi_I) \times_{\check{\mathfrak{c}}_I} \check{\mathfrak{c}}_{I-\gen}^-. 
    $$
\end{proof}

\begin{example}\label{Partial cotangent bundle comparison}
    We can apply Lemma~\ref{Reduction by Restriction III} to the example of $\check{M} = T^*\check{G}$ to obtain a natural isomorphism 
    \begin{equation}\label{Partial cotangent bundle comparison eq1}
        T^*(\check{G}/(\check{U},\psi_I)) \times_{\check{\mathfrak{c}}_I} \check{\mathfrak{c}}_I^- \simeq \check{G} \times \check{\mathfrak{c}}^-_{I-\gen}. 
    \end{equation}
    On the other hand, Example~\ref{universal kostant-whittaker reduction} provides an isomorphism 
    \begin{equation}\label{Partial cotangent bundle comparison eq2}
        T^*(\check{G}/(\check{U},\psi)) \simeq \check{G} \times \check{\mathfrak{c}}. 
    \end{equation}
    Combining \eqref{Partial cotangent bundle comparison eq1} with \eqref{Partial cotangent bundle comparison eq2}, we deduce an isomorphism
    \begin{align}\label{Partial cotangent bundle comparison eq3}
        T^*(\check{G}/(\check{U},\psi_I)) \times_{\check{\mathfrak{c}}_I} \check{\mathfrak{c}}_{I-\gen}^- &\simeq \check{G} \times \check{\mathfrak{c}}^-_{I-\gen} \\
        &\simeq (\check{G} \times \check{\mathfrak{c}}) \times_{\check{\mathfrak{c}}} \check{\mathfrak{c}}_{I-\gen}^- \nonumber \\
        &\simeq T^*(\check{G}/(\check{U},\psi)) \times_{\check{\mathfrak{c}}} \check{\mathfrak{c}}^-_{I-\gen}. \nonumber
    \end{align}
    In fact, we can be slightly more precise. We have a canonical map of $\check{\mathfrak{c}}_I$-varieties
    $$
    T^*(\check{G}/(\check{U},\psi)) \times_{\check{\mathfrak{c}}} \check{\mathfrak{c}}_{I} \simeq \check{G} \times \check{\mathfrak{c}}_I \operatornamewithlimits{\hookrightarrow}^{\mathrm{id} \times \sigma_I} \check{G} \times (\check{\mathfrak{u}}^\perp + \psi_I) \twoheadrightarrow (\check{G} \times (\check{\mathfrak{u}}^\perp + \psi_I))/\check{U} \simeq T^*(\check{G}/(\check{U},\psi_I))
    $$
    which pulls back over the anti-generic locus $\check{\mathfrak{c}}^{-}_{I-\gen}$ to the isomorphism \eqref{Partial cotangent bundle comparison eq3}. 
\end{example}

In order to generalize Remark~\ref{Partial cotangent bundle comparison} to arbitrary $\check{M}$, we note the following easy analog of Lemma~\ref{Hamiltonian reduction via the cotangent bundle} and Lemma~\ref{Kostant-Whittaker reduction via the cotangent bundle}.

\begin{lemma}\label{Partial Kostant-Whittaker reduction via the cotangent bundle}
    There is a canonical $\mathfrak{J}_I$-equivariant isomorphism of $\check{\mathfrak{c}}_I$-schemes
    $$
    \check{M}\tripslash(\check{U},\psi_I) \simeq \check{G} \backslash (\check{M} \times_{\check{\mathfrak{g}}^*} T^*(\check{G}/(\check{U}, \psi_I))).
    $$
    Here, the map $T^*(\check{G}/(\check{U}, \psi_I)) \rightarrow \check{\mathfrak{g}}^*$ is the moment map for the left $\check{G}$-action, and the cotangent bundle $T^*(\check{G}/(\check{U},\psi_I))$ is viewed as a $\check{\mathfrak{c}}_I$-scheme via the natural projection 
    $$
    T^*(\check{G}/(\check{U},\psi_I)) = (\check{G} \times (\check{\mathfrak{u}}^\perp + \psi_I))/\check{U} \rightarrow (\check{\mathfrak{u}}^\perp + \psi_I)/\check{U} \rightarrow \check{\mathfrak{c}}. 
    $$
    Moreover, $\check{G}$ acts diagonally on the product $\check{M} \times_{\check{\mathfrak{g}}^*} T^*(\check{G}/(\check{U},\psi_I))$. 
\end{lemma}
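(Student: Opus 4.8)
The plan is to run the same ``twist trick'' used in the proof of Lemma~\ref{Hamiltonian reduction via the cotangent bundle}, with the linear subspace $\check{\mathfrak{u}}^\perp \subseteq \check{\mathfrak{g}}^*$ replaced throughout by the affine subspace $\check{\mathfrak{u}}^\perp + \psi_I$, and then to promote the resulting isomorphism of $\check{\mathfrak{c}}_I$-schemes to a $\mathfrak{J}_I$-equivariant one using the description of the regular-centralizer action from §§\ref{spectralactionofthecentralizer}. Concretely, I would first consider the isomorphism
\[
\iota : \check{M} \times_{\check{\mathfrak{g}}^*} \bigl(\check{G} \times (\check{\mathfrak{u}}^\perp + \psi_I)\bigr) \;\xrightarrow{\ \sim\ }\; \check{M} \times_{\check{\mathfrak{g}}^*} \bigl(\check{G} \times (\check{\mathfrak{u}}^\perp + \psi_I)\bigr), \qquad \iota(m,g,\chi) = (g^{-1}m,\, g,\, \chi),
\]
where the structure map $\check{G} \times (\check{\mathfrak{u}}^\perp + \psi_I) \to \check{\mathfrak{g}}^*$ gluing on the $\check{M}$-factor is the coadjoint action map on the source and is $\mathrm{pr}_2$ followed by the inclusion $\check{\mathfrak{u}}^\perp + \psi_I \hookrightarrow \check{\mathfrak{g}}^*$ on the target. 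As in Lemma~\ref{Hamiltonian reduction via the cotangent bundle}, $\iota$ is equivariant for the diagonal $\check{G}$-action on the source and the left-translation $\check{G}$-action $h\cdot(m,g,\chi)=(m,hg,\chi)$ on the target, and for the $\check{U}$-actions $u\cdot(m,g,\chi)=(m,gu^{-1},u\cdot\chi)$ on the source and $u\cdot(m,g,\chi)=(um,gu^{-1},u\cdot\chi)$ on the target; here one uses that $\psi_I$ is $\check{U}$-stable, so that the coadjoint action does preserve $\check{\mathfrak{u}}^\perp + \psi_I$.

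Next I would pass to quotients and evaluate the two orders of quotienting. Quotienting the source first by $\check{U}$ and then by $\check{G}$ yields $\check{G}\backslash\bigl(\check{M}\times_{\check{\mathfrak{g}}^*} T^*(\check{G}/(\check{U},\psi_I))\bigr)$, while quotienting the target first by $\check{G}$ and then by $\check{U}$ yields $(\check{M}\times_{\check{\mathfrak{g}}^*}(\check{\mathfrak{u}}^\perp + \psi_I))/\check{U} = \check{M}\tripslash(\check{U},\psi_I)$. Composing with the descent $\overline{\iota}$ of $\iota$ gives the claimed isomorphism of schemes. A short diagram chase then shows it is a morphism over $\check{\mathfrak{c}}_I$: both sides carry their $\check{\mathfrak{c}}_I$-structure through the coordinate in $\check{\mathfrak{u}}^\perp + \psi_I$ via the projection $(\check{\mathfrak{u}}^\perp + \psi_I)/\check{U} \to \check{\mathfrak{c}}_I$, and $\iota$ fixes that coordinate. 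This part is word-for-word the proof of Lemma~\ref{Hamiltonian reduction via the cotangent bundle} (and of Lemma~\ref{Kostant-Whittaker reduction via the cotangent bundle}), so it is routine.

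The substantive point is the $\mathfrak{J}_I$-equivariance. Recall from §§\ref{spectralactionofthecentralizer} that the $\mathfrak{J}_I$-action on a partial Kostant-Whittaker reduction $\check{N}\tripslash(\check{U},\psi_I)$ is not given by an honest action on the ambient product; rather, one uses Lemma~\ref{Reduction in Steps} to identify $\check{N}\tripslash(\check{U},\psi_I) \simeq (\check{N}\tripslash\check{V}_I)\tripslash(\check{U}_I,\overline{\psi}_I)$ and then transports the action of the universal centralizer of $\check{L}_I$ restricted along the Kostant section $\kappa_I$, using Lemma~\ref{Reduction by Restriction} and the identification \eqref{Regular Centralizer as Restriction}. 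I would therefore carry out this bookkeeping for $\check{N} = \check{M}$ and for $\check{N} = T^*\check{G}$ simultaneously, which reduces the $\mathfrak{J}_I$-equivariance of the above isomorphism to the already-established statement of Lemma~\ref{Kostant-Whittaker reduction via the cotangent bundle} for the group $\check{L}_I$, applied to the Hamiltonian $\check{L}_I$-scheme $\check{M}\tripslash\check{V}_I$, together with the explicit comparison of $T^*(\check{G}/(\check{U},\psi_I))$ recorded in Example~\ref{Partial cotangent bundle comparison}. Alternatively, and perhaps more cleanly, one notes that the $\mathfrak{J}_I$-action on $T^*(\check{G}/(\check{U},\psi_I))$ only moves the ``$(\check{\mathfrak{u}}^\perp + \psi_I)$-plus-slice'' direction, commutes with left $\check{G}$-translation, hence descends to the $\check{G}$-quotient, and $\iota$ visibly intertwines this with the $\mathfrak{J}_I$-action on $\check{M}\tripslash(\check{U},\psi_I)$. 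The only real obstacle is this last verification: one must chase through the indirect definition of the $\mathfrak{J}_I$-action and confirm that the twist isomorphism $\iota$ — which is manufactured purely from the $\check{G}$- and $\check{U}$-actions — matches the two descriptions of it.
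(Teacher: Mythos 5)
Your proposal is correct and is essentially the argument the paper intends: the paper omits the proof, presenting the lemma as the evident analog of Lemma~\ref{Hamiltonian reduction via the cotangent bundle} and Lemma~\ref{Kostant-Whittaker reduction via the cotangent bundle}, and your twist isomorphism $\iota(m,g,\chi)=(g^{-1}m,g,\chi)$ with $\check{\mathfrak{u}}^\perp$ replaced by the $\check{U}$-stable affine slice $\check{\mathfrak{u}}^\perp+\psi_I$ is exactly that adaptation. Your extra discussion of the $\mathfrak{J}_I$-equivariance (reducing it via Lemma~\ref{Reduction in Steps} and Lemma~\ref{Reduction by Restriction} to the non-degenerate case for $\check{L}_I$, or noting the action on the second factor commutes with left translation and descends) is a sound way to handle the part the paper leaves implicit.
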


Now we can formulate our general description of a partial Kostant-Whittaker reduction $\check{M}\tripslash(\check{U},\psi_I)$ over the anti-generic locus of $\check{\mathfrak{c}}_I$. 

\begin{proposition}\label{Anti-Generic comparison}
    There is a canonical $\mathfrak{J}_I$-equivariant morphism of $\check{\mathfrak{c}}_{I}$-schemes
    $$
    \eta^-_{I-\gen}: \check{M}\tripslash(\check{U},\psi) \times_{\check{\mathfrak{c}}} \check{\mathfrak{c}}_I \rightarrow \check{M}\tripslash(\check{U},\psi_I) 
    $$
    which pulls back to an isomorphism of $\check{\mathfrak{c}}^{-}_{I-\gen}$-schemes
    $$
    \eta^-_{I-\gen} \times_{\check{\mathfrak{c}}_I} \check{\mathfrak{c}}^-_{I-\gen}: \left( \check{M}\tripslash(\check{U},\psi) \times_{\check{\mathfrak{c}}} \check{\mathfrak{c}}_I \right) \times_{\check{\mathfrak{c}}_I} \check{\mathfrak{c}}_{I-\gen}^- \simeq \check{M}\tripslash(\check{U},\psi_I) \times_{\check{\mathfrak{c}}_I} \check{\mathfrak{c}}^-_{I-\gen} 
    $$
\end{proposition}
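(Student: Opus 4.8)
The plan is to mirror, almost verbatim, the proof of Proposition~\ref{Generic Comparison} (and its refinement Corollary~\ref{Generic Comparison Intermediate}), systematically replacing the generic locus by the anti-generic locus and the Hamiltonian reduction $\check{M}\tripslash\check{U}$ by the partial Kostant--Whittaker reduction $\check{M}\tripslash(\check{U},\psi_I)$. First I would invoke the cotangent-bundle realizations of Lemma~\ref{Kostant-Whittaker reduction via the cotangent bundle} and Lemma~\ref{Partial Kostant-Whittaker reduction via the cotangent bundle}. Since fiber products over $\check{\mathfrak{g}}^*$ and the quotient by $\check{G}$ commute with base change along $\pi_I\colon\check{\mathfrak{c}}_I\to\check{\mathfrak{c}}$, base-changing the first of these along $\pi_I$ gives a $\mathfrak{J}\times_{\check{\mathfrak{c}}}\check{\mathfrak{c}}_I$-equivariant identification
\[
\check{M}\tripslash(\check{U},\psi)\times_{\check{\mathfrak{c}}}\check{\mathfrak{c}}_I\;\simeq\;\check{G}\backslash\!\left(\check{M}\times_{\check{\mathfrak{g}}^*}\bigl(T^*(\check{G}/(\check{U},\psi))\times_{\check{\mathfrak{c}}}\check{\mathfrak{c}}_I\bigr)\right),
\]
while the second reads $\check{M}\tripslash(\check{U},\psi_I)\simeq\check{G}\backslash(\check{M}\times_{\check{\mathfrak{g}}^*}T^*(\check{G}/(\check{U},\psi_I)))$ ($\mathfrak{J}_I$-equivariantly), with $\check{G}$ acting diagonally and all fiber products taken along the left moment maps. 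It therefore suffices to produce a $\check{G}$-equivariant morphism of $\check{\mathfrak{g}}^*\times\check{\mathfrak{c}}_I$-schemes
\[
\varphi_I\colon T^*(\check{G}/(\check{U},\psi))\times_{\check{\mathfrak{c}}}\check{\mathfrak{c}}_I\longrightarrow T^*(\check{G}/(\check{U},\psi_I))
\]
that becomes an isomorphism after $-\times_{\check{\mathfrak{c}}_I}\check{\mathfrak{c}}_{I-\gen}^-$ and is compatible with the regular-centralizer structures; then $\eta^-_{I-\gen}:=\check{G}\backslash(\mathrm{id}_{\check{M}}\times\varphi_I)$ does the job, because the functor $\check{G}\backslash(\check{M}\times_{\check{\mathfrak{g}}^*}-)$ preserves isomorphisms and commutes with the open base change $\check{\mathfrak{c}}_{I-\gen}^-\hookrightarrow\check{\mathfrak{c}}_I$.

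Such a $\varphi_I$ is exactly the map written down at the end of Example~\ref{Partial cotangent bundle comparison}: compose the isomorphism $T^*(\check{G}/(\check{U},\psi))\times_{\check{\mathfrak{c}}}\check{\mathfrak{c}}_I\simeq\check{G}\times\check{\mathfrak{c}}_I$ (the base change of $\delta_\psi$ of Example~\ref{universal hamiltonian reduction}) with $\mathrm{id}_{\check{G}}\times\sigma_I$, where $\sigma_I$ is the partial Kostant section of Construction~\ref{partialkostantslice}, and then with the $\check{U}$-quotient map $\check{G}\times(\check{\mathfrak{u}}^\perp+\psi_I)\twoheadrightarrow T^*(\check{G}/(\check{U},\psi_I))$. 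This is $\check{G}$-equivariant for left translation and a morphism over $\check{\mathfrak{g}}^*$ (left moment maps) and over $\check{\mathfrak{c}}_I$ by construction, and Example~\ref{Partial cotangent bundle comparison} — which rests on Lemma~\ref{Reduction by Restriction III}, i.e.\ ultimately on the torsor property of Lemma~\ref{Anti-Generic torsor} — shows that $\varphi_I\times_{\check{\mathfrak{c}}_I}\check{\mathfrak{c}}_{I-\gen}^-$ is the isomorphism \eqref{Partial cotangent bundle comparison eq3}. Feeding this into the previous paragraph already yields the morphism $\eta^-_{I-\gen}$ together with the assertion that its restriction over $\check{\mathfrak{c}}_{I-\gen}^-$ is an isomorphism.

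The remaining, and I expect only genuinely new, point is the $\mathfrak{J}_I$-equivariance. The subtlety is that the target carries an action of $\mathfrak{J}_I$ — via the identification $\check{M}\tripslash(\check{U},\psi_I)\simeq(\check{M}\tripslash\check{V}_I)\tripslash(\check{U}_I,\overline{\psi}_I)$ of Lemma~\ref{Reduction in Steps} and the non-degenerate construction of Lemma~\ref{Reduction by Restriction} applied to $\check{M}\tripslash\check{V}_I$ — whereas the source only visibly carries the pulled-back action of $\mathfrak{J}\times_{\check{\mathfrak{c}}}\check{\mathfrak{c}}_I$, and these are a priori different group schemes over $\check{\mathfrak{c}}_I$. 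The way to reconcile them is to use the canonical homomorphism $\mathfrak{J}\times_{\check{\mathfrak{c}}}\check{\mathfrak{c}}_I\to\mathfrak{J}_I$ obtained by restricting Ng\^{o}'s homomorphism \eqref{Ngohomomorphism} along the closed immersion $\check{\mathfrak{l}}_I^*\hookrightarrow\check{\mathfrak{g}}^*$ (equivalently, the homomorphism $\rho^G_L$ of Remark~\ref{homomorphism on regular centralizers, group schemes} transported through the Yun--Zhu isomorphism), and to check that $\varphi_I$, which is built entirely out of $\check{G}$-translations and Kostant sections, intertwines the two actions along this homomorphism. Over $\check{\mathfrak{c}}_{I-\gen}^-$ one argues, by precisely the eigenvalue computation on $\check{\mathfrak{v}}_I$ and $\check{\mathfrak{v}}_I^-$ already used in the proof of Lemma~\ref{Anti-Generic torsor}, that $\kappa_I$ takes $\check{G}$-regular values whose $\check{G}$-centralizer already lies in $\check{L}_I$; hence this comparison homomorphism restricts to an isomorphism $\mathfrak{J}\times_{\check{\mathfrak{c}}}\check{\mathfrak{c}}_{I-\gen}^-\simeq\mathfrak{J}_I\times_{\check{\mathfrak{c}}_I}\check{\mathfrak{c}}_{I-\gen}^-$, so that $\eta^-_{I-\gen}$ is $\mathfrak{J}_I$-equivariant in the honest sense over the anti-generic locus. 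The hard part is thus not constructing the morphism but the bookkeeping needed to pin down this regular-centralizer compatibility globally over $\check{\mathfrak{c}}_I$; I would handle it exactly as the $\mathfrak{J}$-equivariance is handled in Lemma~\ref{Kostant-Whittaker reduction via the cotangent bundle} and Example~\ref{Partial cotangent bundle comparison}, tracing the actions through the chain $\delta_\psi$, $\mathrm{id}_{\check{G}}\times\sigma_I$, $\check{U}$-quotient.
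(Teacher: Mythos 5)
Your proposal coincides with the paper's proof, which is literally this one-line adaptation: rerun the argument of Proposition~\ref{Generic Comparison}, with Lemma~\ref{Partial Kostant-Whittaker reduction via the cotangent bundle} in place of Lemma~\ref{Hamiltonian reduction via the cotangent bundle} and the comparison map of Example~\ref{Partial cotangent bundle comparison} (built from $\delta_\psi$, the partial Kostant section $\sigma_I$ of Construction~\ref{partialkostantslice}, and the $\check{U}$-quotient, resting on Lemma~\ref{Reduction by Restriction III} and Lemma~\ref{Anti-Generic torsor}) in place of that of Remark~\ref{Cotangent bundle comparison}. Your additional paragraph on $\mathfrak{J}_I$-equivariance, via the canonical homomorphism $\mathfrak{J}\times_{\check{\mathfrak{c}}}\check{\mathfrak{c}}_I\to\mathfrak{J}_I$ which becomes an isomorphism over $\check{\mathfrak{c}}^-_{I-\gen}$, is a sensible elaboration of a point the paper's one-sentence proof leaves implicit and is consistent with how the paper handles the analogous equivariance in Proposition~\ref{Generic Comparison} and in \eqref{geometricconstructionofrho}.
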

\begin{proof}
    The construction of $\eta_{I-\gen}^-$ is easily adapted from that of $\eta_{I-\gen}$ in the proof of Proposition~\ref{Generic Comparison}, using Lemma~\ref{Partial cotangent bundle comparison} in place of Lemma~\ref{Cotangent bundle comparison} and Lemma~\ref{Partial Kostant-Whittaker reduction via the cotangent bundle} in place of Lemma~\ref{Hamiltonian reduction via the cotangent bundle}.
\end{proof}

We omit the proof of the following straightforward lemma. 

\begin{lemma}\label{Anti-Generic comparison, equivariant}
    Suppose that $\check{M}$ is equipped with an action of an algebraic group $\check{H}$ commuting with the action of $\check{G}$. Then, the morphism $\eta_{I-\gen}^-$ of Proposition~\ref{Anti-Generic comparison} is $\check{H}$-equivariant.
\end{lemma}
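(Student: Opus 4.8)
The plan is to show that the construction of $\eta^-_{I-\gen}$ in the proof of Proposition~\ref{Anti-Generic comparison} manufactures it entirely out of $\check{H}$-equivariant pieces, so that the result is formal once one records how $\check{H}$ acts on each intermediate object. Because the $\check{H}$-action on $\check{M}$ commutes with the $\check{G}$-action, the moment map $\mu\colon\check{M}\to\check{\mathfrak{g}}^*$ is $\check{H}$-invariant (as already noted before Lemma~\ref{Reduction by restriction, equivariant}); consequently $\check{H}$ acts on every fiber product $\check{M}\times_{\check{\mathfrak{g}}^*}(-)$ occurring in §\ref{Hamiltonian Reduction} through its action on the $\check{M}$-factor alone, and this descends to the quotients $\check{M}\tripslash\check{U}$, $\check{M}\tripslash(\check{U},\psi)$, $\check{M}\tripslash(\check{U},\psi_I)$ and to all their base changes along morphisms of $\check{\mathfrak{c}}$, $\check{\mathfrak{c}}_I$, $\check{\mathfrak{c}}_{I-\gen}^-$. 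So the task reduces to checking that each building block of $\eta^-_{I-\gen}$ intertwines these ``action on the $\check{M}$-factor'' structures.

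First I would upgrade Lemmas~\ref{Hamiltonian reduction via the cotangent bundle}, \ref{Kostant-Whittaker reduction via the cotangent bundle}, and \ref{Partial Kostant-Whittaker reduction via the cotangent bundle} to $\check{H}$-equivariant statements. This costs nothing: in the proof of Lemma~\ref{Hamiltonian reduction via the cotangent bundle} the isomorphism $\iota(m,g,\chi)=(g^{-1}m,g,\chi)$ visibly commutes with the $\check{H}$-action on $m\in\check{M}$ (that action commutes with left multiplication by $\check{G}$), and the same holds for the quotient maps by $\check{G}$ and by $\check{U}$; the identical words apply with $(\check{U},\psi)$ or — via the factorisation of Lemma~\ref{Reduction in Steps} — with $(\check{U},\psi_I)$ in place of $\check{U}$. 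This is literally the computation already performed in Lemma~\ref{Reduction by restriction, equivariant}, of which the present lemma is the anti-generic analog. Thus each identification $\check{M}\tripslash(\check{U},\psi_I)\simeq\check{G}\backslash(\check{M}\times_{\check{\mathfrak{g}}^*}T^*(\check{G}/(\check{U},\psi_I)))$, and its counterparts, is $\check{H}$-equivariant, with $\check{H}$ acting on the right only on the $\check{M}$-factor. Next I would recall that, after transporting everything through these isomorphisms, the morphism $\eta^-_{I-\gen}$ becomes $\check{G}\backslash(\mathrm{id}_{\check{M}}\times_{\check{\mathfrak{g}}^*}\phi)$, where
$$
\phi\colon T^*(\check{G}/(\check{U},\psi))\times_{\check{\mathfrak{c}}}\check{\mathfrak{c}}_I\longrightarrow T^*(\check{G}/(\check{U},\psi_I))
$$
is the canonical $\check{G}$-equivariant morphism of $\check{\mathfrak{g}}^*$-schemes of Example~\ref{Partial cotangent bundle comparison} (built from the partial Kostant section $\sigma_I$ of Construction~\ref{partialkostantslice}), all fiber products and base changes being along maps involving only the $\check{G}$-side. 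Since $\phi$ does not involve $\check{M}$, the map $\mathrm{id}_{\check{M}}\times_{\check{\mathfrak{g}}^*}\phi$ is tautologically equivariant for the $\check{H}$-action on the $\check{M}$-factor, and this survives passage to $\check{G}$-quotients and base change along the Chevalley-space morphisms; composing with the $\check{H}$-equivariant structure isomorphisms of the first step gives the $\check{H}$-equivariance of $\eta^-_{I-\gen}$, and the same bookkeeping restricted to $\check{\mathfrak{c}}_{I-\gen}^-$ handles $\eta^-_{I-\gen}\times_{\check{\mathfrak{c}}_I}\check{\mathfrak{c}}_{I-\gen}^-$.

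The only step requiring any genuine attention is the $\check{H}$-equivariance of the structure isomorphisms of Lemmas~\ref{Hamiltonian reduction via the cotangent bundle}--\ref{Partial Kostant-Whittaker reduction via the cotangent bundle} (equivalently, of the map $\iota$), since everything afterwards is formal manipulation of fiber products and quotients; and even this is already dispatched by the argument of Lemma~\ref{Reduction by restriction, equivariant}. Hence the ``main obstacle'' is really only the mild bookkeeping of keeping straight which objects carry the $\check{H}$-action on the $\check{M}$-factor, and I would present the proof as little more than a pointer back to that earlier computation.
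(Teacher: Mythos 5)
Your proof is correct, and since the paper omits this proof as ``straightforward,'' your argument is exactly the intended one: you trace the construction of $\eta^-_{I-\gen}$ through the identifications of Lemmas~\ref{Kostant-Whittaker reduction via the cotangent bundle} and~\ref{Partial Kostant-Whittaker reduction via the cotangent bundle} (whose $\check{H}$-equivariance follows from the same computation with $\iota$ as in Lemma~\ref{Reduction by restriction, equivariant}) and observe that the remaining ingredient, the universal map of Example~\ref{Partial cotangent bundle comparison}, does not involve $\check{M}$ at all, so equivariance is automatic after taking $\check{G}$-quotients and base change. No gaps.
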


\subsection{Affine closure}\label{affineclosure}

We will now apply Proposition~\ref{Generic Comparison} and Proposition~\ref{Anti-Generic comparison} to obtain a description of the ring $\mathcal{O}(\check{M}\tripslash(\check{U}, \psi_I))$ in terms of the Hamiltonian reduction $\mathcal{O}(\check{M} \tripslash \check{U})$ and the non-degenerate Kostant-Whittaker reduction $\mathcal{O}(\check{M} \tripslash (\check{U},\psi))$. Consider the union $\check{\mathfrak{c}}_I^\circ := \check{\mathfrak{c}}_{I-\gen} \cup \check{\mathfrak{c}}_{I-\gen}^-$ of the $I$-generic and $I$-anti-generic loci in $\check{\mathfrak{c}}_I$. Let 
$$
\pi_I: \check{M}\tripslash (\check{U},\psi_I) \rightarrow \check{\mathfrak{c}}_I
$$
denote the natural projection induced by the $\check{V}_I$-reduced moment map $\check{M}\tripslash \check{V}_I \rightarrow \check{\mathfrak{l}}^*_I\doubslash\check{L} = \check{\mathfrak{c}}_I$. 

\begin{lemma}\label{Spectral Hartogs}
    The restriction map 
    \begin{equation}\label{Spectral Hartogs eq1}
        \mathcal{O}(\check{M}\tripslash(\check{U},\psi_I)) \rightarrow \mathcal{O}(\pi_I^{-1}(\check{\mathfrak{c}}_I^\circ))
    \end{equation}
    restricting regular functions on $\check{M}\tripslash (\check{U},\psi_I)$ to the open subscheme $\pi_I^{-1}(\check{\mathfrak{c}}_I^\circ) \subseteq \check{M} \tripslash (\check{U}, \psi_I)$ is an isomorphism. 
\end{lemma}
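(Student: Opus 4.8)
The plan is to deduce the lemma from the algebraic Hartogs principle: on a normal Noetherian scheme $X$, restriction induces an isomorphism $\mathcal{O}(X) \xrightarrow{\sim} \mathcal{O}(X \setminus Z)$ for every closed subset $Z \subseteq X$ of codimension at least two (indeed, a regular — more generally, $S_2$ — scheme has the unique extension property for regular functions across closed subsets of codimension $\geq 2$). I would apply this with $X = \check{M}\tripslash(\check{U},\psi_I)$ and $Z = \pi_I^{-1}(\check{\mathfrak{c}}_I \setminus \check{\mathfrak{c}}_I^\circ)$, so that $X \setminus Z = \pi_I^{-1}(\check{\mathfrak{c}}_I^\circ)$ and the statement reduces to: (i) $X$ is normal; (ii) $Z$ has codimension $\geq 2$ in $X$.

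Assertion (i) is immediate from our standing hypothesis that $\check{M}$ is smooth over $\check{\mathfrak{g}}^*$: then $\check{M}\times_{\check{\mathfrak{g}}^*}(\check{\mathfrak{u}}^\perp + \psi_I)$ is smooth over the affine space $\check{\mathfrak{u}}^\perp + \psi_I$, hence smooth over $\CC$, and since $\check{G}$ (so $\check{U}$) acts freely on $\check{M}$, the map $\check{M}\times_{\check{\mathfrak{g}}^*}(\check{\mathfrak{u}}^\perp+\psi_I) \to \check{M}\tripslash(\check{U},\psi_I)$ is a $\check{U}$-torsor, along which smoothness descends; thus $X$ is smooth, in particular normal.

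For assertion (ii) I would combine an elementary root-theoretic codimension bound with an equidimensionality statement for $\pi_I$. First, $\check{\mathfrak{c}}_I \setminus \check{\mathfrak{c}}_I^\circ$ has codimension $\geq 2$ in $\check{\mathfrak{c}}_I$: its preimage under the finite surjection $\check{\mathfrak{t}}^* \to \check{\mathfrak{c}}_I \simeq \check{\mathfrak{t}}^*/W_I$ is $\bigcup (\{\check{\alpha}=0\}\cap\{\check{\beta}=0\})$ over pairs $\check{\alpha}\in\check{\Phi}_I$, $\check{\beta}\in\check{\Phi}\setminus\check{\Phi}_I$, and each such pair is linearly independent (a reduced root system has no root proportional to $\check{\alpha}$ besides $\pm\check{\alpha}$, and $\check{\Phi}_I$ is stable under negation), so each intersection is a linear subspace of codimension $2$. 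Second, I claim every fiber of $\pi_I$ has dimension $\dim X - \dim\check{\mathfrak{c}}_I$; granting this, $\dim Z \le (\dim\check{\mathfrak{c}}_I - 2) + (\dim X - \dim\check{\mathfrak{c}}_I) = \dim X - 2$, which gives (ii). To prove the equidimensionality I would use Lemma~\ref{Reduction in Steps} together with Lemma~\ref{Reduction by Restriction} applied to the reductive group $\check{L}_I$ and the Hamiltonian $\check{L}_I$-scheme $\check{M}\tripslash\check{V}_I$ with the non-degenerate character $\overline{\psi}_I$, identifying $\pi_I^{-1}(c)$ with the fiber over $\kappa_I(c)$ of the $\check{L}_I$-moment map $\check{M}\tripslash\check{V}_I \to \check{\mathfrak{l}}_I^*$; writing that fiber as $(\check{M}\times_{\check{\mathfrak{g}}^*}(\check{\mathfrak{v}}_I^\perp)_{\xi})/\check{V}_I$, where $(\check{\mathfrak{v}}_I^\perp)_{\xi}$ is a fiber of the linear surjection $\check{\mathfrak{v}}_I^\perp \twoheadrightarrow \check{\mathfrak{l}}_I^*$ (an affine space of dimension $\dim\check{\mathfrak{v}}_I$) and using smoothness of $\check{M}\to\check{\mathfrak{g}}^*$, a dimension count yields $\dim\pi_I^{-1}(c) = \dim\check{\mathfrak{v}}_I + (\dim\check{M} - \dim\check{\mathfrak{g}}) - \dim\check{V}_I = \dim\check{M} - \dim\check{\mathfrak{g}}$, independent of $c$, and equal to $\dim X - \dim\check{\mathfrak{c}}_I$ since $\dim\check{\mathfrak{g}} = 2\dim\check{\mathfrak{u}} + \operatorname{rk}\check{G}$ and $\dim\check{\mathfrak{c}}_I = \operatorname{rk}\check{L}_I = \operatorname{rk}\check{G}$. (If $\check{M}$ is not equidimensional one runs this on each connected component of $X$ separately.)

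I expect the equidimensionality of $\pi_I$ to be the main point — i.e. ensuring that no fiber of $\pi_I$ jumps in dimension over the degenerate locus of $\check{\mathfrak{c}}_I$; the codimension bound on the root arrangement and the invocation of Hartogs are routine. The key observation is that peeling off the $\check{V}_I$- and $\check{U}_I$-torsor quotients via Lemmas~\ref{Reduction in Steps} and~\ref{Reduction by Restriction} reduces the fiber-dimension question to the flatness of $\check{M}$ over $\check{\mathfrak{g}}^*$, which is exactly our hypothesis. (Since $X$ is smooth and $\check{\mathfrak{c}}_I$ regular, this equidimensionality in fact upgrades $\pi_I$ to a flat morphism by miracle flatness, although flatness itself is not needed here.)
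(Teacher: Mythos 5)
Your argument is correct and matches the paper's proof in all essentials: both apply algebraic Hartogs to the smooth (hence normal) scheme $\check{M}\tripslash(\check{U},\psi_I)$, use the same root-theoretic observation that $\check{\mathfrak{c}}_I\setminus\check{\mathfrak{c}}_I^\circ$ has codimension $2$ (pairs $\check{\alpha}\in\check{\Phi}_I$, $\check{\beta}\notin\check{\Phi}_I$ cut out codimension-$2$ linear subspaces), and transfer that bound along $\pi_I$ via the identification of $\pi_I$ with a base change of the moment map $\check{M}\tripslash\check{V}_I\to\check{\mathfrak{l}}_I^*$ coming from Lemma~\ref{Reduction in Steps} and Lemma~\ref{Reduction by Restriction}. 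The only cosmetic difference is that the paper gets the codimension bound on the preimage from flatness of $\pi_I$ (smooth, being a base change of the smooth moment map), whereas you obtain it by an explicit equidimensional fiber count — which, as you note yourself, carries the same information.
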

\begin{proof}
    We have assumed that $\check{M}$ is a free $\check{G}$-variety; thus, the reduction $\check{M}\tripslash\check{V}_I$ is a free $\check{L}_I$-variety. Therefore, the moment map $\mu_I: \check{M}\tripslash \check{V}_I \rightarrow \check{\mathfrak{l}}_I^*$ is smooth. By Lemma~\ref{Reduction by Restriction}, the projection $\pi_I$ is a base change of $\mu_I$, hence is smooth. In particular, $\pi_I$ is flat.
    
    The closed subscheme $\check{\mathfrak{c}}_I \setminus \check{\mathfrak{c}}_I^\circ$ has codimension 2 in $\check{\mathfrak{c}}_I$. Indeed, 
    $$
    \check{\mathfrak{c}}_I \setminus \check{\mathfrak{c}}_I^\circ = \bigcup_{\substack{\alpha \in I \\ \beta \not\in I}} \{\alpha = 0\} \cap \{\beta = 0\}. 
    $$
    Since $\alpha \neq \beta$ in any of the terms of this union, we conclude that the linear subspace $\{\alpha = 0\} \cap \{\beta = 0\} = \{\alpha,\beta = 0\}$ has codimension $2$ in $\check{\mathfrak{c}}_I$. 
    
    It then follows from the flatness of $\pi_I$ that the preimage $\pi_I^{-1}(\check{\mathfrak{c}}_I^\circ)$ of $\check{\mathfrak{c}}_I \setminus \check{\mathfrak{c}}_I^\circ$ has codimension $\geq 2$ in $\check{M}\tripslash(\check{U}, \psi_I)$. Moreover, $\check{M}\tripslash(\check{U},\psi_I)$ is smooth over the affine space $\check{\mathfrak{c}}_I$ and is therefore regular (in particular, normal). By Hartog's principle (say, in the form of \cite[\href{https://stacks.math.columbia.edu/tag/0BCS}{Tag 0BCS}]{stacks-project}), we conclude that the restriction map \eqref{Spectral Hartogs eq1} is an isomorphism. 
\end{proof}

\begin{construction}\label{descriptionofaffineclosure}
    Since $\check{\mathfrak{c}}_I^\circ = \check{\mathfrak{c}}_{I-\gen} \cup \check{\mathfrak{c}}_{I-\gen}^-$, Lemma~\ref{Spectral Hartogs} yields an isomorphism 
    $$
    \mathcal{O}(\check{M}\tripslash(\check{U},\psi_I)) \xrightarrow{\sim} \mathcal{O}(\pi_I^{-1}(\check{\mathfrak{c}}_{I-\gen})) \times_{\mathcal{O}(\pi_I^{-1}(\check{\mathfrak{c}}_{I-\gen}^- \cap \check{\mathfrak{c}}_{I-\gen}))} \mathcal{O}(\pi_I^{-1}(\check{\mathfrak{c}}_{I-\gen}^-)).
    $$
    Recall that $f_I \in \mathcal{O}(\check{\mathfrak{t}}^*)$ (respectively, $g_I \in \mathcal{O}(\check{\mathfrak{t}}^*)$) denotes the product of the roots in $\check{\Phi}_I$ (respectively, in $\check{\Phi} \setminus \check{\Phi}_I$). Since $f_I$ and $g_I$ are $W_I$-invariant, they descend to functions on $\check{\mathfrak{c}}_I$. Moreover, the non-vanishing locus $D(f_I)$ (resp. $D(g_I)$) in $\check{\mathfrak{c}}_I$ is exactly $\check{\mathfrak{c}}_{I-\gen}$ (resp. $\check{\mathfrak{c}}_{I-\gen}^-$). Since $\check{\mathfrak{c}}_I$ is affine, restriction therefore induces isomorphisms
    $$
    \mathcal{O}(\check{M}\tripslash(\check{U},\psi_I))_{f_I} \xrightarrow{\sim} \mathcal{O}(\pi_I^{-1}(\check{\mathfrak{c}}_{I-\gen}))
    $$
    $$
    \mathcal{O}(\check{M}\tripslash(\check{U},\psi_I))_{g_I} \xrightarrow{\sim} \mathcal{O}(\pi_I^{-1}(\check{\mathfrak{c}}^-_{I-\gen}))
    $$
    $$
    \mathcal{O}(\check{M}\tripslash(\check{U},\psi_I))_{f_I g_I} \xrightarrow{\sim} \mathcal{O}(\pi_I^{-1}(\check{\mathfrak{c}}_{I-\gen} \cap \check{\mathfrak{c}}_{I-\gen}^-)). 
    $$
    Now we can bring in Corollary~\ref{Generic Comparison Intermediate} and Proposition~\ref{Anti-Generic comparison} to obtain isomorphisms
    $$
    \eta_{I-\gen}^*: \mathcal{O}(\pi_I^{-1}(\check{\mathfrak{c}}_{I-\gen})) \simeq \mathcal{O}(\check{M}\tripslash \check{U} \times_{\check{\mathfrak{t}}^*} \check{\mathfrak{t}}^*_{I-\gen}) \simeq  \mathcal{O}(\check{M}\tripslash \check{U})_{f_I}
    $$
    $$
    (\eta_{I-\gen}^-)^*: \mathcal{O}(\pi_I^{-1}(\check{\mathfrak{c}}^-_{I-\gen})) \simeq \mathcal{O}(\check{M}\tripslash \check{U} \times_{\check{\mathfrak{t}}^*} \check{\mathfrak{t}}^{-*}_{I-\gen}) \simeq  \mathcal{O}(\check{M}\tripslash (\check{U},\psi))_{g_I}
    $$
    In summary, we have a canonical isomorphism
    $$
    \eta_I: \mathcal{O}(\check{M}\tripslash (\check{U},\psi_I)) \simeq \mathcal{O}(\check{M}\tripslash \check{U})_{f_I} \times_{\mathcal{O}(\check{M}\tripslash (\check{U},\psi))_{f_Ig_I}} \mathcal{O}(\check{M}\tripslash (\check{U},\psi))_{g_I}.
    $$
    Here, the map $\mathcal{O}(\check{M}\tripslash \check{U})_{f_I} \rightarrow \mathcal{O}(\check{M}\tripslash (\check{U},\psi))_{f_Ig_I}$ is given by the natural inclusion 
    $$
    \mathcal{O}(\check{M}\tripslash \check{U})_{f_I} \hookrightarrow \mathcal{O}(\check{M}\tripslash \check{U})_{f_I g_I} 
    $$
    followed by the identification
    $$
    \mathcal{O}(\check{M}\tripslash \check{U})_{f_I g_I} \simeq \mathcal{O}(\check{M}\tripslash (\check{U},\psi))_{f_I g_I}
    $$
    of Proposition~\ref{Generic Comparison} (the ``absolute'' version of Corollary~\ref{Generic Comparison Intermediate} in which \textit{all} roots are inverted). 
\end{construction}

\section{Comparison}\label{comparison}

\subsection{Parabolic restriction} We will now complete our discussion of the parabolic restriction functors $\mathrm{Res}_I^\natural: D_{G(\mathcal{O})}(\Gr_G) \rightarrow D_{L_I(\mathcal{O})}(\Gr_I)$ that was initiated in §§\ref{parabolic restriction}. As we recalled in Remark~\ref{normalized parabolic restriction}, the functor $\mathrm{Res}_I^\natural$ is $t$-exact and therefore induces an exact functor 
$$
\mathrm{Res}_I^\natural: \mathcal{P}_{G(\mathcal{O})}(\Gr_G) \rightarrow \mathcal{P}_{L_I(\mathcal{O})}(\Gr_I)
$$
on abelian categories of perverse sheaves. Moreover, we recalled in Construction~\ref{parabolic restriction monoidal structure} the construction of a monoidal structure on $\mathrm{Res}_I^\natural$. 

On the other hand, the inclusion $\check{L}_I \subseteq \check{G}$ of the canonical dual Levi subgroup induces an exact restriction functor 
$$
\operatorname{Res}^{\check{G}}_{\check{L}_I}: \operatorname{Rep}(\check{G}) \rightarrow \operatorname{Rep}(\check{L}_I). 
$$
Beilinson and Drinfeld \cite[§5.3.29]{beilinsondrinfeld} have established the following compatibility between $\mathrm{Res}_I^\natural$ and $\operatorname{Res}^{\check{G}}_{\check{L}_I}$ under the geometric Satake equivalence. 
\begin{theorem}[Beilinson-Drinfeld]\label{parabolic restriction and Satake}
    Let 
    \begin{align*}
    &\mathbb{S}_G \, : \mathcal{P}_{G(\mathcal{O})}(\Gr_G) \rightarrow \operatorname{Rep}(\check{G}) \\
    &\mathbb{S}_{L_I}: \mathcal{P}_{L_I(\mathcal{O})}(\Gr_I) \rightarrow \operatorname{Rep}(\check{L}_I) 
    \end{align*}
    denote the respective geometric Satake transforms of Mirkovi\'{c}-Vilonen \cite{MV04}. Then, the following diagram of abelian categories and exact tensor functors commutes up to a (canonical) natural isomorphism of tensor functors: 
    $$
    \begin{tikzcd}
        \mathcal{P}_{G(\mathcal{O})}(\Gr_G) \ar[r, "\mathbb{S}_G"] \ar[d, "\mathrm{Res}_I^\natural"] & \operatorname{Rep}(\check{G}) \ar[d, "\operatorname{Res}^{\check{G}}_{\check{L}_I}"]\\
        \mathcal{P}_{L_I(\mathcal{O})}(\Gr_I) \ar[r, "\mathbb{S}_{L_I}"] & \operatorname{Rep}(\check{L}_I)
    \end{tikzcd}
    $$
\end{theorem}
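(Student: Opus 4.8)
The plan is to deduce this from the universal property of the geometric Satake equivalence as a neutralized Tannakian category, rather than by an independent geometric analysis. Recall that $\mathbb{S}_G$ is characterized by the fact that $\mathrm{For}\circ\mathbb{S}_G$ is identified, as a symmetric monoidal functor, with the fiber functor $H^*(\Gr_G,-)$, and similarly for $\mathbb{S}_{L_I}$; thus to produce the claimed natural isomorphism of tensor functors it suffices to exhibit a natural isomorphism of symmetric monoidal functors $\mathcal{P}_{G(\mathcal{O})}(\Gr_G)\to\mathrm{Vect}_\CC$
$$
\mathrm{For}\circ\operatorname{Res}^{\check G}_{\check L_I}\circ\mathbb{S}_G \;\simeq\; \mathrm{For}\circ\mathbb{S}_{L_I}\circ\mathrm{Res}_I^\natural,
$$
and then check that the resulting comparison intertwines the $\check G$- and $\check L_I$-actions appropriately. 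The left-hand side is just $H^*(\Gr_G,-)$ again (forgetting the $\check L_I$-structure forgets even more than forgetting the $\check G$-structure), so the content is a monoidal identification $H^*(\Gr_G,-)\simeq H^*(\Gr_I,\mathrm{Res}_I^\natural(-))$.

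First I would produce this isomorphism of plain (non-monoidal) functors. For $\mathcal{F}\in\mathcal{P}_{G(\mathcal{O})}(\Gr_G)$ we have the induction diagram $\Gr_I\xleftarrow{q}\Gr_P\xrightarrow{r}\Gr_G$, and $\mathrm{Res}_I(\mathcal{F})=q_*r^!\mathcal{F}$. Since $q$ is an affine-space bundle (its fibers are orbits of the pro-unipotent group $V_I(\mathcal{O})$), proper-base-change type reasoning together with $H^*(\Gr_I,q_*(-))\simeq H^*(\Gr_P,-)$ gives $H^*(\Gr_I,\mathrm{Res}_I(\mathcal{F}))\simeq H^*(\Gr_P,r^!\mathcal{F})$. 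The key geometric input, due to Mirkovi\'c--Vilonen (and spelled out in \cite{baumann2018notes}), is that $r$ realizes $\Gr_P$ as the attracting locus for the $\mathbb{G}_m$-action through $2\rho_G-2\rho_L$ with fixed locus $\Gr_I$, so that $H^*(\Gr_P,r^!\mathcal{F})$ computes, component by component, the hyperbolic (= weight-functor) filtration of $H^*(\Gr_G,\mathcal{F})$; summing over $\pi_0(\Gr_I)$ and incorporating the shift defining $\mathrm{Res}_I^\natural$ identifies $\bigoplus_\chi H^*(\Gr_I^\chi,\mathrm{Res}_I^\natural(\mathcal{F})\vert_{\Gr_I^\chi})$ with the associated graded of this filtration, which is (noncanonically) $H^*(\Gr_G,\mathcal{F})$ itself. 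To make this \emph{canonical} I would instead argue directly that the composite natural transformation
$$
H^*(\Gr_I,\mathrm{Res}_I^\natural(-))\simeq H^*(\Gr_P,r^!(-))\xrightarrow{\ \text{(can.)}\ } H^*(\Gr_G,-),
$$
built from the counit $r_!r^!\to\mathrm{id}$ and the identification $\Gr_P=$ attracting locus, is an isomorphism: this can be checked after restriction to the heart and reduces to the statement that on each $\mathrm{IC}_\lambda$ the hyperbolic-restriction spectral sequence degenerates, which follows from parity vanishing (as in the proof of Proposition~\ref{equivariantformality}) exactly as in \cite{MV04}.

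Next I would upgrade this to a monoidal isomorphism. Both $H^*(\Gr_G,-)$ and $H^*(\Gr_I,\mathrm{Res}_I^\natural(-))$ carry lax monoidal structures --- the latter because $\mathrm{Res}_I^\natural$ is monoidal via Construction~\ref{parabolic restriction monoidal structure} (Beilinson--Drinfeld's global fusion construction, which I would use in Gaitsgory's nearby-cycles incarnation) and $H^*(\Gr_I,-)$ is lax monoidal. The compatibility of the comparison isomorphism with these monoidal structures is precisely where Gaitsgory's Theorem~\ref{convolution and fusion} and the results of §§\ref{Fusion}--\ref{parabolic restriction} pay off: the monoidal structure on $\mathrm{Res}_I^\natural$ was \emph{defined} through the degeneration $\Gr_{G,X}'\to X$, and the identification $\Gr_P=$ attracting locus globalizes to $\Gr_{P,X}'$, so that the commutation of nearby cycles with hyperbolic restriction (Richarz, invoked in Construction~\ref{parabolic restriction monoidal structure}) yields the needed coherence. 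Since all objects of $\mathcal{P}_{G(\mathcal{O})}(\Gr_G)$ are equivariantly formal, a lax monoidal structure on these exact fiber functors that is an isomorphism on each object is automatically strong monoidal, and symmetry is inherited from the commutativity constraint (which is itself defined by fusion, hence manifestly compatible). Finally, Tannakian reconstruction applied to this monoidal isomorphism produces both the closed embedding $\check L_I\hookrightarrow\check G$ and the asserted natural isomorphism $\mathbb{S}_{L_I}\circ\mathrm{Res}_I^\natural\simeq\operatorname{Res}^{\check G}_{\check L_I}\circ\mathbb{S}_G$ of tensor functors.

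\textbf{Main obstacle.} The genuinely delicate point is the monoidal compatibility, i.e.\ checking that the fiber-functor isomorphism of the second paragraph is a morphism of \emph{monoidal} functors and not merely of functors. The naive approach --- comparing $H^*(\Gr_G,\mathcal{F}\star\mathcal{G})$ with $H^*(\Gr_I,\mathrm{Res}_I^\natural\mathcal{F}\star\mathrm{Res}_I^\natural\mathcal{G})$ using the \emph{local} convolution diagram --- runs into the non-$t$-exactness and the grading shifts, so one really must route everything through the Beilinson--Drinfeld/Gaitsgory global picture, matching the nearby-cycles definition of $\star$ on both $\Gr_G$ and $\Gr_I$ with the nearby-cycles definition of the monoidal structure on $\mathrm{Res}_I^\natural$, and then invoking that nearby cycles commutes with the relevant proper pushforwards and with hyperbolic restriction. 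This is essentially the content of \cite[§15]{baumann2018notes} combined with the compatibilities established in §§\ref{Fusion}--\ref{parabolic restriction} of the present paper (in particular Proposition~\ref{globalinterpretation} and Proposition~\ref{parabolic restriction vs corestriction}); I would present the argument by reducing, step by step, to those statements rather than re-deriving the coherence diagrams from scratch.
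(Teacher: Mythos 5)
The paper does not actually prove this statement: it is quoted as a theorem of Beilinson--Drinfeld, with the proof deferred to \cite[Proposition 15.3]{baumann2018notes} (see also \cite[\S 5.3.29--30]{beilinsondrinfeld}), and the Remark immediately following it declares the theorem to be the Tannakian \emph{definition} of the embedding $\check{L}_I \subseteq \check{G}$. Your outline reconstructs essentially the argument of that cited reference: identify $H^*(\Gr_I, \mathrm{Res}_I^\natural(-))$ with $H^*(\Gr_G,-)$ using the attracting-set geometry of $\Gr_{P_I}$ and parity vanishing, upgrade this to an isomorphism of \emph{monoidal} fiber functors via the Beilinson--Drinfeld/Gaitsgory fusion picture (where commutation of nearby cycles with hyperbolic restriction supplies the coherence, exactly as in Construction~\ref{parabolic restriction monoidal structure}), and conclude by Tannakian formalism. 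So the route is the correct and standard one, matching the source the paper relies on rather than diverging from it.

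Two caveats on the execution. First, the ``canonical'' arrow you propose, $H^*(\Gr_P, r^!\mathcal{F}) \rightarrow H^*(\Gr_G, \mathcal{F})$ ``built from the counit $r_!r^! \rightarrow \mathrm{id}$,'' does not exist as written: $r$ is not proper (its restriction to each connected component of $\Gr_P$ is a locally closed immersion onto an attracting set), so $r_! \neq r_*$, and the counit only produces a map out of $R\Gamma(\Gr_G, r_!r^!\mathcal{F})$, which is not $H^*(\Gr_P, r^!\mathcal{F}) = R\Gamma(\Gr_G, r_*r^!\mathcal{F})$. The canonical identification has to be obtained as in \cite[\S 3]{MV04} and \cite[\S 15.1]{baumann2018notes}: either from the filtration of $\Gr_G$ by closed unions of attracting sets together with the degree-concentration statement (your parity-vanishing input) which forces degeneration and splits the filtration canonically, or Braden-style by playing the attracting and repelling families against each other; this is a repairable slip, not a different idea. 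Second, Tannakian reconstruction by itself yields a homomorphism $\check{L}_I \rightarrow \check{G}$ and an identification of $\mathbb{S}_{L_I}\circ\mathrm{Res}_I^\natural\circ\mathbb{S}_G^{-1}$ with restriction along it; to obtain the theorem \emph{as stated}, with $\operatorname{Res}^{\check{G}}_{\check{L}_I}$ the restriction along the canonical dual Levi embedding, one must either take this homomorphism as the definition of that embedding (which is the paper's stated point of view) or verify agreement with the canonical $\check{L}_I \subseteq \check{G}$ containing $\check{T}$, e.g.\ via the case $I=\emptyset$ and the transitivity of parabolic restriction (Remark~\ref{parabolic restriction transitivity}). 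Neither point changes the architecture of your argument, and a cleaned-up version of it would be a faithful rendering of the cited proof.
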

We refer the reader to \cite[Proposition 15.3]{baumann2018notes} for a proof. 

\begin{remark}
    It is our point of view that Theorem~\ref{parabolic restriction and Satake} is the (Tannakian) \textit{definition} of the Langlands dual Levi subgroup $\check{L}_I \subseteq \check{G}$.
\end{remark}

\subsection{The Theorem of Ginzburg-Riche}\label{ginzburgrichesubsection} It is now time to bring in our main tool, the results of Ginzburg and Riche in \cite{GR13}. Recall from \eqref{ungraded anti-generic comparison automorphic side} that we have produced an injective graded $R_\emptyset$-module homomorphism  
$$
\xi_\emptyset^\natural: H^*_{T(\mathcal{O})}(\Gr_T, i_\emptyset^{!,\natural} \mathcal{F}_\reg) \rightarrow H_{T(\mathcal{O})}^*(\Gr_T, \operatorname{Res}^\natural_\emptyset(\mathcal{F}_\reg)). 
$$
It is a $\check{G}$-equivariant (obvious) $\check{T}$-equivariant (Remark~\ref{anti-generic comparison automorphic side and the regular centrlizer}) ring homomorphism (Remark~\ref{xiIisaringhomomorphism}). Moreover, we can invoke Theorem~\ref{parabolic restriction and Satake} (in the case $L = T$, which is really a direct consequence of the Mirkovi\'{c}-Vilonen proof \cite{MV04} of geometric Satake) in combination with \cite[Lemma 2.2]{yun2009integral} to obtain a canonical $\check{G}$-equivariant $\check{T}$-equivariant $R_\emptyset$-algebra isomorphism 
\begin{equation}\label{ginzburgricheparabolicrestriction}
H_{T(\mathcal{O})}^*(\Gr_T, \operatorname{Res}^\natural_\emptyset(\mathcal{F}_\reg)) \simeq H_{T(\mathcal{O})}^* \left(\Gr_T, \mathbb{S}_{\check{T}}^{-1}\left(\mathrm{Res}^{\check{G}}_{\check{T}}\left( \mathcal{O}(\check{G}) \right)\right)\right)\simeq \mathcal{O}(\check{G}) \otimes R_\emptyset. 
\end{equation}
Via this natural identification, we may regard $\xi_\emptyset^\natural$ as a homomorphism 
$$
\xi_\emptyset^\natural: H^*_{T(\mathcal{O})}(\Gr_T, i_\emptyset^{!,\natural} \mathcal{F}_\reg) \rightarrow \mathcal{O}(\check{G}) \otimes R_\emptyset. 
$$
On the spectral side, we consider the algebra of functions $\mathcal{O}(T^*(\check{G}/\check{U}))$ on the cotangent bundle of the basic affine space. It is $\check{G}$-module via the natural left action of $\check{G}$ on $T^*(\check{G}/\check{U})$ and a $\check{T}$-module via the natural right action of $\check{T} \simeq \check{B}/\check{U}$ on $T^*(\check{G}/\check{U})$. We have a $\check{G} \times \check{T}$-equivariant morphism 
$$
\check{G} \times \check{\mathfrak{t}}^* \hookrightarrow \check{G} \times \left( \check{\mathfrak{g}}/\check{\mathfrak{u}} \right)^* \twoheadrightarrow \left(\check{G} \times \left(\check{\mathfrak{g}}/\check{\mathfrak{u}}\right)^* \right)/\check{U} = T^*(\check{G}/\check{U})
$$
Here, the embedding $\check{\mathfrak{t}}^* \hookrightarrow \left( \check{\mathfrak{g}}/\check{\mathfrak{u}} \right)^* = \check{\mathfrak{u}}^\perp \subseteq \check{\mathfrak{g}}^*$ takes a linear form $\omega \in \check{\mathfrak{t}}^*$ on $\check{\mathfrak{t}}$ to the unique linear form $\tilde{\omega} \in \check{\mathfrak{g}}^*$ on $\check{\mathfrak{g}}$ extending $\omega$ (along the embedding $\check{\mathfrak{t}} \hookrightarrow \check{\mathfrak{g}}$) and vanishing on both $\check{\mathfrak{u}}$ and $\check{\mathfrak{u}}^-$. Note that this embedding appeared in Construction~\ref{partialkostantslice} as the example of our (coadjoint) partial Kostant section $\sigma_I: \check{\mathfrak{c}}_I \hookrightarrow \check{\mathfrak{g}}^*$ in the case $I = \emptyset$. We obtain an induced $\check{G}\times\check{T}$-equivariant algebra homomorphism on the coordinate rings 
\begin{equation}\label{ginzburgrichespectralmap}
\mathcal{O}(T^*(\check{G}/\check{U})) \rightarrow \mathcal{O}(\check{G}) \otimes \mathcal{O}(\check{\mathfrak{t}}^*) = \mathcal{O}(\check{G}) \otimes R_\emptyset. 
\end{equation}
With this notation in place, we can recall the ``semi-classical limit'' of the main theorem of Ginzburg and Riche as stated in \cite[Theorem 2.3.1]{GR13} in a form convenient for our purposes. The notation $\Upsilon_\emptyset$ that we use in the statement is intended to emphasize that their isomorphism will be recovered in the case $I = \emptyset$ of our Theorem~\ref{Main Theorem}. 

\begin{theorem}[{Ginzburg, Riche \cite[Theorem 2.3.1]{GR13}}] \label{grmaintheoremdetailed}
    There exists a unique $\check{G} \times \check{T}$-equivariant isomorphism of graded $R_\emptyset$-algebras 
    $$
    \Upsilon_\emptyset: H_{T(\mathcal{O})}^*(\Gr_T,  i_\emptyset^{!,\natural} \mathcal{F}_\reg) \simeq \mathcal{O}(T^*(\check{G}/\check{U}))
    $$
    such that the following diagram of $R_L$-algebras commutes:
    \begin{equation}\label{ginzburgrichecommutativediagram}
    \begin{tikzcd}
         H_{T(\mathcal{O})}^*(\Gr_T,  i_\emptyset^{!,\natural} \mathcal{F}_\reg) \ar[r, "\xi_\emptyset^\natural"] \ar[d, "\Upsilon_\emptyset"] & H_{T(\mathcal{O})}^*(\Gr_T, \operatorname{Res}^\natural_\emptyset(\mathcal{F}_\reg)) \ar[d, "\eqref{ginzburgricheparabolicrestriction}"]\\
         \mathcal{O}(T^*(\check{G}/\check{U})) \ar[r, "\eqref{ginzburgrichespectralmap}"] & \mathcal{O}(\check{G}) \otimes R_\emptyset
    \end{tikzcd}
    \end{equation}
\end{theorem}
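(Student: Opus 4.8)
The plan is to deduce Theorem~\ref{grmaintheoremdetailed} from Theorem~\ref{GR13 Main Theorem} (which is \cite[Theorem 2.3.1]{GR13}) by a direct sum and bookkeeping argument; the mathematical substance is entirely in Theorem~\ref{GR13 Main Theorem}, and the work here is organizational. First I would produce $\Upsilon_\emptyset$ as a $\check{G}\times\check{T}$-equivariant graded $R_\emptyset$-module isomorphism. Since $\Gr_T$ is the discrete set $\Lambda$, we have $H_{T(\mathcal{O})}^*(\Gr_T, i_\emptyset^{!,\natural}\mathcal{F}_\reg) = \bigoplus_{\mu\in\Lambda} H_T^*(i_\mu^{!,\natural}\mathcal{F}_\reg)$, and the Peter--Weyl decomposition $\mathcal{F}_\reg = \bigoplus_{\lambda\in\Lambda^+}\mathrm{IC}_\lambda\boxtimes L(\lambda)^*$ gives $H_T^*(i_\mu^{!,\natural}\mathcal{F}_\reg) = \bigoplus_{\lambda} H_T^*(i_\mu^{!,\natural}\mathrm{IC}_\lambda)\otimes L(\lambda)^*$. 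Applying the per-$(\lambda,\mu)$ isomorphisms \eqref{introduction ginzburg riche isomorphism} of Theorem~\ref{GR13 Main Theorem}, summing first over $\mu$ (which collapses the $\CC(-\mu)$-twists and the $\check{T}$-weight decomposition into plain $\check{U}$-invariants) and then over $\lambda$, and invoking Peter--Weyl for $\check{G}$ together with $\operatorname{Sym}(\check{\mathfrak{g}}/\check{\mathfrak{u}}) = \mathcal{O}((\check{\mathfrak{g}}/\check{\mathfrak{u}})^*)$, we recognize the target as $(\mathcal{O}(\check{G})\otimes\mathcal{O}((\check{\mathfrak{g}}/\check{\mathfrak{u}})^*))^{\check{U}} = \mathcal{O}(T^*(\check{G}/\check{U}))$ -- this is exactly the computation already carried out in the introduction following Theorem~\ref{GR13 Main Theorem}. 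Here the $\check{G}$-action records the $L(\lambda)^*$-isotypic decomposition, the $\check{T}$-action records the $\mu$-grading, the $\ZZ$-grading is the cohomological one (with $\check{\mathfrak{g}}/\check{\mathfrak{u}}$ in degree $2$), and $R_\emptyset$-linearity is inherited from \eqref{introduction ginzburg riche isomorphism}.

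Second I would check that $\Upsilon_\emptyset$ makes \eqref{ginzburgrichecommutativediagram} commute. Since all four objects and all four maps are $\check{G}\times\check{T}$-equivariant, it suffices to restrict to a fixed $\check{G}$-isotypic component (labelled $\lambda$) and $\check{T}$-weight space (labelled $\mu$), where \eqref{ginzburgrichecommutativediagram} becomes precisely the commutative square of Theorem~\ref{GR13 Main Theorem} once the other three sides are matched. Namely: (i) $\xi_\emptyset^\natural$ restricted to the $(\lambda,\mu)$-component agrees with \eqref{ginzburg riche automorphic map}, by naturality of $\Xi^\natural$ applied to the summand $\mathrm{IC}_\lambda\boxtimes L(\lambda)^*\hookrightarrow\mathcal{F}_\reg$ and the compatibility of all functors involved with the spectator tensor factor $L(\lambda)^*$; (ii) the isomorphism \eqref{ginzburgricheparabolicrestriction} restricted to the $(\lambda,\mu)$-component agrees with the identification $H^*(\{t^\mu\},\operatorname{Res}^{G,\natural}_T(\mathrm{IC}_\lambda))\otimes R_T\simeq L(\lambda)_\mu\otimes\operatorname{Sym}(\check{\mathfrak{t}})$ appearing in Theorem~\ref{GR13 Main Theorem}, by Theorem~\ref{parabolic restriction and Satake} in the case $L=T$ (Mirkovi\'{c}--Vilonen weight functors) together with \cite[Lemma 2.2]{yun2009integral}; and (iii) the algebra map \eqref{ginzburgrichespectralmap} restricted to the $(\lambda,\mu)$-component agrees with the spectral map \eqref{introduction ginzburg riche spectral map}, because the embedding $\check{\mathfrak{t}}^*\hookrightarrow(\check{\mathfrak{g}}/\check{\mathfrak{u}})^*=\check{\mathfrak{u}}^\perp$ used to define \eqref{ginzburgrichespectralmap} is the partial Kostant section $\sigma_\emptyset$ of Construction~\ref{partialkostantslice}, whose transpose $\check{\mathfrak{g}}/\check{\mathfrak{u}}\twoheadrightarrow\check{\mathfrak{t}}$ is exactly the projection from the triangular decomposition appearing in \eqref{introduction ginzburg riche spectral map}.

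Third I would observe that the bottom map \eqref{ginzburgrichespectralmap} is injective, and extract from this both the uniqueness of $\Upsilon_\emptyset$ and the fact that it is an algebra map. Indeed, the morphism of varieties $\check{G}\times\check{\mathfrak{t}}^*\to T^*(\check{G}/\check{U})$ inducing \eqref{ginzburgrichespectralmap} restricts over the open locus $\check{\mathfrak{t}}^*_\gen$ to an isomorphism onto $T^*(\check{G}/\check{U})\times_{\check{\mathfrak{t}}^*}\check{\mathfrak{t}}^*_\gen$ -- this is precisely the isomorphism $\delta_\gen$ of Example~\ref{universal kostant-whittaker reduction} (equivalently Lemma~\ref{Reduction via Restriction II} applied to $\check{M}=T^*\check{G}$) -- so the morphism is dominant; since $T^*(\check{G}/\check{U})$ is smooth and connected, hence integral, \eqref{ginzburgrichespectralmap} is injective. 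Uniqueness of $\Upsilon_\emptyset$ is then immediate from the commutativity of \eqref{ginzburgrichecommutativediagram}: any two candidates have equal composites with \eqref{ginzburgrichespectralmap}. And $\Upsilon_\emptyset$ is a ring homomorphism -- not merely a module isomorphism -- because the three remaining corners of \eqref{ginzburgrichecommutativediagram} carry ring structures (the upper-left one via the lax monoidal structure on $i_\emptyset^{!,\natural}$ of §§\ref{monoidal structure section} composed with that on equivariant cohomology, Remark~\ref{lax monoidal structure on cohomology}), the maps $\xi_\emptyset^\natural$ (Remark~\ref{xiIisaringhomomorphism}), \eqref{ginzburgricheparabolicrestriction}, and \eqref{ginzburgrichespectralmap} are all ring homomorphisms, and therefore chasing $\eqref{ginzburgrichespectralmap}(\Upsilon_\emptyset(ab))$ around the square and using injectivity of \eqref{ginzburgrichespectralmap} forces $\Upsilon_\emptyset(ab)=\Upsilon_\emptyset(a)\Upsilon_\emptyset(b)$.

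The hard part, such as it is, will not be any single step but the careful matching of conventions in the Peter--Weyl identification of the first step: in particular the dictionary between the $\check{G}$-equivariance structure on $\mathcal{F}_\reg$ carried by the tensor factors $L(\lambda)^*$ and the left-translation $\check{G}$-action on $T^*(\check{G}/\check{U})$, together with the attendant left/right swap on $\mathcal{O}(\check{G})$. This is bookkeeping rather than a genuine obstacle, but it must be done precisely so that the equivariance asserted in Theorem~\ref{grmaintheoremdetailed} is the one actually produced by the construction.
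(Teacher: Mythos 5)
Your proposal is correct and takes essentially the same route as the paper: Theorem~\ref{grmaintheoremdetailed} is presented there as a restatement of \cite[Theorem 2.3.1]{GR13} (i.e.\ Theorem~\ref{GR13 Main Theorem}), obtained by precisely the Peter--Weyl/direct-sum bookkeeping you perform (already sketched in the introduction), with the component-wise matching of $\xi_\emptyset^\natural$, \eqref{ginzburgricheparabolicrestriction}, and \eqref{ginzburgrichespectralmap} against \eqref{ginzburg riche automorphic map} and \eqref{introduction ginzburg riche spectral map}, and with the algebra-homomorphism property extracted, as in the remark following the theorem, from the commutativity of \eqref{ginzburgrichecommutativediagram} together with injectivity. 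Your only (harmless) deviation is proving injectivity of \eqref{ginzburgrichespectralmap} directly via dominance of $\check{G}\times\check{\mathfrak{t}}^*\to T^*(\check{G}/\check{U})$ onto an integral variety, whereas the paper's remark deduces it from the square itself using the injectivity of $\xi_\emptyset^\natural$ and the isomorphism \eqref{ginzburgricheparabolicrestriction}.
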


\begin{remark}
    The fact that $\Upsilon_\emptyset$ is an an algebra homomorphism is an immediate consequence of the commutativity of \eqref{ginzburgrichecommutativediagram}. Indeed, the other three maps $\xi^\natural_\emptyset$, \eqref{ginzburgricheparabolicrestriction}, and \eqref{ginzburgrichespectralmap} appearing in the diagram are algebra homomorphisms, and the horizontal maps $\xi_\emptyset^\natural$, \eqref{ginzburgrichespectralmap} are injective (if it is not clear that \eqref{ginzburgrichespectralmap} is injective, simply note that \eqref{ginzburgricheparabolicrestriction} is an isomorphism and that $\xi_\emptyset^\natural$ is injective by Proposition~\ref{corestriction vs parabolic restriction localization}). 
\end{remark}

\subsection{The Theorem of Yun-Zhu}\label{yun zhu comparison} We will now conclude our discussion of the results of Yun and Zhu from \cite{yun2009integral} initiated in §§\ref{equivarianthomologysubsection}. Recall that in Remark~\ref{homomorphism on regular centralizers, group schemes} we defined
$$
\mathfrak{A}_I :=  \mathfrak{A}_{L_I} := \operatorname{Spec}{H_*^{L_I(\mathcal{O})}}(\Gr_I, \CC) \rightarrow \operatorname{Spec}{R_I} = \mathfrak{c}_I 
$$
to be the $\mathfrak{c}_I$-group scheme obtained from the natural Hopf $R_I$-algebra structure of Construction~\ref{hopfalgebrastructure} on the $R_I$-module $H_*^{L_I(\mathcal{O})}(\Gr_I, \CC)$. In what follows, we will use the canonical identification 
$$
\mathfrak{c}_I = \mathfrak{t}\doubslash W_I \simeq \check{\mathfrak{t}}^* \doubslash W_I = \check{\mathfrak{c}}_I
$$
without further comment. For any $\mathcal{F} \in \mathcal{P}_{L_I(\mathcal{O})}(\Gr_I)$, we recalled in Construction~\ref{comodulestructure} the natural $\mathfrak{A}_I$-module structure on the $R_I$-module $H^*_{L_I(\mathcal{O})}(\Gr_I, \mathcal{F})$. We can take $\mathcal{F}$ to be the regular sheaf on $\Gr_I$, that is, the Hopf algebra object $\mathcal{F}_{I,\reg} \in \mathcal{P}_{L_I(\mathcal{O})}(\Gr_I)$ corresponding to the regular representation $\mathcal{O}(\check{L}_I) \in \operatorname{Rep}(\check{L}_I)$ under the geometric Satake equivalence. Moreover, \cite[Lemma 2.4]{yun2009integral} yields a natural isomorphism 
$$
H^*_{L_I(\mathcal{O})}(\Gr_I, \mathcal{F}_{I,\reg}) \simeq H_{T(\mathcal{O})}^*(\Gr_I, \mathcal{F}_{I,\reg})^{W_I} \simeq (\mathcal{O}(\check{L}_I) \otimes R_\emptyset)^{W_I} \simeq \mathcal{O}(\check{L}_I) \otimes R_I
$$
of Hopf $R_I$-algebras. Hence, we obtain a natural action of $\mathfrak{A}_I$ on $\mathcal{O}(\check{L}_I) \otimes R_I$ through Hopf $R_I = \mathcal{O}(\check{\mathfrak{c}}_I)$-algebra homomorphisms. In other words, we obtain an action of $\mathfrak{A}_I$ on the $\check{\mathfrak{c}}_I$-scheme $\check{L}_I\times \check{\mathfrak{c}}_I$ through group homomorphisms. Acting on the identity section of $\check{L}_I \times \check{\mathfrak{c}}_I$ defines a homomorphism of $\check{\mathfrak{c}}_I$-group schemes 
$$
\mathfrak{A}_I \rightarrow \check{L}_I \times \check{\mathfrak{c}}_I. 
$$
On the other hand, we have the $\check{\mathfrak{c}}_I$-group scheme of regular centralizers $\mathfrak{J}_I \rightarrow \check{\mathfrak{c}}_I$, recalled in §§\ref{spectralactionofthecentralizer}. By its very construction, it is equipped with a canonical group scheme embedding $\mathfrak{J}_I \hookrightarrow \check{{L}}_I \times \check{\mathfrak{c}}_I$.  

\begin{theorem}[{Yun, Zhu \cite{yun2009integral}}]\label{yunzhumaintheorem}
    Assume that $G$ has almost simple derived subgroup. Then, the map $\mathfrak{A}_I \rightarrow \check{L}_I \times \check{\mathfrak{c}}_I$ induces a canonical isomorphism of $\check{\mathfrak{c}}_I$-group schemes $\mathfrak{A}_I \simeq \mathfrak{J}_I$. 
\end{theorem}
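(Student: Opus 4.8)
The plan is to reduce Theorem~\ref{yunzhumaintheorem} to the torus case already treated in \cite[Theorem 2.8, Theorem 4.2]{yun2009integral}, using the relationship between $H_*^{L_I(\mathcal{O})}(\Gr_I,\CC)$, $H_*^{T(\mathcal{O})}(\Gr_{L_I},\CC)$, and the Weyl group $W_I$. First I would recall from \cite[Lemma 2.4]{yun2009integral} (or the argument behind \eqref{ginzburgricheparabolicrestriction}) that, by $W_I$-equivariant formality of $\Gr_I$, there is a canonical isomorphism of Hopf $R_I$-algebras $H^*_{L_I(\mathcal{O})}(\Gr_I,\CC)\simeq H^*_{T(\mathcal{O})}(\Gr_I,\CC)^{W_I}$, and dually a description of $\mathfrak{A}_I = \operatorname{Spec} H_*^{L_I(\mathcal{O})}(\Gr_I,\CC)$ as a quotient (in the sense of invariant theory / Hopf-algebraic duality) of $\mathfrak{A}_{L_I,T}:=\operatorname{Spec} H_*^{T(\mathcal{O})}(\Gr_I,\CC)$ by the $W_I$-action. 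The key point is then that the map $\mathfrak{A}_I\to \check{L}_I\times\check{\mathfrak{c}}_I$ that we have constructed, namely the action on the identity section of $\check{L}_I\times\check{\mathfrak{c}}_I$ coming from the $\mathfrak{A}_I$-module structure on $H^*_{L_I(\mathcal{O})}(\Gr_I,\mathcal{F}_{I,\reg})\simeq \mathcal{O}(\check{L}_I)\otimes R_I$, is compatible, under restriction of equivariance from $L_I(\mathcal{O})$ to $T(\mathcal{O})$, with the analogous map $\mathfrak{A}_{L_I,T}\to \check{T}\times\check{\mathfrak{t}}^*$; this is exactly the content of \cite[Lemma 2.4, Remark 2.5]{yun2009integral} applied to $L_I$ in place of $G$.

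The central input is Yun--Zhu's theorem for $\check{T}$-equivariant homology: under the hypothesis that $G^{\mathrm{der}}$ is almost simple (so that $\check{L}_I^{\mathrm{der}}$, while not necessarily almost simple, still has the property that the relevant localization arguments of \cite[\S4--5]{yun2009integral} apply---here one uses that $\check{L}_I$ is a Levi of the almost simple-ish $\check{G}$ and factors through the decomposition into almost simple factors), the homomorphism $\mathfrak{A}_{L_I,T}\to \check{T}\times\check{\mathfrak{t}}^*_{L_I}$ identifies $\operatorname{Spec} H_*^{T(\mathcal{O})}(\Gr_{L_I},\CC)$ with the universal centralizer $\mathfrak{I}_{\mathrm{univ}}^{\check{L}_I}\times_{\check{\mathfrak{l}}_I^*,\kappa_{\check{L}_I}}$ — or rather, in the appropriate normalization, with $\mathfrak{J}_I$ pulled back along $\check{\mathfrak{t}}^*\to\check{\mathfrak{c}}_I$. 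I would then take $W_I$-invariants: the $W_I$-action on $\Gr_{L_I}$-side homology corresponds to the natural $W_I$-action on $\mathfrak{J}_I\times_{\check{\mathfrak{c}}_I}\check{\mathfrak{t}}^*$ permuting the pullback, and the quotient is precisely $\mathfrak{J}_I\to\check{\mathfrak{c}}_I$ by the construction of Ng\^o's regular centralizer via smooth descent (\cite[Lemme 2.1.1]{Ngo10}, or the presentation in \cite{Ri17}). Since the formation of $\mathfrak{A}$ from $\mathfrak{A}_{T}$ is also a $W_I$-quotient, the map $\mathfrak{A}_I\to\check{L}_I\times\check{\mathfrak{c}}_I$ is the $W_I$-descent of an isomorphism onto $\mathfrak{J}_I\times_{\check{\mathfrak{c}}_I}\check{\mathfrak{t}}^*\hookrightarrow \check{L}_I\times\check{\mathfrak{t}}^*$, hence is itself an isomorphism onto $\mathfrak{J}_I$.

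I would organize the argument in three steps: (i) establish the $W_I$-descent statement $\mathfrak{A}_I = \mathfrak{A}_{L_I,T}\sslash W_I$ and the parallel statement $\mathfrak{J}_I = (\mathfrak{J}_I\times_{\check{\mathfrak{c}}_I}\check{\mathfrak{t}}^*)\sslash W_I$, checking that the two $W_I$-quotient presentations are compatible with the maps to $\check{L}_I\times(-)$; (ii) invoke the Yun--Zhu isomorphism over $\check{\mathfrak{t}}^*$, i.e.\ \cite[Theorem 4.2]{yun2009integral} for the group $\check{L}_I$ (here one must verify that the almost-simplicity hypothesis on $G^{\mathrm{der}}$ transfers appropriately to $\check{L}_I$, e.g.\ by treating the central torus factor separately, where the statement is the elementary computation $\operatorname{Spec} H_*^{T(\mathcal{O})}(\Gr_T,\CC)\simeq \check{T}\times\check{\mathfrak{t}}^*$); (iii) combine (i) and (ii). The main obstacle I expect is step (ii), specifically the reduction of Yun--Zhu's almost-simple hypothesis to the Levi $\check{L}_I$: their localization computations in the affine Grassmannian are phrased for an almost simple group, and $\check{L}_I$ is typically a product of a semisimple group (not necessarily almost simple) with a central torus, so one needs either to decompose $\Gr_{L_I}$ into factors and apply the theorem factorwise, or to observe that the relevant minuscule-coweight computations only involve coroots of $\check{G}$ inside $\check{\Phi}_I$ and therefore inherit the needed properties from $\check{G}$. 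A careful bookkeeping of normalizations (the grading twists, and the identification $R_I\simeq\mathcal{O}(\check{\mathfrak{c}}_I)$ placing $\check{\mathfrak{t}}$ in degree $2$) is the other place where care is needed, but that is routine rather than deep.
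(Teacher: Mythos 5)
The first thing to note is that the paper offers no proof of this statement: Theorem~\ref{yunzhumaintheorem} is quoted from \cite{yun2009integral}, applied to the reductive group $L_I$, with the sole justification being the subsequent remark that the almost-simple hypothesis ``passes to all Levi subgroups.'' Your proposal is therefore, of necessity, a different route: you reconstruct the Levi case from the $T$-equivariant statement by $W_I$-descent. The descent scaffolding of your step (i) is sound: with $\CC$-coefficients $H^*_{L_I(\mathcal{O})}(\Gr_I,\CC)\simeq H^*_{T(\mathcal{O})}(\Gr_I,\CC)^{W_I}$, so by Remark~\ref{change of group} one has $\operatorname{Spec} H_*^{T(\mathcal{O})}(\Gr_I,\CC)\simeq \mathfrak{A}_I\times_{\check{\mathfrak{c}}_I}\check{\mathfrak{t}}^*$, and since $\mathcal{O}(\mathfrak{J}_I)$ is flat over $R_I$ and $R_\emptyset^{W_I}=R_I$, taking $W_I$-invariants of $\mathcal{O}(\mathfrak{J}_I)\otimes_{R_I}R_\emptyset$ does return $\mathcal{O}(\mathfrak{J}_I)$ (one must also check that the torus-level isomorphism is $W_I$-equivariant for the action on the $\check{\mathfrak{t}}^*$-factor alone). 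But this descent is exactly the internal mechanism of \cite{yun2009integral}, so it buys no independence from their theorem: your step (ii) is still ``Yun--Zhu for $\check{L}_I$,'' which is precisely what the paper cites.

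That is where the genuine gap sits, and you flag it correctly without closing it. $L_I^{\mathrm{der}}$ need not be almost simple even when $G^{\mathrm{der}}$ is (take $G=\mathrm{SL}_4$ and $I=\{\check{\alpha}_1,\check{\alpha}_3\}$, so that $L_I^{\mathrm{der}}\simeq \mathrm{SL}_2\times\mathrm{SL}_2$); the remark following Theorem~\ref{yunzhumaintheorem} is too quick for exactly the same reason, so your concern identifies the one substantive point rather than a technicality. Your two proposed repairs are named but not executed: the factorwise decomposition must contend with the fact that $L_I$ is in general only isogenous to a product of almost simple groups with a torus, and $\Gr_{L_I}$, its component group $\pi_0(\Gr_{L_I})\simeq\pi_1(L_I)$, and hence $\mathfrak{J}_I$ are sensitive to the isogeny type; rerunning the computations using only roots in $\check{\Phi}_I$ amounts to reproving Yun--Zhu. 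In the complex-coefficient setting of this paper, the cleanest way to close the gap is to extract $\mathfrak{A}_I\simeq\mathfrak{J}_I$ from the derived Satake equivalence of Bezrukavnikov--Finkelberg \cite{BF07} (in the form already invoked in Construction~\ref{Upsilondefinition}, cf.\ \cite[Theorem 5.5.1]{Ri17}), which avoids the Picard-group normalization responsible for the almost-simplicity hypothesis in \cite{yun2009integral}. Until one of these routes is carried out, your proposal is a plausible plan rather than a proof.
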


\begin{remark}
    Note that the hypothesis that $G$ has almost simple derived subgroup passes to all Levi subgroups of $G$, which justifies our application of \cite{yun2009integral} to the connected reductive group $L_I$. 
\end{remark}

\begin{remark}
    Assuming that $G$ has almost simple derived subgroup, we can combine the natural isomorphism of Remark~\ref{change of group} with Theorem~\ref{yunzhumaintheorem} (in the case $L = G$) to obtain an isomorphism of $\check{\mathfrak{c}}_I$-group schemes
    $$
    \operatorname{Spec}{H_*^{L(\mathcal{O})}}(\Gr_G, \CC) \simeq \mathfrak{A} \times_{\check{\mathfrak{c}}} \check{\mathfrak{c}}_I \simeq \mathfrak{J} \times_{\check{\mathfrak{c}}} \check{\mathfrak{c}}_I. 
    $$
    We can now apply Remark~\ref{homomorphism on regular centralizers, group schemes} to construct a homomorphism of $\check{\mathfrak{c}}_I$-group schemes
    \begin{equation}\label{geometricconstructionofrho}
    \rho_I^{\mathrm{geom}}: \mathfrak{J} \times_{\check{\mathfrak{c}}} \check{\mathfrak{c}}_I \simeq \mathfrak{A} \times_{{\mathfrak{c}}} {\mathfrak{c}}_I \xrightarrow{\rho^G_{L_I}} \mathfrak{A}_I \simeq \mathfrak{J}_I. 
    \end{equation}
    On the other hand, \cite[Theorem 3.4.2]{Ri17} provides canonical identifications of schemes of (commutative) Lie algebras over the bases $\check{\mathfrak{c}}$ and $\check{\mathfrak{c}}_I$, respectively:
    \begin{eqnarray}\label{liealgebraofregularcentralizer}
    \mathbb{L}\mathrm{ie}(\mathfrak{J}/\check{\mathfrak{c}}) \simeq T^*(\check{\mathfrak{c}}) & \mathbb{L}\mathrm{ie}(\mathfrak{J}_I/\check{\mathfrak{c}_I}) \simeq T^*(\check{\mathfrak{c}}_I). 
    \end{eqnarray}
    Here, $T^*(\check{\mathfrak{c}})$ and $T^*(\check{\mathfrak{c}}_I)$ denote the respective cotangent bundles. On the other hand, we may differentiate the map $\pi_I: \check{\mathfrak{c}}_I \rightarrow \check{\mathfrak{c}}$ to obtain a morphism of schemes of (commutative) Lie algebras over $\check{\mathfrak{c}}_I$
    $$
    \pi_I^*: T^*(\check{\mathfrak{c}}) \times_{\check{\mathfrak{c}}} \check{\mathfrak{c}}_I \rightarrow T^*(\check{\mathfrak{c}}_I). 
    $$
\end{remark}

Since the following proposition will not be used in what follows, we omit its (straightforward) proof. 

\begin{proposition}
    The following diagram of $\check{\mathfrak{c}}_I$-schemes commutes.
    $$
    \begin{tikzcd}
        T^*(\check{\mathfrak{c}}) \times_{\check{\mathfrak{c}}} \check{\mathfrak{c}}_I \ar[rr, "\pi_I^*"] \ar[d, "\eqref{liealgebraofregularcentralizer}"] && T^*(\check{\mathfrak{c}}_I) \ar[d, "\eqref{liealgebraofregularcentralizer}"]\\
        \mathbb{L}\mathrm{ie}(\mathfrak{J}/\check{\mathfrak{c}}) \times_{\check{\mathfrak{c}}} \check{\mathfrak{c}}_I \ar[rr,"\mathbb{L}\mathrm{ie}(\rho_I^{\mathrm{geom}})"] && \mathbb{L}\mathrm{ie}(\mathfrak{J}_I/\check{\mathfrak{c}_I})
    \end{tikzcd}
    $$
\end{proposition}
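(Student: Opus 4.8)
The plan is to reduce the claimed commutativity to a statement purely about Hamiltonian reductions and then to invoke the constructions already made in the excerpt. Recall that the two vertical isomorphisms are the identifications $\mathbb{L}\mathrm{ie}(\mathfrak{J}/\check{\mathfrak{c}}) \simeq T^*\check{\mathfrak{c}}$ and $\mathbb{L}\mathrm{ie}(\mathfrak{J}_I/\check{\mathfrak{c}}_I) \simeq T^*\check{\mathfrak{c}}_I$ of \cite[Theorem 3.4.2]{Ri17}, and the bottom horizontal arrow is $\mathbb{L}\mathrm{ie}(\rho_I^{\mathrm{geom}})$, where $\rho_I^{\mathrm{geom}}$ is the composite $\mathfrak{J}\times_{\check{\mathfrak{c}}}\check{\mathfrak{c}}_I \simeq \mathfrak{A}\times_{\mathfrak{c}}\mathfrak{c}_I \xrightarrow{\rho^G_{L_I}} \mathfrak{A}_I \simeq \mathfrak{J}_I$ of \eqref{geometricconstructionofrho}. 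Since all four schemes in the diagram are schemes of commutative Lie algebras (in fact vector bundles) over $\check{\mathfrak{c}}_I$, and since all four maps are $\check{\mathfrak{c}}_I$-linear morphisms of such, it suffices to check the identity on the open dense locus $\check{\mathfrak{c}}_{I\text{-}\gen}\subseteq \check{\mathfrak{c}}_I$, or even on the fully generic locus $\check{\mathfrak{c}}_{\gen}\times_{\check{\mathfrak{c}}}\check{\mathfrak{c}}_I$; a morphism of vector bundles over an integral base that vanishes (or agrees with another) on a dense open vanishes identically.

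First I would recall the description of $\mathfrak{J}$ (resp. $\mathfrak{J}_I$) via the Kostant slice, \eqref{Regular Centralizer as Restriction}: $\mathfrak{J}\simeq \mathfrak{I}_{\mathrm{univ}}\times_{\check{\mathfrak{g}}^*,\kappa}\check{\mathfrak{c}}$ and $\mathfrak{J}_I\simeq \mathfrak{I}_{\mathrm{univ}}^{\check{L}_I}\times_{\check{\mathfrak{l}}_I^*,\kappa_I}\check{\mathfrak{c}}_I$. Passing to Lie algebras, $\mathbb{L}\mathrm{ie}(\mathfrak{J}/\check{\mathfrak{c}})$ is the pullback along $\kappa$ of the bundle $\xi\mapsto \mathfrak{z}_{\check{\mathfrak{g}}}(\xi)$ of coadjoint centralizer Lie algebras, and similarly for $\mathfrak{J}_I$ with $\check{\mathfrak{l}}_I$ and $\kappa_I$. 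The Kostant-Riche identification with $T^*\check{\mathfrak{c}}$ is, concretely, the isomorphism $\mathfrak{z}_{\check{\mathfrak{g}}}(\kappa(c))\simeq T_c^*\check{\mathfrak{c}}$ obtained by differentiating $\chi$ at $\kappa(c)$: the differential $d\chi_{\kappa(c)}\colon \check{\mathfrak{g}}^* \to T_c^*\check{\mathfrak{c}}$ restricts to an isomorphism on the centralizer (this is exactly the content invoked in \cite[Theorem 3.4.2]{Ri17}). Thus the plan of the core computation is: over a point $c_I\in\check{\mathfrak{c}}_I$ with image $c=\pi_I(c_I)\in\check{\mathfrak{c}}$, trace an element of $\mathfrak{z}_{\check{\mathfrak{g}}}(\kappa(c))$ around the square. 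Going down–then–right: identify it with $\omega\in T_c^*\check{\mathfrak{c}}$ via $d\chi_{\kappa(c)}$, then apply $\pi_I^*\colon T_c^*\check{\mathfrak{c}}\to T_{c_I}^*\check{\mathfrak{c}}_I$. Going right–then–down: push the centralizer element through $\mathbb{L}\mathrm{ie}(\rho_I^{\mathrm{geom}})$, which by the very definition of $\rho^G_{L_I}$ (Remark~\ref{homomorphism on regular centralizers, group schemes}, dualizing the pushforward $i_*$ of Construction~\ref{homomorphism on regular centralizers}) and of the Kostant-slice presentation amounts to the natural map $\mathfrak{z}_{\check{\mathfrak{g}}}(\kappa(c))\to\mathfrak{z}_{\check{\mathfrak{l}}_I}(\kappa_I(c_I))$ coming from restriction of linear forms $\check{\mathfrak{g}}^*\twoheadrightarrow\check{\mathfrak{l}}_I^*$ together with the comparison of slices over the relevant locus; then identify the result with $T_{c_I}^*\check{\mathfrak{c}}_I$ via $d\chi_{I,\kappa_I(c_I)}$. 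The two paths agree because $\chi_I\circ(\text{restriction})$ and $\pi_I\circ\chi$ agree on $\check{\mathfrak{p}}_I$ — this is precisely the commutative diagram established in the proof of Lemma~\ref{Reduction in Steps} — so their differentials agree, and the chain rule (naturality of $d(-)$ with respect to composition) closes the square.

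To make the identification of $\rho_I^{\mathrm{geom}}$ with the restriction-of-forms map precise on the generic locus, I would use Lemma~\ref{Kostant-Whittaker reduction via the cotangent bundle} and Lemma~\ref{Partial Kostant-Whittaker reduction via the cotangent bundle}, which present $T^*(\check{G}/(\check{U},\psi))$ and $T^*(\check{G}/(\check{U},\psi_I))$ as $\check{G}\times\check{\mathfrak{c}}$ and, over $\check{\mathfrak{c}}_I$, as the partial-slice model of Example~\ref{Partial cotangent bundle comparison}; the map $\eta_{I\text{-}\gen}^-$ of Proposition~\ref{Anti-Generic comparison} then realizes the comparison of the two, and differentiating it at the identity section recovers $\mathbb{L}\mathrm{ie}(\rho_I^{\mathrm{geom}})$ as the map induced by $\sigma_I$ versus $\kappa$ — i.e.\ exactly restriction of linear forms followed by the Kostant-Riche identifications. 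Alternatively, and perhaps more cleanly, one can bypass the group schemes $\mathfrak{A}$ entirely: since we only need the statement at the level of Lie algebras over an integral base, and both sides are vector bundles, it is enough to exhibit one compatible model, and the Kostant-slice model above does this directly; the Yun–Zhu isomorphism is not needed for this proposition (the proposition as stated does not assume $G^{\mathrm{der}}$ almost simple, although $\rho_I^{\mathrm{geom}}$ as written via \eqref{geometricconstructionofrho} does — in the write-up I would either add that hypothesis or, better, define $\rho_I^{\mathrm{geom}}$ directly via \eqref{Ngohomomorphism} and the Kostant sections so no hypothesis is needed).

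The main obstacle I anticipate is bookkeeping rather than conceptual: pinning down, on the nose, the Kostant–Riche identification $\mathbb{L}\mathrm{ie}(\mathfrak{J}/\check{\mathfrak{c}})\simeq T^*\check{\mathfrak{c}}$ as "differentiate $\chi$ at the Kostant slice" (the reference \cite{Ri17} may phrase it through the canonical symplectic/Poisson structure on $\check{\mathfrak{g}}^*$, so a short lemma reconciling the two descriptions will be needed), and then verifying that the differential of the restriction map $\check{\mathfrak{g}}^*\to\check{\mathfrak{l}}_I^*$ intertwines these differentials compatibly with $\pi_I^*$. This last point is where the commutative diagram from the proof of Lemma~\ref{Reduction in Steps} does the real work: $\chi=\pi_I\circ\chi_I\circ(\text{restriction})$ on $\check{\mathfrak{p}}_I$, hence $d\chi_{\kappa(c)}=d\pi_{I,c_I}\circ d\chi_{I,\kappa_I(c_I)}\circ(\text{restriction})$ on $\check{\mathfrak{p}}_I$ (noting $\kappa(c)$ lies in $\check{\mathfrak{p}}_I$ since the Kostant slice sits in $\check{\mathfrak{b}}+e\subseteq\check{\mathfrak{p}}_I$), and restricting both sides to the centralizer $\mathfrak{z}_{\check{\mathfrak{g}}}(\kappa(c))$ — which restriction carries isomorphically onto $\mathfrak{z}_{\check{\mathfrak{l}}_I}(\kappa_I(c_I))$, at least over $\check{\mathfrak{c}}_{I\text{-}\gen}$ — yields exactly the commutativity of the square. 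Everything else is routine naturality of the cotangent functor.
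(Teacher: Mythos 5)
A preliminary note: the paper states just before this proposition that the result is not used later and \emph{omits} its proof, so there is no argument of the author's to compare yours against; the assessment below is of your proposal on its own terms.

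The spectral half of your plan is fine: reducing to a dense open subset of $\check{\mathfrak{c}}_I$, describing \eqref{liealgebraofregularcentralizer} via the (transpose of the) differential of $\chi$ along the Kostant slice, and using $\chi\vert_{\check{\mathfrak{p}}_I}=\pi_I\circ\chi_I\circ(\text{restriction})$ together with the chain rule would correctly show that $\pi_I^*$ matches, under the Kostant identifications, the map ``restrict centralizers along the slices'' (with the caveat that the identification $\mathfrak{z}_{\check{\mathfrak{g}}}=\mathfrak{z}_{\check{\mathfrak{l}}_I}$ holds over the anti-generic locus $\check{\mathfrak{c}}^-_{I-\gen}$, or the fully generic locus, not over $\check{\mathfrak{c}}_{I-\gen}$ as you wrote). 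The genuine gap is the other half: you assert that $\mathbb{L}\mathrm{ie}(\rho_I^{\mathrm{geom}})$ is ``by the very definition'' this restriction-of-centralizers map. It is not. The bottom arrow of the square is defined through \eqref{geometricconstructionofrho}: the pushforward $i_*$ in equivariant homology of affine Grassmannians from Construction~\ref{homomorphism on regular centralizers}, conjugated by the Yun--Zhu isomorphisms $\mathfrak{A}\simeq\mathfrak{J}$ and $\mathfrak{A}_I\simeq\mathfrak{J}_I$ of Theorem~\ref{yunzhumaintheorem}. Identifying the Lie algebra of this homologically defined homomorphism with \emph{any} spectrally defined map is precisely the content of the proposition, and your proposal never touches the homological side: there is nothing about the Hopf algebra $H^{L_I(\mathcal{O})}_*(\Gr_I,\CC)$, about what $i^*$ on equivariant cohomology does, or about how the Yun--Zhu isomorphism behaves on tangent spaces at the identity section (equivalently on primitives/indecomposables of equivariant (co)homology).

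Your fallback -- that differentiating $\eta^-_{I-\gen}$ of Proposition~\ref{Anti-Generic comparison} at the identity section ``recovers'' $\mathbb{L}\mathrm{ie}(\rho_I^{\mathrm{geom}})$ -- is the same unproved assertion in disguise: $\eta^-_{I-\gen}$ is a purely spectral construction, and relating it to $\rho_I^{\mathrm{geom}}$ would require the $\mathfrak{J}_I$-equivariance package of Theorem~\ref{Main Theorem 1} or an independently established compatibility of the Yun--Zhu isomorphism with Levi restriction, neither of which appears in your write-up. Likewise, ``bypassing $\mathfrak{A}$ entirely'' does not rescue the argument but changes the statement: the arrow to be computed is defined through $\mathfrak{A}$ (which is also why, as you rightly observe, the almost-simple hypothesis is implicitly in force), so redefining $\rho_I^{\mathrm{geom}}$ via \eqref{Ngohomomorphism} and the Kostant sections would prove a different, nearly tautological proposition. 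To close the gap you would need a Lie-algebra-level unwinding of the Yun--Zhu isomorphism -- for instance, identifying $\mathbb{L}\mathrm{ie}(\mathfrak{A}_I/\check{\mathfrak{c}}_I)$ with the $R_I$-dual of the indecomposables of $H^*_{L_I(\mathcal{O})}(\Gr_I,\CC)$, computing the effect of restriction along $\Gr_I\hookrightarrow\Gr_G$ there, and proving that this matches $\pi_I^*$ under \eqref{liealgebraofregularcentralizer} -- before your chain-rule computation on the spectral side can be brought to bear.
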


\subsection{Proof of Theorem~\ref{Main Theorem}}\label{theproof} We are now ready to tie in the results of §\ref{automorphic} and §\ref{Hamiltonian Reduction} to give our proof of the main theorem, Theorem~\ref{Main Theorem}. We will state and prove a refined version of it, Theorem~\ref{Main Theorem 1} below. The following elementary lemma will be used in the course of the proof.

\begin{lemma}\label{automorphichartogs}
    Let $M$ be a flat $R_L$-module. Let $f_I, g_I \in R_I$ denote the elements of \eqref{fIdefinition} and \eqref{gIdefinition}, respectively. Then, the restriction map 
    $$
    M \rightarrow M_{f_I} \times_{M_{f_I g_I}} M_{g_I}
    $$
    is an isomorphism. 
\end{lemma}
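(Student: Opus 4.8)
The statement is a purely commutative-algebra fact about localizations, so the plan is to verify it directly using the exactness properties of a Mayer--Vietoris / Čech sequence for the two localizations, combined with flatness of $M$. Concretely, the key point is that the pair of elements $f_I, g_I \in R_I$ generate the unit ideal \emph{after} passing to the open locus they jointly cover; more precisely, the open subschemes $D(f_I)$ and $D(g_I)$ of $\operatorname{Spec} R_I$ have union $\operatorname{Spec}(R_I)_{f_I} \cup \operatorname{Spec}(R_I)_{g_I}$, and the complement of this union has codimension $\geq 2$. But rather than invoke Hartogs again on the automorphic side, the cleanest route is: first establish the claim for $M = R_I$ itself, then bootstrap to flat $M$ by tensoring.

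First I would recall (or record) the standard fact that for any ring $A$ and any two elements $a, b \in A$, there is an exact sequence
$$
0 \to \ker \to A \xrightarrow{\ (\mathrm{res},\, \mathrm{res})\ } A_a \times A_b \xrightarrow{\ \mathrm{res} - \mathrm{res}\ } A_{ab},
$$
and that this sequence is exact in the last spot (i.e. the second map is surjective onto the kernel of nothing, so the cokernel is the obstruction). The relevant claim is that for $A = R_I$ with $a = f_I$, $b = g_I$, the displayed sequence is exact, i.e. the first map is injective and its image is exactly the equalizer $A_{f_I} \times_{A_{f_I g_I}} A_{g_I}$. Injectivity of $R_I \to (R_I)_{f_I} \times (R_I)_{g_I}$ holds because $R_I \simeq (\operatorname{Sym} \mathfrak{t}^*)^{W_I}$ is a domain and $f_I, g_I$ are nonzero. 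For the equalizer property, I would argue geometrically: $\operatorname{Spec} R_I$ is normal (it is a polynomial ring after base change along the finite flat map $\operatorname{Sym}\mathfrak{t}^* \to R_I$, hence smooth, in particular normal — or directly, $R_I$ is a polynomial ring by Chevalley's theorem), and the closed subset $\operatorname{Spec} R_I \setminus (D(f_I) \cup D(g_I)) = V(f_I, g_I)$ has codimension $\geq 2$ because $f_I$ (a product of roots in $\Phi_I$) and $g_I$ (a product of roots \emph{not} in $\Phi_I$) share no irreducible factors, so each of the finitely many components of $V(f_I, g_I)$ is an intersection of two distinct hyperplanes. Hartogs' principle for the normal scheme $\operatorname{Spec} R_I$ (exactly as cited in Lemma~\ref{Spectral Hartogs}, say in the form of \cite[\href{https://stacks.math.columbia.edu/tag/0BCS}{Tag 0BCS}]{stacks-project}) then gives $R_I \xrightarrow{\sim} \Gamma(D(f_I) \cup D(g_I), \mathcal{O}) = (R_I)_{f_I} \times_{(R_I)_{f_I g_I}} (R_I)_{g_I}$.

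Having established the statement for $M = R_I$, I would then deduce it for an arbitrary flat $R_I$-module $M$ by tensoring the short exact sequence
$$
0 \to R_I \to (R_I)_{f_I} \times (R_I)_{g_I} \to (R_I)_{f_I g_I}
$$
with $M$ over $R_I$. Flatness of $M$ preserves exactness, and localization commutes with tensor product ($M \otimes_{R_I} (R_I)_{f_I} \simeq M_{f_I}$, etc.), so one obtains
$$
0 \to M \to M_{f_I} \times M_{g_I} \to M_{f_I g_I},
$$
exact in the first two spots, which is precisely the assertion that $M \to M_{f_I} \times_{M_{f_I g_I}} M_{g_I}$ is an isomorphism. (One must be slightly careful that the map $(R_I)_{f_I}\times (R_I)_{g_I}\to (R_I)_{f_Ig_I}$ above need not be surjective, but that is irrelevant: exactness at the first two terms is all that is needed, and that is preserved by the flat base change $-\otimes_{R_I}M$.) The proof is then complete, and I do not expect any genuine obstacle — the only point requiring mild care is the codimension-$\geq 2$ claim for $V(f_I, g_I)$, which follows from the disjointness of the root factors of $f_I$ and $g_I$ exactly as in the proof of Lemma~\ref{Spectral Hartogs}.
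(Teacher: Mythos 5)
Your proof is correct, and its core is the same as the paper's: both arguments reduce everything to the case $M=R_I$, which is handled by exactly the Hartogs/codimension-$\geq 2$ argument you give (normality of $R_I$, the factors of $f_I$ and $g_I$ being distinct linear forms, \cite[\href{https://stacks.math.columbia.edu/tag/0BCS}{Tag 0BCS}]{stacks-project}, as in the proof of Lemma~\ref{Spectral Hartogs}). Where you differ is in how flatness of $M$ is exploited: the paper invokes Lazard's theorem to write $M$ as a filtered colimit of finite free modules, reduces to $M$ free of rank one, and uses that localization commutes with filtered colimits; you instead tensor the left-exact sequence $0 \to R_I \to (R_I)_{f_I} \oplus (R_I)_{g_I} \to (R_I)_{f_I g_I}$ with $M$ and use that a flat module preserves kernels (together with $M \otimes_{R_I} (R_I)_{f_I} \simeq M_{f_I}$, etc.). Your route is slightly more direct, avoiding Lazard entirely, and you correctly flag that only exactness at the first two spots is needed (surjectivity onto $(R_I)_{f_I g_I}$ plays no role, and flat base change preserves exactness there since kernels and images are preserved); the paper's route has the mild advantage of reducing immediately to a rank-one computation with no homological bookkeeping. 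One small point of hygiene: your codimension claim should really be checked upstairs in $\operatorname{Spec}\operatorname{Sym}(\mathfrak{t}^*)$, where $f_I$ and $g_I$ genuinely factor into distinct hyperplane equations, and then transported to $\operatorname{Spec} R_I$ along the finite surjection $\mathfrak{t} \to \mathfrak{t}\doubslash W_I$ (which preserves codimension); as stated, ``intersection of two distinct hyperplanes'' inside $\operatorname{Spec} R_I$ is a slight abuse, though the same abuse appears in the paper's own proof of Lemma~\ref{Spectral Hartogs}.
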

\begin{proof}
    By Lazard's theorem \cite[\href{https://stacks.math.columbia.edu/tag/058G}{Tag 058G}]{stacks-project}, we may express $M$ as a filtered colimit of free modules of finite rank. The exactness of localization then allows us to assume that $M$ itself is free of finite rank. Obviously, it is then sufficient to treat the case in which $M$ is free of rank $1$. The claim is then that the restriction map 
    $$
    R_L \rightarrow (R_L)_{f_I} \times_{(R_L)_{f_I g_I}} (R_L)_{g_I}
    $$
    is a bijection. Since $R_L$ is a regular domain (in particular an integrally closed domain), the claim follows from the observation (see the proof of Lemma~\ref{Spectral Hartogs}) that 
    $$
    \operatorname{codim}_{\operatorname{Spec}{R_L}}(\operatorname{Spec}{R_L/(f_I)} \cap \operatorname{Spec}{R_L/(g_I)}) \geq 2
    $$
    in conjunction with Hartog's principle \cite[\href{https://stacks.math.columbia.edu/tag/0BCS}{Tag 0BCS}]{stacks-project}.
\end{proof}

Before proceeding to the proof of Theorem~\ref{Main Theorem 1}, we make some preliminary definitions and observations.

\begin{construction}\label{spectralgrading}
    We define the $\ZZ$-grading on the algebra $\mathcal{O}(T^*(\check{G}/(\check{U},\psi_I)))$ which will correspond under the isomorphism of Theorem~\ref{Main Theorem} to the cohomological grading on the geometric side. Note that we have an injection 
    $$
    \mathcal{O}(T^*(\check{G}/(\check{U},\psi_I))) \simeq (\mathcal{O}(\check{G}) \otimes \mathcal{O}{(\check{\mathfrak{u}}^\perp + \psi_I)})^{\check{U}} \subseteq \mathcal{O}(\check{G}) \otimes \mathcal{O}(\check{\mathfrak{u}}^\perp + \psi_I).
    $$
    We take the $\ZZ$-grading on the $\check{L}_I$-module $\mathcal{O}(\check{G})$ given by the eigenvalues of the distinguished torus $2\check{\rho}_I: \mathbb{G}_m \rightarrow \check{L}_I$ (that which induces the cohomological grading under geometric Satake for the group $\check{L}_I$). Observe that if $\mathbb{G}_m$ acts on $\check{\mathfrak{g}}^*$ through $2\check{\rho}_I$, then the element $\psi_I$ acquires the weight $+2$ (recall that there is an $\check{G}$-equivariant isomorphism $\check{\mathfrak{g}}^* \simeq \check{\mathfrak{g}}$ taking $\psi_I$ to the irregular nilpotent element $e_I = \sum_{\alpha \in I} X_\alpha$, which has $2\check{\rho}_I$-weight $+2$). Instead, we consider the $\mathbb{G}_m$-action on $\check{\mathfrak{g}}^*$ given by $t \mapsto t^{-2} 2\check{\rho}_I(t)$; by construction, this action leaves $\psi_I$ fixed and hence preserves the affine slice $\check{\mathfrak{u}}^\perp + \psi_I$. We grade $\mathcal{O}(\check{\mathfrak{u}}^\perp + \psi_I)$ by the eigenvalues of \textit{this} $\mathbb{G}_m$-action. 

    We need to grade the algebra $\mathcal{O}(T^*(\check{G}/(\check{U}, \psi)) \times_{\check{\mathfrak{c}}} \check{\mathfrak{c}}_I)$ in such a way that the canonical map 
    $$
    (\eta_{I-\gen}^-)^*: \mathcal{O}(T^*(\check{G}/(\check{U},\psi_I))) \rightarrow \mathcal{O}(T^*(\check{G}/(\check{U}, \psi)) \times_{\check{\mathfrak{c}}} \check{\mathfrak{c}}_I) \simeq \mathcal{O}(\check{G}) \otimes \mathcal{O}(\check{\mathfrak{c}}_I). 
    $$
    of Proposition~\ref{Anti-Generic comparison} is graded. Of course, we once again grade $\mathcal{O}(\check{G})$ by the coweight $2\check{\rho}_I: \mathbb{G}_m \rightarrow \check{L}_I$. We must grade $\mathcal{O}(\check{\mathfrak{c}}_I)$ in such a way that the algebra map $\sigma_I^*: \mathcal{O}(\check{\mathfrak{u}}^\perp + \psi_I) \rightarrow \mathcal{O}(\check{\mathfrak{c}}_I)$ of Construction~\ref{partialkostantslice} is graded. Dually, we must define an action of $\mathbb{G}_m$ on $\check{\mathfrak{c}}_I$ such that the partial Kostant section $\sigma_I: \check{\mathfrak{c}}_I \rightarrow \check{\mathfrak{u}}^\perp + \psi_I$ is $\mathbb{G}_m$-equivariant. Since $\sigma_I$ is the composition of the embedding $\check{\mathfrak{l}}_I^* \hookrightarrow \check{\mathfrak{g}}^*$ with the standard Kostant section $\kappa_I: \check{\mathfrak{c}}_I \rightarrow \check{\mathfrak{l}}_I^*$, the problem is to give an action of $\mathbb{G}_m$ on $\check{\mathfrak{c}}_I$ for which the Kostant section $\kappa_I: \check{\mathfrak{c}}_I \rightarrow \check{\mathfrak{l}}_I^*$ is $\mathbb{G}_m$-equivariant, where $\mathbb{G}_m$ acts on $\check{\mathfrak{l}}_I^*$ (as above) by $t \mapsto t^{-2} 2\check{\rho}_I(t)$. But this action of $\mathbb{G}_m$ on $\check{\mathfrak{l}}_I^*$ clearly preserves the image of the closed immersion $\kappa_I$, so we obtain the desired (and in fact uniquely determined) action of $\mathbb{G}_m$ on $\check{\mathfrak{c}}_I$. 

    An alternative construction of this action can be obtained as follows. Let $\mathbb{G}_m$ act on $\check{\mathfrak{l}}_I^*$ by $t \mapsto t^{-2}$. This action commutes with the coadjoint action of $\check{L}_I$, hence induces an action of $\mathbb{G}_m$ on $\check{\mathfrak{l}}_I^*\doubslash \check{L}_I \simeq \check{\mathfrak{c}}_I$; this is the same action. The latter description makes it clear that the natural map $\check{\mathfrak{t}}^* \rightarrow \check{\mathfrak{c}}_I$ is $\mathbb{G}_m$-equivariant, where $\mathbb{G}_m$ acts on $\check{\mathfrak{t}}^*$ by $t \mapsto t^{-2}$. Hence, we obtain an inclusion of graded algebras $\mathcal{O}(\check{\mathfrak{c}}_I) \hookrightarrow \mathcal{O}(\check{\mathfrak{t}}^*) \simeq \operatorname{Sym}{\mathfrak{t}^*}$, where $\mathfrak{t}^*$ is placed in degree $+2$. This grading on $\mathcal{O}(\mathfrak{t}^*)$ agrees with the cohomological grading on $R_\emptyset = H_T^*(\mathrm{pt},\CC)$ under the standard isomorphism $R_\emptyset \simeq \operatorname{Sym}{\mathfrak{t}^*}$. It immediately follows that the standard isomorphism $R_I \simeq \mathcal{O}(\check{\mathfrak{c}}_I)$ identifies the cohomological grading on $R_I$ with our grading on $\mathcal{O}(\check{\mathfrak{c}}_I)$.   

    This action of $\mathbb{G}_m$ on $\check{\mathfrak{c}}_I$ appears frequently in representation theory; for example, in Ng\^{o}'s work \cite{Ngo10}.
\end{construction}
 
\begin{construction}\label{Upsilondefinition}
    We need an $L_I(\mathcal{O})$-equivariant version of the isomorphism \eqref{ginzburgricheparabolicrestriction}. More precisely, we define an isomorphism of $R_I$-algebras
    \begin{equation}
    \Upsilon: H^*_{L(\mathcal{O})}(\Gr_I, \operatorname{Res}^\natural_I(\mathcal{F}_\reg)) \simeq \mathcal{O}(T^*(\check{G}/(\check{U},\psi)) \times_{\check{\mathfrak{c}}} \check{\mathfrak{c}}_I). 
    \end{equation}
    By \cite[Lemma 2.2]{yun2009integral} in conjunction with Theorem~\ref{parabolic restriction and Satake}, we have a natural isomorphism of graded $R_\emptyset$-algebras
    $$
    H^*_{T(\mathcal{O})}(\Gr_I, \operatorname{Res}^\natural_I(\mathcal{F}_\reg)) \operatornamewithlimits{\simeq}^{(1)} H^*(\Gr_I, \operatorname{Res}^\natural_I(\mathcal{F}_\reg)) \otimes R_\emptyset \operatornamewithlimits{\simeq}^{(2)} \mathcal{O}(\check{G}) \otimes R_\emptyset \operatornamewithlimits{\simeq}^{(3)} \mathcal{O}(\check{G} \times \check{\mathfrak{c}}_I) \otimes_{R_I} R_\emptyset. 
    $$
    We can pass to $W_I$-invariants (using the equivariant formality of $\mathrm{Res}^\natural_I(\mathcal{F}_\reg)$) to obtain the desired isomorphism 
    $$
    \Upsilon: H^*_{L(\mathcal{O})}(\Gr_I, \operatorname{Res}^\natural_I(\mathcal{F}_\reg)) \simeq \mathcal{O}(\check{G} \times \check{\mathfrak{c}}_I) \otimes_{R_I} R_\emptyset  ^{W_I} \simeq \mathcal{O}(T^*(\check{G}/(\check{U},\psi)) \times_{\check{\mathfrak{c}}} \check{\mathfrak{c}}_I).
    $$
    The last isomorphism is Lemma~\ref{Reduction by Restriction}. By construction, $\Upsilon$ is an isomorphism of $R_I$-algebras. It is $\check{G}$-equivariant: indeed, $(1)$ is $\check{G}$-equivariant by the very definition of the $\check{G}$-action on $H^*_{T(\mathcal{O})}(\Gr_I, \operatorname{Res}^\natural_I(\mathcal{F}_\reg))$, $(2)$ and $(3)$ are obviously $\check{G}$-equivariant, and the isomorphism of Lemma~\ref{Reduction by Restriction} is $\check{G}$-equivariant by Lemma~\ref{Reduction by restriction, equivariant}. When $G$ has almost simple derived subgroup, we can use the Yun-Zhu isomorphism $\mathfrak{A}_I \simeq \mathfrak{J}_I$ of Theorem~\ref{yunzhumaintheorem} and ask if $\Upsilon$ is ${\mathfrak{J}}_I$-equivariant. That $\Upsilon$ is indeed ${\mathfrak{J}}_I$-equivariant is a direct consequence of the Bezrukavnikov-Finkelberg proof of the derived Satake equivalence \cite{BF07}, which identifies the equivariant cohomology functor with the Kostant-Whittaker reduction of $\operatorname{Sym}{\check{\mathfrak{g}}[-2]}$-modules; we refer the reader to Riche's proof \cite[Theorem 5.5.1]{Ri17} of the mixed modular derived Satake equivalence for the details.

    Finally, we assert that $\Upsilon$ is compatible with the $\ZZ$-gradings. The isomorphism (1) respects the cohomological grading by construction (see the proof of \cite[Lemma 2.2]{yun2009integral}). The isomorphism (2) identifies the cohomological grading on $H^*(\Gr_I, \operatorname{Res}^\natural_I(\mathcal{F}_\reg))$ with the the grading of the $\check{L}_I$-module $\mathcal{O}(\check{G})$ by the eigenvalues of $2\check{\rho}_I: \mathbb{G}_m \rightarrow \check{L}_I$ (this is a standard property of the geometric Satake equivalence; see for example \cite[Theorem 3.6]{MV04}). Therefore, $\Upsilon$ identifies the cohomological grading on $H^*_{L(\mathcal{O})}(\Gr_I, \operatorname{Res}_I^\natural(\mathcal{F}_\reg))$ with the grading on $\mathcal{O}(T^*(\check{G}/(\check{U},\psi)) \times_{\check{\mathfrak{c}}} \check{\mathfrak{c}}_I) \simeq \mathcal{O}(\check{G}) \otimes \mathcal{O}(\check{\mathfrak{c}}_I)$ induced by the grading on $\mathcal{O}(\check{G})$ by the eigenvalues of $2\check{\rho}_I$ and the cohomological grading on $\mathcal{O}(\mathfrak{\check{c}}_I) \simeq R_I$; that is exactly the grading defined in Construction~\ref{spectralgrading}. 
\end{construction}

\begin{theorem}\label{Main Theorem 1}
    Let $L = L_I \subseteq G$ denote a Levi subgroup of the connected reductive group $G$ with simple roots $I \subseteq \Delta$. Let $i_I: \Gr_I := \Gr_L \hookrightarrow \Gr_G$ denote the induced closed immersion on affine Grassmannians. Let $\mathcal{F}_\reg \in D_{G(\mathcal{O})}(\Gr_G)$ denote the regular sheaf of Remark~\ref{ring structure on the regular sheaf} equipped with its natural structure of $\check{G}$-equivariant ring object of $D_{G(\mathcal{O})}(\Gr_G)$. Let $H_{L(\mathcal{O})}^*(\Gr_I, i_I^{!,\natural} \mathcal{F}_\reg)$ denote the $L(\mathcal{O})$-equivariant cohomology of its normalized (see Remark~\ref{normalized corestriction}) corestriction, equipped with its natural structure of graded $R_I := H^*_{L(\mathcal{O})}(\mathrm{pt})$-algebra (arising from the lax monoidal structures of Construction~\ref{lax definition}, Construction~\ref{colax definition}, and Remark~\ref{lax monoidal structure on cohomology}). Moreover, in the case that $G$ has almost simple derived subgroup, we view $H_{L(\mathcal{O})}^*(\Gr_L, i_I^{!,\natural} \mathcal{F}_\reg)$ as a module over the $R_L$-group scheme $\mathfrak{J}_I$ via Remark~\ref{comodulestructure} and the Yun-Zhu isomorphism $\mathfrak{J}_I \simeq \mathfrak{A}_I = \operatorname{Spec}{H^{L(\mathcal{O})}_*(\Gr_L, \CC)}$ of Theorem~\ref{yunzhumaintheorem}. 

    On the other hand, consider the ring of regular functions $\mathcal{O}(T^*(\check{G}/(\check{U}, \psi_I)))$. Here, $\psi_I \in \check{\mathfrak{u}}^*$ is the additive character of Notation~\ref{additivecharacterdefinition} and $T^*(\check{G}/(\check{U}, \psi_I)) := T^*\check{G} \tripslash (\check{U}, \psi_I)$ is the Hamiltonian reduction of the Hamiltonian $\check{U}$-variety $T^*\check{G}$ at the level $\psi_I \in \check{\mathfrak{u}}^*$. View $\mathcal{O}(T^*(\check{G}/(\check{U}, \psi_I)))$ as an $R_I$-algebra via the canonical map $T^*(\check{G}/(\check{U}, \psi_I)) \rightarrow \check{\mathfrak{c}}_I$ of \eqref{projectiontochevalley}, where $\check{\mathfrak{c}}_I = \operatorname{Spec}{R_I}$ is the Chevalley scheme of $\check{L}_I$. Equip $\mathcal{O}(T^*(\check{G}/(\check{U}, \psi_I)))$ with its natural structure of module over the $R_I$-group scheme $\mathfrak{J}_I$ (see §§\ref{spectralactionofthecentralizer}). Moreover, regard $\mathcal{O}(T^*(\check{G}/(\check{U}, \psi_I)))$ as a $\check{G}$-module via the natural left translation action of $\check{G}$ on $T^*(\check{G}/(\check{U},\psi_I))$. We equip $T^*(\check{G}/(\check{U},\psi_I))$ with the grading induced by the $\mathbb{G}_m$-action of Construction~\ref{spectralgrading}.

    Then, there exists a unique $\check{G}$-equivariant isomorphism of graded $R_I$-algebras 
    $$
    \Upsilon_I: H_{L(\mathcal{O})}^*(\Gr_L, i_I^{!,\natural} \mathcal{F}_\reg) \simeq \mathcal{O}(T^*(\check{G}/(\check{U}, \psi_I)))
    $$
    such that the following diagram of $R_L$-algebras commutes:
    \begin{equation}\label{Main Theorem Main commutative diagram}
    \begin{tikzcd}
        H^*_{L(\mathcal{O})}(\Gr_L, i^{!,\natural}\mathcal{F}_\reg) \ar[rr, "\xi_I^\natural"] \ar[d, "\Upsilon_I"] && H^*_{L(\mathcal{O})}(\Gr_L, \operatorname{Res}^\natural_I(\mathcal{F}_\reg)) \ar[d, "\Upsilon"] \\
        \mathcal{O}(T^*(\check{G}/(\check{U}, \psi_I))) \ar[rr, "(\eta_{I-\gen}^-)^*"] && \mathcal{O}(T^*(\check{G}/(\check{U}, \psi)) \times_{\check{\mathfrak{c}}} \check{\mathfrak{c}}_I) 
    \end{tikzcd}
    \end{equation}
    In the case that $G$ has almost simple derived subgroup, we also have that $\Upsilon_I$ is $\mathfrak{J}_I$-equivariant. 
\end{theorem}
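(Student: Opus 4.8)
The plan is to build $\Upsilon_I$ by gluing two isomorphisms defined after inverting, respectively, the elements $f_I,g_I\in R_I$ of \eqref{fIdefinition} and \eqref{gIdefinition}, using the Hartogs-type principle of Lemma~\ref{automorphichartogs} on the automorphic side and of Construction~\ref{descriptionofaffineclosure} (applied to $\check M=T^*\check G$) on the spectral side. \emph{Uniqueness} is immediate: $T^*(\check G/(\check U,\psi_I))$ is connected, hence irreducible, and normal (its smoothness over $\check{\mathfrak{c}}_I$ is noted in the proof of Lemma~\ref{Spectral Hartogs}), while $\eta^-_{I-\gen}$ is dominant by Proposition~\ref{Anti-Generic comparison}; therefore $(\eta^-_{I-\gen})^*$ is injective, so any $\Upsilon_I$ making \eqref{Main Theorem Main commutative diagram} commute is determined by $\Upsilon$ and $\xi_I^\natural$.

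\emph{The $g_I$-chart.} By Remark~\ref{xiIisaringhomomorphism} (see \eqref{graded anti-generic comparison automorphic side}) the localization $(\xi_I^\natural)_{g_I}$ is an isomorphism of graded $(R_I)_{g_I}$-algebras; by Proposition~\ref{Anti-Generic comparison}, $\eta^-_{I-\gen}$ restricts to an isomorphism over $\check{\mathfrak{c}}^-_{I-\gen}=D(g_I)$, so $((\eta^-_{I-\gen})^*)_{g_I}$ is an isomorphism; and $\Upsilon$ is an isomorphism by Construction~\ref{Upsilondefinition}. I therefore \emph{define}
$$
(\Upsilon_I)_{g_I}:=\bigl(((\eta^-_{I-\gen})^*)_{g_I}\bigr)^{-1}\circ\Upsilon_{g_I}\circ(\xi_I^\natural)_{g_I},
$$
which is an isomorphism of graded $\check G$-equivariant (and, in the almost simple case, $\mathfrak J_I$-equivariant, via Construction~\ref{comodulestructure} and Remark~\ref{anti-generic comparison automorphic side and the regular centrlizer}) $(R_I)_{g_I}$-algebras making the $g_I$-localized square commute, since each of the three constituent maps respects all of these structures (Construction~\ref{Upsilondefinition}, Remark~\ref{xiIisaringhomomorphism}, Proposition~\ref{Anti-Generic comparison}, and Construction~\ref{spectralgrading} for the gradings).

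\emph{The $f_I$-chart and the gluing.} On the automorphic side, Remark~\ref{generic comparison automorphic side} identifies $H^*_{L(\mathcal{O})}(\Gr_I,i_I^{!,\natural}\mathcal F_\reg)_{f_I}$ after base change along $(R_I)_{f_I}\to(R_\emptyset)_{f_I}$ with $H^*_{T(\mathcal{O})}(\Gr_T,i_\emptyset^{!,\natural}\mathcal F_\reg)_{f_I}$, to which Ginzburg--Riche's Theorem~\ref{grmaintheoremdetailed} applies via $(\Upsilon_\emptyset)_{f_I}$; on the spectral side, Corollary~\ref{Generic Comparison Intermediate} (equivalently the $I$-generic half of Construction~\ref{descriptionofaffineclosure}) gives $\mathcal{O}(T^*(\check G/(\check U,\psi_I)))_{f_I}\simeq\mathcal{O}(T^*(\check G/\check U))_{f_I}$. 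Passing to $W_I$-invariants --- using the equivariant formality of $i_I^{!,\natural}\mathcal F_\reg$ and $\operatorname{Res}^\natural_I(\mathcal F_\reg)$ (Remark~\ref{parityandformality}) and the fact that over $\check{\mathfrak{c}}_{I-\gen}$ the partial Kostant--Whittaker reduction is the $W_I$-quotient of its pullback to $\check{\mathfrak{t}}^*_{I-\gen}$ --- yields an isomorphism of graded $\check G$-equivariant $(R_I)_{f_I}$-algebras $(\Upsilon_I)_{f_I}$. The crux is that $(\Upsilon_I)_{f_I}$ and $(\Upsilon_I)_{g_I}$ agree after the further localization at $f_Ig_I$: there all objects reduce to the maximal torus --- automorphically via Remark~\ref{generic comparison automorphic side}, spectrally via the ``absolute'' comparison Proposition~\ref{Generic Comparison} (inverting every root), which is precisely how Construction~\ref{descriptionofaffineclosure} glues its two charts --- and the coincidence of the two resulting identifications with $\mathcal{O}(T^*(\check G/(\check U,\psi)))_{f_Ig_I}$ is the content of the commutative square \eqref{ginzburgrichecommutativediagram} together with the compatibility of $\Upsilon$ of Construction~\ref{Upsilondefinition} with $\Upsilon_\emptyset$ (both assembled from the Mirkovi\'c--Vilonen identification and Theorem~\ref{parabolic restriction and Satake}, so equal by the uniqueness in Theorem~\ref{grmaintheoremdetailed}). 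Granting this, Lemma~\ref{automorphichartogs} applied to the free (hence flat) $R_I$-module $H^*_{L(\mathcal{O})}(\Gr_L,i_I^{!,\natural}\mathcal F_\reg)$ writes it as $H^*(\cdots)_{f_I}\times_{H^*(\cdots)_{f_Ig_I}}H^*(\cdots)_{g_I}$, while Construction~\ref{descriptionofaffineclosure} does the same for $\mathcal{O}(T^*(\check G/(\check U,\psi_I)))$, so the compatible pair $\{(\Upsilon_I)_{f_I},(\Upsilon_I)_{g_I}\}$ glues to a unique $R_I$-module isomorphism $\Upsilon_I$.

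It remains to record that $\Upsilon_I$ has the asserted properties. It is graded, $\check G$-equivariant, and ($\mathfrak J_I$-equivariant in the almost simple case) because each of these can be checked after inverting $g_I$, where it holds by construction, and then follows on the nose since $(\eta^-_{I-\gen})^*$ (resp. $\xi_I^\natural$, by Proposition~\ref{corestriction vs parabolic restriction localization}) is injective; likewise it makes \eqref{Main Theorem Main commutative diagram} commute because the $g_I$-localized square does and $(\eta^-_{I-\gen})^*$ is injective. Finally $\Upsilon_I$ is a ring homomorphism: $\xi_I^\natural$ is an algebra map (Remark~\ref{xiIisaringhomomorphism}), $\Upsilon$ and $(\eta^-_{I-\gen})^*$ are algebra maps, and $\xi_I^\natural$ is injective, so $\Upsilon_I$ respects multiplication. \emph{Main obstacle.} The delicate step is the overlap compatibility in the gluing: reconciling the automorphic reduction to $\Gr_T$ (Remark~\ref{generic comparison automorphic side} plus Ginzburg--Riche) with the spectral reduction first to $T^*(\check G/\check U)$ and then to $T^*(\check G/(\check U,\psi))$ of Construction~\ref{descriptionofaffineclosure}, in particular keeping track of the (generically defined, twisted) $W_I$-actions so that the two descended isomorphisms genuinely coincide over $D(f_Ig_I)$; everything else is either formal localization/Hartogs gluing or a direct transcription of results already in hand.
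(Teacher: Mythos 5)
Your proposal is correct and follows essentially the same route as the paper's proof: define the anti-generic ($g_I$) chart so that the localized square commutes, build the generic ($f_I$) chart from Remark~\ref{generic comparison automorphic side}, Theorem~\ref{grmaintheoremdetailed} and Corollary~\ref{Generic Comparison Intermediate} after passing to $W_I$-invariants, glue via Lemma~\ref{automorphichartogs} and Construction~\ref{descriptionofaffineclosure}, and deduce the algebra, equivariance and grading properties from the $g_I$-chart using freeness and injectivity. The overlap compatibility you single out as the crux is precisely what the paper verifies by combining the $f_Ig_I$-localized Ginzburg--Riche square with the automorphic and spectral transitivity triangles \eqref{Main Theorem Eq8} and \eqref{Main Theorem Eq9} before taking $W_I$-invariants.
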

\begin{proof}
    Recall the injective graded $R_I$-module homomorphism of \eqref{ungraded anti-generic comparison automorphic side}
    $$
    \xi_I^\natural: H_{L(\mathcal{O})}^*(\Gr_L, i^{!,\natural}_I \mathcal{F}_\reg) \rightarrow H_{L(\mathcal{O})}^*(\Gr_L, \operatorname{Res}_I(\mathcal{F}_\reg)). 
    $$
    It is a $\check{G}$-equivariant (obvious) $\mathfrak{A}_I$-equivariant (Remark~\ref{anti-generic comparison automorphic side and the regular centrlizer}) ring homomorphism (Remark~\ref{xiIisaringhomomorphism}). Moreover, its localization $(\xi^\natural_I)_{g_I}$ at the element $g_I \in R_I$ of \eqref{gIdefinition} is an isomorphism of $(R_I)_{g_I}$-modules (see \eqref{ungraded anti-generic comparison automorphic side} and Proposition~\ref{corestriction vs parabolic restriction localization}). Note that the commutativity of \eqref{Main Theorem Main commutative diagram} would imply, upon localization at $g_I \in R_I$, that the isomorphism $(\Upsilon_I)_{g_I}$ fits into the following commutative diagram of isomorphisms of $(R_I)_{g_I}$-modules:
    \begin{equation}\label{Main Theorem Eq1}
    \begin{tikzcd}
            H_{L(\mathcal{O})}^*(\Gr_L, i^{!,\natural}_I \mathcal{F}_\reg)_{g_I} \ar[r, "(\xi_I^\natural)_{g_I}"] \ar[d,"(\Upsilon_I)_{g_I}",dashed] & H_{L(\mathcal{O})}^*(\Gr_G, \operatorname{Res}^\natural_I(\mathcal{F}_\reg))_{g_I} \ar[d, "\Upsilon_{g_I}"] \\
            \mathcal{O}(T^*(\check{G}/(\check{U}, \psi_I)))_{g_I} \ar[r, "(\eta_{I-\gen}^-)^*"] & \mathcal{O}(T^*(\check{G}/(\check{U}, \psi)) \times_{\check{\mathfrak{c}}} \check{\mathfrak{c}}_I)_{g_I}. 
    \end{tikzcd}
    \end{equation}
    The isomorphisms $(\xi_I^\natural)_{g_I}$ (explained above), $\Upsilon_{g_I}$ (see Construction~\ref{Upsilondefinition}), and $(\eta_{I-\gen}^-)^*$ (see §§\ref{affineclosure}, and Construction~\ref{spectralgrading} for the grading compatibility) are $\check{G}$-equivariant $\mathfrak{J}_I$-equivariant (in the case that $G$ has simple derived subgroup) graded $R_I$-algebra isomorphisms. Therefore, it will follow that $(\Upsilon_I)_{g_I}$ is a $\check{G}$-equivariant $\mathfrak{J}_I$-equivariant graded $(R_I)_{g_I}$-algebra isomorphism. Since $\Upsilon_I$ is an isomorphism of \textit{free} modules (by Proposition~\ref{equivariantformality}) over the domain $R_I$, it follows that $\Upsilon_I$ is in fact a $\check{G}$-equivariant $\mathfrak{J}_I$-equivariant (in the case that $G$ has simple derived subgroup) isomorphism of graded $R_I$-algebras and that \eqref{Main Theorem Main commutative diagram} commutes. Hence, it is sufficient to construct $\Upsilon_I$ as a mere $R_I$-\textit{module} isomorphism, subject to the further constraint that the diagram \eqref{Main Theorem Eq1} commutes.

    If we succeed in defining $\Upsilon_I$, then we will have a commutative diagram of $R_I$-modules:
    \begin{equation}
    \begin{tikzcd}
        H_{L(\mathcal{O})}^*(\Gr_L, i^{!,\natural}_I \mathcal{F}_\reg) \ar[r] \ar[d, dashed,"\Upsilon_I"] & H_{L(\mathcal{O})}^*(\Gr_L, i^{!,\natural}_I \mathcal{F}_\reg)_{f_I} \times_{H_{L(\mathcal{O})}^*(\Gr_L, i^{!,\natural}_I \mathcal{F}_\reg)_{f_I g_I}} H_{L(\mathcal{O})}^*(\Gr_L, i^{!,\natural}_I \mathcal{F}_\reg)_{g_I} \ar[d, "(\Upsilon_I)_{f_I} \times (\Upsilon_I)_{g_I}", dashed]\\
         \mathcal{O}(T^*(\check{G}/(\check{U}, \psi_I))) \ar[r,"\eta_I"] &  \mathcal{O}(T^*(\check{G}/(\check{U},\psi_I)))_{f_I} \times_{ \mathcal{O}(T^*(\check{G}/(\check{U}, \psi_I)))_{f_I g_I}}  \mathcal{O}(T^*(\check{G}/(\check{U}, \psi_I)))_{g_I}
    \end{tikzcd}
    \end{equation}
    The horizontal maps are the obvious restriction morphisms. In Construction~\ref{descriptionofaffineclosure} (using Lemma~\ref{Spectral Hartogs}), we invoked Hartog's principle to show that the bottom horizontal map $\eta_I$ is an isomorphism. Moreover, by the above observation Lemma~\ref{automorphichartogs}, the top horizontal map is also an isomorphism of $R_I$-modules. Therefore, to define $\Upsilon_I$, it is in fact sufficient to directly define the isomorphisms 
    \begin{align*}
        \Upsilon_{I-\gen} := (\Upsilon_I)_{f_I}:  H_{L(\mathcal{O})}^*(\Gr_L, i^{!,\natural}_I \mathcal{F}_\reg)_{f_I} &\simeq  \mathcal{O}(T^*(\check{G}/(\check{U}, \psi_I)))_{f_I}\\
        \Upsilon_{I-\gen}^- := (\Upsilon_I)_{g_I}: H_{L(\mathcal{O})}^*(\Gr_L, i^{!,\natural}_I \mathcal{F}_\reg)_{g_I} &\simeq  \mathcal{O}(T^*(\check{G}/(\check{U}, \psi_I)))_{g_I}
    \end{align*}
    subject to the constraint that the following diagram of $R_I$-modules commutes. 
    \begin{equation}\label{Main Theorem Eq2}
    \begin{tikzcd}
         H_{L(\mathcal{O})}^*(\Gr_L, i^{!,\natural}_I \mathcal{F}_\reg)_{f_Ig_I} \ar[rr, "(\Upsilon_{I-\gen})_{g_I}"] \ar[d, equals] &&  \mathcal{O}(T^*(\check{G}/(\check{U}, \psi_I)))_{f_Ig_I} \ar[d, equals]\\
          H_{L(\mathcal{O})}^*(\Gr_L, i^{!,\natural}_I \mathcal{F}_\reg)_{g_If_I} \ar[rr, "(\Upsilon_{I-\gen}^-)_{f_I}"] &&  \mathcal{O}(T^*(\check{G}/(\check{U}, \psi_I)))_{g_If_I}
    \end{tikzcd}
    \end{equation}
    Here, the vertical equalities denote the obvious canonical isomorphisms. The condition that the diagram \eqref{Main Theorem Eq1} commutes now becomes the \textit{definition} of the isomorphism $\Upsilon_{I-\gen}^-$. With this choice of $\Upsilon_{I-\gen}^-$, our goal is reduced to producing the isomorphism $\Upsilon_{I-\gen}$ subject to the constraint that the following expanded version of the diagram \eqref{Main Theorem Eq2} commutes (where the equalities are now suppressed for brevity):
    \begin{equation}
    \begin{tikzcd}\label{Main Theorem compatiblity diagram}
        H_{L(\mathcal{O})}^*(\Gr_L, i^{!,\natural}_I \mathcal{F}_\reg)_{f_Ig_I} \ar[rr, "(\Upsilon_{I-\gen})_{g_I}",dashed] \ar[d, "(\xi_I^\natural)_{f_I g_I}"] &&  \mathcal{O}(T^*(\check{G}/(\check{U}, \psi_I)))_{f_Ig_I} \ar[d, "(\eta_{I-\gen}^-)^*_{f_I}"] \\
        H_{L(\mathcal{O})}^*(\Gr_G, \operatorname{Res}_I^\natural(\mathcal{F}_\reg))_{f_Ig_I} \ar[rr, "\Upsilon_{f_I g_I}"] && \mathcal{O}(T^*(\check{G}/(\check{U},\psi)) \times_{\check{\mathfrak{c}}} \check{\mathfrak{c}}_I)_{f_I g_I}
    \end{tikzcd}    
    \end{equation}
    Note, moreover, that $H_{L(\mathcal{O})}^*(\Gr_L, i^{!,\natural}_I \mathcal{F}_\reg)$ differs from $H_{L(\mathcal{O})}^*(\Gr_L, i^{!}_I \mathcal{F}_\reg)$ only by a change in grading. Since we have already established that the compatibility of $\Upsilon_I$ with the gradings is an automatic consequence of the commutativity of \eqref{Main Theorem Eq1}, we will now ignore the distinction between these $R_I$-modules. 

    We have an isomorphism of free $(R_\emptyset)_{f_I}$-modules (see Remark~\ref{generic comparison automorphic side})
    \begin{align}\label{Main Theorem Eq3}
        H_{T(\mathcal{O})}^*(\Gr_L, i_I^! \mathcal{F}_\reg)_{f_I} & \simeq H_{T(\mathcal{O})}^*(\Gr_T, i_\emptyset^! \mathcal{F}_\reg)_{f_I} 
    \end{align}
    where $i_\emptyset: \Gr_T \hookrightarrow \Gr_G$ continues to denote the closed inclusion of $\Gr_T$ into $\Gr_G$ and $R_\emptyset = H_T^*(\mathrm{pt},\CC)$. Next, we bring in the isomorphism of the Theorem~\ref{grmaintheoremdetailed} of Ginzburg-Riche
    \begin{equation}\label{Main Theorem Eq4}
        (\Upsilon_{\emptyset})_{f_I}: H_{T(\mathcal{O})}^*(\Gr_T, i_\emptyset^! \mathcal{F}_\reg)_{f_I}  \simeq \mathcal{O}(T^*(\check{G}/\check{U}))_{f_I}. 
    \end{equation}
    Moreover, we have the isomorphism of Corollary~\ref{Generic Comparison Intermediate}
    \begin{equation}\label{Main Theorem Eq5}
        \eta_{I-\gen}: \mathcal{O}(T^*(\check{G}/\check{U}))_{f_I} \simeq \mathcal{O}(T^*(\check{G}/(\check{U}, \psi_I)))_{f_I} \otimes_{(R_I)_{f_I}} (R_\emptyset)_{f_I}. 
    \end{equation}
    We can chain together \eqref{Main Theorem Eq3}, \eqref{Main Theorem Eq4}, and \eqref{Main Theorem Eq5} to obtain an isomorphism of $(R_\emptyset)_{f_I}$-modules
    \begin{equation}\label{Main Theorem Eq6}
         H_{T(\mathcal{O})}^*(\Gr_L, i_I^! \mathcal{F}_\reg)_{f_I} \simeq \mathcal{O}(T^*(\check{G}/(\check{U}, \psi_I)))_{f_I} \otimes_{(R_I)_{f_I}} (R_\emptyset)_{f_I}. 
    \end{equation}
    Recall that $R_I = (R_\emptyset)^{W_I}$, where $W_I$ is the Weyl group of $L$. By the equivariant formality of $i_I^!\mathcal{F}_\reg$ (Proposition~\ref{equivariantformality}), taking $W_I$-invariants in \eqref{Main Theorem Eq6} yields an isomorphism of $(R_I)_{f_I}$-modules
    \begin{equation}
        \Upsilon_{I-\gen}: H_{L(\mathcal{O})}^*(\Gr_L, i_I^! \mathcal{F}_\reg)_{f_I} \simeq \mathcal{O}(T^*(\check{G}/(\check{U}, \psi_I)))_{f_I}. 
    \end{equation}
    It remains to verify the commutativity of \eqref{Main Theorem compatiblity diagram}. We start by observing the commutativity of the following diagram
    \begin{equation}\label{Main Theorem Eq7}
    \begin{tikzcd}
        H_{T(\mathcal{O})}^*(\Gr_T, i_\emptyset^! \mathcal{F}_\reg)_{f_Ig_I} \ar[r, "(\Upsilon_{\emptyset})_{f_I g_I}"] \ar[d] & \mathcal{O}(T^*(\check{G}/\check{U}))_{f_Ig_I} \ar[d] \\
        H^*_{T(\mathcal{O})}(\Gr_G, \mathcal{F}_\reg)_{f_I g_I} \ar[r, "\eqref{ginzburgricheparabolicrestriction}"] & \mathcal{O}(T^*(\check{G}/(\check{U}, \psi)))_{f_I g_I} \otimes_{R_{f_I g_I}} (R_{\emptyset})_{f_I g_I}. 
    \end{tikzcd}
    \end{equation}
    Indeed, this diagram is nothing other than the commutative diagram \eqref{ginzburgrichecommutativediagram} of Theorem~\ref{grmaintheoremdetailed} characterizing the Ginzburg-Riche isomorphism $\Upsilon_\emptyset$, localized at $f_I g_I \in R_\emptyset$ (note the evident identification of $\mathcal{O}(T^*(\check{G}/(\check{U},\psi)))$ with $\mathcal{O}(\check{G}) \otimes R$ coming from Lemma~\ref{Reduction by Restriction}). On the geometric side, we have the commutative diagram
    \begin{equation}\label{Main Theorem Eq8}
    \begin{tikzcd}
        H_{T(\mathcal{O})}^*(\Gr_T, i_\emptyset^! \mathcal{F}_\reg)_{f_Ig_I} \ar[d] \ar[r] & H_{T(\mathcal{O})}^*(\Gr_L, i_I^! \mathcal{F}_\reg)_{f_I g_I} \ar[ld]\\
        H^*_{T(\mathcal{O})}(\Gr_G, \mathcal{F}_\reg)_{f_I g_I} .
    \end{tikzcd}
    \end{equation}
    obtained by localizing at $f_I g_I \in R_I$ the evidently commutative diagram 
    $$
    \begin{tikzcd}
        H_{T(\mathcal{O})}^*(\Gr_T, i_\emptyset^! \mathcal{F}_\reg) \ar[d] \ar[r] & H_{T(\mathcal{O})}^*(\Gr_L, i_I^! \mathcal{F}_\reg) \ar[ld]\\
        H^*_{T(\mathcal{O})}(\Gr_G, \mathcal{F}_\reg).
    \end{tikzcd}
    $$
    On the spectral side, we have the commutative diagram 
    \begin{equation}\label{Main Theorem Eq9}
    \begin{tikzcd}
        \mathcal{O}(T^*(\check{G}/\check{U}))_{f_Ig_I} \ar[d,"(\eta_{\emptyset-\gen}^*)_{f_I g_I}"] \ar[r,"(\eta_{I-\gen}^*)_{f_I g_I}"] & \mathcal{O}(T^*(\check{G}/(\check{U},\psi_I)))_{f_I g_I} \otimes_{(R_I)_{f_I g_I}} (R_\emptyset)_{f_I g_I} \ar[ld, "(\eta_{I-\gen}^- \times_{\check{\mathfrak{c}}_I} \check{\mathfrak{t}}^*)^*_{f_I g_I}"] \\
        \mathcal{O}(T^*(\check{G}/(\check{U},\psi)))_{f_I g_I} \otimes_{R_{f_I g_I}} (R_\emptyset)_{f_I g_I} 
    \end{tikzcd}
    \end{equation}
    obtained from the diagram of varieties 
    $$
    \begin{tikzcd}
        T^*(\check{G}/\check{U}) & T^*(\check{G}/(\check{U}, \psi_I)) \times_{\check{\mathfrak{c}}_I} \check{\mathfrak{t}}^* \ar[l, "\eta_{I-\gen}"]\\
        \left(T^*(\check{G}/(\check{U}, \psi)) \times_{\check{\mathfrak{c}}} \check{\mathfrak{c}}_I\right) \times_{\check{\mathfrak{c}}_I} \check{\mathfrak{t}}^* \ar[ur, "\eta_{I-\gen}^- \times_{\check{\mathfrak{c}}_I} \check{\mathfrak{t}}^*",swap] \ar[u, "\eta_{\emptyset-\gen}"]
    \end{tikzcd}
    $$
    by passing to rings of regular functions and localizing at $f_I g_I \in R_I$. Combining the diagrams \eqref{Main Theorem Eq7}, \eqref{Main Theorem Eq8}, and \eqref{Main Theorem Eq9} and then passing to $W_I$-invariants yields the diagram \eqref{Main Theorem compatiblity diagram}, which concludes the proof. 
\end{proof}
\emergencystretch=1em
\printbibliography

\end{document}